\theoremstyle{plain}                 
\newtheorem{theorem}{Theorem}[section]     
\newtheorem{proposition}[theorem]{Proposition}
\newtheorem{corollary}[theorem]{Corollary}
\newtheorem{lemma}[theorem]{Lemma}    
\theoremstyle{definition}           
\newtheorem{definition}[theorem]{Definition}   
\newtheorem{example}[theorem]{Example}
\newtheorem{algorithm}[theorem]{Algorithm}
\theoremstyle{remark}       
\newtheorem{remark}[theorem]{Remark} 
\def\bea{\begin{eqnarray}}
\def\eea{\end{eqnarray}}
\begin{document}
\title[Symplectic geometry of electrical networks]{Symplectic geometry of electrical networks}

\author[B.~Bychkov]{B.~Bychkov}
\address{B.~B.: Department of Mathematics, University of Haifa, Mount Carmel, 3498838, Haifa, Israel}
\email{bbychkov@hse.ru}

\author[V. Gorbounov]{V.~Gorbounov}
\address{V.~G.: Faculty of Mathematics, National Research University Higher School of Economics, Usacheva 6, 119048 Moscow, Russia}
\email{vgorb10@gmail.com }

\author[L. Guterman]{L.~Guterman}
\address{L.~G.: Einstein Institute of Mathematics, Hebrew University, Jerusalem 91904, Israel.}
\email{lazar.guterman@mail.huji.ac.il}

\author[A. Kazakov]{A.~Kazakov}
\address{A.~K.:  Lomonosov Moscow State University, Faculty of Mechanics and Mathematics, Russia, 119991, Moscow, GSP-1, 1 Leninskiye Gory, Main Building; Centre of Integrable Systems, P. G. Demidov Yaroslavl State University, Sovetskaya 14, 150003, Yaroslavl, Russia;
Center of Pure Mathematics, Moscow Institute of Physics and Technology, 9 Institutskiy per., Dolgoprudny, Moscow Region, 141701, Russian Federation;
Kazan Federal University, N.I. Lobachevsky Institute of Mathematics and Mechanics,  Kazan, 420008, Russia}
\email{anton.kazakov.4@mail.ru}

\begin{abstract}
In this paper we relate a well-known in symplectic geometry compactification of the space of symmetric bilinear forms considered as a chart of the Lagrangian Grassmannian to the specific compactifications of the space of electrical networks in the disc obtained in \cite{L}, \cite{CGS} and \cite{BGKT}. In particular, we state an explicit connection between these works and describe some of the combinatorics developed there in the language of symplectic geometry.
We also show that the combinatorics of the concordance vectors forces the uniqueness of the symplectic form, such that corresponding points of the Grassmannian are isotropic. We define a notion of Lagrangian concordance which provides a construction of the compactification of the space of electrical networks in the positive part of the Lagrangian Grassmannian bypassing the construction from \cite{L}.
\end{abstract}
\maketitle

\tableofcontents

\hspace{0.1in}

\section{Introduction}
Thomas Lam constructed an embedding of the space $E_n$ of electrical networks with $n$ boundary points into a subspace of the intersection of the positive Grassmannian $Gr(n-1,2n)$ and a projectivization of a certain subspace $H$ \cite{L}. Examining the closure of the image of this embedding led him to the definition of the compactification of $E_n$ which extends the above embedding to the closure of the image.
Later it was shown in \cite{CGS} and \cite{BGKT} that the intersection $Gr(n-1,n)\cap\mathbb PH$ is isomorphic to the Lagrangian Grassmannian $LG(n-1)$. From now on we will use the term embedding keeping in mind that it can be continued to the compactification via Lam's construction.

We begin this paper by explaining that using symplectic geometry one obtains a family of compactifications of the space $E_n$ in $LG(n-1)$ and one can tailor the choice of an element in this family to a specific task. In particular, one can wish to parametrize the positive part of $LG(n-1)$ by electrical networks. We will call these compactifications {\it positive}. The compactifications constructed in \cite{CGS} and \cite{BGKT} are meant to be positive but it is yet to be proved. 
These compactifications look different and our next goal in this paper is to state the precise connection between them.

In \cite{CGS} it was shown that there is a degenerate 2-form $\widetilde{\Lambda}_{2n}$ in the ambient space $\mathbb R^{2n}$ such that the subspaces of dimension $n+1$ that form the image of the space of electrical networks under the Lam map are isotropic for $\widetilde\Lambda_{2n}$. The variety $LG(n-1)$ appears as the quotient of these isotropic subspaces by the kernel of $\widetilde\Lambda_{2n}$.

In the approach developed in \cite{BGKT} it was shown that there is a symplectic form $\overline{\Lambda}_{2n}$ in the ambient space such that the collection of subspaces of dimension $n-1$ that form the image of the space of electrical networks under the Lam map are isotropic for $\overline{\Lambda}_{2n}$. In fact everything is happening in a certain subspace $V\subset \mathbb R^{2n}$ of codimension two. Namely it turns out that $H\subset \bigwedge^{n-1} V$, $\Lambda$ restricts to a symplectic form $\Lambda_{2n-2}$ on $V$ and $Gr(n-1,2n)\cap \mathbb PH=LG(n-1,V)$ is the closed orbit of a certain vector in $\bigwedge^{n-1} V$ under the action of the symplectic group $Sp(V)$ for the 2-form $\Lambda_{2n-2}$.
The subspace $H$ becomes the space of the fundamental representation of $Sp(V)\cong Sp(2n-2)$ which corresponds to the last vertex of the Dynkin diagram. 

The explicit connection between these compactifiactions can be stated in two ways.

On the one hand the construction of a compactification from \cite{L} uses a bipartite graph obtained by the Temperley's trick from the graph representing an electrical network. One needs to make a non canonical choice of colors for the vertices of this bipartite graph and there are exactly two ways to do this. One choice leads to the construction from \cite{L} and \cite{BGKT}. We prove in Corollary \ref{cor:maincor} that swapping the colors of the vertices gives a compactification from \cite{CGS}.

On the other hand each isotropic for $\overline{\Lambda}_{2n}$ subspace $E$ of dimension $n-1$ has the symplectic orthogonal $E^{\bot}$ of dimension $n+1$ such that $E\subset E^{\bot}$. The subspaces of complementatry dimensions from \cite{CGS} and \cite{BGKT} are symplectic orthogonals to each other up to a fixed linear isomorphism of the ambient space. We discuss this in Section \ref{sec:linearalgebra}.

The subspace $H$ comes equipped with a remarkable basis. The basis vectors are so called {\it concordance vectors} and are defined via combinatorial properties of non-crossing matchings on the boundary vertices. 
Our next result, Theorem \ref{Classification}, says that this combinatorics forces the existence of a unique up to a scalar factor symplectic form such that the kernel of the convolution with it on $\bigwedge^{n-1} V$ is spanned by the concordance vectors. This form up to a scalar is the form $\Lambda_{2n-2}$ described above. Moreover in the course of proving this theorem we found a combinatorial description of the concordance vectors in terms of basis of the space $V$. We call this combinatorial description {\itshape Lagrangian concordance}. 
It provides a direct construction (Algorithm \ref{Algoritm_embedding}) of the compactification of $E_n$ on $LG(n-1)$ bypassing the construction from \cite{L}.

The electrical Lie algebra which is isomorphic to the symplectic Lie algebra $\mathfrak{sp}_{2n-2}$ is given in terms of the Lam generators $\mathfrak{u}_i$ and the relations between them.
Although $\mathfrak{u}_i$ are not the Chevalley generators they posses some of their properties. In particular the concordance basis is very special with respect to the action of the electrical Lie algebra: the generators $\mathfrak{u}_i$ map a concordance vector to a concordance vector or to zero, see Theorem \ref{cristal V}.

In a recent paper \cite{GLX} the authors studied the {\it grove algebra}, a quadratic algebra generated by the grove partition functions of electrical networks with the properties similar to the properties of the projective algebra of the Grassmannian. Our Theorem \ref{invariance of g_i} implies a nice action of $\mathfrak{sp}_{2n-2}$ on the set of grove coordinates. It might be useful for studying the grove algebra.

\subsection{Organization of the paper}

In Section \ref{sec:background} we present the theory of electrical networks in the form we will need it in this paper and describe the connection of the space of electrical networks to the Lagrangian Grassmannian.

In Section \ref{sec:main1} we compare the embeddings from \cite{CGS} and \cite{L}, \cite{BGKT} via symplectic geometry and via combinatorics. In Proposition \ref{Ortogonality of embeddings} we prove that the subspaces from \cite{CGS} and \cite{BGKT} corresponding to an electrical network are orthogonal to each other w.r.t. the standard bilinear form up to a fixed scalar matrix. In Corollary \ref{cor: inclusion} we prove that these subspaces are $\overline{\Lambda}_{2n}$-symplectic ortogonals to each other. 

Section \ref{th-comb} is devoted to a comparison via combinatorics. We first define a dual generalized Temperley's trick. We then find in Theorem \ref{thm:explicitcoord} a point of the Grassmannian $Gr_{\ge0}(n+1,2n)$ which represents the Postnikov's network associated to an electrical network via dual generalized Temperley's trick. In Corollary \ref{cor:maincor} we conclude that this point is coincide with one defined in \cite{CGS}.

In Section \ref{sec:main2} we relate the combinatorics of concordance with symplectic geometry. In Section \ref{sec: concordance property and symplectic forms} we proof Theorem \ref{Classification} which explains how combinatorics of the concordance forces the existence of unique, up to a scalar, form such that the kernel of a convolution with it on $\bigwedge^{n-1}V$ is spanned by the concordance vectors. The intermediate step in proving this theorem is Algorithm \ref{Algoritm_embedding} which gives an explicit procedure to express a concordance vector as an element of $\bigwedge^{n-1}V$. We finish this subsection relating concordance vectors to the description of Plucker coordinates of hollow cactus networks in Proposition \ref{empty cactus maps to p}. 

In Section \ref{main proofs} we prove auxiliary statements in the course of proving Theorem \ref{Classification}, which are Algorithm \ref{Algoritm_embedding} and Lemma \ref{diagonal}. 

In Section \ref{subsec: lagrangian concordance} we prove Theorem \ref{theorem:cactus-plu} describing the coefficients of a concordant vector as elements of $\bigwedge^{n-1}V$ and introduce the notion of a Lagrangian concordance in Definition \ref{def: Lagrangian concordance}, which provides a combinatorial way to describe a nonzero Plucker coordinates of a concordant vector as elements of $\bigwedge^{n-1}V$.

In Section \ref{sec: special property} we prove that the action of Lam's generators on the basis of $H$, consisting of concordance vectors, possess a remarkable property that is stated in Theorem \ref{cristal V}.

In Section \ref{sec:cactus} we discuss parametrization of electrical networks by effective resistances matrices and provide a different than Algorithm \ref{Algoritm_embedding} way to write an expression of concordance vectors as elements of $\bigwedge^{n-1}V$. 

In Section \ref{sec: appendix} we present a needed in Section \ref{th-comb} background on generalized Temperley's trick and Postnikov's networks.

\subsection{Acknowledgments}
Research of B.~B. was partially supported by the ISF grant 876/20.
For V.~G. this article is an output of a research project implemented as part of the Basic Research Program at the National Research University Higher School of Economics (HSE University). Working on this project V.~G. also visited the Max Plank Institute of Mathematics in Bonn, Germany in the summer 2023 and BIMSA in Beijing, China in the summer 2024. 
L.~G. was partially supported by the Israel Science Foundation grant ISF-2480/20. 
Research of A.~K. and B.~B. on Sections \ref{sec:main1} and \ref{sec:main2} was supported by the Russian Science Foundation project No. 20-71-10110 (https://rscf.ru/en/project/23-71-50012) which finances the work of A.K. at P. G. Demidov Yaroslavl State University. Research of A.~K. on Section \ref{sec:cactus} was supported by the state assignment of MIPT (project FSMG-2023-0013).
We are greateful to Boris Feigin for useful discussions. We also thank the anonymous referee for valuable comments.

\section{The space of electrical networks and its natural compactifications}\label{sec:background}
\subsection{Electrical networks} 
We start with a brief introduction into the theory of electrical networks following \cite{CIM} and \cite{CGV}.
An {\itshape electrical network} is a finite weighted undirected graph $\Gamma$ embedded into a disk, where the vertex set is divided into the set of clockwise enumerated boundary vertices and the set of interior vertices. The weight $w(e)$ of an edge is to be thought of a conductance of the corresponding edge. We will think of electrical current as a flow along edges, and each vertex has a voltage. These physical objects should satisfy two laws. Kirchhoff's law says that for any interior vertex the total current through all adjacent edges equals zero. Ohm's law says that if $e=(u,v)$ is an edge then we have 
$$I(e)=w(e)(U(u)-U(v))$$
where $w(e)$ is the conductance of the edge, $U(u),\ U(v)$ are the voltages of the vertices $u,\ v$ respectively and $I$ is the current from $u$ to $v$. We think of boundary vertices as of nodes to which a voltage is applied. It follows from the maximum principle for the discrete harmonic functions \cite[Chapter 3.2]{CIW} that fixed voltages on the boundary vertices give rise to the unique extension on all vertices. Voltages on the vertices lead to the electrical current flowing along the edges. From Kirchhoff's law follows that to determine electrical current on any edge of the graph $\Gamma$, it is enough to know electrical currents outgoing from the boundary vertices. Therefore to completely determine physical properties of an electrical network it is enough to know the map sending a vector $v\in \mathbb R^{n}$ of boundary voltages to a vector of boundary currents. This map is linear and its matrix is called the {\itshape response matrix} $M_R$ which can be determined in terms of the network as it is proved in \cite[Theorem 3.2]{CIM}. Notice that two electrical networks share the same  response matrix if and only if they can be transformed to each other with electrical transformations , see \cite[Section 2.4]{L}.

We denote an electrical network by $e(\Gamma,\omega)$ or simply $e$, where $\Gamma$ is an underlying graph and $\omega:\text{vertex set of }\Gamma\to\mathbb R_{>0}$ is a weight (conductance) function. We denote by $E_n$ the set of electrical networks on $n$ boundary vertices up to the  electrical transformations.

\subsection{Compactification of the spaces of bilinear forms and electrical networks}
This section contains some standard facts which can be found for example in \cite{AG}.

A {\itshape symplectic} vector space is a vector space equipped with a symplectic bilinear form. An {\itshape isotropic} subspace of a symplectic vector space is a vector subspace on which the symplectic form vanishes. A maximal isotropic subspace is called a {\itshape Lagrangian} subspace. For a symplectic vector space $V$, the {\itshape Lagrangian Grassmannian} $LG(n, V )$ is the space of Lagrangian subspaces. If it is clear which symplectic space we are working with we will denote the Lagrangian Grassmannian $LG(n)$. For a degenerate skew-symmetric form $\Lambda$ we denote by $IG^\Lambda(k,n)\subset Gr(k,n)$ the set of isotropic subspaces w.r.t. $\Lambda$.

Below we work with a specific symplectic space 
$$W_n = \mathbb R^n \oplus (\mathbb R^n )^*,$$ 
where $(\mathbb R^n )^*$ is the dual space of $\mathbb R^n$, equipped with the symplectic form:

\[\Lambda((x,\psi),(x',\psi')) = \psi'(x) - \psi(x'),\]
for any $(x,\psi)$ and $(x',\psi')$ in $W_n$. 

The set of all Lagrangian subspaces of $W_n$ is the set of points of an algebraic variety $LG(n)$, called the Lagrangian Grassmannian. 
Let $H$ be a hyperspace in $W_n$ and $H^\perp$ be its orthogonal with respect to the symplectic form $\Lambda$. It is a standard exercise in the linear symplectic geometry to show that $\mathrm{dim}\, H^\perp=1$, $H^\perp\subset H$.
 If $L$ is a Lagrangian subspace in $W_n$ then $H^\perp$ is either a subspace in $L$ or $\mathrm{dim}\, H^\perp\cap L = 0.$ In the first case $L$ must be a subspace of $H$ since the form $\Lambda$ is non degenerate, in the second case $\mathrm{dim}\, L\cap H=n-1$ since for non degenerate the maximal dimension of an isotropic subspace is $n$. Therefore in both these cases
\[\mathrm{dim} \,(L\cap H)/H^\perp=n-1.\]
Moreover $H/H^\perp$ inherits a symplectic form and the subspace $(L\cap H)/H^\perp$ is Lagrangian in $H/H^\perp$. Summarizing, any hyperspace $H\subset W_n$ defines a surjection 

\[f_H\colon LG(n)\rightarrow LG(n-1).\]

Denote by $Sym_n$ the space of $n\times n$ symmetric matrices with the coefficients in $\mathbb R$. 
If $Q$ is in $Sym_n$ , then we can define the subspace

\[LQ = Span\{e_i + (\sum_{j=1}^n Q_{i,j} e^*_j )\}_{i=1,\ldots ,n}\]
where $(e_i)$ is the canonical basis of $\mathbb R^n$, and $(e^*_i )$ the dual basis. $LQ$ is a Lagrangian subspace of $W_n$, it is just the graph of the linear map defined by the matrix $Q$. This correspondence defines a map from
$Sym_n$ to the algebraic variety $LG(n)$. This map is a standard description of a chart in $LG(n)$, the dimension of $LG(n)$ therefore is $(n^2+n)/2$.

In particular an electrical network in a disk with $n$ boundary points can be considered as a Lagrangian subspace. Indeed an electrical network $e(\Gamma,\omega)\in E_n$ has the response matrix $M_R$ which is symmetric $n\times n$ matrix hence the above construction applies. However there is a subtlety here, the matrix $M_R$ is always degenerate and therefore the image of the space $E_n$ is a subvariety of positive codimension in $LG(n)$.
Namely if we fixed a Lagrangian subspace in $W_n$, say $\mathbb R^n\oplus {\bf 0}$, then the image of the elements of $E_n$
lands in the submanifold of $LG(n)$ of the Lagrangian subspaces $L$ such that
\[L\cap \mathbb R^n\oplus {\bf 0}\not={\bf 0}\]
For a critical electrical network \cite{CIM} the rank of the response matrix $M_R$ is $n-1$ and the kernel $K$ of the linear map defined by $M_R$ is generated by the vector ${\bf v}$ whose all coordinates are equal to each other. Splitting $W_n$ into a direct sum
\[W_n=K\oplus H\]
for any such a hyperspace $H$ we naturally obtain a map from the space $E_n$ into $LG(n-1)$ according to the above construction. Since the dimensions of these spaces coincide we obtain a compactification of $E_n$. 
Choosing a particular complement subspace $H$ to the subspace $K$ and a particular basis of $H$ one can tailor the compactification of $E_n$ to specific tasks, for example, to use $E_n$ to parametrize the positive part of $LG(n-1).$

\subsection{Positive compactifications of electrical networks} In this section we will define the compactification of $E_n$ obtained in \cite{L}, \cite{CGS} and \cite{BGKT}.

\begin{definition}
A {\itshape non-crossing partition} $\sigma$ is a set partition of $[\bar n]:=\{\bar1,\bar2,\ldots,\bar n\}$ such that if $[\bar n]$ is arranged on a circle 
the convex hulls of its blocks are pairwise disjoint.

Equivalently, $\sigma$ is non-crossing if there is no $i<j<k<l$ with $ik$ and $jl$ belonging to different components of $\sigma$.
\end{definition}
Non-crossing partitions can also be defined given a graph embedded into a disk.

\begin{definition} \label{groves, ncp}
A {\itshape grove} $F$ on $\Gamma$ is a spanning subforest, that is an acyclic subgraph that uses all vertices, such that each connected component $F_i\subset F$ contains boundary vertices. The {\itshape boundary partition} $\sigma(F)$ is the set partition of the set $\{\bar{1},\bar{2},\ldots,\bar{n}\}$ specifies which boundary vertices lie in the same connected component of $F$. Note that since $\Gamma$ is planar, $\sigma(F)$ must be a {\itshape non-crossing partition}, also called a {\itshape planar set partition}.

We will often write set partitions in the form $\sigma=(\bar{a},\bar{b},\bar{c}|\bar{d},\bar{e}|\bar{f},\bar{g}|\bar{h})$. Denote by $\mathcal{NC}_n$ the
set of non-crossing partitions on the set $\{\bar{1},\bar{2},\ldots,\bar{n}\}$.
Each non-crossing partition $\sigma$ on the set $\{\bar{1},\bar{2},\ldots,\bar{n}\}$ has a dual non-crossing partition on $\{\widetilde{1},\widetilde{2},\ldots,\widetilde{n}\}$
 where by convention  $\widetilde{i}$ lies between  $\bar{i}$ and $\overline{i+1}$ and $\widetilde{n}$ lies between $\overline{n}$ and $\overline{1}$. For example, $(\widetilde{1},\widetilde{3}|\widetilde{2}|\widetilde{4},\widetilde{5}|\widetilde{6})$ is dual to $(\bar{1},\bar{4},\bar{6}|\bar{2},\bar{3}|\bar{5})$.

 We will often consider the non-crossing partition of $[2n]$ defined by merging the initial partition $\sigma$ and it's dual $\widetilde\sigma$, where we will use an identification $\{1,2,\ldots,2n-1,2n\} =\{\overline{1},\widetilde{1},\overline{2},\widetilde{2},\ldots,\overline{n},\widetilde{n}\}$. We will denote this partition $(\sigma|\widetilde\sigma)$ (see Fig. \ref{non crossing partition and its dual}).
\end{definition}

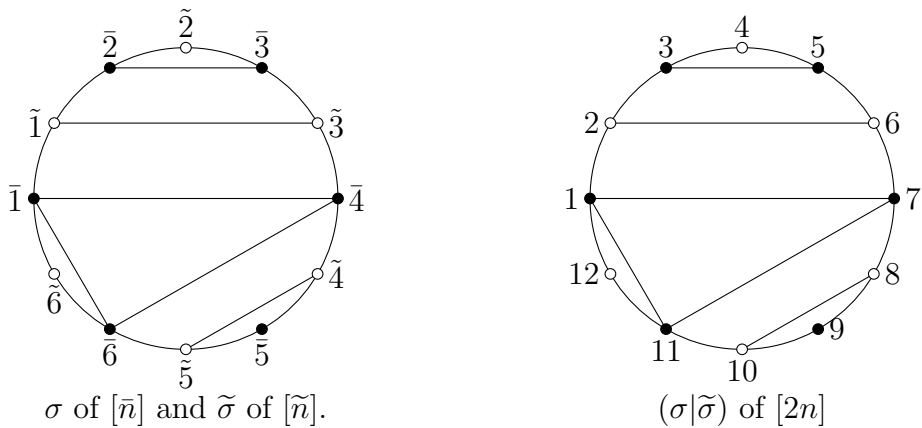
\begin{figure}[ht]
\centering
\begin{tikzpicture}
    \draw (0,0) circle (2);

    \draw (0:2) -- node [left] {}(180:2);
    \draw (30:2) -- node [left] {}(150:2);
    \draw (60:2) -- node [left] {}(120:2);
    \draw (0:2) -- node [left] {}(-120:2);
    \draw (-120:2) -- node [left] {}(180:2);
    \draw (-30:2) -- node [left] {}(-90:2);

    \filldraw[fill=black] (0:2) node [right] {$\bar4$} circle (2pt);
    \filldraw[fill=white] (30:2) node [right] {$\tilde3$} circle (2pt);
    \filldraw[fill=black] (60:2) node [above] {$\bar3$} circle (2pt);
    \filldraw[fill=white] (90:2) node [above] {$\tilde2$} circle (2pt);
    \filldraw[fill=black] (120:2) node [above] {$\bar2$} circle (2pt);
    \filldraw[fill=white] (150:2) node [left] {$\tilde1$} circle (2pt);
    \filldraw[fill=black] (180:2) node [left] {$\bar1$} circle (2pt);
    \filldraw[fill=white] (-30:2) node [right] {$\tilde4$} circle (2pt);
    \filldraw[fill=black] (-60:2) node [below] {$\bar5$} circle (2pt);
    \filldraw[fill=white] (-90:2) node [below] {$\tilde5$} circle (2pt);
    \filldraw[fill=black] (-120:2) node [below] {$\bar6$} circle (2pt);
    \filldraw[fill=white] (-150:2) node [below] {$\tilde6$} circle (2pt);

    \draw (0,-2.8) node {$\sigma$ of $[\bar n]$ and $\widetilde{\sigma}$ of $[\widetilde{n}]$.};
\end{tikzpicture} \phantom{aaaaaaaaaa}
\begin{tikzpicture}
    \draw (0,0) circle (2);

    \draw (0:2) -- node [left] {}(180:2);
    \draw (30:2) -- node [left] {}(150:2);
    \draw (60:2) -- node [left] {}(120:2);
    \draw (0:2) -- node [left] {}(-120:2);
    \draw (-120:2) -- node [left] {}(180:2);
    \draw (-30:2) -- node [left] {}(-90:2);

    \filldraw[fill=black] (0:2) node [right] {$7$} circle (2pt);
    \filldraw[fill=white] (30:2) node [right] {$6$} circle (2pt);
    \filldraw[fill=black] (60:2) node [above] {$5$} circle (2pt);
    \filldraw[fill=white] (90:2) node [above] {$4$} circle (2pt);
    \filldraw[fill=black] (120:2) node [above] {$3$} circle (2pt);
    \filldraw[fill=white] (150:2) node [left] {$2$} circle (2pt);
    \filldraw[fill=black] (180:2) node [left] {$1$} circle (2pt);
    \filldraw[fill=white] (-30:2) node [right] {$8$} circle (2pt);
    \filldraw[fill=black] (-60:2) node [right] {$9$} circle (2pt);
    \filldraw[fill=white] (-90:2) node [below] {$10$} circle (2pt);
    \filldraw[fill=black] (-120:2) node [below] {$11$} circle (2pt);
    \filldraw[fill=white] (-150:2) node [left] {$12$} circle (2pt);

    \draw (0,-2.8) node {$(\sigma|\widetilde{\sigma})$ of $[2n]$};
\end{tikzpicture}
    \caption{Non-crossing partition $\sigma=(\bar1,\bar4,\bar6|\bar2,\bar3|\bar5)$ with its dual non-crossing partition  $\widetilde{\sigma}=(\tilde1,\tilde3|\tilde2|\tilde4,\tilde5|\tilde6)$ and merged partition $(\sigma|\widetilde{\sigma})$.}
    \label{non crossing partition and its dual}
\end{figure}

\begin{definition}\label{def:hollow}
Let $S$ be a circle with $n$ boundary points labeled by $1,\ldots, n$. Let $\sigma\in\mathcal{NC}_n$, identifying the boundary points according to the parts of $\sigma$ gives a hollow cactus $S_{\sigma}$. It is a union of circles glued together at the identified points. The interior of $S_{\sigma}$, together with $S_{\sigma}$ itself is called a cactus. A cactus network is a weighted graph $\Gamma$ embedded into a cactus. In other words, we might think about cactus networks as the networks which are obtained by contracting the set of edges between the boundary vertices defined by $\sigma\in\mathcal{NC}_n$ which have infinite conductivity.
\end{definition}

The motivation to introduce cactus networks was the fact that the image of the set of cactus networks coincides with the closure of the set of electrical networks under the embedding into a certain projective space, see \cite[Section 4]{L} for more details.

Recall that we denote by $E_n$ the set of electrical networks with $n$ boundary vertices up to the  electrical transformations, see \cite[Section 2.4]{L}. We denote by $\overline{E}_n$ the corresponding set of cactus networks up to the  electrical transformations.

\begin{definition}
The {\itshape totally nonnegative Grassmannian} is the locus in the Grassmannian with the Plucker coordinates of the same sign.
\end{definition}

 We will now describe the embedding of the set of electrical networks into the totally nonnegative Grassmannian $Gr(n-1,2n)_{\ge 0}$ from \cite{L}.

\begin{definition}
For a non-crossing partition $\sigma$ we define the {\itshape grove measurement} related to $\sigma$ as follows
$$L_{\sigma(\Gamma)}:=\sum\limits_{\{F|\sigma(F)=\sigma\}}wt(F),$$
where the summation is over all groves with boundary partition $\sigma$, and $wt(F)$ is the product of the weights of the edges in $F$. By $L_{unc}$ we will denote the grove measurement of the type $L_{*|*|\ldots|*}$.
\end{definition}

\begin{definition} \label{def-ordcon}
We call an $(n-1)$-element subset $I\subset\{1,\ldots,2n\}$ {\itshape concordant} with a non-crossing partition $\sigma$ if each part of $\sigma$ and each part of the dual partition $\widetilde{\sigma}$ contains exactly one element not in $I$. In this situation we also say that $\sigma$ or $(\sigma,\widetilde{\sigma})$ is concordant with $I$.
\end{definition}

\begin{example}
Let $\sigma=(\bar{1},\bar{4},\bar{6}|\bar{2},\bar{3}|\bar{5})$ be such that $\widetilde{\sigma}=(\widetilde{1},\widetilde{3}|\widetilde{2}|\widetilde{4},\widetilde{5}|\widetilde{6})$. Then $\sigma$ is concordant with $\{2,5,7,8,11\}$ but not concordant with $\{2,5,7,8,12\}$, see Fig. \ref{concordance example}. 
Recall that we use an identification $\{1,2,\ldots,12\} =\{\overline{1},\widetilde{1},\overline{2},\widetilde{2},\ldots,\overline{6},\widetilde{6}\}$.
\end{example}

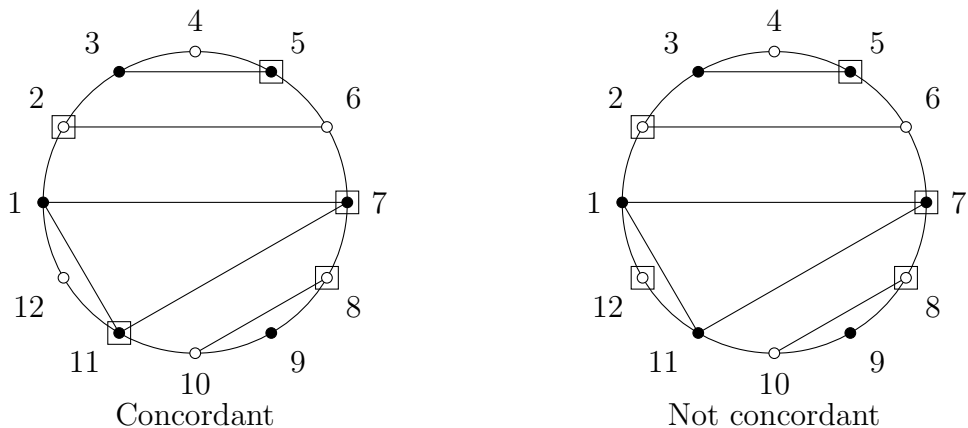
\begin{figure}[ht]
\centering
\begin{tikzpicture}
    \draw (0,0) circle (2);

    \draw (0:2) -- node [left] {}(180:2);
    \draw (30:2) -- node [left] {}(150:2);
    \draw (60:2) -- node [left] {}(120:2);
    \draw (0:2) -- node [left] {}(-120:2);
    \draw (-120:2) -- node [left] {}(180:2);
    \draw (-30:2) -- node [left] {}(-90:2);

    \filldraw[fill=black] (0:2) node [right=5pt] {$7$} circle (2pt);
    \filldraw[fill=white] (30:2) node [above right=3pt] {$6$} circle (2pt);
    \filldraw[fill=black] (60:2) node [above right=3pt] {$5$} circle (2pt);
    \filldraw[fill=white] (90:2) node [above=3.5pt] {$4$} circle (2pt);
    \filldraw[fill=black] (120:2) node [above left=3pt] {$3$} circle (2pt);
    \filldraw[fill=white] (150:2) node [above left=3pt] {$2$} circle (2pt);
    \filldraw[fill=black] (180:2) node [left=3.5pt] {$1$} circle (2pt);
    \filldraw[fill=white] (-30:2) node [below right=3pt] {$8$} circle (2pt);
    \filldraw[fill=black] (-60:2) node [below right=3pt] {$9$} circle (2pt);
    \filldraw[fill=white] (-90:2) node [below=3.5pt] {$10$} circle (2pt);
    \filldraw[fill=black] (-120:2) node [below left=3pt] {$11$} circle (2pt);
    \filldraw[fill=white] (-150:2) node [below left=3pt] {$12$} circle (2pt);

    \draw (0:2) +(-0.15,-0.15) rectangle +(0.15,0.15);
    \draw (-30:2) +(-0.15,-0.15) rectangle +(0.15,0.15);
    \draw (-120:2) +(-0.15,-0.15) rectangle +(0.15,0.15);
    \draw (150:2) +(-0.15,-0.15) rectangle +(0.15,0.15);
    \draw (60:2) +(-0.15,-0.15) rectangle +(0.15,0.15);

    \draw (0,-2.8) node {Concordant};
\end{tikzpicture} \phantom{aaaaaaaaaa}
\begin{tikzpicture}
    \draw (0,0) circle (2);

    \draw (0:2) -- node [left] {}(180:2);
    \draw (30:2) -- node [left] {}(150:2);
    \draw (60:2) -- node [left] {}(120:2);
    \draw (0:2) -- node [left] {}(-120:2);
    \draw (-120:2) -- node [left] {}(180:2);
    \draw (-30:2) -- node [left] {}(-90:2);

    \filldraw[fill=black] (0:2) node [right=5pt] {$7$} circle (2pt);
    \filldraw[fill=white] (30:2) node [above right=3pt] {$6$} circle (2pt);
    \filldraw[fill=black] (60:2) node [above right=3pt] {$5$} circle (2pt);
    \filldraw[fill=white] (90:2) node [above=3.5pt] {$4$} circle (2pt);
    \filldraw[fill=black] (120:2) node [above left=3pt] {$3$} circle (2pt);
    \filldraw[fill=white] (150:2) node [above left=3pt] {$2$} circle (2pt);
    \filldraw[fill=black] (180:2) node [left=3.5pt] {$1$} circle (2pt);
    \filldraw[fill=white] (-30:2) node [below right=3pt] {$8$} circle (2pt);
    \filldraw[fill=black] (-60:2) node [below right=3pt] {$9$} circle (2pt);
    \filldraw[fill=white] (-90:2) node [below=3.5pt] {$10$} circle (2pt);
    \filldraw[fill=black] (-120:2) node [below left=3pt] {$11$} circle (2pt);
    \filldraw[fill=white] (-150:2) node [below left=3pt] {$12$} circle (2pt);

    \draw (0:2) +(-0.15,-0.15) rectangle +(0.15,0.15);
    \draw (60:2) +(-0.15,-0.15) rectangle +(0.15,0.15);
    \draw (150:2) +(-0.15,-0.15) rectangle +(0.15,0.15);
    \draw (-150:2) +(-0.15,-0.15) rectangle +(0.15,0.15);
    \draw (-30:2) +(-0.15,-0.15) rectangle +(0.15,0.15);

    \draw (0,-2.8) node {Not concordant};
\end{tikzpicture}
    \caption{The non-crossing partition $\sigma=(\bar1,\bar4,\bar6|\bar2,\bar3|\bar5)$ is concordant with $\{2,5,7,8,11\}$, but not concordant with $\{2,5,7,8,12\}$}
    \label{concordance example}
\end{figure}

\begin{theorem} \textup{\cite[Theorem 5.10]{L}} \label{Emb}
Let $e\in \overline{E}_n$. Then the collection of boundary measurements 
$$\sum\limits_{(\sigma,I)}L_{\sigma},$$
here summation is over all $\sigma$, which are concordant with $I$, considered as a set of Plucker coordinates, defines a point in the totally non-neggative Grassmannian $Gr_{\geq 0}(n-1,2n)$.
\end{theorem}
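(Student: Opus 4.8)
Fix $e=e(\Gamma,\omega)\in\overline E_n$ and, for each $(n-1)$-subset $I\subset\{1,\dots,2n\}$, put $\Delta_I:=\sum_{\sigma}L_{\sigma(\Gamma)}$, the sum over all non-crossing partitions $\sigma$ concordant with $I$ in the sense of Definition \ref{def-ordcon}. Two things must be checked: that the vector $(\Delta_I)_I$ obeys the Plücker relations, so that it represents a point of $Gr(n-1,2n)$, and that this point is totally nonnegative. The second is immediate: each grove weight $wt(F)=\prod_{e\in F}w(e)$ is positive, hence every $L_\sigma\geq 0$ and every $\Delta_I\geq 0$, while any grove of $\Gamma$ contributes to some $\Delta_I$, so the $\Delta_I$ are not all zero and are of one sign. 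The whole content is therefore the Plücker relations, and the plan is to deduce them from Postnikov's theorem that the boundary measurement map of a planar bipartite network in the disk lands in the totally nonnegative Grassmannian, by exhibiting $(\Delta_I)_I$ as a positive rescaling of the Plücker vector of such a network.

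\textbf{Reduction to a planar bipartite network.} To $\Gamma$ I would attach, via the generalized Temperley's trick recalled in Section \ref{sec: appendix}, a planar bipartite graph $G$ in the disk with $2n$ boundary vertices identified with $\{\bar 1,\widetilde 1,\dots,\bar n,\widetilde n\}=\{1,\dots,2n\}$ as in Definition \ref{groves, ncp}: the $\bar i$ are the original boundary vertices of $\Gamma$ and the $\widetilde i$ are vertices placed in the boundary faces, and the edges of $G$ carry weights induced from the conductances $\omega$. By Postnikov's theorem $G$ determines a point of $Gr_{\geq0}(n-1,2n)$ whose Plücker coordinate $p_I$ is, up to one global normalization and signs fixed by the planar embedding, the weight generating function of the almost-perfect matchings of $G$ (equivalently, of the flows in $G$) whose set of matched boundary vertices is $I$. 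For a genuine electrical network this is the classical Temperley/Kenyon--Wilson dictionary; for a cactus network one either applies the generalized trick directly to the cactus-embedded graph, or deduces the result from the genuine case by letting the conductances of the pinched boundary edges tend to infinity and invoking that $Gr_{\geq0}(n-1,2n)$ is closed.

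\textbf{The combinatorial identification.} The core step is a weight-preserving bijection between groves $F$ of $\Gamma$, together with the complementary spanning forest they cut out in the dual graph, and the almost-perfect matchings of $G$ that contribute to some $p_I$. Under this bijection the connected components of $F$ determine which of the $\bar i$ are left unmatched, and the components of the dual forest determine which of the $\widetilde i$ are left unmatched; since $F$ and its dual forest are complementary in a planar graph, the two partitions obtained are exactly a dual pair $(\sigma,\widetilde\sigma)$, and the matching has matched boundary set $I$ precisely when $\sigma(F)=\sigma$ is concordant with $I$. Because $G$ is planar, all matchings feeding into a fixed $p_I$ occur with the same sign, so there is no cancellation; summing $wt(F)$ over all groves with $\sigma(F)=\sigma$ yields $L_\sigma$, and summing further over all $\sigma$ concordant with $I$ reproduces $\Delta_I$ up to the global normalization. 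Hence $(\Delta_I)_I$ is a positive multiple of $(p_I)_I$, so it satisfies the Plücker relations and, being nonnegative and not all zero, defines a point of $Gr_{\geq0}(n-1,2n)$.

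\textbf{Main obstacle.} The delicate part is this bijection and its sign analysis: one must pin down exactly how a spanning forest of $\Gamma$, once one also records the set of face vertices it absorbs, becomes an almost-perfect matching of $G$; verify bijectivity onto the matchings that contribute to the $p_I$; check that the Temperley weights agree with the grove weights up to a single global factor; and, most importantly, confirm that planarity forces all matchings summed into a given $p_I$ to carry the same sign, so that $p_I$ and $\Delta_I$ coincide up to a positive scalar rather than merely up to sign. A secondary technical point is making the passage from electrical to cactus networks rigorous, namely checking that the projectivized Plücker vectors converge to the vector assembled from the cactus grove measurements; this is close to the statement, quoted from \cite[Section 4]{L}, that the cactus networks fill out the closure of the image. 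As an alternative one could attempt to verify the three-term Plücker relations for $(\Delta_I)_I$ directly from quadratic identities among grove measurements, but organizing those identities seems no easier than the bijection above.
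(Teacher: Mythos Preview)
The paper does not itself prove Theorem~\ref{Emb}; it is quoted from \cite[Theorem~5.10]{L} as background material without proof. Your proposal is therefore not to be compared against a proof in this paper, but against the original argument in \cite{L}. That said, your outline is essentially the route taken there: apply the generalized Temperley trick to pass from $\Gamma$ to a planar bipartite network $N(\Gamma,\omega)$, identify the Pl\"ucker coordinate $\Delta_I^\bullet$ with the dimer partition function of almost perfect matchings on $N(\Gamma,\omega)$ with boundary $I$ (this is exactly what the present paper records in Remark~\ref{Plucker coord as dimer partition function}), and then invoke Postnikov's theorem (Theorem~\ref{m-pos} here) to land in $Gr_{\geq 0}(n-1,2n)$. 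The grove--dimer bijection you describe is the Temperley/Kenyon--Wilson correspondence, and the extension to cactus networks is handled in \cite[Section~4]{L} by the closure argument you mention.

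One small correction to your write-up: your claim that ``any grove of $\Gamma$ contributes to some $\Delta_I$'' is what ensures the Pl\"ucker vector is nonzero, but you should note explicitly that every non-crossing partition $\sigma$ is concordant with at least one $(n-1)$-subset $I$; this is a combinatorial fact (immediate from Lemma~\ref{number of parts}, since $|\sigma|+|\widetilde\sigma|=n+1$ allows one to choose exactly one element from each part to leave out). Otherwise your sketch faithfully captures the argument, including the honest identification of the bijection and its sign coherence as the technical heart.
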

We will denote the sum $\sum\limits_{(\sigma,I)}L_{\sigma}$ as $\Delta_I^\bullet$.
\begin{example} \label{bulcon-ex}
  By Definition \ref{def-ordcon} for any $n$  the following holds:
  \begin{itemize}
      \item $\Delta_{246\dots2n-2}^\bullet=L_{unc};$
      \item $\Delta_{135\dots 2n-3}^\bullet=L_{123\dots n}.$
  \end{itemize} 
\end{example}
\begin{remark} \label{Plucker coord as dimer partition function}
    The variable $\Delta_I^\bullet$ has an independent combinatorial definition as the dimer partition function of all almost perfect matchings (see \cite[Section 3.2]{L} and \cite[Section 4.1]{L3}) on a bipartite network $N(\Gamma, \omega)$ (see Section \ref{pbn-ap}) with the boundary $I.$ 
\end{remark}

\begin{definition}\label{def:concordance}
Let $I$ be a $(n-1)$-element subset of $\{1,\ldots,2n\}$ and $\sigma\in \mathcal{NC}_n$

Define a $(\binom{2n}{n-1}\times C_n)$-matrix $A_n=(a_{I\sigma})$, where $I$ is a $(n-1)$-element subset of $\{1,\ldots,2n\}$, $\sigma\in \mathcal{NC}_n$ and $C_n$ is the $n$-th Catalan number as follows:

$$a_{I\sigma}=\begin{cases}
1, \text{\ if\ }  \sigma\ \text{is concordant with}\ I; \\
0, \text{\ otherwise.}
\end{cases}$$

Let the {\itshape concordance space} $H$ be the column space of $A_n$, it is a subspace of $\bigwedge^{n-1}\mathbb{R}^{2n}$ and $\mathrm{dim}\, H=C_n$. Denote by $\mathbb PH$ the projectivization of $H$.

The standard basis vectors of $\mathbb R^{2n}$ we denote by $e_i$. For $I=(i_1,i_2,\ldots,i_{n-1})$ we denote by $e_I$ a standard vector $e_{i_1}\wedge e_{i_2}\wedge\ldots\wedge e_{i_{n-1}}$. Thus vectors of the form $e_I=e_{i_1}\wedge e_{i_2}\wedge \ldots \wedge e_{i_{n-1}}$ where  $i_1<i_2<\ldots<i_{n-1}$ form a basis of the space $\bigwedge^{n-1}\mathbb{R}^{2n}$; and vectors $w_\sigma=\sum\limits_I a_{I \sigma}e_I\in\bigwedge^{n-1}\mathbb{R}^{2n}$ form a basis of the space $H$, we call them {\itshape concordance vectors}.
\end{definition}

\begin{theorem} \textup{\cite[Theorem 5.8]{L}} \label{Image of the Lam's embedding}
The image of the set $\overline{E}_n$ in $Gr(n-1,2n)_{\geq 0}$ is the intersection: 
$$Gr(n-1,2n)_{\geq 0}\cap\mathbb P H.$$
\end{theorem}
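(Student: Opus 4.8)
The inclusion ``$\subseteq$'' is almost immediate. For $e(\Gamma,\omega)\in\overline E_n$, Theorem~\ref{Emb} already places the point $(\Delta^\bullet_I)_I$ in $Gr_{\ge0}(n-1,2n)$, and regrouping the defining sum,
\[
\sum_I \Delta_I^\bullet\, e_I \;=\; \sum_I\Big(\sum_{\sigma\text{ concordant with }I} L_{\sigma(\Gamma)}\Big)e_I \;=\; \sum_\sigma L_{\sigma(\Gamma)}\, w_\sigma ,
\]
shows that its vector of Plucker coordinates is a linear combination of the concordance vectors $w_\sigma$, hence lies in $H$; so the image point lies in $Gr_{\ge0}(n-1,2n)\cap\mathbb P H$. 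The real content is therefore surjectivity onto the whole intersection, and the plan is to deduce it from a compatibility of stratifications on the two sides.

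On the electrical side, $\overline E_n$ is stratified into cells indexed by the combinatorial type of a critical (cactus) representative --- its medial graph, or lensless wiring diagram in the sense of \cite{CIM} --- each cell being a positive orthant $\mathbb R_{>0}^{d}$ coordinatized by the edge conductances, with closure relations generated by the elementary degenerations (deleting or contracting an edge, or passing from a network to a cactus network). On the Grassmannian side, one uses Postnikov's cell decomposition of $Gr_{\ge0}(n-1,2n)$ by plabic graphs. The key geometric point to establish is that $Gr_{\ge0}(n-1,2n)\cap\mathbb P H$ is exactly the union of those Postnikov cells whose closure meets $\mathbb P H$, and that each such cell lies entirely inside $\mathbb P H$, so that the intersection is itself a subcomplex of $Gr_{\ge0}(n-1,2n)$.

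The matching of the two stratifications would then be carried out in three steps. First, by Remark~\ref{Plucker coord as dimer partition function} the coordinates $\Delta_I^\bullet$ attached to $e$ are dimer partition functions on the bipartite graph produced from $(\Gamma,\omega)$ by Temperley's trick, so Postnikov's boundary-measurement theory puts the image of the cell of $e$ inside a single cell $C(e)$; the map from conductances to $C(e)$ is injective, because the grove measurements $L_\sigma$ are recovered up to a common scalar from the $\Delta_I^\bullet$ (the concordance matrix $A_n$ has full column rank $C_n$ by Definition~\ref{def:concordance}), and the conductances of a critical network are in turn recovered from its grove measurements. Second, a dimension count --- for which the symplectic model $Gr(n-1,2n)\cap\mathbb P H\cong LG(n-1)$ of the later sections is convenient --- shows that $\dim C(e)$ equals the number of edges of a critical representative of $e$, so the embedding restricts to an isomorphism of the cell of $e$ onto $C(e)$. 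Third, $e\mapsto C(e)$ is an order isomorphism from the poset of electrical cells onto the poset of Postnikov cells contained in $\mathbb P H$. Granting these, the images of the electrical cells are precisely the Postnikov cells lying in $\mathbb P H$, and these union to $\bigcup_{C\subseteq\mathbb P H}C=Gr_{\ge0}(n-1,2n)\cap\mathbb P H$; compactness of $\overline E_n$ and continuity of the embedding give the same conclusion as a cross-check, since they force the image to be closed.

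The step I expect to be the genuine obstacle is the combinatorial identification in the third step: describing, purely in terms of decorated permutations or plabic graphs, which Postnikov cells are cut out by the linear equations defining $H$, and proving that these biject rank-preservingly with the cells of $\overline E_n$. One could try to bypass the cell machinery altogether by arguing directly that nonnegativity of all $\Delta_I^\bullet$ together with the membership $p\in H$ forces the reconstructed grove measurements $L_\sigma\ge0$ to satisfy the inequalities characterizing the response data of cactus networks (in the spirit of Kenyon--Wilson), and then invoking a realizability theorem for networks; but extracting those inequalities from the Plucker positivity is essentially the same difficulty, so in either route the crux is precisely the compatibility of the two positive structures.
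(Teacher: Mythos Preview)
The paper does not contain a proof of this theorem: it is stated as background, with the citation \cite[Theorem 5.8]{L}, and is used only as input for later constructions. So there is no ``paper's own proof'' to compare against; the actual proof lives in Lam's paper.

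That said, your outline is very much in the spirit of Lam's argument. The easy inclusion you give is correct and is exactly how it goes: the regrouping $\sum_I\Delta_I^\bullet e_I=\sum_\sigma L_\sigma w_\sigma$ immediately places the image point in $\mathbb P H$. For surjectivity, Lam does precisely what you propose --- he matches the cell structure of $\overline E_n$ (indexed by medial strand data / matchings) with a subposet of the positroid stratification of $Gr_{\ge0}(n-1,2n)$, and shows that the cells meeting $\mathbb P H$ are exactly those arising from electrical networks via the Temperley trick. Your three-step plan (Temperley $\Rightarrow$ plabic $\Rightarrow$ single Postnikov cell; dimension count; order isomorphism of posets) is an accurate summary of that strategy.

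Where you are honest about the gap is exactly where the work is: the combinatorial bijection between electrical cells and the relevant positroid cells (your ``third step''). In Lam's paper this is the content of his Sections~5--6, in particular the characterization of the bounded affine permutations / decorated permutations that can arise from the Temperley bipartite graphs, together with the result that for such cells the defining linear relations of $H$ are automatically satisfied by all Plucker coordinates. One ingredient you did not mention but which is important in Lam's argument is the use of the cyclic symmetry (the shift $s$ in Definition~\ref{Lam_group}) and the action of the generators $u_i(t)$ to move between cells; this reduces the verification to top-dimensional cells and boundary strata. Your alternative route via Kenyon--Wilson-type inequalities would indeed run into the same obstacle, as you anticipated. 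As written, your proposal is a correct and well-informed sketch, but it is not a proof: the crux (which you correctly isolate) is left open.
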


Here we identify $Gr(n-1,2n)_{\geq 0}$ and its Plucker embedding. It was proven by Chepuri-George-Speyer \cite[Theorem 1.5]{CGS} and Bychkov-Gorbounov-Kazakov-Talalaev \cite[Theorem 3.3]{BGKT} that in fact the image of $\overline{E}_n$ under the Lam's embedding lies in the totally nonnegative Lagrangian Grassmannian. Now we are going to describe the embedding $\overline{E}_n$ into the totally nonnegative Grassmannian $Gr(n-1,2n)_{\geq 0}$ following \cite{BGKT}. This method allows to refine Theorem \ref{Emb} since it proves that the image of $\overline{E}_n$ lies in the Lagrangian Grassmannian $LG(n-1,V)$ for a certain $2n-2$-dimensional subspace $V$. We will describe the second approach from \cite{CGS} in Section \ref{sec:main1}.
\begin{definition} \label{Omega_n}
Let $e\in E_n$ and denote by $M_R(e)=(x_{ij})$ its response matrix. Define a point in $Gr(n-1,2n)$ associated to $e$ as the row space of the matrix:
$$\Omega_n(e)=\left(\begin{matrix}
x_{11} & 1 & -x_{12} & 0 & x_{13} & 0 & \ldots & (-1)^n \\
-x_{21} & 1 & x_{22} & 1 & -x_{23} & 0 & \ldots & 0 \\
x_{31} & 0 & -x_{32} & 1 & x_{33} & 1 & \ldots & 0 \\
\vdots & \vdots & \vdots & \vdots & \vdots & \vdots & \ddots & \vdots &  
\end{matrix}\right).$$
\end{definition}

Given a matrix $M$ of size $n-1\times 2n$ we will denote by $\Delta_I(M)$ the minor of the matrix $M$ corresponding to the set of columns indexed by the set $I=\{i_1,\ldots,i_{n-1}\}$.

\begin{theorem} \textup{\cite[Theorem 3.3]{BGKT}} \label{Theorem about Omega_n}
The row space of $\Omega_n(e)$ defines the same point in the $Gr_{\geq 0}(n-1,2n)$ as the point defined by $e(\Gamma,\omega)$ under the Lam's embedding. More precisely, the set of maximal minors of the matrix $\Omega'_n(e)$ obtained from $\Omega_n(e)$ by deleting the last row is related to the choice of Plucker coordinates $\Delta_I^\bullet$ in Lam's embedding as $\Delta_I(\Omega_n(e))=\frac{\Delta_I^\bullet}{L_{unc}}$.
\end{theorem}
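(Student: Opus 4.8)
The plan is to compare the two parametrizations of the Lam point directly through their Plücker coordinates. First I would recall from Theorem~\ref{Emb} that the Lam embedding of $e\in\overline{E}_n$ has Plücker coordinates $\Delta_I^\bullet=\sum_{\{\sigma\,:\,\sigma\text{ concordant with }I\}}L_\sigma$, and that by Example~\ref{bulcon-ex} the coordinate $\Delta_{246\cdots 2n-2}^\bullet$ equals $L_{unc}$. Since a point of the Grassmannian is only defined up to a common scalar, it suffices to show that the maximal minors $\Delta_I(\Omega_n(e))$ of the matrix from Definition~\ref{Omega_n} satisfy $\Delta_I(\Omega_n(e))=\Delta_I^\bullet/L_{unc}$ for every $(n-1)$-element subset $I$; in particular the normalization is fixed by checking $\Delta_{246\cdots 2n-2}(\Omega_n(e))=1$, which is immediate because the corresponding submatrix of $\Omega_n(e)$ is the identity (the even columns $2,4,\dots,2n-2$ carry the standard basis vectors $e_1,\dots,e_{n-1}$).

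Next I would set up the minor computation. Write $\Omega_n'(e)$ for the $(n-1)\times 2n$ matrix obtained by deleting the last row; note that deleting the last row is harmless for the minors indexed by $I\subset\{1,\dots,2n\}$ of size $n-1$ once we know the claim holds, but care is needed since $\Omega_n'(e)$ still has $2n$ columns and rank $n-1$. The entries are arranged so that the odd columns $1,3,\dots,2n-1$ contain (up to signs $(-1)^{i+j}$) the entries of the response matrix $M_R=(x_{ij})$, while the even columns $2,4,\dots,2n$ contain a ``staircase'' of $0$'s and $1$'s. Expanding a maximal minor $\Delta_I(\Omega_n'(e))$ along the even columns it contains, one reduces to a sum of products of a sign, some $1$'s from the staircase, and a minor of the signed response matrix built from the odd columns in $I$. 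The combinatorial heart is to identify this expansion with the sum $\sum_\sigma L_\sigma$ over non-crossing partitions $\sigma$ concordant with $I$, divided by $L_{unc}$: each non-crossing partition $\sigma$ concordant with $I$ should correspond to exactly one surviving term, and the minor of the signed response matrix occurring in that term should equal $L_\sigma/L_{unc}$ by the classical formula expressing grove measurements as (signed) minors of the response matrix — this is the generalized matrix-tree / ``grove = minor'' theorem of Curtis--Ingerman--Morrow and its cactus extension.

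The main obstacle, and the bulk of the work, is the bookkeeping of signs and the matching between index sets. Concretely: (i) the signs $(-1)^{i+j}$ attached to the $x_{ij}$ in $\Omega_n(e)$, together with the sign from the Laplace expansion along the even columns and the sign from reordering columns into increasing order, must all cancel to leave the positive combination $\sum_\sigma L_\sigma$; getting this right is what forces the precise alternating-sign pattern in Definition~\ref{Omega_n}. (ii) One must check the bijection ``surviving terms of the Laplace expansion $\leftrightarrow$ non-crossing partitions concordant with $I$'': given $I$, the even columns not in $I$ single out which rows of $M_R$ are used and how the staircase $1$'s pin down a permutation, and one needs that the resulting minor of $M_R$ is (up to sign and the factor $L_{unc}$) the grove measurement $L_\sigma$ for the unique $\sigma$ concordant with $I$ that can arise, and is zero otherwise. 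A clean way to organize this is to first prove it for the generic (critical) electrical network, where $M_R$ has rank $n-1$ and the grove-to-minor dictionary is literally the Curtis--Ingerman--Morrow theorem, and then extend to all of $\overline{E}_n$ by continuity/Zariski density, since both sides of $\Delta_I(\Omega_n(e))=\Delta_I^\bullet/L_{unc}$ are polynomial (after clearing $L_{unc}$) in the data of the cactus network. I would also double-check the two base cases of Example~\ref{bulcon-ex} by hand — $\Delta_{246\cdots}^\bullet=L_{unc}$ and $\Delta_{135\cdots}^\bullet=L_{1\cdots n}$ — against the identity submatrix and the ``all-odd-columns'' submatrix of $\Omega_n(e)$, as these fix the normalization and the orientation of the whole argument.
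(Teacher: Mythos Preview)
This theorem is not proved in the present paper; it is quoted from \cite{BGKT}. The approach there (summarized in Section~\ref{th-comb} when the parallel result for the dual Temperley trick is established) does \emph{not} proceed by expanding minors of $\Omega_n(e)$ and matching them against concordance sums. Instead one applies the generalized Temperley trick to obtain a bipartite network $N(\Gamma,\omega)$, passes to a Postnikov network with a fixed perfect orientation, and identifies the extended boundary measurement matrix $M_B$ (row by row) with $\Omega_n'(e)$. The equalities $\Delta_I(M_B)=\Delta_I^\bullet/L_{unc}$ then come for free from Lam's dimer interpretation of $\Delta_I^\bullet$ (Remark~\ref{Plucker coord as dimer partition function}) together with Postnikov's Theorem~\ref{m-pos}; the only entrywise computation needed is that the boundary measurements $a_{i\to j}$ are the response-matrix entries, which is \cite[Proposition~2.8]{KW 2011}. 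The sign bookkeeping is absorbed into the perfect-orientation/winding formalism rather than done by hand.

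Your approach is genuinely different, and it has a gap. You write that a surviving term of the Laplace expansion should equal ``$L_\sigma/L_{unc}$ for the unique $\sigma$ concordant with $I$ that can arise''. But for a given $I$ there are typically many non-crossing partitions $\sigma$ concordant with $I$, and a single minor of $M_R$ is not a single grove ratio $L_\sigma/L_{unc}$ but a \emph{sum} of such ratios over a family of boundary partitions --- this is exactly the content of the Kenyon--Wilson grove formulas, not the classical matrix-tree theorem. What actually has to be shown is that the particular signed minor of $M_R$ picked out by your Laplace expansion coincides with $\sum_{\sigma\text{ concordant with }I} L_\sigma/L_{unc}$; that identity is essentially Lam's Theorem~\ref{Emb} rephrased, and proving it directly from CIM/Kenyon--Wilson is the hard part you have not supplied. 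The Postnikov-network route avoids this by packaging the combinatorics into the dimer/boundary-measurement correspondence once and for all. If you want to pursue your direct route, the correct reference to invoke at the key step is \cite[Proposition~2.8]{KW 2011} (together with a careful identification of which circular minor of $M_R$ the Laplace expansion produces), not the bare matrix-tree theorem.
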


 Now define the subspace $V$ and the symplectic form $\Lambda$ which will define the Lagrangian subspace for electrical networks. 
$$V=\{v\in \mathbb R^{2n}|\sum\limits_{i=1}^{n}(-1)^iv_{2i}=0,\ \sum\limits_{i=1}^n(-1)^iv_{2i-1}=0\}.$$
Fix a basis for the subspace $V$:
\begin{equation} \label{Basis of V}
v_1=(1,0,1,0,\ldots,0,0,0),\ v_2=(0,1,0,1,\ldots,0,0,0),\ldots,v_{2n-2}=(0,0,0,0,\ldots,1,0,1).
\end{equation}

To make our notations consistent with a combinatorics of the points in Grassmannian corresponding to electrical networks we will use the following definition of the symplectic group:
$$Sp(2n,\mathbb R)=\{M\in Mat_{2n\times 2n}(\mathbb R): M\Lambda M^T=\Lambda\}$$
for any symplectic form $\Lambda$.
It means that we take the symplectic group for the form $\Lambda^{-1}$ in the sense of the standard definition.

\begin{lemma} \textup{\cite[Lemma 4.2]{BGKT}} \label{Rows in V}
All the rows of the matrix $\Omega_n(e)$ belong to the subspace $V$. 
\end{lemma}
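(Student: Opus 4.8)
The plan is to check directly from the definition of $V$ that each row of $\Omega_n(e)$ satisfies the two defining linear equations
\[
\sum_{i=1}^{n}(-1)^i v_{2i}=0,\qquad \sum_{i=1}^{n}(-1)^i v_{2i-1}=0 .
\]
So I would label the $k$-th row of $\Omega_n(e)$ and read off its entries in the odd positions $1,3,5,\dots,2n-1$ and in the even positions $2,4,6,\dots,2n$ separately, then evaluate the two alternating sums.

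First, consider the even-indexed entries. From the displayed matrix $\Omega_n(e)$ the even columns carry no response-matrix data: row $k$ has entries in the even positions that are (up to the pattern visible in the first three rows) a single block $\dots,0,1,1,0,\dots$, i.e. $1$'s in the two positions $2k-2$ and $2k$ (with the obvious truncation for the first and last rows, where the pattern wraps with the sign $(-1)^n$ in the top row). Plugging this into $\sum_i(-1)^i v_{2i}$, the two nonzero contributions come from consecutive indices $i=k-1$ and $i=k$, whose signs $(-1)^{k-1}$ and $(-1)^{k}$ are opposite, so they cancel; the boundary rows cancel for the same reason once the sign $(-1)^n$ in the corner is taken into account. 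Hence every row lies in the hyperplane $\sum_i(-1)^i v_{2i}=0$.

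Second, consider the odd-indexed entries of row $k$: these are $\pm x_{k,j}$ for $j=1,\dots,n$, sitting in columns $1,3,\dots,2n-1$, with the chessboard sign $(-1)^{k+j}$ read off from the matrix (the diagonal entries $x_{kk}$ appear with a fixed sign, off-diagonal $x_{kj}$ alternate). The alternating sum $\sum_{i=1}^n(-1)^i v_{2i-1}$ over row $k$ then becomes, up to an overall sign, $\sum_{j=1}^n x_{k,j}$, i.e. the $k$-th row sum of the response matrix $M_R(e)$. This is exactly zero: the response matrix of an electrical network annihilates the all-ones vector (Kirchhoff's law / the fact that constant voltages produce no current, as recalled in the text). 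Therefore every row of $\Omega_n(e)$ also lies in the second hyperplane, and hence in $V$.

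The only genuinely delicate point is bookkeeping: one must pin down the exact sign pattern of $\Omega_n(e)$ — in particular verifying that the alternating signs $(-1)^i$ built into the definition of $V$ match the chessboard signs $(-1)^{k+j}$ in the odd columns, so that the alternating column sum really collapses to the plain row sum $\sum_j x_{kj}$ rather than an alternating one — and to handle the first and last rows, where the band of $1$'s in the even columns is truncated and the corner entry $(-1)^n$ enters. Once the indexing conventions of Definition \ref{Omega_n} are fixed, both verifications are a one-line computation, the second resting entirely on $M_R(e)\mathbf{1}=0$; I expect this sign-matching to be the main (and only) obstacle.
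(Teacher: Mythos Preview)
Your proposal is correct, and it is exactly the natural direct verification: check the two defining linear equations of $V$ row by row, the even-column condition reducing to two consecutive $1$'s (or the wrap-around pair $1,(-1)^n$) cancelling in the alternating sum, and the odd-column condition collapsing to $(-1)^k\sum_j x_{kj}=0$ by the Kirchhoff identity $M_R(e)\mathbf{1}=0$. The present paper does not reprove the lemma---it is quoted from \cite[Lemma 4.2]{BGKT}---but your argument is precisely the kind of computation carried out there, so there is no alternative approach to compare; the only content, as you note, is matching the chessboard sign $(-1)^{k+j}$ in column $2j-1$ against the weight $(-1)^j$ in the definition of $V$, which indeed produces the overall factor $(-1)^k$ and the plain row sum.
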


Expanding the rows of the matrix $\Omega_n(e)$ in the basis (\ref{Basis of V}) we obtain the matrix $\widetilde{\Omega}_n(e)=\Omega_n(e)B_n^{-1}$, where $B_n$ is the matrix whose rows are vectors $v_i$. This parameterization was extended to the cactus networks as well, see \cite[Section 3.2]{BGKT}.

Fix  the symplectic form on $V$
\begin{equation} \label{8}
\Lambda_{2n-2} = \left(\begin{array}{cccccc}
\phantom{\vdots}0 & 1 & 0 & \cdots &  \cdots & 0   \\
\phantom{\vdots}-1 & 0 & -1 & 0 & & \vdots   \\
\phantom{\vdots}0 & 1 & 0 & 1 & \ddots & \vdots  \\
\vdots & \ddots  & \ddots  & \ddots & \ddots & 0   \\
 \vdots &    & \ddots &  1 & 0 & 1  \\
\phantom{\vdots}0 &  \cdots & \cdots  & 0 &  -1 & 0
\end{array}\right),
\end{equation}
i.e. $\Lambda_{2n-2}$ is a $2n-2\times 2n-2 $ matrix.

\begin{theorem} \label{Embedding into LG} \textup{\cite[Theorem 4.4]{BGKT}}
For $e\in \overline{E}_n$ the matrix $\widetilde{\Omega}_n(e)$ defines a point of $LG_{\geq 0}(n-1,V)$. In other words, the following identity holds:
$$\widetilde{\Omega}_n(e)\Lambda_{2n-2}\widetilde{\Omega}_n(e)^T=0.$$
\end{theorem}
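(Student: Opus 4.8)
The plan is to lift the quadratic identity to the ambient space $\R^{2n}$, where it becomes an assertion about the matrix $\Omega_n(e)$ together with a fixed skew form on $\R^{2n}$, and then to reduce that assertion to the symmetry of the response matrix. First I would get rid of the change of basis: by Lemma~\ref{Rows in V} every row of $\Omega_n(e)$ lies in $V$, and $\widetilde{\Omega}_n(e)=\Omega_n(e)B_n^{-1}$ is by construction the matrix of coordinates of those rows in the basis~(\ref{Basis of V}). A bilinear form on a subspace always extends to the whole space, so there is a skew-symmetric $2n\times 2n$ matrix $\widehat{\Lambda}$ whose associated form restricts on $V$ to the push-forward of $\Lambda_{2n-2}$ along $a\mapsto aB_n$; moreover the product $\Omega_n(e)\,\widehat{\Lambda}\,\Omega_n(e)^T$ does not depend on the choice of such $\widehat{\Lambda}$, precisely because the rows of $\Omega_n(e)$ lie in $V$. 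Hence
\[
\widetilde{\Omega}_n(e)\,\Lambda_{2n-2}\,\widetilde{\Omega}_n(e)^T=\Omega_n(e)\,\widehat{\Lambda}\,\Omega_n(e)^T ,
\]
and it is enough to show the right-hand side vanishes for one convenient $\widehat{\Lambda}$.

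To pin down such a $\widehat{\Lambda}$, write $v_i=e_i+e_{i+2}$ and note that $\Lambda_{2n-2}$ is the bidiagonal matrix with $(\Lambda_{2n-2})_{pq}=(-1)^{p+1}$ when $|p-q|=1$ and $0$ otherwise; in particular its nonzero entries join an odd index to an even one, so $\widehat{\Lambda}$ may be sought supported on the odd--even block of $\R^{2n}$. A brief telescoping in the exponents shows that the ``staircase'' choice
\[
\widehat{\Lambda}_{2\alpha-1,\,2\beta}=(-1)^{\beta-\alpha}\ \ (\alpha\le\beta),\qquad
\widehat{\Lambda}_{2\beta,\,2\alpha-1}=-(-1)^{\beta-\alpha}\ \ (\alpha\le\beta),
\]
with all remaining entries $0$, satisfies $v_i\widehat{\Lambda}v_j^T=(\Lambda_{2n-2})_{ij}$ for all $1\le i,j\le 2n-2$, so it is admissible.

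With this $\widehat{\Lambda}$ in hand the identity reduces to $r_a\widehat{\Lambda}\,r_b^{\,T}=0$ for all pairs of rows $r_a,r_b$ of $\Omega_n(e)$ with $1\le a,b\le n-1$; the diagonal cases $a=b$ are automatic since $\widehat{\Lambda}$ is skew-symmetric. Because $\widehat{\Lambda}$ lives on the odd--even block, $r_a\widehat{\Lambda}r_b^{\,T}$ has no term quadratic in the $x_{ij}$: it is a combination of signed partial sums $\sum_{\alpha\le\beta}(-1)^{\beta-\alpha}(r_a)_{2\alpha-1}$ of the odd entries $(r_a)_{2\alpha-1}=(-1)^{\alpha+a}x_{a\alpha}$ of $r_a$, weighted by the two $\pm1$'s sitting in the even columns of $r_b$ (and symmetrically with $a$ and $b$ interchanged). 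The point is that $(-1)^{\beta-\alpha}(-1)^{\alpha+a}=(-1)^{\beta+a}$ is independent of $\alpha$, so each such partial sum reduces to a partial row-sum of $M_R(e)$ with a uniform sign; since the two $\pm1$'s of a row $r_a$ with $a\ge2$ occupy the consecutive even columns $2(a-1)$ and $2a$, the difference of the two resulting partial row-sums collapses to a single entry, and I expect to land on $r_a\widehat{\Lambda}\,r_b^{\,T}=(-1)^{a+b}(x_{ab}-x_{ba})=0$ by symmetry of $M_R(e)$. The first row needs its own look, because its even part carries the ``seam'' entry $(-1)^n$ in column $2n$ in place of a $1$: pairing that entry through $\widehat{\Lambda}$ with the odd part of $r_b$ produces the full alternating sum $(-1)^n\sum_{\alpha=1}^{n}(-1)^{n-\alpha}(r_b)_{2\alpha-1}=(-1)^{b}\sum_{\alpha=1}^{n}x_{b\alpha}$, which vanishes since the rows of the response matrix sum to zero, and the remaining terms collapse as before to $(-1)^{b+1}(x_{1b}-x_{b1})=0$. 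This settles $e\in E_n$; for $e\in\overline E_n$ I would invoke continuity, since $e\mapsto\widetilde{\Omega}_n(e)$ extends to cactus networks and $E_n$ is dense in $\overline E_n$, so the polynomial identity $\widetilde{\Omega}_n(e)\Lambda_{2n-2}\widetilde{\Omega}_n(e)^T=0$ persists on the closure, while total nonnegativity of the Plücker coordinates is already provided by Theorems~\ref{Emb} and~\ref{Theorem about Omega_n}.

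The hard part is purely bookkeeping rather than conceptual: one must produce an explicit admissible $\widehat{\Lambda}$ (that is, solve the form-extension problem in closed form) and then keep three competing sign patterns straight --- the factor $(-1)^{\alpha+a}$ in the odd entries of $\Omega_n(e)$, the positions of the two $\pm1$'s in its even entries, and the $(-1)^n$ seam in the last column of the first row --- so that the ``telescoping in the exponents'' really is an exact cancellation. It seems cleanest to treat the cases $a=b$ (automatic), $a,b\ge2$, and $a=1$ separately, because the overlap pattern of the two $\pm1$-columns is genuinely different in each.
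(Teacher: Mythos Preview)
Your approach is correct and is essentially the strategy of the cited proof in \cite[Theorem~4.4]{BGKT}: lift the form $\Lambda_{2n-2}$ from $V$ to a skew form on $\mathbb R^{2n}$, then verify $\Omega_n(e)\,\widehat\Lambda\,\Omega_n(e)^T=0$ by a direct computation that reduces to $x_{ab}=x_{ba}$ and $\sum_\alpha x_{b\alpha}=0$. Note that the present paper does not give its own proof of this statement; the only trace is Lemma~\ref{Omega is isotropic under Lambda}, whose one-line argument records that in \cite{BGKT} the lift is the concrete matrix $\overline\Lambda_{2n}=\Lambda_{2n}^{-1}=T_{2n}\lambda_{2n}T_{2n}^T$. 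Your ``staircase'' lift $\widehat\Lambda$ is a different but equally legitimate extension (any two such extensions agree on $V$, which is all that matters since the rows of $\Omega_n(e)$ lie in $V$ by Lemma~\ref{Rows in V}); your choice has the advantage that the odd--even block structure makes the collapse to $(-1)^{a+b}(x_{ab}-x_{ba})$ transparent, whereas the BGKT lift hides this behind a matrix factorisation. The sign bookkeeping you flag as ``the hard part'' does work out exactly as you expect, including the seam case $a=1$, and the passage to $\overline E_n$ by density is the same in both.
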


We now describe the embedding of $\overline{E}_n$ into the totally nonnegative Isotropic Grassmannian $IG^{\widetilde{\Lambda}_{2n}}(n+1,2n)_{\geq 0}$ following Chepuri-George-Speyer \cite{CGS}, where the skew-symmetric form $\widetilde{\Lambda}_{2n}$ is given by
$$\widetilde{\Lambda}_{2n}((x_{\bar1},x_{\tilde{1}},x_{\bar2},x_{\tilde{2}},\ldots, x_{\bar n},x_{\tilde{n}}),(y_{\bar 1},y_{\tilde{1}},y_{\bar2},y_{\tilde{2}},\ldots,y_{\bar n},y_{\tilde{n}}))=$$
$$\sum\limits_{i=1}^n(x_{\bar i}y_{\tilde{i}}-x_{\tilde{i}}y_{\bar i})+\sum\limits_{j=1}^{n-1}(x_{\overline{j+1}}y_{\tilde{j}}-x_{\tilde{j}}y_{\overline{j+1}})+(-1)^n(x_{\bar1}y_{\tilde{n}}-x_{\tilde{n}}y_{\bar1}),$$
where we use a natural identification $[2n]=[\overline n]\sqcup[\widetilde n]$ given by $\{1,2,3,4\ldots, 2n-1,2n\}=\{\overline1,\widetilde1,\overline2,\widetilde2,\ldots,\overline n,\widetilde n\}$.

The form $\widetilde{\Lambda}_{2n}$ has a two dimensional kernel, spanned by the vectors $(0,1,0,-1,\ldots,0,(-1)^{n-1})$ and $(1,0,-1,0,\ldots,(-1)^{n-1},0)$. Note that any of these vectors is orthogonal to $V$ and since $\dim(V)=2n-2$ we get $\mathbb R^{2n}=V\bigoplus\ker\widetilde{\Lambda}_{2n}$.
\begin{theorem} \cite[Theorem 1.6 and Theorem 1.8]{CGS} \label{Chepuri-George-Speyer embedding}
Let $e(\Gamma,\omega)\in E_n$ and denote by $M_R$ its response matrix. Choose an arbitrary matrix $S$ such that:
\begin{equation}\label{eq:S_ij}
-M_R=\left(\begin{matrix}
S_{1n}-S_{11} & S_{2n}-S_{21} & \cdots & S_{nn}-S_{n1} \\
S_{11}-S_{12} & S_{21}-S_{22} & \cdots & S_{n1}-S_{n2} \\
\vdots & \vdots & \cdots & \vdots \\
S_{1(n-1)}-S_{1n} & S_{2(n-1)}-S_{2n} & \cdots & S_{n(n-1)}-S_{nn}
\end{matrix}\right).
\end{equation}
Then the matrix $MD\in IG^{\widetilde{\Lambda}_{2n}}(n+1,2n)_{\geq 0}$, where $M$ is $(n+1\times 2n)$ matrix defined by
$$M=\left(\begin{matrix}
0 & 1 & 0 & 1 & \ldots & 0 & 1 \\
1 & S_{11} & 0 & S_{12} & \cdots & 0 & S_{1n} \\
0 & S_{21} & 1 & S_{22} & \cdots & 0 & S_{2n} \\
\vdots & \vdots & \vdots & \vdots & \cdots & \vdots & \vdots \\
0 & S_{n1} & 0 & S_{n2} & \cdots & 1 & S_{nn}
\end{matrix}\right)$$
and
$D=diag(1,1,-1,-1,1,1,\ldots,(-1)^{(n\,\mathrm{mod}\,2)+1},(-1)^{(n\,\mathrm{mod}\,2)+1})$ is the $2n\times 2n$ diagonal matrix.
\end{theorem}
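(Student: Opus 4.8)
The plan is to establish three things in turn: that the matrix $S$ exists and the row space of $MD$ does not depend on its choice, so that the claimed point is well defined and of the right dimension; that this row space is isotropic for $\widetilde{\Lambda}_{2n}$; and that it lies in the totally nonnegative part.

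\emph{Well-definedness.} For each fixed $i$, column $i$ of the system \eqref{eq:S_ij} expresses the vector $\bigl((-M_R)_{1i},\dots,(-M_R)_{ni}\bigr)$ as the tuple of cyclic consecutive differences $S_{i,n}-S_{i,1},\,S_{i,1}-S_{i,2},\dots,S_{i,n-1}-S_{i,n}$ of the sequence $S_{i,1},\dots,S_{i,n}$. Such a tuple automatically sums to zero, and conversely any zero-sum tuple is realized, the solution being unique up to replacing $S_{i,1},\dots,S_{i,n}$ by $S_{i,1}+c_i,\dots,S_{i,n}+c_i$. Since the response matrix has vanishing row sums, $\sum_m (-M_R)_{mi}=0$, so a solution $S$ exists. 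Replacing $S_{ij}$ by $S_{ij}+c_i$ replaces the $(i{+}1)$-st row of $M$ by itself plus $c_i$ times the first row of $M$, an elementary row operation; hence the row spaces of $M$ and of $MD$, and so the total nonnegativity of $MD$, are independent of the choice of $S$. Finally, the columns of $M$ indexed by $\{2,1,3,5,\dots,2n-1\}$ form a block matrix $\left(\begin{smallmatrix}1&0\\ \ast&I_n\end{smallmatrix}\right)$, so $\operatorname{rank}M=n+1$ and $MD$ spans a genuine $(n{+}1)$-plane.

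\emph{Isotropy.} Since $D=D^{T}$ is diagonal, $MD\,\widetilde{\Lambda}_{2n}\,(MD)^{T}=M\,(D\widetilde{\Lambda}_{2n}D)\,M^{T}$, so it suffices to prove $\widetilde{\Lambda}_{2n}(r_aD,r_bD)=0$ for all rows $r_a,r_b$ of $M$. Write $r_0$ for the first row and $r_1,\dots,r_n$ for the others, so $(r_k)_{2k-1}=1$, $(r_k)_{2j}=S_{kj}$, all other entries zero. With $d_{2j-1}=d_{2j}=(-1)^{j+1}$ one gets $r_0D=(0,1,0,-1,\dots,0,(-1)^{n-1})$, which is exactly one of the two spanning vectors of $\ker\widetilde{\Lambda}_{2n}$ recorded above, so $r_0D$ pairs to zero with everything. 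For $k,l\in\{1,\dots,n\}$, expanding the three groups of terms of $\widetilde{\Lambda}_{2n}$ against $r_kD$ and $r_lD$ causes all contributions to collapse, leaving
\[
\widetilde{\Lambda}_{2n}(r_kD,r_lD)=\bigl(S_{l,k}-S_{l,k-1}\bigr)-\bigl(S_{k,l}-S_{k,l-1}\bigr),
\]
with the cyclic convention $S_{\bullet,0}:=S_{\bullet,n}$. By \eqref{eq:S_ij} each bracket is an entry of $M_R$, namely $S_{l,k}-S_{l,k-1}=(M_R)_{kl}$ and $S_{k,l}-S_{k,l-1}=(M_R)_{lk}$, so the value is $(M_R)_{kl}-(M_R)_{lk}=0$ by symmetry of the response matrix. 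The only delicate point here is sign bookkeeping: the alternating signs in $D$, the corner term $(-1)^n$ in $\widetilde{\Lambda}_{2n}$, and the $(-1)^n$ ending the first row of $M$ are precisely what make the three groups of terms cancel, and they must be tracked carefully.

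\emph{Total nonnegativity — the main obstacle.} This is the hard part, and it is where one leaves pure linear algebra for the combinatorial model. The plan is to identify each maximal minor $\Delta_I(MD)$, up to a single global sign, with a grove measurement $\Delta^{\bullet}_{J}$ of Lam for a suitable relabeling $I\mapsto J$ of the ground set, since those are sums of products of positive conductances and hence nonnegative (Theorem~\ref{Emb}, Remark~\ref{Plucker coord as dimer partition function}). Two routes are available. One may build a bipartite (plabic) network from $\Gamma$ by a Temperley-type trick whose boundary measurement matrix equals $M$ after the rescaling $D$ and the relabeling, and then invoke Postnikov's theorem that the boundary measurement map lands in $Gr_{\ge 0}$; the work is to check that the particular sign matrix $D$ is exactly the one turning the signed boundary measurements into one global sign. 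Alternatively one computes $\Delta_I(MD)$ directly: expanding along the odd columns of $M$ (which carry the identity block) reduces a maximal minor to a subdeterminant of $(S_{ij})$, which is converted into a grove measurement via \eqref{eq:S_ij}; one can also cross-check against the complementary-minor description coming from the $(n{-}1)$-dimensional picture of Theorem~\ref{Theorem about Omega_n}. Either way, the crux is the sign analysis — showing that the factors $(-1)^j$ from $D$, the Laplace signs from the interleaving of odd and even columns, and the signs in the dictionary between $S$, $M_R$ and grove measurements all combine so that every maximal minor of $MD$ acquires the same sign. I expect this reconciliation of signs, rather than isotropy or well-definedness, to be the genuine obstacle.
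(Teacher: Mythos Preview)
Your proposal is sound in outline. Note first that in this paper the theorem is quoted from \cite{CGS} without a self-contained proof; your direct verifications of well-definedness and of isotropy are correct and supply what the paper simply cites.

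For total nonnegativity, the first of your two routes --- build a Temperley-type bipartite network and invoke Postnikov's theorem --- is exactly what the paper later carries out in Section~\ref{th-comb}. The specific construction is the \emph{dual} generalized Temperley trick $N^d(\Gamma,\omega)$ of Definition~\ref{temp_gen} (the usual trick with colours swapped), turned into a Postnikov network with the particular source set $O_2=\{1,2,3,5,\dots,2n-1\}$. Theorem~\ref{thm:explicitcoord} then identifies its extended boundary measurement matrix with $MD$ after a row swap and left multiplication by $D'=\mathrm{diag}(1,1,-1,1,-1,\dots)$, using Lemma~\ref{lemmal} and \cite[Proposition~2.8]{KW 2011} to check that $S'_{ij}:=(-1)^{i+j}F_{ij}$ satisfies the defining relations~\eqref{eq:S_ij}. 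So your anticipated ``sign reconciliation'' obstacle is real, and the paper resolves it through these intermediate lemmas rather than a bare Laplace expansion. Your alternative route --- reducing to the $(n{-}1)$-picture via Theorem~\ref{Theorem about Omega_n} and the coordinates $\Delta^{\bullet}_J$ --- is not the path taken; the paper stays within the $(n{+}1)$-picture and its native co-concordance coordinates $\Delta^{\circ}_I$ (Theorem~\ref{thm:co-Emb}).
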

Similar parametrization was constructed for cactus networks, see \cite[Theorem 1.9]{CGS}.
\subsection{Electrical Lie theory and Lam group} 
Electrical Lie theory was introduced in \cite{L2} and subsequently studied in \cite{YS} and \cite{BGG}. The relationship between the Lam's embedding of the set $E_n$ into the totally nonnegative Grassmannian $Gr(n-1,2n)_{\geq 0}$ and electrical Lie theory could be summarized by Theorem \ref{Lam_group_acting} below (see \cite[Section 5.4]{L} for the details). A different approach relating networks and totally nonnegative Lagrangian Grassmannians was discovered in \cite{Kar}.

Define the cyclic operator $\chi$ acting on Grassmannian $Gr(k,m)$ as in \cite[Section 3.1]{L}.
$$\chi: (v_1,v_2,\ldots,v_m)\to (v_2,\ldots,v_m,(-1)^{k-1}v_1),$$
where $v_i$ for $ i\in1,\ldots,m$ are the columns of the matrix representing the point in the Grassmannian $Gr(k,m)$. Denote by $s$ the $2n\times 2n$-matrix of the operator $\chi$ written for $k=n-1,\ m=2n$:
\begin{equation} \label{cyclic operator s}
s=\left(\begin{matrix}
0 & 1 & 0 & 0 & \cdots & 0 \\
0 & 0 & 1 & 0 & \cdots & 0 \\
0 & 0 & 0 & 1 & \cdots & 0 \\
\vdots & \vdots & \vdots & \vdots & \ddots & \vdots \\
0 & 0 & 0 & 0 & \cdots & 1 \\
(-1)^{n} & 0 & 0 & 0 & \cdots & 0
\end{matrix}\right).
\end{equation}

\begin{definition} \label{Lam_group}
Introduce some auxiliary matrices:

$\bullet$ if $i\in 1,\dots,2n-1$ then $x_i(t)$ is the upper-triangular matrix with ones on the main diagonal and the only one non-zero entry $(x_i)_{i,i+1}=t$ above the main diagonal,

$\bullet$ if $i\in 1,\dots,2n-1$ then $y_i(t)$ is the low-triangular matrix with ones on the main diagonal and the only one non-zero entry $(y_i)_{i+1,i}=t$ below the main diagonal,

$\bullet$ if $i=2n$ then $x_{2n}(t)=sx_1(t)s^{-1},$

$\bullet$ if $i=2n$ then $y_{2n}(t)=sy_1(t)s^{-1}.$



Finally, define $u_i(t)=x_i(t)y_{i-1}(t)=y_{i-1}(t)x_i(t)$ for $i\in 1,\ldots,2n$, where the indices taken mod $2n$. The group generated by all $u_i(t)$ where $i\in1,\ldots,2n$ is called the {\itshape Lam group}. It is a representation of the electrical Lie group of type $A$, see \cite[Proposition 5.13]{L}. A {\itshape Lam algebra} is the Lie algebra of the Lam group, it is generated by the set $\mathfrak{u}_i=\log(u_i(t))$, where $i\in1,\ldots,2n$.
\end{definition}

The following theorem explains the combinatorial meaning of Lam's generators in the context of electrical networks.
\begin{theorem} \textup{\cite[Proposition 5.12]{L}} \label{Lam_group_acting}

Let $e\in E_n$ be an electrical network and $N_1(\Gamma,\omega)$ be associated with it bipartite network (see Section \ref{pbn-ap}). Likewise, for the electrical networks $u_{2k-1}(t)(e)$ and $u_{2k}(t)(e)$ let $N_2(u_{2k-1}\Gamma,u_{2k-1}\omega)$ and $N_2(u_{2k}\Gamma,u_{2k}\omega)$ be the associated bipartite networks. Then, the points $X(N_1)$ and $X(N_2)$ of Grassmannian defined by $N_1(\Gamma,\omega),\ N_2(u_{2k-1}\Gamma,u_{2k-1}\omega)$ and $N_2(u_{2k}\Gamma,u_{2k}\omega)$ are related to each other as follows:

$$X(N_2)=X(N_1)u_{2k-1}(t),\ X(N_2)=X(N_1)u_{2k}(t),$$
where $u_i(t)$ are generators of the Lam group from the Definition \ref{Lam_group}.
\end{theorem}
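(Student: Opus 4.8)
The plan is to argue locally at the boundary: the element $u_i(t)$ realizes one of the two elementary boundary electrical transformations (a resistor added in series at a boundary node, or a resistor added in parallel between two adjacent boundary nodes, according to the parity of $i$ in the doubled indexing $\{1,\dots,2n\}=\{\overline1,\widetilde1,\dots,\overline n,\widetilde n\}$), and such a transformation is supported in an arbitrarily small neighbourhood of the boundary. So the first step is to recall, from Section~\ref{sec: appendix} (and Section~\ref{pbn-ap}), the explicit shape of the Temperley bipartite network $N(\Gamma,\omega)$ near each marked point, together with the fact recorded in Remark~\ref{Plucker coord as dimer partition function} that the point $X(N)\in Gr_{\ge0}(n-1,2n)$ has Plücker coordinate $\Delta_I$ equal to the dimer partition function of almost perfect matchings of $N$ with boundary $I$. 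From this one reads off that $N(u_i(t)\Gamma,u_i(t)\omega)$ is obtained from $N(\Gamma,\omega)$ by splicing in one fixed small weighted bipartite gadget $G_i(t)$ at the half-edges near the $i$-th marked point, everything else being unchanged.

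The heart of the proof is then to identify $G_i(t)$ with the composition of two elementary \emph{bridge} insertions of the Postnikov type. Concretely, I would show that a matching of $N(u_i(t)\Gamma,\cdot)$ restricts to $G_i(t)$ in one of two ways — using the new weighted edge or not — and that expanding the dimer partition function along this dichotomy gives
$$\Delta_I\bigl(X(N_2)\bigr)=\Delta_I\bigl(X(N_1)\bigr)+t\,\Delta_{\sigma_i(I)}\bigl(X(N_1)\bigr),$$
where $\sigma_i$ is the transposition moving the index $i$ to $i+1$ (applied once, then once more with a shifted index, for the two constituent bridges). This is exactly the effect on Plücker coordinates of right multiplication of an $(n-1)\times 2n$ matrix representing $X(N_1)$ by the elementary matrices $x_i(t)$ and then $y_{i-1}(t)$. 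Since $u_i(t)=x_i(t)y_{i-1}(t)$ by Definition~\ref{Lam_group}, this yields $X(N_2)=X(N_1)u_i(t)$ for all non-cyclic indices, i.e. for $i=2k-1$ and $i=2k$ with the relevant gadget contained in one chart.

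For the cyclic indices — those where $x_{2n}$ or $y_{2n}$ enters, namely $i=2n$ and $i=1$ — the gadget straddles the marked point between $\overline n$ and $\overline1$, so one cannot work in a single affine chart. Here I would use that rotating the disc by one marked point carries $N(\Gamma,\omega)$ to the Temperley network of the rotated electrical network and acts on the associated Grassmannian point precisely by the cyclic operator $\chi$, i.e. by the matrix $s$ of \eqref{cyclic operator s}; combining this with the definitions $x_{2n}(t)=sx_1(t)s^{-1}$ and $y_{2n}(t)=sy_1(t)s^{-1}$ reduces these cases to the case $i=1$ resp. $i=2$ already handled above. An alternative, purely linear-algebraic route is also available from the material already in place: each of $u_{2k-1}(t)$, $u_{2k}(t)$ alters the response matrix $M_R$ by a classical explicit (essentially rank-one) update, and substituting this update into the matrix $\Omega_n(e)$ of Definition~\ref{Omega_n} one checks by a short sequence of row operations that $\Omega_n(u_i(t)(e))$ has the same row space as $\Omega_n(e)\,u_i(t)$; by Theorem~\ref{Theorem about Omega_n} this is the asserted identity of Grassmannian points.

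The step I expect to be the main obstacle is the bookkeeping in the middle paragraph: pinning down the exact local form of the Temperley bipartite network and of the gadget $G_i(t)$, verifying that the bridge-expansion identity holds with the correct index shift, and — crucially — tracking the sign conventions (the alternating signs in $s$, in $\Lambda_{2n-2}$, and in the diagonal twist $D$) so that the elementary factors come out exactly as $x_i(t)$ and $y_{i-1}(t)$ rather than signed variants, and so that the wrap-around index $2n$ is handled consistently with the definition $x_{2n}(t)=sx_1(t)s^{-1}$.
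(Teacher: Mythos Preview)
The paper does not supply its own proof of this theorem: it is quoted verbatim as \cite[Proposition~5.12]{L}, with a pointer to \cite[pp.~23--25]{BGKT} for illustrations, and the text moves on immediately to the next statement. So there is nothing in the present paper to compare your proposal against.

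That said, your outline is essentially Lam's original argument. The generators $u_{2k-1}(t)$ and $u_{2k}(t)$ correspond to adding a boundary spike at $\overline k$ and a boundary edge between $\overline k$ and $\overline{k+1}$, respectively; under the generalized Temperley trick each of these becomes a local gadget near the boundary that factors as two Postnikov bridges, and the standard bridge--Pl\"ucker identity then gives right multiplication by $x_i(t)$ and $y_{i-1}(t)$. The wrap-around indices are handled by conjugation with the cyclic shift $s$, exactly as you describe and as encoded in Definition~\ref{Lam_group}. Your alternative route through the rank-one update of the response matrix and row operations on $\Omega_n(e)$ is precisely the computation carried out in \cite{BGKT}; either path works. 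The caution you raise about signs and index shifts is well placed --- that is where the actual work lies --- but there is no conceptual gap in the plan.
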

See \textup{\cite[Pp. 23-25]{BGKT}} for the illustrations of the statement above.

\begin{theorem} \cite[Theorem 5.4]{BGKT} \label{Theorem 5.4}
The operators $u_i(t)|_V$ preserve the symplectic form $\Lambda_{2n-2}$ defined by \eqref{8}. Moreover the restriction of the Lam group to the subspace V generated by the set of matrices $u_i(t)|_V,\ i = 1,\ldots, 2n$ is the representation of the symplectic group $Sp(2n-2)$.
\end{theorem}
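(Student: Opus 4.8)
The plan is to prove the two assertions in turn: that each generator $u_i(t)|_V$ lies in $Sp(2n-2):=Sp(\Lambda_{2n-2})$ (with the paper's convention $M\Lambda M^T=\Lambda$), and that these generators exhaust $Sp(2n-2)$. I would begin by checking that $u_i(t)$ preserves $V$, so that $u_i(t)|_V$ is defined. Since $x_i(t)=\exp(tE_{i,i+1})$, $y_{i-1}(t)=\exp(tE_{i,i-1})$ and $E_{i,i+1}E_{i,i-1}=E_{i,i-1}E_{i,i+1}=0$, one gets $u_i(t)=x_i(t)y_{i-1}(t)=\mathrm{Id}+t\,\mathfrak u_i$ with $\mathfrak u_i=E_{i,i-1}+E_{i,i+1}$ for $2\le i\le 2n-1$ (and, using $x_{2n}(t)=sx_1(t)s^{-1}$, $y_{2n}(t)=sy_1(t)s^{-1}$ from Definition \ref{Lam_group} and \eqref{cyclic operator s}, the analogous nilpotent $\mathfrak u_{2n}=(-1)^nE_{2n,1}+E_{2n,2n-1}$, and $\mathfrak u_1=E_{1,2}+(-1)^nE_{1,2n}$). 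Writing $V=\ker\phi\cap\ker\psi$ with $\phi=\sum_j(-1)^je^*_{2j}$ and $\psi=\sum_j(-1)^je^*_{2j-1}$, for $v\in V$ the vector $v\,\mathfrak u_i$ is supported on the two coordinates adjacent to $i$, and $\phi$ and $\psi$ vanish on it because the two contributions cancel (e.g.\ $\psi(e_{2m-1})=(-1)^m$ and $\psi(e_{2m+1})=(-1)^{m+1}$ cancel after the common factor $t\,v_i$); hence $\mathfrak u_i(V)\subseteq V$ and $u_i(t)(V)=V$. (Alternatively one checks that the rows of the matrices $\Omega_n(e)$, $e\in E_n$, span $V$, and invokes Theorem \ref{Lam_group_acting} together with Lemma \ref{Rows in V}.)

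For the symplectic property I would argue from results already in hand rather than by a matrix identity. By Theorem \ref{Embedding into LG} the row span of $\widetilde\Omega_n(e)$ is a Lagrangian of $(V,\Lambda_{2n-2})$ for every $e\in E_n$, and by Theorem \ref{Lam_group_acting}, transported to the basis \eqref{Basis of V}, the operator $u_i(t)|_V$ carries it to the (Lagrangian) subspace attached to $u_i(t)\cdot e$. Since $\dim E_n=\binom n2=\dim LG(n-1,V)$ and $LG(n-1,V)$ is irreducible, these Lagrangians are Zariski dense in $LG(n-1,V)$; as ``being Lagrangian'' is a closed condition on a point of $Gr(n-1,V)$, it follows that $u_i(t)|_V$ sends \emph{every} Lagrangian of $(V,\Lambda_{2n-2})$ to a Lagrangian. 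A linear automorphism with this property is a symplectic similitude, $u_i(t)|_V\,\Lambda_{2n-2}\,(u_i(t)|_V)^T=c(t)\,\Lambda_{2n-2}$ (the classical statement that two nondegenerate skew forms with the same Lagrangian Grassmannian are proportional, valid once $\dim V\ge 4$); comparing determinants gives $c(t)^{2n-2}=\det(u_i(t)|_V)^2=1$, and $c(0)=1$ with continuity forces $c\equiv 1$. Hence $u_i(t)|_V\in Sp(2n-2)$. A self-contained alternative is to verify the infinitesimal condition $\mathfrak u_i|_V\,\Lambda_{2n-2}+\Lambda_{2n-2}\,(\mathfrak u_i|_V)^T=0$ directly from the explicit matrix of $\mathfrak u_i|_V$ in the basis \eqref{Basis of V}, which is short since $\mathfrak u_i$ has only two nonzero entries.

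For the ``moreover'', the group $G$ generated by $\{u_i(t)|_V:i=1,\dots,2n,\ t\in\mathbb R\}$ is connected, being generated by one-parameter subgroups through the identity; by the previous step it is a connected subgroup of $Sp(2n-2)$ whose Lie algebra $\mathfrak g$ is the Lie subalgebra of $\mathfrak{sp}(\Lambda_{2n-2})$ generated by $\mathfrak u_1|_V,\dots,\mathfrak u_{2n}|_V$, i.e.\ the Lam algebra acting on $V$. By electrical Lie theory (\cite{L2}, \cite{YS}, \cite{BGG}; cf.\ \cite[Proposition~5.13]{L}) the Lam algebra is the electrical Lie algebra of type $A$, which is simple and isomorphic to $\mathfrak{sp}_{2n-2}$. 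Restriction to $V$ is a Lie-algebra homomorphism out of this simple algebra, and it is nonzero ($\mathfrak u_1|_V$ already does not vanish on $v_1$ from \eqref{Basis of V}), hence injective; so $\mathfrak g\cong\mathfrak{sp}_{2n-2}$. But $\mathfrak g\subseteq\mathfrak{sp}(\Lambda_{2n-2})$ and $\dim\mathfrak{sp}(\Lambda_{2n-2})=(n-1)(2n-1)=\dim\mathfrak{sp}_{2n-2}$, so $\mathfrak g=\mathfrak{sp}(\Lambda_{2n-2})$. Since $Sp(2n-2)$ is connected, $G$ and $Sp(2n-2)$ are connected subgroups with the same Lie algebra, whence $G=Sp(2n-2)$.

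The genuine difficulty is concentrated in the last step: a priori the $\mathfrak u_i|_V$ might generate a proper subalgebra of $\mathfrak{sp}(\Lambda_{2n-2})$, and the two ``extra'' cyclic generators $\mathfrak u_{2n-1},\mathfrak u_{2n}$ make the abstract isomorphism type of $\mathfrak g$ non-obvious, so one really does need the external input that the electrical Lie algebra is the \emph{simple} algebra $\mathfrak{sp}_{2n-2}$, using simplicity to upgrade ``nonzero representation on $V$'' to ``faithful'' and then matching dimensions. Everything else is routine — indeed the symplectic property can be outsourced entirely to Theorems \ref{Embedding into LG} and \ref{Lam_group_acting} plus the dimension count — except that the degenerate case $n=2$ (where $\mathfrak{sp}_2=\mathfrak{sl}_2$ and every line of $V$ is Lagrangian) should be recorded separately and is an immediate direct check.
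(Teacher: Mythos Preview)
This theorem is cited from \cite{BGKT} and the present paper gives no proof of it, so there is nothing in the paper to compare your argument against. Your argument is correct on its own terms. Two remarks. First, the density route to the symplectic property is valid but the sentence ``$\dim E_n=\binom{n}{2}=\dim LG(n-1,V)$ and $LG(n-1,V)$ is irreducible, hence the image is Zariski dense'' is not quite enough as stated: equal dimensions do not by themselves force density, you need that the Lam map is injective (hence an immersion on a top-dimensional cell), so the image contains a Euclidean-open piece of $LG_{\ge 0}$ and therefore is Zariski dense. Your self-contained alternative, checking $\mathfrak u_i|_V\,\Lambda_{2n-2}+\Lambda_{2n-2}\,(\mathfrak u_i|_V)^T=0$ directly from the two nonzero entries of $\mathfrak u_i$, is both shorter and is essentially the approach in \cite{BGKT}; I would lead with it. Second, for the ``moreover'' clause your strategy---import the identification of the electrical Lie algebra with the simple algebra $\mathfrak{sp}_{2n-2}$ from \cite{L2}, \cite{YS}, \cite{BGG}, use simplicity to upgrade the nonzero action on $V$ to a faithful one, match dimensions with $\mathfrak{sp}(\Lambda_{2n-2})$, and conclude by connectedness of $Sp(2n-2)$---is exactly the standard argument and is how \cite{BGKT} proceeds.
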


\section{Comparing different compactifications}\label{sec:main1}

The main goal of this section is to compare this embeddings given in Theorem \ref{Theorem about Omega_n} and Theorem \ref{Chepuri-George-Speyer embedding}. We will do it in two ways: through symplectic linear algebra and through combinatorics.

\subsection{Through linear algebra}\label{sec:linearalgebra}
Consider an electrical network $e(\Gamma,\omega)\in E_n$ with the response matrix $M_R=(x_{ij})_{i,j\in[n]}$. Two different embeddings from Theorem \ref{Theorem about Omega_n} and from Theorem \ref{Chepuri-George-Speyer embedding} gives two different matrices $\Omega_n(e)$ and $MD$ representing the points corresponding to the electrical network $e(\Gamma,\omega)$ in Grassmannians $Gr(n+1,2n)_{\geq 0}$ and $Gr(n-1,2n)_{\geq 0}$ respectively. The natural task is to relate these approaches. Below we suggest several ways to do it. 

\begin{proposition} \label{Ortogonality of embeddings}
The subspaces of $\mathbb R^{2n}$ representing the points $MD\in Gr(n+1,2n)_{\geq 0}$ and $\Omega_n(e)\widetilde{D}\in Gr(n-1,2n)_{\geq 0}$ are orthogonal with respect to the standard bilinear form:
$$MD(\Omega_n(e)\widetilde{D})^T=0,$$
where 
$\widetilde{D}=diag(-1,1,-1,1,\ldots, -1,1)$.
\end{proposition}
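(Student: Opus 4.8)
The plan is to verify the matrix identity $MD(\Omega_n(e)\widetilde D)^T = 0$ directly by computing the inner products of rows, exploiting the explicit form of both matrices. Write $MD(\Omega_n(e)\widetilde D)^T = M D \widetilde D \,\Omega_n(e)^T = M (D\widetilde D)\,\Omega_n(e)^T$, so the first step is to compute the diagonal matrix $D\widetilde D$. Since $D = \mathrm{diag}(1,1,-1,-1,\ldots)$ and $\widetilde D = \mathrm{diag}(-1,1,-1,1,\ldots)$, the product $D\widetilde D$ is again an explicit sign-diagonal matrix with a period-four pattern; this collapses the problem to showing $M (D\widetilde D) \Omega_n(e)^T = 0$, i.e. that every row of $M$ is orthogonal to every row of $\Omega_n(e)$ in the bilinear form given by $D\widetilde D$.

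Next I would set up the row-by-row computation. The rows of $M$ are: the top row $(0,1,0,1,\ldots,0,1)$ and the rows $(\,0,\ldots,1,\ldots,0 \mid S_{k1},S_{k2},\ldots,S_{kn}\,)$ interleaved, for $k=1,\ldots,n$ (a single $1$ in the odd-indexed block plus the $k$-th row of the $S$-matrix in the even-indexed block). The rows of $\Omega_n(e)$ have the interleaved shape $(\pm x_{i1}, *, \mp x_{i2}, *, \ldots)$ with the $0/1$ pattern shifting along the diagonal, exactly as in Definition \ref{Omega_n}. The key structural input is the defining relation \eqref{eq:S_ij} expressing $-M_R = (x_{ij})$ as consecutive differences $S_{k,\ell-1} - S_{k\ell}$ of the $S$-entries (cyclically, with the first row $S_{kn}-S_{k1}$). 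So when I pair a row of $M$ against a row of $\Omega_n(e)$ and substitute the $x_{ij}$'s in $\Omega_n(e)$ by these differences of $S$-entries, the sum should telescope. The signs coming from $D\widetilde D$ and from the alternating $(-1)^i$ in $\Omega_n(e)$ are precisely what is needed to make the telescoping cancellation work, including the wrap-around term $(-1)^n$ in the last column.

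I expect the main obstacle to be purely bookkeeping: tracking the three independent alternating sign sources — the entries of $D\widetilde D$, the $(-1)^i$ signs built into $\Omega_n(e)$, and the cyclic shift in the index of $S$ in \eqref{eq:S_ij} — simultaneously, and confirming that the boundary/wrap-around contributions (first and last columns, the $(-1)^n$ corner entry) cancel rather than leaving a residue. A clean way to organize this is to note that $\Omega_n(e)$ is, up to the sign twist $\widetilde D$, built from the same alternating-difference pattern as the matrix $M$ of Theorem \ref{Chepuri-George-Speyer embedding}: both encode the response matrix via the "discrete derivative" presentation of \eqref{eq:S_ij}. Thus orthogonality is really the statement that a difference operator and its formal adjoint compose to zero on the relevant space. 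Once the sign conventions are pinned down, each entry $(M(D\widetilde D)\Omega_n(e)^T)_{jk}$ is a short alternating sum of $S$-entries that visibly telescopes to $0$; I would present one representative case (a generic middle row of $M$ against a generic middle row of $\Omega_n(e)$) in detail and remark that the boundary cases are analogous. Positivity and the Grassmannian memberships are already supplied by Theorems \ref{Theorem about Omega_n} and \ref{Chepuri-George-Speyer embedding}, so nothing beyond the linear-algebra identity is needed here.
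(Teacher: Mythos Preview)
Your proposal is correct and matches the paper's approach exactly: the paper's proof reads in its entirety ``The proof is a straightforward matrix multiplication,'' and what you have outlined is precisely that computation, with the telescoping via \eqref{eq:S_ij} made explicit.
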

\begin{proof}
The proof is a straightforward matrix multiplication. 
\end{proof}

\begin{lemma} \cite[part of Theorem 4.4]{BGKT} \label{Omega is isotropic under Lambda}
 The subspace defined by the row space of the matrix $\Omega_n(e)$ is isotropic w.r.t. the symplectic form $\overline{\Lambda}_{2n}:=\Lambda_{2n}^{-1}$ $(2n\times 2n$ matrix$)$ given by \eqref{8}. In other words, the following identity holds
 $$\Omega_n(e)\Lambda_{2n}^{-1}(\Omega_n(e))^T=0.$$
\end{lemma}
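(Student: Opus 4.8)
The plan is to reduce the $2n$-dimensional isotropy statement to the $(2n-2)$-dimensional identity already recorded in Theorem \ref{Embedding into LG}, exploiting the fact that the rows of $\Omega_n(e)$ lie in $V$.

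\textbf{Step 1: restrict to $V$.} By Lemma \ref{Rows in V} the rows of $\Omega_n(e)$ lie in $V$, so there is a unique $(n-1)\times(2n-2)$ matrix $\widetilde{\Omega}_n(e)$ with $\Omega_n(e)=\widetilde{\Omega}_n(e)\,B_n$, where $B_n$ is the $(2n-2)\times 2n$ matrix whose rows are the basis vectors $v_1,\dots,v_{2n-2}$ of \eqref{Basis of V}; this $\widetilde{\Omega}_n(e)$ is precisely the coordinate matrix introduced just after Lemma \ref{Rows in V}. Substituting,
$$\Omega_n(e)\,\Lambda_{2n}^{-1}\,\Omega_n(e)^T=\widetilde{\Omega}_n(e)\,\big(B_n\,\Lambda_{2n}^{-1}\,B_n^T\big)\,\widetilde{\Omega}_n(e)^T,$$
so it suffices to compute the $(2n-2)\times(2n-2)$ matrix $B_n\,\Lambda_{2n}^{-1}\,B_n^T$, i.e. the Gram matrix of $\overline{\Lambda}_{2n}$ restricted to $V$ in the basis \eqref{Basis of V}.

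\textbf{Step 2: compute $B_n\Lambda_{2n}^{-1}B_n^T$ without inverting $\Lambda_{2n}$.} The key observation is that for $1\le j\le 2n-2$ the $(j+1)$-st row of the $2n\times 2n$ matrix $\Lambda_{2n}$ of the shape \eqref{8} equals $(-1)^j v_j$: both vectors have their only nonzero entries in positions $j$ and $j+2$, equal to $(-1)^j$ and to $1$ respectively. Hence $v_j\Lambda_{2n}=(-1)^j e_{j+1}$, and thus $v_j\Lambda_{2n}^{-1}=(-1)^j e_{j+1}$; along the way one sees that $\Lambda_{2n}$ is invertible, since its rows $2,\dots,2n-1$ form a basis of $V$ while its rows $1$ and $2n$ are $e_2$ and $-e_{2n-1}$, which are independent modulo $V$. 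Therefore the $(i,j)$ entry of $B_n\Lambda_{2n}^{-1}B_n^T$ equals $(-1)^i e_{i+1}\cdot v_j^{T}=(-1)^i (v_j)_{i+1}$, which vanishes unless $|i-j|=1$; a short sign check on the tridiagonal pattern then gives
$$B_n\,\Lambda_{2n}^{-1}\,B_n^T=-\,\Lambda_{2n-2}.$$

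\textbf{Step 3: conclude.} Combining Steps 1 and 2,
$$\Omega_n(e)\,\Lambda_{2n}^{-1}\,\Omega_n(e)^T=-\,\widetilde{\Omega}_n(e)\,\Lambda_{2n-2}\,\widetilde{\Omega}_n(e)^T=0,$$
the last equality being exactly Theorem \ref{Embedding into LG}. Since $\overline{\Lambda}_{2n}=\Lambda_{2n}^{-1}$ is skew-symmetric and nondegenerate, this says precisely that the row space of $\Omega_n(e)$ is isotropic for $\overline{\Lambda}_{2n}$. The argument is essentially routine; the only spot needing care is Step 2, namely keeping the signs in the tridiagonal pattern straight and confirming nondegeneracy of $\Lambda_{2n}$. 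The same computation applies verbatim to cactus networks, replacing Theorem \ref{Embedding into LG} by its stated extension to $\overline{E}_n$.
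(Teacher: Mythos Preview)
Your argument is correct in outline and conclusion, but there is a slip in Step~2: from the observation that row $j{+}1$ of $\Lambda_{2n}$ equals $(-1)^j v_j$ you get $e_{j+1}\Lambda_{2n}=(-1)^j v_j$, not $v_j\Lambda_{2n}=(-1)^j e_{j+1}$ (the latter is false, e.g.\ $v_1\Lambda_{2n}=2e_2+e_4$). The correct deduction is to right-multiply by $\Lambda_{2n}^{-1}$, obtaining $v_j\Lambda_{2n}^{-1}=(-1)^j e_{j+1}$ directly; with this fix your computation of $B_n\Lambda_{2n}^{-1}B_n^T=-\Lambda_{2n-2}$ and the rest of the proof go through unchanged.

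As for comparison, the paper's proof is essentially a citation: it records that $\Lambda_{2n}^{-1}=T_{2n}\lambda_{2n}T_{2n}^{T}$ in the notation of \cite[Section~4]{BGKT} and then defers to the proof of \cite[Theorem~4.4]{BGKT}. Your route is the same in spirit---reduce isotropy in $\mathbb{R}^{2n}$ to the already-known isotropy in $V$---but you carry it out explicitly and self-containedly by identifying the restriction of $\overline{\Lambda}_{2n}$ to $V$ in the basis \eqref{Basis of V} as $-\Lambda_{2n-2}$, rather than invoking an external factorization. Your version has the advantage of being readable without consulting \cite{BGKT}; the paper's version is shorter but requires the reader to unpack the $T_{2n},\lambda_{2n}$ notation from that reference.
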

\begin{proof}
    By the straightforward matrix multiplication we get that $\overline{\Lambda}_{2n}:=\Lambda_{2n}^{-1}=T_{2n}\lambda_{2n}T_{2n}^T$ in the notations of \cite[Section 4]{BGKT}.
    Thus the statement holds by the argument from the proof of \cite[Theorem 4.4]{BGKT}.
\end{proof}

\begin{remark}
    The form $\overline{\Lambda}_{2n}$ is not the unique skew-symmetric form w.r.t. which the matrix $\Omega_n(e)$ is isotropic. For example, another such form is $\overline{\overline\Lambda}_{2n}$ whose inverse $\overline{\overline\Lambda}_{2n}^{-1}$ is given by $$\overline{\overline\Lambda}_{2n}^{-1}((x_{\bar1},x_{\tilde{1}},x_{\bar2},x_{\tilde{2}},\ldots, x_{\bar n},x_{\tilde{n}}),(y_{\bar 1},y_{\tilde{1}},y_{\bar2},y_{\tilde{2}},\ldots,y_{\bar n},y_{\tilde{n}}))=$$
$$\sum\limits_{i=1}^n(x_{\bar i}y_{\tilde{i}}-x_{\tilde{i}}y_{\bar i})+\sum\limits_{j=1}^{n-1}(x_{\overline{j+1}}y_{\tilde{j}}-x_{\tilde{j}}y_{\overline{j+1}})-(-1)^n(x_{\bar1}y_{\tilde{n}}-x_{\tilde{n}}y_{\bar1}),$$
where we use a natural identification $[2n]=[\overline n]\sqcup[\widetilde n]$ given by $\{1,2,3,4\ldots, 2n-1,2n\}=\{\overline1,\widetilde1,\overline2,\widetilde2,\ldots,\overline n,\widetilde n\}$.
\end{remark}

\begin{corollary} \label{cor: inclusion} 
  The subspace defined by the column space of the matrix $\overline\Lambda_{2n}^{-1}(MD)^T$ is the $\overline\Lambda_{2n}$-symplectic complement to the subspace defined by row space of the matrix $\Omega_n(e)\widetilde{D}$ and therefore we have the inclusion of the correspondent subspaces.  
\end{corollary}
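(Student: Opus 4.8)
The plan is to deduce Corollary \ref{cor: inclusion} from the three facts already established: Proposition \ref{Ortogonality of embeddings}, which gives the orthogonality $MD(\Omega_n(e)\widetilde D)^T=0$ for the standard bilinear form; Lemma \ref{Omega is isotropic under Lambda}, which says the row space of $\Omega_n(e)$ is isotropic for $\overline\Lambda_{2n}$; and the dimension count $\dim(\text{row space } MD)=n+1$, $\dim(\text{row space }\Omega_n(e)\widetilde D)=n-1$, whose sum is $2n$.

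First I would fix notation: let $A$ be the $(n-1)$-dimensional subspace spanned by the rows of $\Omega_n(e)\widetilde D$ and let $C$ be the $(n+1)$-dimensional subspace spanned by the rows of $MD$, equivalently the column space of $(MD)^T$. The $\overline\Lambda_{2n}$-symplectic complement of $A$ is by definition $A^{\perp_{\overline\Lambda}}=\{v\in\mathbb R^{2n}\mid v^T\overline\Lambda_{2n}a=0\ \forall a\in A\}$, and since $\overline\Lambda_{2n}$ is nondegenerate this complement has dimension $2n-(n-1)=n+1$, matching $\dim C$ exactly. So it suffices to prove the single inclusion $C\subseteq A^{\perp_{\overline\Lambda}}$; equality then follows from the dimension equality.

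Next I would unwind what $C\subseteq A^{\perp_{\overline\Lambda}}$ means at the level of matrices. The column space of $\overline\Lambda_{2n}^{-1}(MD)^T=\Lambda_{2n}(MD)^T$ is $\overline\Lambda_{2n}^{-1}C$ after identifying row and column vectors appropriately; a vector $w=\overline\Lambda_{2n}^{-1}c$ with $c\in C$ lies in $A^{\perp_{\overline\Lambda}}$ iff $w^T\overline\Lambda_{2n}a=0$ for all $a\in A$, i.e. iff $c^T(\overline\Lambda_{2n}^{-1})^T\overline\Lambda_{2n}a=0$. Since $\overline\Lambda_{2n}$ is skew-symmetric, $(\overline\Lambda_{2n}^{-1})^T\overline\Lambda_{2n}=-\overline\Lambda_{2n}^{-1}\overline\Lambda_{2n}=-\Id$, so this condition collapses to $c^Ta=0$ for all $a\in A$ — that is, to $C\perp A$ in the standard bilinear form, which is precisely the content of Proposition \ref{Ortogonality of embeddings} (read as $MD(\Omega_n(e)\widetilde D)^T=0$). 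Hence the column space of $\overline\Lambda_{2n}^{-1}(MD)^T$ is contained in, and by dimension equal to, the $\overline\Lambda_{2n}$-symplectic complement of the row space of $\Omega_n(e)\widetilde D$. The asserted inclusion of the correspondent subspaces (i.e. $A\subseteq A^{\perp_{\overline\Lambda}}$, the isotropy statement) then follows from Lemma \ref{Omega is isotropic under Lambda}, since the row space of $\Omega_n(e)$ and the row space of $\Omega_n(e)\widetilde D$ are isotropic for $\overline\Lambda_{2n}$ simultaneously — one checks $\widetilde D\,\overline\Lambda_{2n}\,\widetilde D^T$ is again (a scalar multiple of) a form for which $\Omega_n(e)$ is isotropic, or more directly re-runs the matrix multiplication of Lemma \ref{Omega is isotropic under Lambda} with the extra diagonal conjugation.

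The only genuinely substantive point is the sign bookkeeping: verifying $(\overline\Lambda_{2n}^{-1})^T\overline\Lambda_{2n}=-\Id$ and tracking the diagonal matrix $\widetilde D$ through the pairing so that Proposition \ref{Ortogonality of embeddings} applies verbatim. This is the main (and only) obstacle, and it is a routine finite computation rather than a conceptual difficulty; everything else is the dimension count $(n-1)+(n+1)=2n$ together with nondegeneracy of $\overline\Lambda_{2n}$.
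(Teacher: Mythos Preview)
Your proposal is correct and follows essentially the same route as the paper's own argument: both convert the Euclidean orthogonality of Proposition \ref{Ortogonality of embeddings} into the $\overline\Lambda_{2n}$-symplectic complement relation via the tautological insertion of $\overline\Lambda_{2n}\overline\Lambda_{2n}^{-1}$, and then obtain the inclusion $A\subseteq A^{\perp_{\overline\Lambda}}$ from Lemma \ref{Omega is isotropic under Lambda} together with the observation that conjugation by $\widetilde D$ sends $\overline\Lambda_{2n}$ to $-\overline\Lambda_{2n}$ (this is the ``specific form of $\widetilde D$'' the paper invokes without spelling out). The only difference is that you make the dimension count $(n-1)+(n+1)=2n$ explicit to upgrade containment to equality, whereas the paper asserts equality directly; your version is a shade more careful on this point. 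One small slip: in your first paragraph you write ``it suffices to prove the single inclusion $C\subseteq A^{\perp_{\overline\Lambda}}$,'' but the subspace you actually need to place inside $A^{\perp_{\overline\Lambda}}$ is $\overline\Lambda_{2n}^{-1}C$, not $C$ --- you correct this in the next paragraph, so the computation is right, but the sentence should be amended.
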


\begin{proof}
    By Proposition \ref{Ortogonality of embeddings} we have that 
    $$MD(\Omega_n(e)\widetilde{D})^T=0,\text{ i.e. }\langle MD,\Omega_n(e)\widetilde{D}\rangle_{Euc}=0,$$ 
    where by $\langle\cdot,\cdot\rangle_{Euc}$ we mean the standard Euclidean bilinear form on $\mathbb R^{2n}$. 
    
    Or, equivalently,
    $$\Omega_n(e)\widetilde{D}(MD)^T=0,\text{ i.e. }\langle \Omega_n(e)\widetilde{D},MD\rangle_{Euc}=0.$$ 
    Multiplying by $\overline\Lambda_{2n}\overline\Lambda_{2n}^{-1}$ we obtain
    $$\Omega_n(e)\widetilde{D}\overline\Lambda_{2n}\overline\Lambda_{2n}^{-1}(MD)^T=0,\text{ i.e. }\langle\Omega_n(e)\widetilde{D},MD(\overline\Lambda_{2n}\overline\Lambda_{2n}^{-1})^T\rangle_{Euc}=0.$$
    Or, equivalently,
    $$\langle\Omega_n(e)\widetilde{D},(MD(\Lambda_{2n}^{-1})^T)\Lambda_{2n}^T\rangle_{Euc}=0.$$
    In other words, $(\Omega_n(e)\widetilde{D})^{\perp_{\overline\Lambda_{2n}}}=\Lambda_{2n}^{-1}(MD)^T$.
    By Proposition \ref{Omega is isotropic under Lambda} $\Omega_n(e)$ is isotropic for the form $\Lambda_{2n}$.  Because of a specific form of the diagonal matrix $\widetilde{D}$ we have that $\Omega_n(e)\widetilde{D}$ is also isotropic for the form $\Lambda_{2n}$, therefore $\Omega_n(e)\widetilde{D}\subset (\Omega_n(e)\widetilde{D})^{\perp_{\overline\Lambda_{2n}}}$. Since we proved already that $(\Omega_n(e)\widetilde{D})^{\perp_{\overline\Lambda_{2n}}}=\overline\Lambda_{2n}^{-1}(MD)^T$, we conclude that the subspace defined by the row space of the matrix $\Omega_n(e)\widetilde{D}$ lies inside the subspace defined by the column space of the matrix $\overline\Lambda_{2n}^{-1}(MD)^T$. 
\end{proof}

The next example shows this embedding turns out to be very explicit. We expect that the same is true for any $n$.

\begin{example} \label{rem: inclusion}
For $n=3$ the matrices $\Omega_3(e)\widetilde{D}$ and $\overline\Lambda_{2\cdot3}^{-1}$ are given by

$$\Omega_3(e)\widetilde{D}=\left(\begin{array}{rcrcrc}
-S_{11}+S_{13} & 1 & S_{21}-S_{23} & 0 & -S_{31}+S_{33} & -1 \\
-S_{11}+S_{12} & 1 & S_{21}-S_{22} & 1 & -S_{31}+S_{32} & 0 \\
S_{12}-S_{13} & 0 & -S_{22}+S_{23} & 1 & S_{32}-S_{33} & 1 
\end{array}\right),$$

$$\overline\Lambda_{2\cdot 3}^{-1}(MD)^T=\left(\begin{array}{cccc}
1 & S_{11} & S_{21} & S_{31} \\
0 & -1 & 1 & 0 \\
0 & S_{11}-S_{12} & S_{21}-S_{22} & S_{31}-S_{32} \\
0 & 0 & 1 & -1 \\
0 & -S_{12}+S_{13} & -S_{22}+S_{23} & -S_{32}+S_{33} \\
0 & 0 & 0 & -1
\end{array}\right),$$

$$(\overline\Lambda_{2\cdot 3}^{-1}(MD)^T)^T=\left(\begin{array}{crcrcr}
1 & 0 & 0 & 0 & 0 & 0 \\
S_{11} & -1 & S_{11}-S_{12} & 0 & -S_{12}+S_{13} & 0 \\
S_{21} & 1 & S_{21}-S_{22} & 1 & -S_{22}+S_{23} & 0 \\
S_{31} & 0 & S_{31}-S_{32} & -1 & -S_{32}+S_{33} & -1
\end{array}\right).$$
Since the response matrix is symmetric \cite[Theorem 2 and Theorem 3]{CIM} and by Theorem \ref{Chepuri-George-Speyer embedding} is equal to the matrix

$$-\left(\begin{matrix}
S_{13}-S_{11} & S_{23}-S_{21} & S_{33}-S_{31}  \\
S_{11}-S_{12} & S_{21}-S_{22} & S_{31}-S_{32}  \\
S_{12}-S_{13} & S_{22}-S_{23} & S_{32}-S_{33}  
\end{matrix}\right),$$
using this symmetry and adding the row $i,\ i\in\{3,4\}$ to the row $1$, multiplied by the relevant factor, of the matrix $(\overline\Lambda_{2\cdot3}^{-1}(MD)^T)^T$, we obtain the rows $2$ and $3$ of the matrix $\Omega_3(e)\widetilde{D}$. Note also that by Theorem \ref{Theorem about Omega_n} $\text{rank}(\Omega_3(e))=3-1$ and the alternating sum of the rows of $\Omega_3(e)$ is equal to zero. Thus the row space of the matrix $\Omega_3(e)\widetilde{D}$ lies in the row space of the matrix $(\overline\Lambda_{2\cdot3}^{-1}(MD)^T)^T$. In other words, the row space of the matrix $\Omega_3(e)\widetilde{D}$ lies in the column space of the matrix $\overline\Lambda_{2\cdot3}^{-1}(MD)^T$.
\end{example}

\begin{corollary}
Consider a cactus network $e' \in \overline{E}_n$ and associated with it matrices $M_1$ and $M_2$ which define points of $Gr_{\geq 0}(n-1, 2n)$ and $Gr_{\geq 0}(n+1, 2n)$ correspondingly. Then Proposition \ref{Ortogonality of embeddings}, Lemma \ref{Omega is isotropic under Lambda}, Corollary \ref{cor: inclusion} are  hold if we replace $\Omega_n(e)$ and $MD$ with $M_1$ and $M_2$ correspondingly.
        \begin{proof}
        Let us prove Proposition \ref{Ortogonality of embeddings} for a connected cactus network $e\in\overline{E}_n$. $e$ can be obtained from some $e'\in E_n$ tending some of its conductances to the infinity. By the continuity argument from \cite[Subsection 3.2]{BGKT} and the proof of \cite[Theorem 1.9]{CGS} it is possible to replace the matrices $\Omega_n(e)$ and $MD$ by the sufficient matrices $\Omega_{n,R}(e')$ and $M_TD$ (see Theorem \ref{Theorem about Omegar_n} and Remark \ref{thm: Speyer for cactus} for their precise form). Then it is possible to take a limit $R_{ij}\to\infty$. Orthogonality of the subspaces does not depend on the choice of the matrices, which define them, thus we have $M_TD(\Omega_{n,R}(e')\widetilde{D})^T=0$. By tending some $R_{ij}$ to zero we prove Proposition \ref{Ortogonality of embeddings} for a connected cactus network $e$ . For a disconnected cactus network we have to deal with each of the block of the corresponding matrices separately in the same way. The proofs of Lemma \ref{Omega is isotropic under Lambda}, Corollary \ref{cor: inclusion} are absolutely analogous. 
    \end{proof}
\end{corollary}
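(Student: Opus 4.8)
The plan is to deduce the cactus case from the statements already proved for genuine electrical networks by a degeneration argument, using that every element of $\overline{E}_n$ is a limit of networks in $E_n$ in which the conductances of the boundary edges prescribed by a non-crossing partition are sent to infinity (cf.\ Definition \ref{def:hollow} and \cite[Section 4]{L}).

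First I would treat a \emph{connected} cactus network $e\in\overline{E}_n$. Choose $e'\in E_n$ and a family of conductances $R_{ij}$ so that $e$ is recovered in the limit $R_{ij}\to\infty$ along the contracted boundary edges. Following the continuity analysis of \cite[Subsection 3.2]{BGKT} and the proof of \cite[Theorem 1.9]{CGS}, one replaces $\Omega_n(e)$ and $MD$ by matrix representatives $\Omega_{n,R}(e')$ and $M_TD$ that stay finite and of full rank throughout the degeneration and whose limits span $M_1$ and $M_2$. For every $e'\in E_n$ Proposition \ref{Ortogonality of embeddings} gives
$$M_TD\,(\Omega_{n,R}(e')\widetilde{D})^T=0 .$$
Orthogonality of two subspaces with respect to the standard bilinear form is independent of the choice of spanning matrices and is a closed condition, hence it survives the limit $R_{ij}\to\infty$; this yields the orthogonality assertion for $M_1,M_2$. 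The same degeneration applied to the isotropy identity $\Omega_n(e)\Lambda_{2n}^{-1}\Omega_n(e)^T=0$ of Lemma \ref{Omega is isotropic under Lambda}, and to the subspace inclusion of Corollary \ref{cor: inclusion} (again a closed condition), produces those two statements for connected cactus networks as well.

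For a \emph{disconnected} cactus network the response data, and hence the matrices $M_1$ and $M_2$, decompose into blocks indexed by the connected components, each block being the corresponding pair of matrices of a connected cactus network on fewer boundary vertices. One then applies the connected case blockwise and reassembles, exactly as indicated in the preceding proof.

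The step I expect to be the main obstacle is the purely technical verification that the chosen representatives $\Omega_{n,R}(e')$ and $M_TD$ really have well-defined full-rank limits describing $M_1$ and $M_2$, rather than dropping in rank, so that the closed conditions can legitimately be transported through the degeneration. This is exactly the continuity bookkeeping already carried out in \cite{BGKT} and \cite{CGS}; once it is in place, all three assertions follow formally.
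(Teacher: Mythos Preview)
Your proposal is correct and follows essentially the same approach as the paper: degenerate a genuine network to the cactus by sending conductances to infinity, use the alternative representatives $\Omega_{n,R}$ and $M_TD$ from \cite{BGKT} and \cite{CGS} that survive the limit, pass the closed conditions (orthogonality, isotropy, inclusion) through the degeneration, and treat the disconnected case blockwise. Your write-up is in fact a bit more explicit than the paper's about why the argument works (closed conditions, full-rank limits), but there is no substantive difference in strategy.
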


\subsection{Through combinatorics} \label{th-comb}

A famous Theorem \ref{m-pos} due to Postnikov provides an embedding of Postnikov's networks into the totally nonnegative Grassmannian. On the other hand, given an electrical network $e(\Gamma,\omega)$ one can associate to it a Postnikov's network as follows:
$$e(\Gamma,\omega)\stackrel{\text{Temperley's trick}}{\longmapsto}N(\Gamma)\stackrel{\text{Definition \ref{postn-net}}}{\longmapsto} PN(\Gamma,\omega',O)$$
(see Section \ref{pbn-ap} for precise definitions). Combining these constructions together we get an embedding $E_n\hookrightarrow Gr_{\ge 0}(n-1,2n)$. It was shown in \cite[Theorem 3.3 and Theorem 3.6]{BGKT} that this embedding coincides with the one from Theorem \ref{Theorem about Omega_n} given by $e(\Gamma,\omega)\mapsto \Omega_n(e)$. However, the construction above is not canonical since it depends on the choice of colors for the vertices in the definition of generalized Temperley's trick. The goal of this subsection is to show that turning black vertices into white and vice versa produces the embedding from Theorem \ref{Chepuri-George-Speyer embedding}. The operation of obtaining a bipartite graph by reversing the color of the vertices as above we call a dual generalized Temperley's trick (see Definition \ref{temp_gen}).

 \begin{definition} \cite[Section 1]{CGS} \label{co-con}
     We call an $(n+1)$-element subset $I \subset [1, \dots, 2n]$ {\em co-concordant} with a non-crossing partition $\sigma$ if $I$ contains exactly one element in each part of $\sigma$ and each part of the dual partition $\tilde{\sigma}.$
 \end{definition}
Note that in \cite[Section 1]{CGS} Definition \ref{co-con} was formulated in a slightly different way (see also \cite[Section 2.4]{CGS}).

In terms of the co-concordant spaces 
the analogue of Theorem \ref{Emb} has the form:
 \begin{theorem} \cite[Theorem 2.5]{CGS} \label{thm:co-Emb}
 Let $e \in \overline{E}_n.$ Then the collection of boundary measurements
$$\Delta^{\circ}_{I}:= \sum\limits_{(\sigma, I)} L_{\sigma},$$
 defines a point in  $Gr_{\geq 0}(n + 1, 2n)$, where the summation is over all such $\sigma,$ which are co-concordant with $I$. 
 \end{theorem}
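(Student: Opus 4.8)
The plan is to deduce Theorem \ref{thm:co-Emb} directly from Lam's Theorem \ref{Emb} by establishing a clean combinatorial bijection between concordant and co-concordant subsets, and then transporting the totally nonnegativity statement along this bijection. The key observation is the following: for any non-crossing partition $\sigma\in\mathcal{NC}_n$, the merged partition $(\sigma|\widetilde\sigma)$ of $[2n]$ has exactly $n+1$ blocks (this is the standard fact that a non-crossing partition and its Kreweras-type dual together partition the $2n$ points into $n+1$ parts). An $(n-1)$-element subset $I$ is concordant with $\sigma$ precisely when $I$ omits exactly one element from each of these $n+1$ blocks; equivalently, the complement $I^c = [2n]\setminus I$, which has $n+1$ elements, is a transversal of the $n+1$ blocks of $(\sigma|\widetilde\sigma)$. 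But ``$I^c$ is a transversal of the blocks of $(\sigma|\widetilde\sigma)$'' is exactly the definition of $I^c$ being co-concordant with $\sigma$ (Definition \ref{co-con}). Therefore the map $I\mapsto I^c$ is, for each fixed $\sigma$, a bijection between $\{I : |I|=n-1,\ \sigma \text{ concordant with } I\}$ and $\{J : |J|=n+1,\ \sigma \text{ co-concordant with } J\}$, and it does not depend on $\sigma$.

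With this bijection in hand, the first step is to record that $\Delta^\circ_{I^c} = \sum_{(\sigma,I^c)} L_\sigma = \sum_{(\sigma,I)} L_\sigma = \Delta^\bullet_I$ for every $(n-1)$-subset $I$; that is, the co-concordant boundary measurements of Theorem \ref{thm:co-Emb} are literally the concordant boundary measurements of Theorem \ref{Emb}, just relabeled by complementation of the index set. The second step is to invoke the standard fact about the Hodge-star / complementation duality on Grassmannians: the map $Gr(n-1,2n)\to Gr(n+1,2n)$ sending a point with Plücker coordinates $(\Delta_I)_{|I|=n-1}$ to the point with Plücker coordinates $(\Delta'_J)_{|J|=n+1}$ where $\Delta'_{I^c} = \mathrm{sgn}(I,I^c)\,\Delta_I$, is an isomorphism, and it sends $Gr_{\ge0}(n-1,2n)$ to $Gr_{\ge0}(n+1,2n)$ up to the sign twist. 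The only subtlety is the sign $\mathrm{sgn}(I,I^c)$: the collection $(\Delta^\bullet_I)$ from Theorem \ref{Emb} is totally nonnegative with the signs already built in, and one must check that $(\Delta^\circ_{I^c})$ with the naive (all-positive) normalization of Theorem \ref{thm:co-Emb} is the correct sign-corrected version. Concretely, one needs the sign of the complementation to be absorbable by a fixed diagonal rescaling of the ambient coordinates (an element of the torus acting on $Gr(n+1,2n)$), which preserves total nonnegativity; this is where the diagonal matrix $D$ of Theorem \ref{Chepuri-George-Speyer embedding} makes its appearance.

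I expect the main obstacle to be precisely this sign bookkeeping. The statement ``$\Delta^\circ_I := \sum_{(\sigma,I)} L_\sigma$ defines a point of $Gr_{\ge0}(n+1,2n)$'' asserts that all these manifestly nonnegative sums really do satisfy the Plücker relations with a consistent sign pattern, and the naive complementation of Lam's point does not automatically land in the totally nonnegative part — it lands there only after twisting by a sign that alternates in a controlled way along $[2n]$. So the real content is: (i) the purely combinatorial bijection $I\leftrightarrow I^c$ relating concordance and co-concordance, which is immediate from the block count of $(\sigma|\widetilde\sigma)$; (ii) the identity $\Delta^\circ_{I^c}=\Delta^\bullet_I$; and (iii) verifying that the complementation sign $\mathrm{sgn}(I,I^c)$ is realized by a diagonal torus element, so that total nonnegativity is preserved. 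Step (iii) is the place where a short explicit computation is unavoidable: one checks that for $|I|=n-1$ the sign $\mathrm{sgn}(I,I^c)$ depends only on $\sum_{i\in I} i \pmod 2$ (or a similar linear functional), hence equals $\prod_{i\in I}\varepsilon_i$ for a fixed sign vector $(\varepsilon_i)$, and rescaling column $i$ by $\varepsilon_i$ converts Lam's point into the all-nonnegative co-concordant point. Once (i)--(iii) are in place, Theorem \ref{thm:co-Emb} follows from Theorem \ref{Emb} with no further work, and in fact one simultaneously obtains that the two compactifications are related by complementation of Plücker indices up to this torus twist — which is exactly the linear-algebra orthogonality recorded in Proposition \ref{Ortogonality of embeddings} and Corollary \ref{cor: inclusion}, now seen from the combinatorial side.
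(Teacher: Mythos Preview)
The paper does not give its own proof of this theorem; it is stated with a citation to \cite[Theorem~2.5]{CGS} and no argument. The closest thing in the present paper to an independent proof is the combination of Theorem~\ref{pl-cor} and Postnikov's Theorem~\ref{m-pos}: the dual generalized Temperley trick produces a Postnikov network whose boundary measurement matrix $M_B$ has maximal minors $\Delta_I(M_B)=\Delta^\circ_I/L_{unc}$, and Postnikov's theorem then forces these to be the Pl\"ucker coordinates of a point of $Gr_{\ge 0}(n+1,2n)$. That route passes through dimers and perfect orientations, exactly as in \cite{CGS}.

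Your approach is genuinely different, and it is correct. The bijection $I\leftrightarrow I^c$ between concordant $(n-1)$-subsets and co-concordant $(n+1)$-subsets is immediate from Lemma~\ref{number of parts} (the merged partition $(\sigma|\widetilde\sigma)$ has $n+1$ blocks, and ``omit one from each block'' is the complement of ``pick one from each block''), and the identity $\Delta^\circ_{I^c}=\Delta^\bullet_I$ follows. The sign check in your step~(iii) also goes through: for $|I|=n-1$ one has $\mathrm{sgn}(I,I^c)=(-1)^{\sum_{i\in I}i-\binom{n}{2}}$, which is a global constant times $\prod_{i\in I}(-1)^i$; hence the orthogonal complement of Lam's point, after the column sign-flip $\widetilde D=\mathrm{diag}((-1)^j)$ on the $(n-1)$-side, has Pl\"ucker coordinates exactly $\Delta^\circ_J$ up to a global sign. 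Since those are manifestly nonnegative, the point lies in $Gr_{\ge 0}(n+1,2n)$. What you have rediscovered is precisely the content of Proposition~\ref{Ortogonality of embeddings} read in reverse: there the orthogonality $MD\,(\Omega_n(e)\widetilde D)^T=0$ is checked by matrix multiplication, whereas you obtain it from the combinatorics of complementation. Your argument is shorter and more conceptual than the dimer-model proof, and it explains \emph{why} the two embeddings are related by orthogonal complement plus a fixed torus twist --- something the paper establishes separately in Section~\ref{sec:linearalgebra} rather than using as the source of Theorem~\ref{thm:co-Emb}.
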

 \begin{remark}
     Similarly to Remark \ref{Plucker coord as dimer partition function} the variable $\Delta^\circ_I$ has an independent combinatorial definition as the dimer partition function of all 
     almost perfect matchings
     on bipartite network $N^d(\Gamma, \omega)$ (see Definition \ref{temp_gen}) with the boundary $I$.
 \end{remark}

Taking into account  Definition \ref{co-con} and repeating the proof of \cite[Lemma 2.27]{BGKT}, we obtain:

\begin{lemma} \label{lemmal}

 For  $e(\Gamma,\omega)\in E_n$ and  $N(\Gamma, \omega)$ as above the following holds:
 $$L_{unc}=\Delta_{[2n] \setminus R}^{\circ},$$ where $R$ is any set of even indices
 such that  $|R|=n-1$.

 $$L_{ij}:=L_{ij|*|*\dots}=\Delta_{[2n]\setminus (\{2i-1\}\cup N)}^{\circ},$$ where $N$ is the set of all even indices except of the two closest to  
 $2j-1$ and $|N|=n-2.$ 
 
 $$L_{kk}:=\sum_{i\neq k}L_{ik}=\Delta_{[2n]\setminus (\{2k-1\}\cup T)}^{\circ},$$ where $T$ is the set of all even indices except of the two closest to $2k-1$ and $|T|=n-2$. 
 \end{lemma}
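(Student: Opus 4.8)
The plan is to reduce the three identities to the known concordance version from \cite{BGKT} by tracking precisely how the dual generalized Temperley's trick exchanges the roles of the black and white vertices, and hence how concordant subsets $I$ (of size $n-1$) correspond to co-concordant subsets (of size $n+1$). First I would recall the proof of \cite[Lemma 2.27]{BGKT}: there one identifies $L_{unc}$, $L_{ij}$, and $L_{kk}$ as specific $\Delta^\bullet$-coordinates, i.e.\ as grove measurements summed over non-crossing partitions concordant with particular $(n-1)$-element sets. The combinatorial content is that the uncrossing partition $\ast|\ast|\cdots|\ast$ is concordant exactly with sets of the form $\{1,3,5,\dots,2n-3\}$ translated by picking out odd positions, while $\sigma=(\bar i,\bar j|\ast|\cdots)$ forces a single "defect" at an odd index together with a prescribed pattern of even indices, and $L_{kk}$ is the sum over $j$ of the $L_{kj}$.

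\smallskip

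Next I would invoke the combinatorial dictionary between concordance and co-concordance. By Definition \ref{def-ordcon}, $\sigma$ is concordant with $I$ (with $|I|=n-1$) iff each block of $\sigma$ and of $\widetilde\sigma$ contains exactly one element of $[2n]\setminus I$; by Definition \ref{co-con}, $\sigma$ is co-concordant with $J$ (with $|J|=n+1$) iff each block of $\sigma$ and of $\widetilde\sigma$ contains exactly one element of $J$. Since $\sigma$ together with $\widetilde\sigma$ has exactly $n+1$ blocks (a non-crossing partition of $[n]$ into $k$ blocks has dual with $n+1-k$ blocks, and $k+(n+1-k)=n+1$), the complement map $I\mapsto [2n]\setminus I$ sends $(n-1)$-sets to $(n+1)$-sets and, crucially, sends sets concordant with $\sigma$ to sets co-concordant with $\sigma$ and vice versa. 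Therefore $\Delta^\circ_{[2n]\setminus I}=\sum_{\{\sigma\,:\,\sigma\text{ co-conc. with }[2n]\setminus I\}}L_\sigma=\sum_{\{\sigma\,:\,\sigma\text{ conc. with }I\}}L_\sigma=\Delta^\bullet_I$. This is the single structural fact doing all the work.

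\smallskip

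With this identity in hand, each of the three claimed formulas follows by translating the \cite[Lemma 2.27]{BGKT} description of $L_{unc}$, $L_{ij}$, $L_{kk}$ as $\Delta^\bullet_I$ for an explicit $I$ into the corresponding $\Delta^\circ_{[2n]\setminus I}$. For $L_{unc}$: by Example \ref{bulcon-ex}, $L_{unc}=\Delta^\bullet_{246\cdots(2n-2)}$, and one checks that more generally $L_{unc}=\Delta^\bullet_R$ for any $(n-1)$-element set $R$ of even indices (the uncrossing partition is concordant with such $R$ precisely because each singleton block of $\sigma$ and each block of $\widetilde\sigma$ then contains a unique odd defect); complementing gives $L_{unc}=\Delta^\circ_{[2n]\setminus R}$ for any such $R$, which is the first claim. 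For $L_{ij}$ and $L_{kk}$ one identifies the $(n-1)$-set $I$ with which the relevant $\sigma$'s are concordant — it consists of a single odd index $2i-1$ (the defect isolating the block $\{\bar i,\bar j\}$) together with all-but-two even indices, the two omitted ones being the even indices straddling $2j-1$ — and again complements, using $[2n]\setminus(\{2i-1\}\cup N)$ where $N$ is the set of even indices except the two closest to $2j-1$. The summation defining $L_{kk}=\sum_{i\neq k}L_{ik}$ translates termwise.

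\smallskip

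The main obstacle I anticipate is purely bookkeeping: getting the parity conventions and the "two even indices closest to $2j-1$" exactly right under the identification $[2n]=[\overline n]\sqcup[\widetilde n]$, and confirming that the block-count identity (so that complementation really does swap concordance and co-concordance) holds uniformly, including the cyclic wrap-around case where the two even neighbors of $2j-1$ are $2n$ and $2$. Once the complementation bijection is verified, there is no genuine difficulty — this is exactly why the statement is phrased as "repeating the proof of \cite[Lemma 2.27]{BGKT}."
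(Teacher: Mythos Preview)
Your proposal is correct and matches the paper's approach: the paper simply says ``taking into account Definition \ref{co-con} and repeating the proof of \cite[Lemma 2.27]{BGKT},'' which is exactly your strategy of using the complementation bijection $I\mapsto[2n]\setminus I$ to convert concordance to co-concordance and then invoking the BGKT lemma. Your formulation is in fact more explicit than the paper's, since you isolate the general identity $\Delta^\circ_{[2n]\setminus I}=\Delta^\bullet_I$ (which follows immediately from comparing Definitions \ref{def-ordcon} and \ref{co-con} together with $|\sigma|+|\widetilde\sigma|=n+1$) before specializing; one minor remark is that for $L_{kk}$ the translation is not really ``termwise'' but rather the single identity $\Delta^\bullet_{\{2k-1\}\cup T}=\sum_{m\neq k}L_{km}$ already holds on the $\bullet$-side, since every $\sigma=(\bar k,\bar m|\ast|\cdots)$ with $m\neq k$ is concordant with $\{2k-1\}\cup T$.
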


For an electrical network $e(\Gamma,\omega) \in E_n$ let us consider associated with it the Postnikov network $PN(\Gamma,\omega',O_2)$ (see Definition \ref{postn-net}), where $O_2$ is a perfect orientation with the set of sources  $O_2=\{1, 2, 3, 5, \dots, 2n-1 \}$. Note that a perfect orientation $O_2$ exists due to the fact that $\Delta_{[2n]\setminus R}^{\circ}=L_{unc}\neq 0$ (see \cite[Remark 2.7]{BGKT} and the proof of \cite[Theorem 3.3]{BGKT}). Then Postnikov's network $PN(\Gamma,\omega',O_2)$ defines the same point of $Gr_{\geq 0}(n+1, 2n)$ as the collection of boundary measurements $\Delta^{\circ}_{I}$, more explicitly:
\begin{theorem} \label{pl-cor}
   For any $e \in E_n$ the following holds:
    $$\frac{\Delta^{\circ}_I}{L_{unc}}=\Delta_I(M_B),$$
    where $M_B$ is the extend boundary measurement matrix (see Definition \ref{extend}) of $PN(\Gamma,\omega',O_2).$
\end{theorem}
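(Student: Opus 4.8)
The plan is to compare two explicit matrix representatives of the same point in $Gr_{\ge 0}(n+1,2n)$: the extended boundary measurement matrix $M_B$ coming from Postnikov's network $PN(\Gamma,\omega',O_2)$, and the matrix $MD$ from Theorem \ref{Chepuri-George-Speyer embedding}. First I would recall from Section \ref{sec: appendix} (and \cite{L}, \cite{BGKT}) the explicit relation between the edge/boundary measurements of the bipartite network $N^d(\Gamma,\omega)$ obtained by the dual generalized Temperley's trick (Definition \ref{temp_gen}) and the grove measurements $L_\sigma$ of the original electrical network. The key combinatorial input is the dictionary between almost perfect matchings of $N^d(\Gamma,\omega)$ with boundary $I$ and groves of $\Gamma$ whose boundary partition is co-concordant with $I$; this is exactly the content of the Remark following Theorem \ref{thm:co-Emb}, together with Lemma \ref{lemmal}, which identifies the distinguished measurements $L_{unc}$, $L_{ij}$, $L_{kk}$ with specific co-concordant Plücker coordinates $\Delta^\circ_{[2n]\setminus S}$.

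Next I would set up the perfect orientation $O_2$ with sources $\{1,2,3,5,\dots,2n-1\}$, which exists because $L_{unc}=\Delta^\circ_{[2n]\setminus R}\ne0$ for an even set $R$ of size $n-1$ (as noted before the theorem). By the Lindström--Gessel--Viennot / Postnikov formula, the maximal minor $\Delta_I(M_B)$ equals the signed generating function of families of vertex-disjoint directed paths in $PN(\Gamma,\omega',O_2)$ from the source set to $I$, which is precisely the dimer partition function of almost perfect matchings of $N^d(\Gamma,\omega)$ with boundary $I$, up to the standard normalization by the weight of the source-to-source "trivial" matching. The latter trivial matching contributes $L_{unc}$. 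Hence $\Delta_I(M_B) = \Delta^\circ_I / L_{unc}$ follows once the signs are checked to agree. This is essentially the dual of the argument proving \cite[Theorem 3.3]{BGKT} (i.e. Theorem \ref{Theorem about Omega_n}), with the roles of black and white vertices swapped; the proof of \cite[Theorem 3.6]{BGKT} gives the template, and one transports it through the dual generalized Temperley's trick.

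The main obstacle I anticipate is bookkeeping of signs and of the choice of source set. Postnikov's boundary measurements carry signs depending on the cyclic order of sources and sinks and on wire crossings, while the co-concordant Plücker coordinates $\Delta^\circ_I$ of \cite{CGS} are defined to be nonnegative; reconciling these requires a careful choice of gauge (equivalently, the diagonal matrix $D$ appearing in Theorem \ref{Chepuri-George-Speyer embedding}) and tracking how the perfect orientation $O_2$ with the slightly irregular source set $\{1,2,3,5,\dots,2n-1\}$ affects the crossing signs. I would handle this by first verifying the identity on the distinguished coordinates from Lemma \ref{lemmal} (where both sides are unambiguous), then invoking the fact that a point of the Grassmannian is determined by its Plücker coordinates together with the three-term Plücker relations, so matching a spanning family of minors suffices; alternatively one can appeal directly to Theorem \ref{thm:co-Emb}, which already guarantees that the collection $\{\Delta^\circ_I\}$ is a point of $Gr_{\ge0}(n+1,2n)$, reducing the task to checking that $M_B$ realizes that same point, which is a finite verification on a basis of minors.
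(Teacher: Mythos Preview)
Your second paragraph is exactly the right argument and matches what the paper does: the paper's entire proof is ``analogous to the proof of \cite[Corollary 3.5]{BGKT}'', which is precisely the Postnikov/Talaska identity expressing $\Delta_I(M_B)$ as the dimer partition function on the bipartite graph with boundary $I$, normalized by the dimer partition function with boundary equal to the source set $O_2$. Since almost perfect matchings on $N^d(\Gamma,\omega)$ with boundary $I$ enumerate groves co-concordant with $I$ (the Remark after Theorem \ref{thm:co-Emb}) and $\Delta^\circ_{O_2}=L_{unc}$ (Lemma \ref{lemmal}), the identity follows.

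One caution about your framing: your first paragraph proposes to compare $M_B$ with the matrix $MD$ from Theorem \ref{Chepuri-George-Speyer embedding}. In this paper's logical order that comparison is the \emph{content} of the subsequent Theorem \ref{thm:explicitcoord}, whose proof uses Theorem \ref{pl-cor} (and Lemma \ref{lemmal}) as input. So if you treat ``$M_B$ and $MD$ represent the same point'' as a step rather than as motivation, you are arguing in a circle. You do not need $MD$ at all; the dimer argument in your second paragraph is self-contained. Likewise, the sign worries in your third paragraph are already absorbed by the standard Postnikov machinery (Theorem \ref{m-pos} guarantees all signs come out nonnegative), so there is no need for a separate verification on a spanning family of minors.
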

\begin{proof}
    The proof is analogous to the proof of \cite[Corollary 3.5]{BGKT}.
\end{proof}
Now, we are ready to obtain Chepuri--George--Speyer embedding. 
\begin{lemma} \label{lem:second row}
    Consider  $PN(\Gamma,\omega',O_{2})$ then the row of $M_B$ corresponding to the source labeled by $2$ is:
    $$(0, 1, 0, -1, 0, 1, 0, -1, \dots).$$
\end{lemma}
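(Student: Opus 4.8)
The plan is to compute the row of the extended boundary measurement matrix $M_B$ indexed by the source $2$ directly from the combinatorial description of boundary measurements in the Postnikov network $PN(\Gamma,\omega',O_2)$. Recall that the entry of $M_B$ in the row of a source $i$ and column of a boundary vertex $j$ is, up to sign, the sum over directed paths from $i$ to $j$ in the chosen perfect orientation $O_2$ of the product of edge weights (with the diagonal normalized so that the source columns form an identity block). So first I would unwind, via the definitions in Section \ref{pbn-ap} and the generalized Temperley's trick applied to produce $N(\Gamma)$, exactly which edges emanate from the boundary vertex $2$ of the bipartite network and where the short directed paths starting at the source $2$ can go.

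The key point is that in the Temperley bipartite graph the boundary vertices $\overline i$ and $\widetilde i$ are connected only to their immediate neighbours on the boundary circle and to a single adjacent interior vertex coming from the original network $\Gamma$; the vertex labeled $2$ in the identification $[2n]=\{\overline1,\widetilde1,\dots\}$ is $\widetilde1$, a ``dual'' boundary vertex that is not attached to any interior vertex of $\Gamma$ but only sits between $\overline1$ and $\overline2$. Consequently the only directed paths in $O_2$ out of the source $\widetilde1=2$ that reach other boundary vertices are the trivial length-zero path to $2$ itself and the short two-step paths through the single internal white (or black) vertex separating consecutive boundary points, which land exactly on the even-indexed boundary vertices $\widetilde2, \widetilde3,\dots$ Since those separating vertices are degree-two, each such path has weight $1$ after the normalization, and the alternating signs $+1,-1,+1,-1,\dots$ are precisely the signs dictated by the sign convention in the definition of the boundary measurement matrix (Definition \ref{extend}), i.e. $(-1)$ raised to the number of boundary sources strictly between $i$ and $j$. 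Collecting these contributions gives the row $(0,1,0,-1,0,1,0,-1,\dots)$, with zeros in the source columns (other than column $2$, which carries the normalizing $1$) because $\{1,2,3,5,7,\dots,2n-1\}$ are sources.

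Concretely the steps are: (i) identify $2=\widetilde1$ and describe its star in the Temperley graph $N(\Gamma)$ and the induced orientation $O_2$; (ii) enumerate all directed paths in $O_2$ from the source $2$ to each boundary vertex, observing they are forced to be the short paths through the degree-two ``dual'' vertices; (iii) evaluate the corresponding monomials, which reduce to $1$ up to sign after the identity-block normalization; (iv) read off the signs from the sign rule in Definition \ref{extend}, noting that crossing each intermediate source contributes a factor $-1$, which produces the alternating pattern and annihilates the odd (source) columns. This is essentially the computation already carried out for the first row in \cite[Lemma 4.2 and Corollary 3.5]{BGKT}, transported through the dual Temperley's trick, so one can also simply invoke that parallel and check the color swap only affects which of $\overline{}$ or $\widetilde{}$ vertices play the role of sources without changing the shape of this particular row.

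The main obstacle I anticipate is bookkeeping the sign convention correctly: the matrices $\Omega_n$, $M$, and $M_B$ in this paper all carry carefully chosen alternating signs and diagonal twists $D$, $\widetilde D$, and getting the pattern $(0,1,0,-1,\dots)$ rather than, say, $(0,1,0,1,\dots)$ or a globally shifted version depends on matching the orientation $O_2=\{1,2,3,5,\dots,2n-1\}$ with the path-sign rule exactly. So the real work is not the path enumeration (which is immediate once one sees the dual vertices are degree two) but verifying that the sign attached to the path from $2$ to $\widetilde j$ is $(-1)^{j+1}$, which I would do by counting the sources strictly separating $2$ from $\widetilde j$ along the boundary and invoking the precise formula in Definition \ref{extend}.
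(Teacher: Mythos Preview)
Your approach differs from the paper's and has a genuine gap. The claim that vertex $2=\widetilde 1$ in $N^d(\Gamma)$ is connected only to a degree-two vertex ``separating consecutive boundary points'' is incorrect: by Definition~\ref{temp_gen}, the even boundary vertex $2$ is joined to the white face vertex for whichever face $F$ of $\Gamma$ contains the boundary arc between $\bar 1$ and $\bar 2$. That face can be bounded by many edges of $\Gamma$, so this face vertex typically has large degree, and directed paths out of source $2$ can wander arbitrarily far into the interior of the network, picking up products of the conductances $\omega(e)$ along the way. What the lemma asserts is that the signed boundary measurement $a_{2\to 2k}$ equals $\pm 1$ \emph{independently of all the weights}; this cannot be read off from local structure and would require a cancellation argument you have not supplied. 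Your final sign count (sources strictly between $2$ and $2k$) is the right mechanism for the alternation, but it presupposes that the unsigned measurement is $1$, which is exactly the nontrivial part.

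The paper sidesteps path enumeration entirely. For $I=\{2k\}\cup O_2\setminus\{2\}$ the complement $[2n]\setminus I$ consists of $n-1$ even indices, so Lemma~\ref{lemmal} gives $\Delta_I^\circ=L_{unc}$ and then Theorem~\ref{pl-cor} gives $\Delta_I(M_B)=1$. Because the source columns of $M_B$ form an identity block, this minor expands as $(-1)^s a_{2\to 2k}$, whence $a_{2\to 2k}=(-1)^{k-1}$. Thus the weight-independence is a consequence of the grove identity $\Delta^\circ_{[2n]\setminus R}=L_{unc}$, not of the shape of individual paths. Your appeal to \cite{BGKT} does not transfer: Lemma~4.2 there concerns the rows of $\Omega_n(e)$ lying in $V$, and in the $(n-1)$-dimensional picture there is no constant row analogous to the one appearing here.
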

\begin{proof}
  Directly from Definition \ref{extend} 
  we obtain:
    $$a_{2 \to 2}=1,\; a_{2 \to 2k-1}=0.$$
On the other hand, for any $k \neq 1$ we have:
$$
\Delta_{\{2k\}\cup O_2 \setminus \{2\} }(M_B) =\frac{\Delta_{\{2k\}\cup O_2 \setminus\{2\} }^{\circ}}{L_{unc}}=1,
$$
here the first equality follows from Theorem \ref{pl-cor} and the second equality follows from Lemma \ref{lemmal}.
Moreover,  from Definition \ref{extend} we obtain:
$$\Delta_{\{2k\}\cup O_2 \setminus \{2\} }(M_B)=(-1)^sa_{2 \to 2k},$$
where $s$ is the number of the elements in $O_2$  between the source labeled by $2$ and the sink labeled by $2k.$ By the direct computations we conclude that $a_{2 \to 2k}=(-1)^{k-1}.$ 
\end{proof}

The following theorem is a main result of this subsection. It is an analog of \cite[Theorem 3.3 and Theorem 3.6]{BGKT} and provide an embedding from Theorem \ref{Chepuri-George-Speyer embedding} in terms of the dual generalized Temperley's trick and the Postnikov's embedding.

\begin{theorem} \label{thm:explicitcoord}
    Posntikov network $PN(\Gamma,\omega',O_{2})$ defines the point in $Gr_{\geq 0}(n+1, 2n)$ spanned by vector-rows of the following matrix:
    \begin{equation} \label{ch-sp}
    X=\left(\begin{matrix}
0 & 1 & 0 & 1 & \ldots & 0 & 1 \\
1 & S_{11} & 0 & S_{12} & \cdots & 0 & S_{1n} \\
0 & S_{21} & 1 & S_{22} & \cdots & 0 & S_{2n} \\
\vdots & \vdots & \vdots & \vdots & \cdots & \vdots & \vdots \\
0 & S_{n1} & 0 & S_{n2} & \cdots & 1 & S_{nn}
\end{matrix}\right)D,    
    \end{equation}
    
where $D=diag(1, 1, -1, -1, \dots)$ and $S_{ij}$ are defined by Equation \eqref{eq:S_ij}.
\end{theorem}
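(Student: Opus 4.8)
The plan is to combine Theorem \ref{pl-cor}, which identifies the normalized co-concordant boundary measurements $\Delta^{\circ}_I/L_{unc}$ with the Pl\"ucker coordinates $\Delta_I(M_B)$ of the extended boundary measurement matrix of $PN(\Gamma,\omega',O_2)$, with the explicit form of $M_B$ that one can read off row by row. The source set is $O_2=\{1,2,3,5,\dots,2n-1\}$, so $M_B$ has $n+1$ rows, one for each source. The strategy is to prove that, after the sign twist by $D=\operatorname{diag}(1,1,-1,-1,\dots)$, the matrix $M_B D$ equals the matrix $X D^{-1}$ of \eqref{ch-sp} up to a row operation that does not change the point of the Grassmannian, i.e. $M_B$ and $XD^{-1}D^{-1}$ differ by an invertible left multiplication; equivalently $\Delta_I(M_B)=\Delta_I(XD^{-1})$ for all $(n+1)$-subsets $I$, which by Theorem \ref{pl-cor} is exactly what we must show.

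First I would fix the presentation of $M_B$ coming from Definition \ref{extend}: in the columns indexed by $O_2$ (the sources) $M_B$ is the identity up to sign, and in each column indexed by a sink $2k$ the entry in the row of source $i$ is $\pm a_{i\to 2k}$, where $a_{i\to 2k}$ is the boundary measurement (weighted sum over paths) and the sign is the standard Postnikov sign depending on how many source-indices lie between $i$ and $2k$. Lemma \ref{lem:second row} already pins down the row of the source labeled $2$: it is $(0,1,0,-1,0,1,0,-1,\dots)$, which is precisely the first row of $X$ after applying $D$ (indeed $(0,1,0,1,\dots)D=(0,1,0,-1,0,1,0,-1,\dots)$). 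So the content left is to identify the rows corresponding to the sources $1,3,5,\dots,2n-1$ with the rows $2,\dots,n+1$ of $X\,D$. For the source $2i-1$, I would show that the entry of $M_B$ in the column of sink $2k$ equals $(-1)^{\text{something}}S_{ik}$ (with the sign matching the $D$-twist), by relating the boundary measurements $a_{2i-1\to 2k}$ to the grove measurements $L_{ik}$, $L_{kk}$, $L_{unc}$ via Lemma \ref{lemmal} and Theorem \ref{pl-cor}, in exactly the way the entries $S_{ij}$ are defined from $M_R$ in \eqref{eq:S_ij}. This is the analogue of the computation in \cite[Theorem 3.3 and Theorem 3.6]{BGKT}, now carried out with the opposite vertex coloring, i.e. for the dual generalized Temperley's trick.

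Concretely, I would evaluate $\Delta_{\{2k\}\cup O_2\setminus\{2i-1\}}(M_B)$ in two ways: on one hand, by cofactor expansion in the near-identity columns it equals $\pm a_{2i-1\to 2k}$ with an explicit sign; on the other hand, by Theorem \ref{pl-cor} it equals $\Delta^{\circ}_{\{2k\}\cup O_2\setminus\{2i-1\}}/L_{unc}$, which Lemma \ref{lemmal} rewrites in terms of $L_{ik}$ (and, when relevant, $L_{kk}$). Matching these with the definition of $S_{ij}$ — recalling $-M_R=(S_{\cdot n}-S_{\cdot 1}\,|\,S_{\cdot 1}-S_{\cdot 2}\,|\,\dots)$, so that $S_{ij}$ is, up to the usual telescoping freedom, the partial sum $-\sum_{\ell\le j}(M_R)_{i\ell}$, which is itself a ratio of grove measurements — shows that the $(i,k)$ sink-entry of $M_B$ is $(-1)^{?}S_{ik}$, where the sign is precisely the $(2k)$-th diagonal entry of $D$. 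Finally I would note that the telescoping ambiguity in choosing $S$ corresponds exactly to right multiplication of $X$ by a matrix fixing the Grassmannian point (it only shifts the "$0,1,0,1,\dots$" top row around), so the point is well defined and independent of the choice of $S$ — which is consistent with the statement of Theorem \ref{Chepuri-George-Speyer embedding}.

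The main obstacle I expect is bookkeeping of signs: tracking the Postnikov path-crossing signs against the alternating signs of $D$, the $(-1)^n$ wrap-around term hidden in the cyclic structure of $O_2$ (note that $O_2$ is $\{1,2,3,5,\dots\}$, not a "generic" source set, so the source $1$ and source $3$ are adjacent and this perturbs the sign count), and the $(-1)^{k-1}$ already appearing in Lemma \ref{lem:second row}. I would handle this by first doing the cases $n=3,4$ explicitly to fix conventions, then arguing the general sign by an induction on $k$ (moving the sink $2k$ past one more source index at a time, each step flipping exactly one sign), exactly as the parallel sign lemma is proved in \cite{BGKT}. Once the signs are under control, the identification of entries with $S_{ij}$ is the same rational-function manipulation as in \cite[Theorem 3.6]{BGKT}, and the theorem follows; the corollary \ref{cor:maincor} identifying this with the Chepuri--George--Speyer matrix $MD$ is then immediate from Theorem \ref{Chepuri-George-Speyer embedding}.
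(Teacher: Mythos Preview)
Your proposal is correct and follows essentially the same route as the paper: both start from the extended boundary measurement matrix $M_B$ of $PN(\Gamma,\omega',O_2)$, use Lemma \ref{lem:second row} to pin down the row of the source $2$, and then identify the remaining entries $F_{ij}=a_{2i-1\to 2j}$ with a valid choice of $S_{ij}$ by evaluating suitable Pl\"ucker coordinates via Theorem \ref{pl-cor} and Lemma \ref{lemmal}. The paper streamlines your sign bookkeeping by swapping rows $1$ and $2$ of $M_B$ and factoring out an explicit left diagonal matrix $D'=\operatorname{diag}(1,1,-1,1,-1,\dots)$ so that $M_B$ becomes $D'(\text{matrix with entries }(-1)^{i+j}F_{ij})D$ up to row span; setting $S'_{ij}:=(-1)^{i+j}F_{ij}$, it then verifies \eqref{eq:S_ij} directly by computing the differences $S'_{kk}-S'_{k,k-1}$ and $S'_{i,j-1}-S'_{ij}$ as the minors $\Delta_{[2n]\setminus(\{2k-1\}\cup T)}$ and $\Delta_{[2n]\setminus(\{2i-1\}\cup N)}$ of Lemma \ref{lemmal}, rather than extracting individual entries and inducting on $k$ as you suggest.
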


\begin{proof}
According to Definition \ref{extend} and Lemma \ref{lem:second row} the extended boundary measurements matrix of $PN(\Gamma,\omega',O_{2})$ has the form:   
  
     $$M_B=\left(\begin{matrix}
1 & F_{11} & 0 & F_{12} & \cdots & 0 & F_{1n} \\
0 & 1 & 0 & -1 &  \cdots & 0 & (-1)^{n-1} \\
0 & F_{21} & 1 & F_{22} & \cdots & 0 & F_{2n} \\
\vdots & \vdots & \vdots & \vdots & \cdots & \vdots & \vdots \\
0 & F_{n1} & 0 & F_{n2} & \cdots & 1 & F_{nn}
\end{matrix}\right),$$
where $F_{ij}=a_{2i-1 \to 2j}.$

 Let us swap the first and second rows of $M_B$ to obtain the matrix $M_{B}''.$ Then it is easy to verify that $M_{B}''$ can be rewritten as follows: 
$$M_{B}''=D'\left(\begin{matrix}
0 & 1 & 0 & 1 &  0 & 1 & \ldots \\
1 & F_{11} & 0 & -F_{12} & 0 & F_{13} & \ldots \\
0 & -F_{21} & 1 & F_{22} & 0 & -F_{23}& \ldots \\
\vdots & \vdots & \vdots & \vdots & \cdots & \vdots & \vdots \\
\end{matrix}\right)D,$$
where $D'=diag(1, 1, -1, 1, -1, \dots)$ of size $n+1$.

Since $PN(\Gamma,\omega',O_{2})$ defines the point in $Gr_{\geq 0}(n+1, 2n)$ spanned by the rows of the  matrix $M_B$ (see Theorem \ref{m-pos}), the same point is defined by the matrix $M_{B}' = D'M_B''.$

Note that for any $I\subset[2n]$
\begin{equation} \label{eq:right sign}
\Delta_I(M_B')=\det(D')\Delta_I(M_B'')=-\det(D')\Delta_I(M_B)=(-1)^{\binom{n}{2}+1}\Delta_I(M_B),
\end{equation}
where the first equality holds by the multiplicativity of determinant, the second equality holds because a swap of two rows changes the sign of determinant, and the third equality holds because $\det(D')=(-1)^{\binom{n}{2}}$.

Let us proof that $S'_{ij}:=(-1)^{i+j}F_{ij}$ satisfies the conditions for $S_{ij}$. Indeed:
\begin{multline} \label{xij_1}
S'_{kk}-S'_{kk-1}=(-1)^{\binom{n}{2}+1}\Delta_{[2n]\setminus (\{2k-1\}\cup T)}(M_{B}')=\Delta_{[2n]\setminus (\{2k-1\}\cup T)}(M_{B})=\\
=\frac{\Delta_{[2n]\setminus (\{2k-1\}\cup T)}^{\circ}}{L_{unc}}=\frac{L_{kk}}{L_{unc}}=x_{kk},   
\end{multline}
 \begin{multline} \label{xij_2}
 S'_{ij-1}-S'_{ij}=(-1)^{\binom{n}{2}+1}\Delta_{[2n]\setminus (\{2i-1\}\cup N)}(M_{B}')=\Delta_{[2n]\setminus (\{2i-1\}\cup N)}(M_{B})=\\
 =\frac{\Delta_{[2n]\setminus (\{2i-1\}\cup N)}^{\circ}}{L_{unc}}=\frac{L_{ij}}{L_{unc}}=-x_{ij},    
 \end{multline}
here $N$ and $T$ are the same as in Lemma \ref{lemmal} and all indices in $S'_{ij}$ are from the set $\{1,\ldots n\}\,(\mathrm{mod}\, n)$, where $0 (\text{mod }n)=n$.
The first equalities in \eqref{xij_1} and \eqref{xij_2} hold besause of the exact form of $M_B'$ (see the proof of Theorem \cite[Theorem 1.8]{CGS}), the second equalities hold due to \eqref{eq:right sign}, the third equalities hold due to Theorem \ref{pl-cor}, the fourth equalities hold due to Lemma \ref{lemmal} and the last equalities hold due to \cite[Proposition 2.8]{KW 2011}.
\end{proof}
Combining Theorem \ref{pl-cor} and Theorem \ref{thm:explicitcoord} we obtain the following:
\begin{corollary}\label{cor:maincor}
    For every $e \in E_n$ the dual generalized Temperley's trick defines a point in $Gr_{\geq 0}(n+1, 2n)$ which might be parameterized by a matrix defined by \eqref{ch-sp}. In other words, the composition 
    $$e(\Gamma,\omega)\stackrel{\text{dual Temperley's trick}}{\longmapsto}N^d(\Gamma)\stackrel{\text{\text{Definition \ref{postn-net}}}}{\longmapsto} PN(\Gamma,\omega',O_2)\stackrel{\text{Postnikov's embedding}}{\longmapsto}M_B$$
    (see Section \ref{pbn-ap} for precise definitions) coincides with the embedding from Theorem \ref{Chepuri-George-Speyer embedding}, i.e. it maps $e(\Gamma,\omega)\mapsto MD$.
\end{corollary}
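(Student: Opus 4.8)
The plan is to assemble Corollary \ref{cor:maincor} directly from the two facts already in hand: Theorem \ref{thm:explicitcoord}, which identifies the point of $Gr_{\geq 0}(n+1,2n)$ defined by $PN(\Gamma,\omega',O_2)$ with the row space of the matrix $X$ in \eqref{ch-sp}, and Theorem \ref{pl-cor}, which guarantees that this Postnikov network is exactly what the dual generalized Temperley's trick produces (via the composition $e(\Gamma,\omega)\mapsto N^d(\Gamma)\mapsto PN(\Gamma,\omega',O_2)$) and that its extended boundary measurement matrix $M_B$ computes the Plucker coordinates $\Delta^{\circ}_I/L_{unc}$. So the first step is simply to chain these maps: the dual Temperley's trick sends $e(\Gamma,\omega)$ to $N^d(\Gamma)$, Definition \ref{postn-net} promotes this to $PN(\Gamma,\omega',O_2)$, and Postnikov's embedding (Theorem \ref{m-pos}) sends the latter to the point of the Grassmannian represented by $M_B$, hence by $X$.

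The second step is to match $X$ against the matrix $MD$ from Theorem \ref{Chepuri-George-Speyer embedding}. Comparing \eqref{ch-sp} with the statement of Theorem \ref{Chepuri-George-Speyer embedding}, the matrix $X$ is literally $MD$ with $D=\mathrm{diag}(1,1,-1,-1,\ldots)$ and with the same entries $S_{ij}$ satisfying \eqref{eq:S_ij} relative to the response matrix $M_R$ of $e$; this identification was in fact already established inside the proof of Theorem \ref{thm:explicitcoord}, where the entries $S'_{ij}=(-1)^{i+j}F_{ij}$ were checked to satisfy the defining system \eqref{eq:S_ij} for $-M_R$. Thus the point of $Gr_{\geq 0}(n+1,2n)$ produced by the dual Temperley's trick is the point $MD$, which is precisely the image of $e(\Gamma,\omega)$ under the Chepuri--George--Speyer embedding of Theorem \ref{Chepuri-George-Speyer embedding}. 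Since $S$ in \eqref{eq:S_ij} is only defined up to the ambiguity in solving that linear system, one should note (as is implicit in \cite{CGS}) that the resulting \emph{point} of the Grassmannian is independent of this choice, so the identification of points is unambiguous even though the matrix representatives are not.

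The only genuinely substantive input has already been absorbed into Theorem \ref{thm:explicitcoord}, namely the row-swap and sign bookkeeping that converts $M_B$ (which has its distinguished all-$\pm1$ row in the second slot, by Lemma \ref{lem:second row}) into the Chepuri--George--Speyer normal form, together with the verification via Lemma \ref{lemmal} and Theorem \ref{pl-cor} that the resulting entries reconstruct $-M_R$. Consequently the proof of the corollary itself is a one-line composition argument, and I would write it essentially as: "Combining Theorem \ref{pl-cor} and Theorem \ref{thm:explicitcoord} we see that the composition $e(\Gamma,\omega)\mapsto N^d(\Gamma)\mapsto PN(\Gamma,\omega',O_2)\mapsto M_B$ lands on the point of $Gr_{\geq 0}(n+1,2n)$ represented by the matrix \eqref{ch-sp}, which coincides with $MD$ of Theorem \ref{Chepuri-George-Speyer embedding}; hence the two embeddings agree." The main (and only) obstacle is not in this corollary at all but was dispatched earlier: ensuring that the perfect orientation $O_2$ with sources $\{1,2,3,5,\ldots,2n-1\}$ exists, which follows from $\Delta^{\circ}_{[2n]\setminus R}=L_{unc}\neq 0$ and is cited from \cite[Remark 2.7]{BGKT}; I would at most remind the reader of this point so the composition in the corollary is well-defined.
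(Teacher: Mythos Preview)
Your proposal is correct and matches the paper's own argument exactly: the paper states the corollary immediately after the sentence ``Combining Theorem \ref{pl-cor} and Theorem \ref{thm:explicitcoord} we obtain the following,'' giving no further proof. Your additional remarks about the well-definedness of $O_2$ and the independence of the choice of $S$ are accurate elaborations but go slightly beyond what the paper records.
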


\section{Symplectic Lie algebra and electrical networks}\label{sec:main2}

\subsection{Concordance and symplectic forms} \label{sec: concordance property and symplectic forms}

The symplectic form \eqref{8} was discovered in the context of electrical algebras in \cite{BGG} where the generalized electrical algebras were introduced. It was used subsequently in the context of electrical networks in 
\cite{BGKT}. 
The connection of this form to the combinatorics developed in \cite{L} has been
 mysterious until now. The goal of this section is to explain its appearance in terms of combinatorics of concordance vectors.










Given a vector space $W$ and a skew symmetric form $\omega:W\times W\to \mathbb R$ one can define the following convolution operator:
\begin{equation} \label{convolution}
Q:\bigwedge^k W\to\bigwedge^{k-2}W
\end{equation}

$$Q(x_1\wedge x_2\wedge \ldots \wedge x_k)=\sum\limits_{i<j}\omega(x_i,x_j)(-1)^{i+j-1}x_1\wedge\ldots\wedge \widehat{x}_i\wedge\ldots\wedge\widehat{x}_j\wedge\ldots\wedge x_k.$$

\begin{theorem} \textup{\cite[Theorem 5.9]{BGKT}}\label{Fund_repr}
The following holds

$\bullet$ $H$ is the subspace of $\bigwedge^{n-1}V$.

$\bullet$ For $W=V,\ \omega=\Lambda_{2n-2}$ and $k=n-1,\ H$ coincides with the kernel of the convolution operator $Q$.

$\bullet$ The subspace $H$ is invariant under the action of the Lam group and hence the
action of $Sp(2n-2)$. Moreover, $H$ is the space of the fundamental representation of the group $Sp(2n-2)$ which corresponds to the last vertex of the Dynkin diagram.
\end{theorem}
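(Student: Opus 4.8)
\textbf{Proof proposal for Theorem \ref{Fund_repr}.}

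The plan is to establish the three bullets in order, since each uses the previous ones. For the first bullet, I would show $H \subset \bigwedge^{n-1} V$ by checking that every concordance vector $w_\sigma$ lies in $\bigwedge^{n-1} V$. Recall $V = \{v \in \mathbb R^{2n} : \sum_i (-1)^i v_{2i} = 0,\ \sum_i (-1)^i v_{2i-1} = 0\}$, so $\bigwedge^{n-1} V$ is cut out inside $\bigwedge^{n-1}\mathbb R^{2n}$ by the two linear conditions obtained by contracting with the covectors $\varphi_{\mathrm{ev}} = \sum_i (-1)^i e_{2i}^*$ and $\varphi_{\mathrm{odd}} = \sum_i (-1)^i e_{2i-1}^*$. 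Concretely one must verify $\iota_{\varphi}(w_\sigma) = 0$ for both covectors. Expanding $w_\sigma = \sum_{I \text{ concordant with } \sigma} e_I$ and grouping the terms $e_I$ that differ by swapping one even (resp. odd) index, the concordance condition --- ``each part of $\sigma$ and each part of $\widetilde\sigma$ contains exactly one element not in $I$'' --- forces these terms to cancel in signed pairs; this is a combinatorial bookkeeping argument on which missing element sits in which block. I expect this to be a clean but slightly fiddly sign computation, essentially the content of showing each $w_\sigma$ is ``$V$-valued''.

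For the second bullet, I would argue $H = \ker Q$ where $Q : \bigwedge^{n-1} V \to \bigwedge^{n-3} V$ is the $\Lambda_{2n-2}$-convolution. The inclusion $H \subseteq \ker Q$ amounts to checking $Q(w_\sigma) = 0$ for each $\sigma$: writing out $Q(w_\sigma) = \sum_{I} \sum_{i<j} \Lambda_{2n-2}(e_{i_a}, e_{i_b})(-1)^{a+b-1} e_{I \setminus \{i_a, i_b\}}$ and using that $\Lambda_{2n-2}$ has entries only on the sub- and super-diagonal (in the basis $v_1, \dots, v_{2n-2}$), one sees that the only surviving pairs correspond to removing two ``adjacent'' indices from $I$; again the concordance combinatorics should make the coefficient of each fixed $(n-3)$-subset vanish. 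For the reverse inclusion, I would compare dimensions: $\dim H = C_n$ by Definition \ref{def:concordance}, and $\dim \ker Q$ can be computed from the $Sp(2n-2)$-representation theory of $\bigwedge^{\bullet} V$, where the kernel of contraction is exactly the irreducible ``fundamental'' piece $V_{\langle \omega_{n-1}\rangle}$, whose dimension is the $n$-th Catalan number. Alternatively one checks $\mathrm{rank}\, Q$ directly. The cleanest route is: $H \subseteq \ker Q$ together with $\dim H = C_n = \dim(\text{fundamental rep}) \le \dim \ker Q$, plus the (standard) fact that $\ker Q$ is contained in that fundamental representation, forces equality.

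The third bullet then nearly follows: $Q$ is built purely from $\Lambda_{2n-2}$, and by Theorem \ref{Theorem 5.4} the operators $u_i(t)|_V$ preserve $\Lambda_{2n-2}$, hence they commute with $Q$ and preserve $\ker Q = H$. So $H$ is a $Sp(2n-2)$-subrepresentation of $\bigwedge^{n-1} V$. To identify it as \emph{the} fundamental representation attached to the last node of the $C_{n-1}$ Dynkin diagram, I would invoke the classical decomposition of $\bigwedge^k$ of the standard $Sp(2m)$-module: $\bigwedge^k V = \bigoplus_{j \ge 0} V_{\langle \omega_{k-2j}\rangle}$ (with $\omega_0$ the trivial module), and the kernel of the single contraction $Q$ is precisely the top summand $V_{\langle \omega_{n-1}\rangle}$, which is irreducible and corresponds to the last vertex. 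Finally, a dimension check $\dim V_{\langle \omega_{n-1}\rangle} = C_n$ (either from the Weyl dimension formula for $C_{n-1}$, or by noting this is forced by $\dim H = C_n$ once irreducibility is known) closes the argument.

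\textbf{Main obstacle.} I expect the combinatorial verification that $Q(w_\sigma) = 0$ --- i.e. that the concordance vectors actually lie in $\ker Q$, with all the signs from the wedge product and from $\Lambda_{2n-2}$ conspiring correctly --- to be the technical heart; everything else is either a dimension count or an appeal to standard $Sp$-representation theory. One subtlety to handle with care is the nonstandard convention for $Sp(2n,\mathbb R)$ used in the paper (the group preserving $\Lambda^{-1}$ rather than $\Lambda$), which must be tracked so that ``$u_i(t)|_V$ preserves $\Lambda_{2n-2}$'' is applied in the form actually needed to conclude $[u_i(t)|_V, Q] = 0$.
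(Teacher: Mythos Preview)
The paper does not prove Theorem \ref{Fund_repr} itself; it is cited verbatim from \cite[Theorem~5.9]{BGKT}, so there is no in-paper proof to compare against directly. That said, the present paper does furnish an independent and \emph{constructive} argument for the first bullet, via Algorithm \ref{Algoritm_embedding}: each concordance vector $w_\sigma$ is written explicitly as a pure wedge of $n-1$ vectors lying in $V$ (see Example \ref{ex:algorithm} and the remark immediately following the Algorithm). Your proposed route for the first bullet --- checking $\iota_{\varphi_{\mathrm{ev}}}(w_\sigma)=\iota_{\varphi_{\mathrm{odd}}}(w_\sigma)=0$ --- is correct and shorter, but nonconstructive: it certifies $w_\sigma\in\bigwedge^{n-1}V$ without producing the explicit factorisation. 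The paper's constructive version is what is actually needed downstream: Lemma \ref{diagonal}, Theorem \ref{Classification}, and the Lagrangian-concordance combinatorics of Section \ref{subsec: lagrangian concordance} all rely on having $w_\sigma$ in the bracket form produced by Algorithm \ref{Algoritm_embedding}, not merely on knowing $w_\sigma\in\bigwedge^{n-1}V$.

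For the second and third bullets your outline is the standard argument and is essentially what one expects \cite{BGKT} to do: invariance of $\ker Q$ under $Sp(2n-2)$ is automatic once Theorem \ref{Theorem 5.4} is in hand, the classical decomposition of $\bigwedge^{n-1}V$ identifies $\ker Q$ with the fundamental module $V_{\omega_{n-1}}$, and the dimension identity $\dim V_{\omega_{n-1}}=\binom{2n-2}{n-1}-\binom{2n-2}{n-3}=C_n=\dim H$ forces $H=\ker Q$. One small simplification: with the explicit bracket form of $w_\sigma$ from Algorithm \ref{Algoritm_embedding} available, your ``main obstacle'' (verifying $Q(w_\sigma)=0$) becomes a one-line computation rather than a delicate sign chase --- only consecutive pairs $v_i\wedge v_{i+1}$ contribute, and the alternating structure of the brackets forces them to cancel, exactly as worked out in Example \ref{Example of algorithm}.
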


One could ask whether the form for which $H\subset \ker Q$ is unique. The following remark states that in general the uniqueness is not the case.
\begin{remark} Consider following skew symmetric forms on space $\mathbb R^6$:
$$\left(\begin{array}{rrrrrr}
0 & 1 & 1 & -1 & -1 & 1 \\
-1 & 0 & 1 & 0 & -1 & -1 \\
-1 & -1 & 0 & 1 & 0 & -1 \\
1 & 0 & -1 & 0 & 1 & 1 \\
1 & 1 & 0 & -1 & 0 & 1 \\
-1 & 1 & 1 & -1 & -1 & 0 \\
\end{array}\right)\phantom{aaaaaaaaaa}\left(\begin{array}{rrrrrr}
0 & -1 & 0 & 0 & 0 & 1 \\
1 & 0 & 1 & 0 & 0 & 0 \\
0 & -1 & 0 & -1 & 0 & 0 \\
0 & 0 & 1 & 0 & 1 & 0 \\
0 & 0 & 0 & -1 & 0 & -1 \\
-1 & 0 & 0 & 0 & 1 & 0 \\
\end{array}\right).$$
One could check that the concordance vectors $w_{1|23},\ w_{13|2},\ w_{12|3},\ w_{1|2|3},\ w_{123}$ belong to $\ker Q$.
\end{remark}

However, the theorem below states that after the restriction to $V$ such a form is unique up to a scalar. This fact and the combinatorial proof of this theorem solve the mystery of the appearance of the form $\Lambda_{2n-2}$.
\begin{theorem} \label{Classification}
Let $\omega$ be a skew symmetric form on $V$ such that the kernel of the convolution $Q$ with $\omega$ on the space $\bigwedge^{n-1} V$ is equal to $H$. Then $\omega=a\cdot\Lambda_{2n-2}$ for some $a\in \mathbb R\setminus\{0\}$. 
\end{theorem}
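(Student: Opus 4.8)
The plan is to determine the form $\omega$ by exploiting the fact that the concordance vectors $w_\sigma$, as $\sigma$ ranges over $\mathcal{NC}_n$, span $H = \ker Q$, and that $Q(w_\sigma) = 0$ for each $\sigma$ is a large system of linear constraints on the $\binom{2n-2}{2}$ entries of $\omega$. Concretely, each equation $Q(w_\sigma) = 0$ in $\bigwedge^{n-3} V$ is equivalent to the vanishing of all its coordinates $\Delta_J$, $J \subset \{1,\dots,2n-2\}$ with $|J| = n-3$; expanding $w_\sigma = \sum_I a_{I\sigma} e_I$ (written in the basis of $V$ via Algorithm \ref{Algoritm_embedding}, i.e.\ via Lagrangian concordance) and collecting terms gives for each such $J$ a linear relation $\sum \pm\, \omega(e_i, e_j) = 0$ over those pairs $i,j$ whose insertion into $J$ produces a term $e_I$ appearing in $w_\sigma$. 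First I would set up this linear system carefully: the unknowns are $\omega_{ij} := \omega(v_i, v_j)$ for $1 \le i < j \le 2n-2$ (using the basis \eqref{Basis of V} of $V$), and the equations are indexed by pairs $(\sigma, J)$.

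Second, I would show the system forces $\omega$ to be tridiagonal in the basis $v_1,\dots,v_{2n-2}$, i.e.\ $\omega_{ij} = 0$ whenever $|i - j| \ge 2$. The natural way is to test against well-chosen concordance vectors: the small partitions — the fully uncrossed $\sigma = (\bar1|\cdots|\bar n)$ giving $\Delta_{246\cdots(2n-2)}^\bullet = L_{unc}$, the fully crossed $\sigma = (\bar1\bar2\cdots\bar n)$ giving $L_{12\cdots n}$, and the ``one nontrivial block'' partitions $\sigma = (\bar i, \bar j | * | \cdots)$ — have concordance vectors supported on very few Plücker coordinates, so the equations $Q(w_\sigma) = 0$ become short and transparent. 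Pinning down which $e_I$ occur and with which signs (this is exactly what Theorem \ref{theorem:cactus-plu} / the Lagrangian concordance description delivers) should let me read off $\omega_{ij} = 0$ for $|i-j|\ge 2$ directly, and simultaneously force the antidiagonal-type sign pattern of \eqref{8} on the sub/super-diagonal.

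Third, having reduced to $\omega$ tridiagonal, I would determine the remaining free parameters $\omega_{i,i+1}$. Using a family of concordance vectors that ``move'' along the boundary circle — e.g.\ the cyclic shifts, or a chain of partitions differing by a single elementary move — each $Q(w_\sigma) = 0$ relates consecutive $\omega_{i,i+1}$ and $\omega_{i+1,i+2}$, so that all of them are equal up to the prescribed alternating signs; the cyclic/$(-1)^n$ boundary behavior of the concordance combinatorics fixes the sign at the wrap-around. This yields $\omega = a\,\Lambda_{2n-2}$ with a single scalar $a$, and $a \ne 0$ because $\omega = 0$ would give $\ker Q = \bigwedge^{n-1}V \ne H$.

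\textbf{Main obstacle.} The crux is the bookkeeping in step two and three: extracting, from the combinatorial description of the concordance vectors as elements of $\bigwedge^{n-1} V$ (Algorithm \ref{Algoritm_embedding} / Theorem \ref{theorem:cactus-plu}), precisely which coordinates $\Delta_I$ are nonzero in $w_\sigma$ and the signs with which a given pair $\{i,j\}$ enters the coefficient of a given $e_J$ in $Q(w_\sigma)$. Choosing the partitions $\sigma$ economically so that these equations decouple — ideally each equation involving only one or two of the $\omega_{ij}$ — is what makes the argument work, and matching the signs coming from the convolution formula \eqref{convolution} against the sign pattern of $\Lambda_{2n-2}$ (including the $(-1)^n$ corner entry) is the delicate part. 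I expect the rest (antisymmetry, the final scalar, nonvanishing of $a$) to be routine once the tridiagonality and the sign propagation along the diagonal are established.
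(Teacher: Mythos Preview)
Your proposal is correct and follows essentially the same route as the paper: the paper's proof reduces to Lemma \ref{diagonal}, which is proved exactly by testing $Q(w_\sigma)=0$ against a small, explicit list of partitions --- the fully crossed $(\bar1\bar2\cdots\bar n)$ and fully uncrossed $(\bar1|\bar2|\cdots|\bar n)$ partitions together with their ``one extra block $(i,i{+}2)$'' modifications for tridiagonality, and a local partition of the shape $(\ldots|i,i{+}4|i{+}1,i{+}3|i{+}2|\ldots)$ for the recursion $\omega_{i,i+1}+\omega_{i+1,i+2}=0$ --- after which $a\neq 0$ follows since $\omega=0$ would make $\ker Q=\bigwedge^{n-1}V$. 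One small correction: $\Lambda_{2n-2}$ in \eqref{8} is genuinely tridiagonal with \emph{no} $(-1)^n$ corner entry or wrap-around, so there is nothing cyclic to match there; the cyclic terms appear only in the forms $\widetilde\Lambda_{2n}$ and $\overline\Lambda_{2n}$ on the ambient $\mathbb{R}^{2n}$.
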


Theorem \ref{Fund_repr} states that $H\subset\bigwedge^{n-1} \mathbb R^{2n}$ actually lies within $\bigwedge^{n-1}V$. 
 Algorithm \ref{Algoritm_embedding} below gives a precise expression of decomposing $w_{\sigma}\in H$ in terms of the basis vectors \eqref{Basis of V} of $V$. Among of an independent interest this algorithm is a key step in the proof of Theorem \ref{Classification}. 
 
\begin{algorithm} \label{Algoritm_embedding}
The basis vector $w_{\sigma}$ corresponding to the partition $\sigma$ written in the basis \eqref{Basis of V} is obtained by the following steps.

\begin{enumerate}

\item[\bf{1.}] Merge all the vertices of the initial non-crossing partition and its dual in the one set $[2n]$:
$$(\sigma|\widetilde\sigma) = (i_1 i_2 i_3 \ldots i_{k-2} i_{k-1} i_k|i_{k+1} i_{k+2} i_{k+3}\ldots i_{l-2} i_{l-1} i_l|\ldots |i_{p+1} i_{p+2} i_{p+3}\ldots i_{2n-2} i_{2n-1} i_{2n}),$$
where $i_1<i_{k+1}<i_{l+1}<\ldots<i_p$.

\item[\bf{2.}] Consider the following expression:
\begin{equation} \label{br2} \begin{array}{l}
    (i_1\ i_2)(i_2\ i_3)\ldots (i_{k-2}\ i_{k-1})(i_{k-1}\ i_k)(i_{k+1}\ i_{k+2})(i_{k+2}\ i_{k+3})\ldots \\ \ldots (i_{l-2}\ i_{l-1})(i_{l-1}\ i_l)\ldots (i_{p+1}\ i_{p+2})(i_{p+2}\ i_{p+3}) \ldots
(i_{2n-2}\ i_{2n-1})(i_{2n-1}\ i_{2n}) \end{array}
\end{equation}
Here we use the following notation: instead of $e_{i_s}$ we write $i_s$ and skip "$\wedge$" between brackets.

\item[\bf{3.}] Reorder brackets in the way of increasing first indices in each bracket, and then the signs of $e_{i_s}$ should be placed as follows.
Consider an arbitrary bracket $(i_x\ i_{x+1})$ in the expression (\ref{br2}). Before the first term $i_x$ we should put the sign "+". Before the second term $i_{x+1}$ we should put the sign "+" if the number of integers that lie in the interval $(i_x;i_{x+1})$ and have the same parity as $i_x$ and $i_{x+1}$ is even, and the sign "-" otherwise. Note that such a definition is correct since in the merged numbering even numbers correspond to the initial non-crossing partition, while odd --- to it's dual, and thus $i_x$ and $i_{x+1}$ always have the same parity.

\item[\bf{4.}] Finally, we rewrite the expression from {\bf 3.} in terms of basis \eqref{Basis of V} as follows:
$$(e_{i_x}\pm e_{i_{x+1}})=(v_{i_x}-v_{i_{x}+2}+v_{i_{x}+4}-\ldots (-1)^{\frac{i_{x+1}-2-i_x}{2}}v_{i_{y}-2}).$$
\end{enumerate}

\end{algorithm}

\begin{example}\label{ex:algorithm}
Let us illustrate Algorithm \ref{Algoritm_embedding} on our running example $\sigma=(\bar{1},\bar{4},\bar{6}|\bar{2},\bar{3}|\bar{5})$.

The united partition $(\sigma|\widetilde\sigma)$ of the set $[12]$ corresponding to $\sigma$ has the form:
$$(1,7,11|2,6|3,5|4|8,10|9|12).$$
The expression for the vector $\omega_{\sigma}$ as in (\ref{br2}) is:
$$(1\ 7)(7\ 11)(2\ 6)(3\ 5)(8\ 10).$$
Following the Algorithm we reorder the brackets in the way of increasing first indices in each bracket:
$$(1\ 7)(2\ 6)(3\ 5)(7\ 11)(8\ 10).$$
The signs before the first term in each bracket is "$+$", so it remains to put the signs before the second terms in each bracket, which should be done as follows:

The set of odd numbers which lie in the interval $(1,7)$ is $\{3,5\}$. The cardinality of this set is an even number, so we write $(1+7)$.

The set of odd numbers which lie in the interval $(2,6)$ is $\{4\}$. The cardinality of this set is an odd number, so we write $(2-6)$.

The set of odd numbers which lie in the interval $(3,5)$ is empty. The cardinality of this set is an even number, so we write $(3+5)$.

The set of odd numbers which lie in the interval $(7,11)$ is $\{9\}$. The cardinality of this set is an odd number, so we write $(7-11)$.

The set of odd numbers which lie in the interval $(8,10)$ is empty. The cardinality of this set is an even number, so we write $(8+10)$.

Taking all above together we obtain the following expression for the vector $w_{\sigma}$:
$$w_{\sigma}=(e_1+e_7)\wedge(e_2-e_6)\wedge(e_3+e_5)\wedge(e_7-e_{11})\wedge(e_8+e_{10})=$$
$$(v_1-v_3+v_5)\wedge(v_2-v_4)\wedge v_3\wedge(v_7-v_9)\wedge v_8.$$

\end{example}

\begin{remark}
A different algorithm to decompose a concordance vector as a pure wedge product is established in \cite[Lemma 3.4]{CGS}. However, a different definition of the concordance is used in that paper. Thus it is not immediately clear how to rewrite that algorithm in Lam's language directly. Moreover, the exact form of the restriction from the Isotropic Grassmannian $IG(n+1,2n)$ appeared in that paper to the Lagrangian Grassmannian was not studied yet. For our purposes the possibility to write a concordance vector as an element of $\bigwedge^{n-1}V$ is crucial.
\end{remark}
 It is known from Theorem \ref{Fund_repr} that the concordance space $H$ lies inside $\bigwedge^{n-1}V$, however Algorithm \ref{Algoritm_embedding} gives explicit expressions of the basis vectors of the space $H$ in terms of basis vectors of the space $V$. Thus Algorithm \ref{Algoritm_embedding} gives another proof of the fact that $H\subset \bigwedge^{n-1}V$, that is an additional nice application of our construction. 
\begin{lemma} \label{diagonal}
Let $\omega$ be a skew-symmetric form on $V$ such that the kernel of the convolution with $Q$ on the space $\bigwedge^{n-1} V$ contains $H$. Then 
$$\omega(i,j)=
\begin{cases}
(-1)^i a,& \text{\ if\ }  j=i+1, \\
0,& \text{\ if\ } j>i+1,
\end{cases}$$
for some $a\in \mathbb R$.
\end{lemma}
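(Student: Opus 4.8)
The plan is to extract the pairwise values $\omega(i,j)$, $i<j$, by testing the convolution operator $Q$ (built from $\omega$) against carefully chosen concordance vectors, using the explicit decompositions of $w_\sigma$ in the basis \eqref{Basis of V} supplied by Algorithm \ref{Algoritm_embedding}. The point is that $Q(w_\sigma)=0$ is a linear condition on the $\binom{2n-2}{2}$ unknowns $\omega(v_i,v_j)$, and if we feed in enough concordance vectors whose wedge expansions are sparse we can read off all these unknowns. I would first set up notation: write $\alpha_{ij}:=\omega(v_i,v_j)$ for the unknown form values on the basis \eqref{Basis of V}, so the claim is exactly that $\alpha_{i,i+1}=(-1)^i a$ (constant up to sign) and $\alpha_{ij}=0$ for $j\ge i+2$.

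First I would handle the ``far-apart'' vanishing $\alpha_{ij}=0$ for $j>i+1$. Here I would pick, for each such pair, a non-crossing partition $\sigma$ of a small ``window'' of boundary points so that by Algorithm \ref{Algoritm_embedding} the vector $w_\sigma$ decomposes as a wedge of factors each of which is a single $v_m$ except for a small number of ``short'' two-term factors, chosen so that the only way $v_i$ and $v_j$ can both be omitted from a surviving $(n-3)$-fold wedge in $Q(w_\sigma)$ is via the term $\alpha_{ij}$ itself — i.e.\ the coefficient of the corresponding basis vector of $\bigwedge^{n-3}V$ in $Q(w_\sigma)$ is (up to sign) exactly $\alpha_{ij}$. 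Setting it to zero forces $\alpha_{ij}=0$. Concretely one can take the ``trivial'' partition on all but two or three boundary points (so $w_\sigma$ is a near-coordinate vector like $v_1\wedge v_2\wedge\cdots\wedge \widehat{v_?}\wedge\cdots$ with one short sum inserted), and exploit that most of the $v_m$'s appear with multiplicity one. This is the step I expect to require the most care: one must make sure that for \emph{every} pair $j\ge i+2$ there is such a sparse concordance vector, and that no \emph{other} pair of omitted indices contributes to the same target monomial; the parity-of-odd-numbers sign rule in Step 3 of the Algorithm has to be tracked so the coefficient is genuinely $\pm\alpha_{ij}$ and not a cancellation-prone combination.

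Next, with all $\alpha_{ij}=0$ for $|i-j|\ge2$ established, I would pin down the sub/super-diagonal entries. Applying $Q$ to a suitable concordance vector now gives relations purely among the $\alpha_{m,m+1}$. I would use the vectors $w_\sigma$ coming from partitions whose wedge expansion contains two consecutive short factors sharing a common $v_m$ (as in Example \ref{ex:algorithm}, e.g.\ $(v_1-v_3+v_5)\wedge(v_2-v_4)\wedge v_3\wedge\cdots$): the vanishing of an appropriate coefficient of $Q(w_\sigma)$ then yields an equation like $\alpha_{m,m+1}+\alpha_{m+1,m+2}=0$ after the already-proved vanishings kill the cross terms, possibly up to the sign dictated by Step 3. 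Chaining these relations across $m=1,\dots,2n-3$ shows all $\alpha_{m,m+1}$ are determined by a single scalar $a:=\alpha_{12}$ up to the alternating sign $(-1)^m$, which is precisely the asserted formula $\omega(i,j)=(-1)^i a$ for $j=i+1$. Finally I would note that $a$ is allowed to be $0$ here since the lemma only assumes $H\subseteq\ker Q$, not equality; the non-degeneracy (hence $a\ne0$) is recovered only in Theorem \ref{Classification} by additionally using $\ker Q\subseteq H$. Throughout, the honest bookkeeping — choosing the right family of test partitions and tracking the Algorithm's sign conventions — is the real content; the linear algebra itself is routine once the right vectors are in hand.
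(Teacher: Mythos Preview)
Your strategy is the paper's strategy: feed concordance vectors through Algorithm \ref{Algoritm_embedding} into $Q$ and read off linear constraints on the $\alpha_{ij}=\omega(v_i,v_j)$. Two points of comparison are worth flagging. First, the paper does the two cases in the opposite order: it extracts the recursion $\alpha_{i,i+1}+\alpha_{i+1,i+2}=0$ directly from the single partition with merged blocks $(i,i+4\mid i+1,i+3)$, whose concordance vector is $(v_i-v_{i+2})\wedge v_{i+1}\wedge(\ldots)$, by isolating the coefficient of a monomial in $(\ldots)$ that avoids $v_i,v_{i+2}$; this works \emph{without} first knowing $\alpha_{ij}=0$ for $|i-j|\ge2$, so your ordering is a convenience, not a necessity. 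Second, for the vanishing $\alpha_{ij}=0$ with $j\ge i+2$ the paper does not hunt pair by pair but uses four fixed concordance vectors: $v_1\wedge v_3\wedge\cdots\wedge v_{2n-3}$, $v_2\wedge v_4\wedge\cdots\wedge v_{2n-2}$, and two ``one bump'' variants with a single inserted factor $v_i$ of the opposite parity, which together kill all $(\mathrm{odd},\mathrm{odd})$, $(\mathrm{even},\mathrm{even})$, $(\mathrm{odd},\mathrm{even})$ and $(\mathrm{even},\mathrm{odd})$ pairs at once. Your illustrative ``near-coordinate'' vector $v_1\wedge v_2\wedge\cdots$ is not actually attainable from the Algorithm, since each bracket produces $v$'s of a fixed parity; the paper's parity-stratified choice is precisely what makes the ``sparse test vector'' step go through cleanly.
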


\begin{proof}[Proof of Theorem \ref{Classification}]
Lemma \ref{diagonal} implies that if $\omega$ is a skew symmetric form on $V$ such that the kernel of the convolution with $Q$ on the space $\bigwedge^{2n-2} V$ contains $H$, then the form $\omega$ has the type $\omega=a\cdot\Lambda_{2n-2}$ for some $a\in \mathbb R$. The choice of the parameter $\omega(1,1)=a$ uniquely defines the form $\omega$. The form is non-degenerate if $a\ne 0$, so by Theorem \ref{Fund_repr} the concordance space $H$  is contained in the kernel of the convolution $ker Q$ with such form $\omega$. Moreover, the equality $\ker Q=H$ is satisfied.  If $a=0$ we get a zero form, and thus the kernel of the convolution with it, is the zero space. The zero space can not contain the concordance space $H$ whose dimension equals to the Catalan number $C_{n+1}$. This provides the sufficiency condition.
\end{proof}


\begin{definition} \label{def: hollow network}
Let $p_{\sigma}$ denote the point in $Gr_{\ge 0}(n-1,2n)\cap\mathbb PH$ with
$$L_{\sigma'}(p_{\sigma})=\begin{cases}
1, \text{\ if\ }  \sigma'=\sigma \\
0, \text{\ otherwise.}
\end{cases}$$
For a non-crossing partition $\sigma$ of $n$ define a {\itshape hollow} cactus network $P_\sigma$ as a cactus network obtained from an empty network on $n$ boundary vertices by the identification of boundary vertices according to $\sigma$.
\end{definition}
\begin{proposition} \cite[Proposition 4.11]{L} \label{concordance vector is a cactus network}
The point $P_\sigma$ maps to $p_\sigma$ under the Lam's embedding from Theorem \ref{Emb}.
\end{proposition}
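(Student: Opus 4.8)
The plan is to compute both sides directly: the point $p_\sigma$ is defined by its grove measurements, and the grove measurements of the hollow cactus network $P_\sigma$ can be read off from its combinatorial structure. First I would recall that $P_\sigma$ is obtained from the empty network on $n$ boundary vertices (no interior vertices, no edges) by gluing boundary vertices together according to the blocks of $\sigma$. A grove on a graph is a spanning forest each of whose components contains at least one boundary vertex; on the empty network glued along $\sigma$, the only spanning subforest is the graph itself (there are no edges to delete), and the boundary partition it induces is exactly $\sigma$, because two boundary vertices lie in the same component precisely when they were identified, i.e. when they lie in the same block of $\sigma$. Hence $L_{\sigma'}(P_\sigma) = 1$ if $\sigma' = \sigma$ and $0$ otherwise, which by Definition \ref{def: hollow network} is exactly the defining property of $p_\sigma$.

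The remaining point is to check that this collection of grove measurements genuinely represents a point of $Gr_{\ge 0}(n-1,2n)\cap\mathbb P H$, i.e. that $P_\sigma$ is a legitimate cactus network in the sense of Definition \ref{def:hollow} and that Theorem \ref{Emb} applies to it. By Definition \ref{def:hollow}, a hollow cactus network is exactly the image of the empty network under the identification of boundary vertices prescribed by $\sigma$, so $P_\sigma\in\overline{E}_n$ by construction; then Theorem \ref{Emb} guarantees that the Plücker coordinates $\Delta_I^\bullet(P_\sigma) = \sum_{(\sigma',I)} L_{\sigma'}(P_\sigma)$ define a point of $Gr_{\ge 0}(n-1,2n)$, and since all the $L_{\sigma'}$ vanish except $L_\sigma = 1$ we get $\Delta_I^\bullet(P_\sigma) = a_{I\sigma}$, the $\sigma$-th column of the concordance matrix $A_n$. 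By Definition \ref{def:concordance} this column is precisely the concordance vector $w_\sigma \in H$, so the point lies in $\mathbb P H$ as well, and it coincides with the point whose grove coordinates are supported on $\sigma$ — that is, with $p_\sigma$.

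The only genuinely delicate step is verifying that the grove enumeration on $P_\sigma$ really does give $\delta_{\sigma',\sigma}$, since one must be careful about whether an "empty network" on $n$ boundary vertices (with $n \geq 1$ isolated vertices and no edges) has a unique grove, and about how the identifications interact with the requirement that each component of the forest meet the boundary. Concretely, after the identifications the underlying graph of $P_\sigma$ has exactly $|\sigma|$ vertices (one per block), all of them boundary, and no edges; its unique spanning subforest is itself, each component is a single boundary vertex, and the induced boundary partition is $\sigma$. So $wt(P_\sigma) = 1$ (empty product) contributes to $L_\sigma$ and nothing else. Once this is pinned down the rest is a matter of matching definitions. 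This is essentially the content of \cite[Proposition 4.11]{L}; I would present the argument in the streamlined form above, invoking Theorem \ref{Emb} and Definition \ref{def:concordance} to pass from grove measurements to the concordance vector $w_\sigma$.
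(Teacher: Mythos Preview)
Your argument is correct. The paper does not actually prove this proposition; it is cited from \cite[Proposition 4.11]{L} without proof, and the subsequent Proposition~\ref{empty cactus maps to p} simply uses it as input. Your direct verification that the unique grove on the hollow cactus $P_\sigma$ has boundary partition $\sigma$ (since the underlying graph has one vertex per block of $\sigma$, all boundary, and no edges) is exactly the natural argument, and your computation $\Delta_I^\bullet(P_\sigma)=a_{I\sigma}$ in fact also covers the content of Proposition~\ref{empty cactus maps to p}, whose short proof in the paper is the same calculation you give.
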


\begin{proposition} \cite[Proposition 4.11]{L} \label{empty cactus maps to p} 
 Under the Plucker embedding $p_\sigma\in Gr_{\ge 0}(n-1,2n)\cap\mathbb PH$ maps to $w_\sigma \in \bigwedge^{n-1}\mathbb{R}^{2n}.$
\end{proposition}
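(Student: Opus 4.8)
The plan is to compute directly both sides and match Plücker coordinates. On one side, Proposition \ref{concordance vector is a cactus network} tells us that the hollow cactus network $P_\sigma$ maps to the point $p_\sigma$, which by Definition \ref{def: hollow network} is characterized by $L_{\sigma'}(p_\sigma)=\delta_{\sigma\sigma'}$. On the other side, the Lam embedding sends a cactus network to the point of $Gr_{\ge0}(n-1,2n)$ whose Plücker coordinates are $\Delta_I^\bullet=\sum_{\sigma' \text{ concordant with } I} L_{\sigma'}$ (Theorem \ref{Emb}). So the plan is: substitute $L_{\sigma'}(p_\sigma)=\delta_{\sigma\sigma'}$ into the formula for $\Delta_I^\bullet$ and read off what remains.

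First I would observe that, upon this substitution, the sum $\Delta_I^\bullet(p_\sigma)=\sum_{\sigma' \text{ concordant with } I} L_{\sigma'}(p_\sigma)$ collapses: every term vanishes except possibly the one with $\sigma'=\sigma$, which contributes $1$ precisely when $\sigma$ is concordant with $I$ and $0$ otherwise. Hence
\[
\Delta_I^\bullet(p_\sigma)=
\begin{cases}
1, & \sigma \text{ concordant with } I,\\
0, & \text{otherwise}.
\end{cases}
\]
Next I would compare this with the coordinates of the concordance vector $w_\sigma=\sum_I a_{I\sigma}e_I$ from Definition \ref{def:concordance}, where $a_{I\sigma}=1$ iff $\sigma$ is concordant with $I$ and $0$ otherwise. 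The $e_I$-coefficient of $w_\sigma$ is exactly $a_{I\sigma}$, which matches $\Delta_I^\bullet(p_\sigma)$ on the nose. Therefore, under the Plücker embedding, $p_\sigma$ is represented by the vector $\sum_I a_{I\sigma}e_I=w_\sigma\in\bigwedge^{n-1}\mathbb{R}^{2n}$, as claimed. One should note that $w_\sigma\neq 0$ (it has at least one nonzero coordinate, e.g. any $I$ concordant with $\sigma$ — such $I$ exists by a direct count, selecting all-but-one element from each block of $\sigma$ and of $\widetilde\sigma$), so this genuinely defines a point of projective space, and it lies in $\mathbb{P}H$ by construction of $H$ as the span of the $w_\sigma$.

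The only real subtlety — and the step I would be most careful about — is the normalization and consistency of the identification between "Plücker coordinates $\Delta_I$ of a point of $Gr(n-1,2n)$" and "coefficients of a vector in $\bigwedge^{n-1}\mathbb{R}^{2n}$", since Plücker coordinates are only defined up to a common scalar and the ordering/sign conventions on the $e_I$ must be the standard increasing-index ones fixed in Definition \ref{def:concordance}. Here, however, the scalar ambiguity is harmless: $p_\sigma$ was defined by the normalization $L_\sigma(p_\sigma)=1$, and the identity $\Delta_I^\bullet = \sum L_{\sigma'}$ of Theorem \ref{Emb} is the specific representative that makes the two sides literally equal rather than merely proportional. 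So no essential obstacle arises; the argument is a direct bookkeeping match once the definitions are lined up. (This also re-derives, as a byproduct, that $p_\sigma$ indeed lies in $Gr_{\ge0}(n-1,2n)\cap\mathbb{P}H$, consistent with Definition \ref{def: hollow network}.)
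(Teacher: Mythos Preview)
Your proof is correct and follows essentially the same approach as the paper: substitute $L_{\sigma'}(p_\sigma)=\delta_{\sigma\sigma'}$ into the formula $\Delta_I^\bullet=\sum_{(\sigma',I)}L_{\sigma'}$ from Theorem~\ref{Emb} to obtain $\Delta_I^\bullet(p_\sigma)=a_{I\sigma}$, and then identify $\sum_I a_{I\sigma}e_I$ with the definition of $w_\sigma$. Your additional remarks on normalization and the non-vanishing of $w_\sigma$ are more careful than the paper's version, but the core argument is identical.
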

\begin{proof}
Denote by $P_{\sigma}$ the hollow cactus network from Proposition \ref{concordance vector is a cactus network} which maps to the vector $p_{\sigma}$ under the Lam's embedding. 

Thus the point $p_{\sigma}$ has the following Plucker coordinates:
$$\Delta_I^\bullet=\sum\limits_{(\sigma',I)}L_{\sigma'}=a_{I\sigma}L_{\sigma}=a_{I\sigma}.$$
Therefore the Plucker embedding maps $p_{\sigma}$ to $\sum\limits_{I\in(^{\ 2n}_{[n-1]})}\Delta_I^\bullet e_I = \sum\limits_{I\in(^{\ 2n}_{[n-1]})}a_{I\sigma}e_I$. This expression is indeed the definition of the concordance vector $w_{\sigma}$. 
\end{proof}

The following statement refines the result of Algorithm \ref{Algoritm_embedding}. We will prove a stronger statement in Theorem \ref{theorem:cactus-plu}.
\begin{corollary} \label{cor: coeff are of the same sign}
Plucker coordinates of a concordance vector $w_{\sigma}|_V$ thought as a point in $LG(n-1,V)$ are positive.
\end{corollary}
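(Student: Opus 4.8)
The plan is to read off the Plücker coordinates of $w_\sigma|_V$ from the explicit decomposition produced by Algorithm \ref{Algoritm_embedding} and show that, once we expand the wedge product of the vectors $(v_{i_x}-v_{i_x+2}+\cdots)$ into the basis $\{v_{j_1}\wedge\cdots\wedge v_{j_{n-1}}\}$ of $\bigwedge^{n-1}V$, every nonzero coefficient is $+1$. First I would recall the output of the algorithm: $w_\sigma$ is written as a wedge of $n-1$ factors, one for each bracket $(i_x\ i_{x+1})$ coming from the merged partition $(\sigma|\widetilde\sigma)$, and each factor equals
\[
e_{i_x}\pm e_{i_{x+1}} = v_{i_x}-v_{i_x+2}+v_{i_x+4}-\cdots+(-1)^{\frac{i_{x+1}-2-i_x}{2}}v_{i_{x+1}-2},
\]
a signed telescoping sum of consecutive basis vectors $v_j$ with $j$ ranging over one fixed parity class in the interval $[i_x,\,i_{x+1})$. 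The key structural observation is that the index sets of the $n-1$ brackets, when translated into the ranges of $v$-indices they involve, are \emph{nested or disjoint} — this is exactly the non-crossing property of $(\sigma|\widetilde\sigma)$, which the algorithm encodes. So in any monomial $v_{j_1}\wedge\cdots\wedge v_{j_{n-1}}$ appearing in the expansion, the factor chosen from each bracket is forced, and one can keep track of the total sign.

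The computation of that total sign is the heart of the argument. Each monomial contributes a product of three sign sources: (i) the internal signs $(-1)^{(j-i_x)/2}$ within each bracket's telescoping sum; (ii) the overall sign of the factor $e_{i_x}+ \text{(possibly }-)e_{i_{x+1}}$ as prescribed in step \textbf{3.} of the algorithm; and (iii) the permutation sign incurred when reordering the wedge $v_{j_1}\wedge\cdots\wedge v_{j_{n-1}}$ into increasing order. I would set up an induction on the nesting structure: for an "innermost" bracket (one containing no other bracket in its span), choosing $v_j$ from it and contracting reduces to a smaller non-crossing configuration, and I would check that the three sign contributions combine to reproduce exactly the prescription for the smaller configuration. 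The parity bookkeeping in step \textbf{3.} — counting integers of the same parity strictly between $i_x$ and $i_{x+1}$ — is precisely what is needed to make the reordering sign cancel against the internal sign, leaving $+1$. Alternatively, and perhaps more cleanly, I would use Proposition \ref{empty cactus maps to p} together with Theorem \ref{Emb}: since $p_\sigma$ maps to $w_\sigma$ and $p_\sigma$ lies in $Gr_{\ge 0}(n-1,2n)$, all its Plücker coordinates in the $e_I$ basis are nonnegative (in fact $0$ or $1$, being the entries $a_{I\sigma}$); the content of the corollary is then that passing from the $e$-basis to the $v$-basis of $\bigwedge^{n-1}V$ — i.e. applying $(\det B_n^{-1})$-type transformations blockwise via the telescoping relations — preserves positivity of the surviving coordinates. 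One shows the change-of-basis matrix restricted to the relevant coordinates has all nonnegative entries when acting on $w_\sigma$, again by the nesting argument.

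The main obstacle I anticipate is the sign tracking in (iii): the reordering permutation depends globally on which $v_j$ is selected from every bracket simultaneously, so a naive term-by-term estimate will not obviously give a constant sign. The right move is to process the brackets in order of nesting (innermost first) so that at each step the contracted index is either the smallest or largest among those still to be placed, making the incremental reordering sign explicit and local; then the parity condition from step \textbf{3.} is exactly the compensating factor. I would also need to handle singleton blocks of $(\sigma|\widetilde\sigma)$ — which contribute no bracket — correctly in the bookkeeping, but these only shift indices and do not affect signs. Once the inductive step is verified on a small base case (e.g. $n=2$ or $n=3$, matching Example \ref{ex:algorithm}), the general statement follows, and this is subsumed by the sharper Theorem \ref{theorem:cactus-plu} proved later.
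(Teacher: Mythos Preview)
The paper's proof is much shorter than either of your proposed routes, and you are missing its central shortcut. Rather than tracking signs through the full expansion, the paper invokes Theorem~\ref{Embedding into LG}, which asserts that for any $e\in\overline{E}_n$ the matrix $\widetilde{\Omega}_n(e)$ defines a point of $LG_{\ge 0}(n-1,V)$. Applied to the hollow cactus $P_\sigma$ and combined with Proposition~\ref{empty cactus maps to p}, this says directly that all Pl\"ucker coordinates of $w_\sigma|_V$ \emph{in the $v$-basis} already have the same sign. It then suffices to exhibit a single positive one. For this the paper takes $I=\{l_1,\ldots,l_{n-1}\}$ to be the set of first $v$-indices from each bracket; after the reordering in Step~\textbf{3} these satisfy $l_1<\cdots<l_{n-1}$, each first term carries sign $+$, and no reordering of the wedge is needed, so the contribution to the coefficient of $v_I$ is $+1$. (That this is the \emph{only} contribution follows because $l_i$ is the minimum index in the $i$-th bracket, so any system of distinct representatives hitting exactly $\{l_1,\ldots,l_{n-1}\}$ is forced to be the identity.)

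Your second approach cites Theorem~\ref{Emb} instead of Theorem~\ref{Embedding into LG}; the former gives nonnegativity only in the $e_I$-basis of $\bigwedge^{n-1}\mathbb{R}^{2n}$, and the change of basis to the $v$-coordinates involves signs, so it is not manifestly positivity-preserving --- you end up having to redo the nesting-and-sign argument anyway. Your first approach, a direct inductive sign computation through the nesting structure, could in principle succeed, but it is essentially an attempt to prove the stronger Theorem~\ref{theorem:cactus-plu} from scratch; note that the paper's own proof of that theorem \emph{uses} the present corollary to pin down the sign after reducing the coefficients to $\pm1$. So you would be doing strictly more work than the paper does in both places combined, and you correctly anticipate that the global reordering sign is the hard part you have not yet controlled.
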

\begin{proof}
By Theorem \ref{Embedding into LG} $p_{\sigma}|_V\in LG_{\ge 0}(n-1,V)$. This means that all Plucker coordinates of the point $p_\sigma|_V$ in the $\bigwedge^{n-1} V$ corresponding to the empty cactus network $P_{\sigma}$ of the same sing, but as proved in Proposition \ref{empty cactus maps to p} this point is exactly $w_{\sigma}$.

Consider the collection $I=\{l_1,\ldots,l_{n-1}\}$ of the first terms from each bracket in expression \eqref{br2}. Note that $l_1<\ldots<l_{n-1}$. Thus by the construction the output of Algorithm \ref{Algoritm_embedding} will have sign $"+"$ in front of $v_I$. Since all signs have the same sign and one of them is positive, we conclude that all them are positive.
\end{proof}

\begin{remark} \label{rem: algorithm vs Plucker coordinates}
    Taking into account Proposition \ref{empty cactus maps to p} we obtain an additional meaning of Algorithm \ref{Algoritm_embedding} since it follows that Algorithm \ref{Algoritm_embedding} provides a representative to a point $p(\sigma)|_V$ under the Plucker embedding. Thus Corollary \ref{cor: coeff are of the same sign} and Theorem \ref{theorem:cactus-plu} might be thought as a description of Plucker coordinates of the points $p(\sigma)|_V$. We will develop this approach in Section \ref{subsec: lagrangian concordance}.
\end{remark}


\subsection{Proofs of Algorithm \ref{Algoritm_embedding} and Lemma \ref{diagonal}} \label{main proofs}

To prove Algorithm \ref{Algoritm_embedding} we need the following lemma due to Lam.
\begin{lemma} \cite[Lemma 2.2]{L} \label{number of parts}
If $\sigma$ and $\widetilde\sigma$ are dual non-crossing partitions, then $|\sigma|+|\widetilde\sigma|=n+1$.
\end{lemma}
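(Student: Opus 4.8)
The plan is to prove the identity by a planar Euler-characteristic count, leaning on the geometric meaning of the dual partition. Place the $2n$ points $\bar1,\widetilde1,\bar2,\widetilde2,\dots,\bar n,\widetilde n$ in this cyclic order on the boundary circle of a disc $D$, and to $\sigma\in\mathcal{NC}_n$ associate a plane forest $F_\sigma$: its vertices are $\bar1,\dots,\bar n$, and inside each block of $\sigma$ of size $k$ we draw a path of $k-1$ chords joining consecutive points of that block (a singleton block contributes an isolated vertex). Since $\sigma$ is non-crossing, these chords can be drawn pairwise disjoint, so $F_\sigma$ is a forest with $n$ vertices, $n-|\sigma|$ edges, and exactly $|\sigma|$ connected components.

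The heart of the argument is that the blocks of $\widetilde\sigma$ are exactly the groups of the points $\widetilde i$ formed according to which connected component of $D^{\circ}\setminus F_\sigma$ has $\widetilde i$ in its closure. Since no vertex of $F_\sigma$ is one of the points $\widetilde i$, a small half-disc around $\widetilde i$ meets a unique such component $\phi(\widetilde i)$, and $\phi(\widetilde i)$ touches both boundary arcs incident to $\widetilde i$. I would show that $\widetilde i$ and $\widetilde j$ lie in the same block of $\widetilde\sigma$ iff $\phi(\widetilde i)=\phi(\widetilde j)$. For the forward implication: if $\tau\in\mathcal{NC}_n$ on the dual points has $(\sigma|\tau)$ non-crossing, then $F_\tau$ may be drawn disjoint from $F_\sigma$ inside $D$, so any $\tau$-path from $\widetilde i$ to $\widetilde j$ stays in one component of $D^\circ\setminus F_\sigma$; apply this to $\tau=\widetilde\sigma$. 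For the converse: joining, inside each component, all the $\widetilde i$ sitting there by a path gives a non-crossing $\tau$ with $(\sigma|\tau)$ non-crossing whose blocks are the fibres of $\phi$, and by the forward implication no valid $\tau$ can be coarser, so maximality of the dual forces $\widetilde\sigma=\tau$. Hence $|\widetilde\sigma|$ equals the number of components of $D^\circ\setminus F_\sigma$; note also that every such component is some $\phi(\widetilde i)$, since a component whose closure misses $\partial D$ would be a bounded region enclosed by $F_\sigma$ alone, impossible for a forest, and a component whose closure meets $\partial D$ meets a boundary arc adjacent to some $\widetilde i$.

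It remains to count these components by Euler's formula. Adjoin the $2n$ boundary arcs of $\partial D$ to $F_\sigma$, obtaining a connected plane graph $G$ with $V=2n$ vertices and $E=(n-|\sigma|)+2n$ edges; on the sphere $V-E+F=2$ gives $F=n+2-|\sigma|$, and discarding the single face outside $D$ leaves $n+1-|\sigma|$ faces inside $D$, which are exactly the components of $D^\circ\setminus F_\sigma$. Therefore $|\widetilde\sigma|=n+1-|\sigma|$, i.e. $|\sigma|+|\widetilde\sigma|=n+1$.

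I expect the main obstacle to be making rigorous the face description of $\widetilde\sigma$ — in particular the maximality direction and the topological claim that a chord of $F_\tau$ issuing from $\widetilde i$ cannot leave $\phi(\widetilde i)$ without meeting $F_\sigma$. Two shortcuts avoid this. First, a lattice-theoretic one: $\mathcal{NC}_n$ is graded with rank function $\sigma\mapsto n-|\sigma|$ and the dual (Kreweras) map is an order-reversing bijection, so $\mathrm{rank}(\sigma)+\mathrm{rank}(\widetilde\sigma)=\mathrm{rank}(\widehat1)=n-1$, which rearranges to the claim — but this shifts the burden onto gradedness and the anti-automorphism property. Second, a short induction on $n$: if $\sigma$ has a singleton block $\{\bar k\}$, deleting it removes one point and one block from $\sigma$ while merging $\widetilde{k-1}$ and $\widetilde k$ into a single dual point without changing $|\widetilde\sigma|$, so the inductive hypothesis applies; if $\sigma$ has no singleton, it contains an interval block with two cyclically consecutive points $\bar k,\overline{k+1}$, which traps $\widetilde k$ as a singleton block of $\widetilde\sigma$, and one runs the analogous step on the dual side.
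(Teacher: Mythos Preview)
Your proposal is correct. The paper does not actually prove this lemma: it simply cites it as \cite[Lemma 2.2]{L} and uses it as a black box, so there is no ``paper's own proof'' to compare against. All three of your routes --- the Euler-characteristic face count, the Kreweras anti-automorphism/gradedness remark, and the singleton-deletion induction --- are standard and sound; the induction is the most self-contained, while the Euler argument most directly matches the geometric picture of $(\sigma|\widetilde\sigma)$ that the paper relies on elsewhere (e.g.\ in the proof of Algorithm~\ref{Algoritm_embedding}).
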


\begin{proof}[Proof of Algorithm \ref{Algoritm_embedding}]

\ 

\textbf{Brackets decomposition}. We regroup the terms in the expression (\ref{br2}) according their belonging to connected components of $(\sigma|\widetilde\sigma)$:
\begin{equation} \label{br2 brackets}
[(i_1\ i_2)(i_2\ i_3)\ldots (i_{k-2}\ i_{k-1})(i_{k-1}\ i_k)]\ [(i_{k+1}\ i_{k+2})(i_{k+2}\ i_{k+3})\ldots (i_{l-2}\ i_{l-1})(i_{l-1}\ i_l)]\ \ldots\ 
\end{equation}
$$[(i_{p+1}\ i_{p+2})(i_{p+2}\ i_{p+3}) \ldots
(i_{2n-2}\ i_{2n-1})(i_{2n-1}\ i_{2n})],$$
where each squared bracket corresponds to one of the connected components.


Given a non-crossing partition $(\sigma|\widetilde\sigma)=\{B_1,\ldots,B_t\}$ where by $B_i$ we denote connected components of the merged partition as in Definition \ref{groves, ncp}, one can write the decomposition of the concordance vector $w_{\sigma}$ in terms of the standard basis $e_1,\ldots,e_{2n}$ as follows:

\begin{equation} \label{formula for a concordance vector}
w_{\sigma}=\sum\limits_{I_1,\ldots,I_t}(e_{I_1}\wedge \ldots\wedge e_{I_t})_{lex},
\end{equation}
where $\forall 1\le j \le t\ I_j\subset B_i$ such that $|I_j|=|B_j|-1$. By $()_{lex}$ we denote the lexicographical reordering of all factors in the brackets. This is a straightforward from Definition \ref{def:concordance}.

Note that up to signs the formula \eqref{br2 brackets} provides exactly the same as formula \eqref{formula for a concordance vector}. Indeed, for $B_j=\{i_1,\ldots,i_x,\ldots,i_k\}$ the subset $I_{j}:=B_j\setminus \{i_x\}$ could be obtained if and only if we firstly choose all the indices except for $i_x$ in all brackets which contain $i_x$. Then note that in order to obtain non zero wedge product, the choice of the indices in $B_i$, which do not contain $i_x$, is unique due to the rule of the construction of expression \eqref{br2 brackets}. Thus the expression (\ref{br2}) contains exactly that terms which appears in $w_{\sigma}$.

\textbf{Signs arrangement}. We now prove that the signs in the expression (\ref{br2}) should be arranged as in the algorithm's description. For that we have to compare the rule for sign's arrangement in (\ref{br2}) with signs in (\ref{formula for a concordance vector}). Starting from the expression (\ref{formula for a concordance vector}) we want to obtain (\ref{br2}) by taking common factors out of brackets. Suppose that we have two wedge products, differ only in one factor: $i_x$ in one wedge and $i_{x+1}$ in the other one. To split off the bracket $(i_x\ i_{x+1})$ we move $i_{x+1}$ in the same place in order as $i_x$, and then move them both to the first places in the corresponding wedge products. So up to a common sign in front of the bracket the sign in front of $i_{x+1}$ inside the bracket is the number of the replacements that we need to move $i_{x+1}$ to the place of $i_x$. We shall compute that sign:
\begin{multline}
\text{the sign that is obtained by replacing}\ i_{x+1}\ \text{on the place of}\  i_x= \\
\phantom{aaaaaaaaaaaaaaaaaaaaaaaaa.}=\#\text{brackets}\ (i_f\ i_{f+1})\ \text{in}\ (\ref{br2}),\ \text{where}\ i_x<i_f\ \text{or}\ i_{f+1}<i_{x+1}\\
\phantom{aaaaaaaaaaaaaaaaaaaaaaaaa.}=\#\text{brackets}\ (i_f\ i_{f+1})\ \text{in}\ (\ref{br2}),\ \text{where}\ i_x<i_f<i_{f+1}<i_{x+1}\\
=\#\text{arbitrary brackets}\ (i_f\ i_{f+1})\ \text{in the expression $\Phi$},\phantom{..}
\end{multline}
where by $\Phi$ we denote the expression \eqref{br2} written for the non-crossing partition which is induced by $\sigma$ on the shaded area in Fig. \ref{ras}.

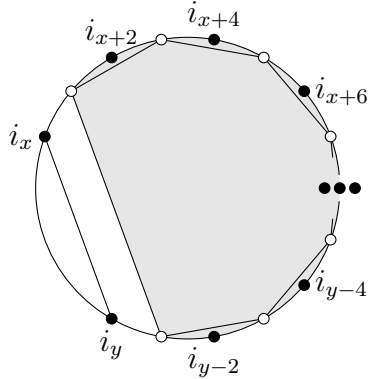
\begin{figure}[ht]
\centering
\begin{tikzpicture}
     \fill[gray!20!white] (140:2) -- node [left] {}(-100:2) arc [start angle=-100, end angle=140, radius=2];
    \draw (-5:2) arc [start angle=355, end angle=5, radius=2];

    \draw (20:2) -- node [left] {}(60:2);
    \draw (60:2) -- node [left] {}(100:2);
    \draw (100:2) -- node [left] {}(140:2);
    \draw (-20:2) -- node [left] {}(-60:2);
    \draw (-60:2) -- node [left] {}(-100:2);
    \draw (140:2) -- node [left] {}(-100:2);
    \draw (160:2) -- node [left] {}(-120:2);
    \draw (20:2) -- node [left] {}(12:1.95);
    \draw (-20:2) -- node [left] {}(-12:1.95);

    \filldraw (0:2) node [left] {} circle (2pt);
    \filldraw (0:2.2) node [above] {} circle (2pt);
    \filldraw (0:1.8) node [right] {} circle (2pt);

    \filldraw[fill=white] (20:2) node [left] {} circle (2pt);
    \filldraw (40:2) node [right] {$i_{x+6}$} circle (2pt);
    \filldraw[fill=white] (60:2) node [right] {} circle (2pt);
    \filldraw[fill=black] (80:2) node [above] {$i_{x+4}$} circle (2pt);
    \filldraw[fill=white] (100:2) node [right] {} circle (2pt);
    \filldraw[fill=black] (120:2) node [above] {$i_{x+2}$} circle (2pt);
    \filldraw[fill=white] (140:2) node [above] {} circle (2pt);
    \filldraw (160:2) node [left] {$i_x$} circle (2pt);

     \filldraw[fill=white] (-20:2) node [right] {} circle (2pt);
    \filldraw (-40:2) node [right] {$i_{y-4}$} circle (2pt);
     \filldraw[fill=white] (-60:2) node [right] {} circle (2pt);
    \filldraw (-80:2) node [below] {$i_{y-2}$} circle (2pt);
     \filldraw[fill=white] (-100:2) node [right] {} circle (2pt);
    \filldraw (-120:2) node [below] {$i_{y}$} circle (2pt);

\end{tikzpicture}
\caption{Shaded area and bijection between brackets and nodes}
    \label{ras}
\end{figure}

The first equality in above is satisfied because each wedge product between the places of $i_x$ and $i_{x+1}$ appears if and only if exist the bracket where $i_x<i_f$ or $i_{f+1}<i_{x+1}$.

The second equality in above is satisfied because in a non-crossing partition edges which connect vertices inside different parts of the  partition are not intersect, so  it could not be that only one of $i_f,i_{f+1}$ belongs to $[i_x;i_{x+1}]$.



The third equality in above is satisfied because any non-crossing partition on the shaded area in the Fig. \ref{ras} could be completed to the non-crossing partition on the initial $[2n]$ vertices.

Thus we have a reduction which allows count arbitrary brackets in the expression $\Phi$ instead of counting brackets with some restrictions in the initial expression (\ref{br2}) in order to follow the sign. In order to count the brackets in the expression $\Phi$ we count them for some special non-crossing partition on the shaded area and prove that this number is independent on the partition. 

Consider the non-crossing partition on the shaded area drawn on Fig. \ref{ras}. For this non-crossing partition brackets in the expression $\Phi$ are in a bijection with the integers which lie on the interval $(i_x;i_{x+1})$ and have the same parity as $i_x$ and $i_{x+1}$. 

It remains to prove that the number of the brackets in the expression $\Phi$ is independent from the non-crossing partition. By the Lemma \ref{number of parts} the number of parts $|(\sigma|\widetilde\sigma)|=|\sigma|+|\widetilde\sigma|=n+1$ is independent from $\sigma$. On the other hand, the number of the brackets in the expression $\Phi$ equal to the difference between the number of vertices in the shaded area and the number of parts in the union of non-crossing partition and it's dual on the shaded area. It follows that the number of brackets is independent from the partition as well. Thus since the number of the brackets in the expression $\Phi$ for the non-crossing partition from Fig. \ref{ras} on the shaded area is equal to the number of the integers which lie on the interval $(i_x;i_{x+1})$ and have the same parity as $i_x$ and $i_{x+1}$ and is independent from the non-crossing partition, the same is true for an arbitrary non-crossing partition.

We now have to follow the common sign between each of the bracket $(i_x\ i_{x+1})$ which might be obtained by taking common factors out of brackets while transferring from (\ref{formula for a concordance vector}) to (\ref{br2}). However, we could take common factors out of the brackets starting from the first wedge-factor in any term and thus never obtain any changes of the sign in front of the any of the bracket. Thus the initial sign "$+$" in front of each of the bracket is being preserved.

\textbf{From the standard basis to the basis (\ref{Basis of V})}. To finish the proof it remains to explain how the expression (\ref{br2}) could be rewritten in terms of the basis (\ref{Basis of V}). We do it for an arbitrary bracket $(i_x\ i_{x+1})$:
$$(e_{i_x}\pm e_{i_{x+1}})=v_{i_{x}}-v_{i_{x}+2}+\ldots\pm v_{i_{x+1}-2}.$$
Since the sign "$\pm$" depends on the number of the integers which lie in the interval $(i_x;i_{x+1})$ and have  the same parity as $i_x$ and $i_{x+1}$ and the terms in the RHS of the expression above run over exactly these integers, the expression above is satisfied.
\end{proof}

\begin{proof}[Proof of Lemma \ref{diagonal}, case $j=i+1$]
Consider the non-crossing partition $(\sigma|\widetilde\sigma)=(\ldots\ i\ i+4|i+1\ i+3\ \ldots)$ drawn on the Fig. \ref{diag}.

\begin{figure}[ht]
\centering
\begin{tikzpicture}
    \draw (0,0) circle (2);

    \draw (120:2) -- node [left] {}(-60:2);
    \draw (80:2) -- node [left] {}(-20:2);

    \filldraw (-0.75,-0.3) node [left] {} circle (2pt);
    \filldraw (-0.92,-0.3) node [above] {} circle (2pt);
    \filldraw (-0.58,-0.3) node [right] {} circle (2pt);
    
    \filldraw (120:2) node [above] {$i$} circle (2pt);
    \filldraw (-60:2) node [below] {$i+1$} circle (2pt);
    \filldraw (30:2) node [right] {$i+2$} circle (2pt);
    \filldraw[fill=white] (80:2) node [above] {$i+3$} circle (2pt);
    \filldraw[fill=white] (-20:2) node [right] {$i+4$} circle (2pt);

\end{tikzpicture} \phantom{aaaaaaaaaa}
\begin{tikzpicture}
    \draw (0,0) circle (2);

    \draw (120:2) -- node [left] {}(-60:2);
    \draw (80:2) -- node [left] {}(-20:2);

    \filldraw (-0.75,-0.3) node [left] {} circle (2pt);
    \filldraw (-0.92,-0.3) node [above] {} circle (2pt);
    \filldraw (-0.58,-0.3) node [right] {} circle (2pt);
    
    \filldraw[fill=white] (120:2) node [above] {$i$} circle (2pt);
    \filldraw[fill=white] (-60:2) node [below] {$i+1$} circle (2pt);
    \filldraw[fill=white] (30:2) node [right] {$i+2$} circle (2pt);
    \filldraw (80:2) node [above] {$i+3$} circle (2pt);
    \filldraw (-20:2) node [right] {$i+4$} circle (2pt);

\end{tikzpicture}
\caption{Non-crossing partition $\sigma=(\ldots|i\ i+4|i+1\ i+3|i+2|\ldots)$}
    \label{diag}
\end{figure}
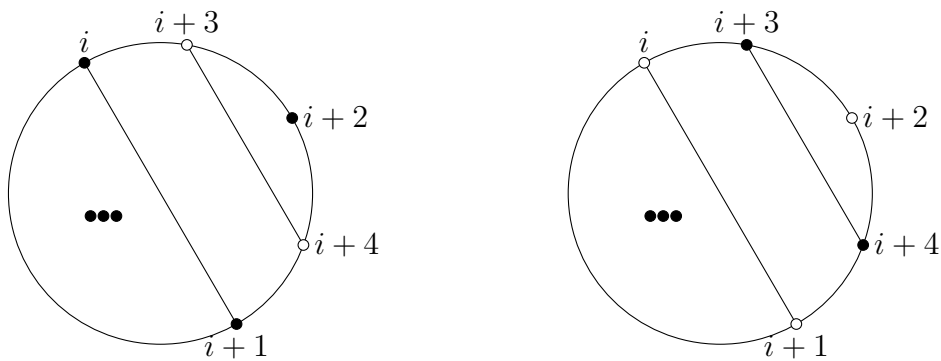

From Algorithm \ref{Algoritm_embedding} follows that the partition's part consisting from $2$ elements form a bracket in the expression \ref{br2}. Thus the expression for the concordance vector $w_{\sigma}$ looks as follows:
\begin{equation} \label{Lemma 1}
w_{\sigma}=(e_i-e_{i+4})\wedge(e_{i+1}+e_{i+3})\wedge(\ldots)=(v_i-v_{i+2})\wedge v_{i+1}\wedge(\ldots)=
\end{equation}
$$(v_i\wedge v_{i+1}+v_{i+1}\wedge v_{i+2})\wedge(\ldots)=v_i\wedge v_{i+1}\wedge(\ldots)+v_{i+1}\wedge v_{i+2}\wedge (\ldots).$$

We now convolve the expression above with the form $\omega$:
\begin{equation} \label{Qonvolution Lemma 1}
Q(w_{\sigma})=\omega(i,i+1)(-1)^{(1+2-1)}(\ldots)+\omega(i+1,i+2)(-1)^{(1+2-1)}(\ldots)+\ldots=
\end{equation}
$$\omega(i,i+1)(\ldots)+\omega(i+1,i+2)(\ldots)+\ldots.$$
We equate the expression above to $0$ because $w_{\sigma}\in\ker Q$ and compute the coefficient in front of one of the terms in $(\ldots)$. Assume for a moment that $i,i+2$ do not appear in $(\ldots)$, then we have:
$$\omega(i,i+1)+\omega(i+1,i+2)=0.$$

We now come back to the general case when $i,i+2$ might appear in $(\ldots)$. If in $(\ldots)$ appears a term which does not contain $i$ and $i+2$, when the computation of the coefficient in front of this term ends up with required equality $\omega(i,i+1)+\omega(i+1,i+2)=0.$ If there are no such term, i.e. each term in $(\ldots)$ contains $i$ or $i+2$, then after cutting out all $v_i\wedge v_i$ and $v_{i+2}\wedge v_{i+2}$ in the expression
$$(v_i\wedge v_{i+1}+v_{i+1}\wedge v_{i+2})\wedge(\ldots)=v_i\wedge v_{i+1}\wedge(\ldots)+v_{i+1}\wedge v_{i+2}\wedge (\ldots),$$ 
we fall into the first case considered.

\end{proof}
\begin{proof}[Proof of Lemma \ref{diagonal}, case $j>i+1$]
Let us consider the following concordance vectors corresponding to the non-crossing partitions from Fig. \ref{null}:
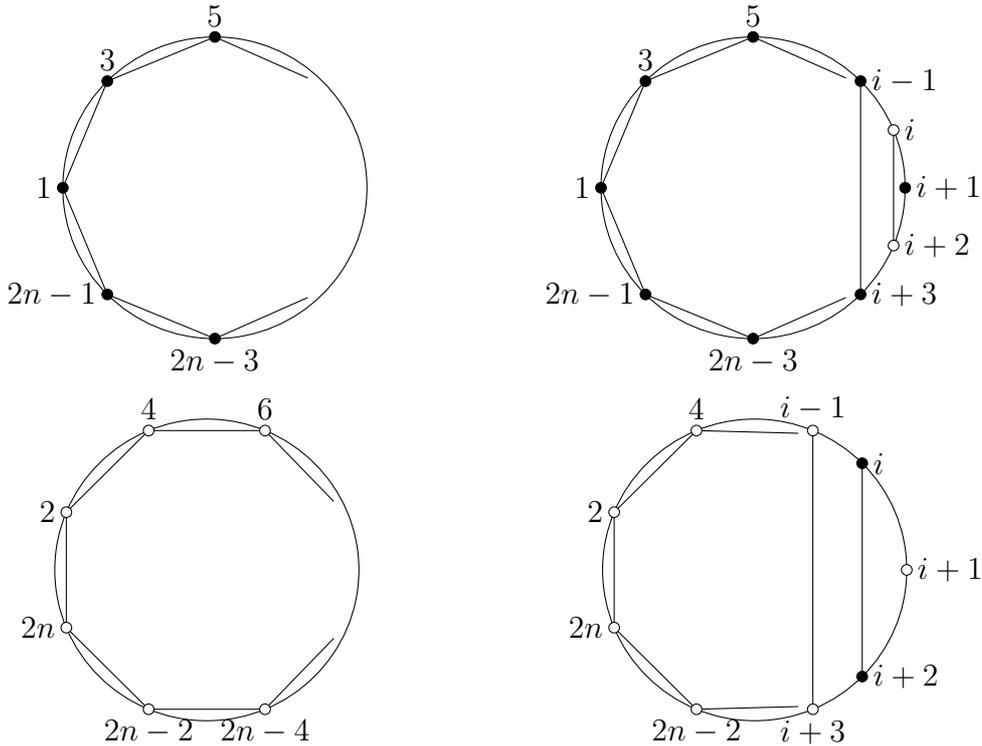
\begin{figure}[ht]
\centering
\begin{tikzpicture}
    \draw (0,0) circle (2);

    \filldraw (90:2) node [above] {$5$} circle (2pt);
    \filldraw (135:2) node [above] {$3$} circle (2pt);
    \filldraw (180:2) node [left] {$1$} circle (2pt);
    \filldraw (-135:2) node [left] {$2n-1$} circle (2pt);
    \filldraw (-90:2) node [below] {$2n-3$} circle (2pt);

    \draw (90:2) -- node [left] {}(50:1.9);
    \draw (90:2) -- node [left] {}(135:2);
    \draw (135:2) -- node [left] {}(180:2);
    \draw (180:2) -- node [left] {}(-135:2);
    \draw (-135:2) -- node [left] {}(-90:2);
    \draw (-90:2) -- node [left] {}(-50:1.9);

\end{tikzpicture}\phantom{aaaaaaaaaa}
\begin{tikzpicture}
    \draw (0,0) circle (2);

    \draw (90:2) -- node [left] {}(50:1.9);
    \draw (90:2) -- node [left] {}(135:2);
    \draw (135:2) -- node [left] {}(180:2);
    \draw (180:2) -- node [left] {}(-135:2);
    \draw (-135:2) -- node [left] {}(-90:2);
    \draw (-90:2) -- node [left] {}(-50:1.9);
    \draw (-45:2) -- node [left] {}(45:2);
    \draw (-22.5:2) -- node [left] {}(22.5:2);

    \filldraw (45:2) node [right] {$i-1$} circle (2pt);
    \filldraw (90:2) node [above] {$5$} circle (2pt);
    \filldraw (135:2) node [above] {$3$} circle (2pt);
    \filldraw (180:2) node [left] {$1$} circle (2pt);
    \filldraw (-135:2) node [left] {$2n-1$} circle (2pt);
    \filldraw (-90:2) node [below] {$2n-3$} circle (2pt);
    \filldraw (-45:2) node [right] {$i+3$} circle (2pt);
    \filldraw[fill=white] (22.5:2) node [right] {$i$} circle (2pt);
    \filldraw[fill=white] (-22.5:2) node [right] {$i+2$} circle (2pt);
    \filldraw (0:2) node [right] {$i+1$} circle (2pt);
 
\end{tikzpicture}\phantom{a}

\begin{tikzpicture}
    \draw (0,0) circle (2);

    \draw (67.5:2) -- node [left] {}(28.5:1.9);
    \draw (-67.5:2) -- node [left] {}(-28.5:1.9);
    \draw (67.5:2) -- node [left] {}(112.5:2);
    \draw (112.5:2) -- node [left] {}(157.5:2);
    \draw (-157.5:2) -- node [left] {}(157.5:2);
    \draw (-112.5:2) -- node [left] {}(-157.5:2);
    \draw (-112.5:2) -- node [left] {}(-67.5:2);

    \filldraw[fill=white] (67.5:2) node [above] {$6$} circle (2pt);
    \filldraw[fill=white] (112.5:2) node [above] {$4$} circle (2pt);
    \filldraw[fill=white] (157.5:2) node [left] {$2$} circle (2pt);
    \filldraw[fill=white] (-67.5:2) node [below] {$2n-4$} circle (2pt);
    \filldraw[fill=white] (-112.5:2) node [below] {$2n-2$} circle (2pt);
    \filldraw[fill=white] (-157.5:2) node [left] {$2n$} circle (2pt);

\end{tikzpicture}\phantom{aaaaaaaaaaaa}
\begin{tikzpicture}
    \draw (0,0) circle (2);

    \draw (72.5:1.90) -- node [left] {}(112.5:2);
    \draw (112.5:2) -- node [left] {}(157.5:2);
    \draw (-157.5:2) -- node [left] {}(157.5:2);
    \draw (-112.5:2) -- node [left] {}(-157.5:2);
    \draw (-112.5:2) -- node [left] {}(-72.5:1.9);
    \draw (67.5:2) -- node [left] {}(-67.5:2);
    \draw (45:2) -- node [left] {}(-45:2);

    \filldraw[fill=white] (67.5:2) node [above] {$i-1$} circle (2pt);
    \filldraw[fill=white] (112.5:2) node [above] {$4$} circle (2pt);
    \filldraw[fill=white] (157.5:2) node [left] {$2$} circle (2pt);
    \filldraw[fill=white] (-67.5:2) node [below] {$i+3$} circle (2pt);
    \filldraw[fill=white] (-112.5:2) node [below] {$2n-2$} circle (2pt);
    \filldraw[fill=white] (-157.5:2) node [left] {$2n$} circle (2pt);
    \filldraw (45:2) node [right] {$i$} circle (2pt);
    \filldraw (-45:2) node [right] {$i+2$} circle (2pt);
    \filldraw[fill=white] (0:2) node [right] {$i+1$} circle (2pt);

\end{tikzpicture}
    \caption{Non-crossing partitions from the proof of Lemma \ref{diagonal}, case $j>i+1$.}
    \label{null}
\end{figure}
$$w_{13\ldots 2n-1|2|4|\ldots|2n}=(e_1+e_3)\wedge(e_3+e_5)\wedge\ldots\wedge(e_{2n-3}+e_{2n-1})=v_1\wedge v_3\wedge\ldots\wedge v_{2n-3}$$

$$w_{1|24\ldots 2n|3|5|\ldots|2n-1}=(e_2+e_4)\wedge(e_4+e_6)\wedge\ldots\wedge(e_{2n-2}+e_{2n})=v_2\wedge \wedge\ldots\wedge v_{2n-2}$$

$$w_{13\ldots i-1,i+3,i+5\ldots 2n-1|i,i+2|2|4|\ldots|i-2|i+1|i+4|\ldots|2n}=$$
$$(e_1+e_3)\wedge(e_3+e_5)\wedge\ldots\wedge(e_{i-3}+e_{i-1})\wedge(e_{i-1}-e_{i+3})\wedge(e_{i+3}+e_{i+5})\wedge\ldots\wedge(e_{2n-3}+e_{2n-1})\wedge(e_i+e_{i+2})=$$
$$v_1\wedge v_3\ldots\wedge v_{i-3}\wedge (v_{i-1}-v_{i+1})\wedge v_{i+3}\wedge\ldots\wedge v_{2n-3}\wedge v_{i}$$

$$w_{24\ldots i-1,i+3,i+5\ldots 2n|i,i+2|1|3|\ldots|i-2|i+1|i+2|\ldots|2n-1}=$$
$$(e_2+e_4)\wedge(e_4+e_6)\wedge\ldots\wedge(e_{i-3}+e_{i-1})\wedge(e_{i-1}-e_{i+3})\wedge(e_{i+3}+e_{i+5})\wedge\ldots\wedge(e_{2n-2}+e_{2n})\wedge(e_i+e_{i+1})=$$
$$v_2\wedge v_4\ldots\wedge v_{i-3}\wedge (v_{i-1}-v_{i+1})\wedge v_{i+3}\wedge\ldots\wedge v_{2n-2}\wedge v_{i}$$

In all four expressions we obtained each index occurres exactly once, so we get the following.

Convolving the first vector with the symplectic form we get: $\forall i,j\in \{1,3,\ldots,2n-3\},\ i+1<j\ \omega(i,j)=0$.

Convolving the second vector with the symplectic form we get: $\forall i,j\in \{2,4,\ldots,2n-2\},\ i+1<j\ \omega(i,j)=0$.

Convolving the third vector with the symplectic form we get: $\forall i\in \{2,4,\ldots,2n-2\},\ j\in \{1,3,\ldots,2n-3\},\ i+1<j\ \omega(i,j)=0$.

Convolving the fourth vector with the symplectic form we get: $\forall i\in \{1,3,\ldots,2n-3\},\ j\in \{2,4,\ldots,2n-2\},\ i+1<j\ \omega(i,j)=0$.

\end{proof}

To help the reader follow all details and see this natural occurrence of the symplectic form $\Lambda_{2n-2}$ from the concordance we provide our exposition with a simple example $n=3$.
\begin{example} \label{Example of algorithm}

Let us decompose the vectors $w_{\sigma}$ via a basis (\ref{Basis of V}) of the space $V$ following Algorithm \ref{Algoritm_embedding}:
$$w_{1|23}=e_2\wedge e_3+e_2\wedge e_5+e_3\wedge e_6 +e_5\wedge e_6=(e_2-e_6)\wedge (e_3+e_5)=(v_2-v_4)\wedge v_3$$
$$w_{13|2}=e_1\wedge e_2+e_1\wedge e_4+e_2\wedge e_5+e_4\wedge e_5=(e_1-e_5)\wedge (e_2+e_4)=(v_1-v_3)\wedge v_2$$
$$w_{12|3}=e_1\wedge e_4+e_3\wedge e_4+e_1\wedge e_6+e_3\wedge e_6=(e_1+e_3)\wedge (e_4+e_6)=v_1\wedge v_4$$  
$$w_{1|2|3}=e_2\wedge e_4+e_4\wedge e_6+e_2\wedge e_6=(e_2+e_4)\wedge(e_4+e_6)=v_2\wedge v_4$$
$$w_{123}=e_1\wedge e_3+e_3\wedge e_5+e_1\wedge e_5=(e_1+e_3)\wedge (e_3+e_5)=v_1\wedge v_3$$

Now we can explicitly write down the condition $w_{\sigma}\in \ker Q$ from Theorem \ref{Classification}:
$$w_{1|23}\in \ker Q\text{, i.e.}\ Q(w_{1|23})=\omega(2,3)+\omega(3,4)=0 \Longrightarrow \omega(2,3)=-\omega(3,4)$$
$$w_{13|2}\in \ker Q\text{, i.e.}\ Q(w_{13|2})=\omega(1,2)+\omega(2,3)=0 \Longrightarrow \omega(1,2)=-\omega(2,3)$$
$$w_{12|3}\in \ker Q\text{, i.e.}\ Q(w_{12|3})=\omega(1,4)=0 \Longrightarrow \omega(1,4)=0$$
$$w_{1|23}\in \ker Q\text{, i.e.}\ Q(w_{1|2|3})=\omega(2,4)=0 \Longrightarrow \omega(2,4)=0$$
$$w_{123}\in \ker Q\text{, i.e.}\ Q(w_{123})=\omega(1,3)=0 \Longrightarrow \omega(1,3)=0$$

Denote $\omega(1,2)$ as $a$ and obtain:
$$\omega=\left(\begin{matrix}
0 & a & 0 & 0\\
-a & 0 & -a & 0 \\
0 & a & 0 & a \\
0 & 0 & -a & 0
\end{matrix}\right).$$
\end{example}

\subsection{Lagrangian concordance} \label{subsec: lagrangian concordance}
Analyzing Algorithm \ref{Algoritm_embedding} we are able to obtain  the explicit relation between Plucker coordinates of a point $LG_{\geq 0}(n-1, V)$ associated with $e \in \overline{E}_n$ and the grove measurements. At the beginning we give an explicit form for the concordance vectors $w_\sigma|_V$.

\begin{definition} \label{def:envelope}
For two connected components $B_i=\{i_1,\ldots,i_k\}\subseteq[2n]$ and $B_j=\{j_1,\ldots,j_l\}\subseteq[2n]$ of a non-crossing partition $\sigma$ (or $\widetilde\sigma$) we say that a chord $\{i_x,i_{x+1}\}\subseteq B_i$ such that $\{i_x,i_{x+1}\}\ne\{i_1,i_k\}$ {\itshape envelopes} $B_j$ if $B_j$ lies in the area bounded by the chord $\{i_x,i_{x+1}\}$ and the boundary circle.
\end{definition}

\begin{theorem} \label{theorem:cactus-plu}
Plucker coordinates of a concordance vector $w_{\sigma}|_V$ thought as a point of $LG(n-1,V)$ are equal either to $0$ or to $1$.
\end{theorem}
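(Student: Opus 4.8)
The plan is to read off the Plücker coordinates of $w_\sigma|_V$ directly from the output of Algorithm~\ref{Algoritm_embedding}. Recall that the algorithm expresses $w_\sigma$ as a wedge product of $n-1$ factors, one for each bracket $(i_x\ i_{x+1})$ in \eqref{br2}, and each such factor is rewritten in the basis \eqref{Basis of V} as a telescoping sum
\[
(e_{i_x}\pm e_{i_{x+1}}) = v_{i_x}-v_{i_x+2}+v_{i_x+4}-\cdots\pm v_{i_{x+1}-2},
\]
i.e. a signed sum of basis vectors $v_m$ with $m$ ranging over the integers in $[i_x,i_{x+1})$ of the same parity as $i_x$. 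The first step is to describe, for each bracket, the set of indices $m\in\{1,\dots,2n-2\}$ that can occur; these are the "admissible positions'' for that bracket. Then a Plücker coordinate $\Delta_{\{m_1<\cdots<m_{n-1}\}}(w_\sigma|_V)$ is a signed sum, over all ways of choosing one admissible index from each bracket so that the chosen multiset equals $\{m_1,\dots,m_{n-1}\}$, of the corresponding products of $\pm1$ signs.

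The heart of the argument is a combinatorial claim: for any fixed $(n-1)$-element subset $\{m_1,\dots,m_{n-1}\}$, there is \emph{at most one} system of distinct representatives (one admissible index per bracket) realizing it, and when such a system exists the product of signs is $+1$. Uniqueness of the representative system is the key structural point; I expect to prove it by induction on $n$ (or on the number of brackets), peeling off an innermost bracket of the non-crossing partition $(\sigma|\widetilde\sigma)$. An innermost bracket $(i_x\ i_{x+1})$ with $i_{x+1}=i_x+2$ contributes the single index $v_{i_x}$ with coefficient $+1$ and no choice is possible, which forces $i_x\in\{m_1,\dots,m_{n-1}\}$ or else the coordinate vanishes; removing it and recursing on the smaller non-crossing partition (on the complementary arc, cf. the shaded-area reduction in the proof of Algorithm~\ref{Algoritm_embedding}) should preserve the "distinct representatives'' and "no cancellation'' structure. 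More precisely, the intervals $[i_x,i_{x+1})$ of distinct brackets, intersected with a fixed parity class, form a laminar (nested or disjoint) family because $\sigma$ and $\widetilde\sigma$ are non-crossing; this laminar structure is exactly what makes the system-of-distinct-representatives unique. The sign bookkeeping in Step~3 of the algorithm was arranged precisely so that the alternating signs inside each telescoping factor, combined across brackets, never produce a cancellation — this is the content of Corollary~\ref{cor: coeff are of the same sign}, which already tells us all nonzero coordinates have the same sign and at least one equals $+1$; here we upgrade that to: each nonzero coordinate is \emph{exactly} $+1$, i.e. no two distinct representative systems contribute to the same coordinate with reinforcing signs.

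I would organize the write-up as follows. First, fix notation for the brackets $\mathcal{B}_1,\dots,\mathcal{B}_{n-1}$ of $(\sigma|\widetilde\sigma)$ and their admissible index sets $A_r\subseteq\{1,\dots,2n-2\}$, each $A_r$ being an arithmetic progression with step $2$. Second, prove the laminarity of the family $\{A_r\}$ within each parity class from non-crossingness. Third, prove by induction that for any transversal-type subset $T$ (one can characterize which subsets are "hit'': those meeting each $A_r$ with the multiplicity forced by laminarity), the system of distinct representatives is unique; combine with the telescoping sign rule to get coefficient $+1$, and for all other subsets get coefficient $0$. Finally, note $w_\sigma|_V$ is nonzero (it is a nonzero element of $H\subseteq\bigwedge^{n-1}V$, e.g. by Proposition~\ref{empty cactus maps to p} and Theorem~\ref{Embedding into LG}), so at least one coordinate is $1$, completing the proof. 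The main obstacle, I expect, is the clean statement and inductive proof of the uniqueness of the system of distinct representatives — phrasing the laminar family correctly and making the peeling-off step go through without case explosion; everything after that is sign bookkeeping already essentially carried out in the proof of Algorithm~\ref{Algoritm_embedding}.
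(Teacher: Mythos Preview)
Your central claim---that for each $(n-1)$-subset there is \emph{at most one} system of distinct representatives drawn from the original bracket index-sets---is false as stated. In the paper's own Example~\ref{ex:proof of coeff}, the target $\{3,4,6,9,10,11,12\}$ admits two SDRs: one with bracket~1 $\mapsto 12$ and bracket~6 $\mapsto 10$, the other with bracket~1 $\mapsto 10$ and bracket~6 $\mapsto 12$ (brackets~2--5, 7 are forced). These two contributions cancel to give coefficient $0$, so the theorem is not violated, but your uniqueness argument would not detect this. Your fallback sentence (``no two distinct representative systems contribute with reinforcing signs'') is the true statement, but you give no mechanism to prove it, and doing so directly requires exactly the sign bookkeeping you hoped to avoid.

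The paper's proof uses the same laminar structure you identified, but exploits it differently and avoids the SDR count entirely. Since the index sets $A_r$ are nested-or-disjoint, one may subtract each enveloped factor from the enveloping one \emph{as a row operation on the span}, leaving the point of $LG(n-1,V)$ (hence all Pl\"ucker coordinates) unchanged up to a global sign. After this reduction the factors have pairwise disjoint supports, so expanding the wedge product yields each basis monomial $v_J$ at most once, with coefficient $\pm 1$; then Corollary~\ref{cor: coeff are of the same sign} pins down the sign as $+1$. In short: rather than proving uniqueness of SDRs for the original factors (false) or tracking cancellations (painful), first Gaussian-eliminate using laminarity to \emph{force} disjoint supports, after which your argument becomes a one-liner.
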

\begin{proof}
Consider any connected component $B_i=\{i_1,\ldots,i_k\}\subseteq[2n]$ of $\sigma$ (or $\widetilde\sigma$). By the second step of Algorithm \ref{Algoritm_embedding} any $\{i_x,i_{x+1}\}\subseteq B_i$ such that $\{i_x,i_{x+1}\}\ne\{i_1,i_k\}$ corresponds to some bracket in the expression for $w_\sigma\in\bigwedge^{n-1}V$. According to the fourth step of Algorithm \ref{Algoritm_embedding} this bracket is 
\begin{equation} \label{eq:step 4}
(v_{i_x}-v_{i_{x}+2}+v_{i_{x}+4}-\ldots (-1)^{\frac{i_{x+1}-2-i_x}{2}}v_{i_{x+1}-2}).
\end{equation}
Note that summands in different brackets of the form \eqref{eq:step 4} for two different chords either do not intersect (by the vectors $v_i$) or one contains another as summands. We are in the second case if and only if the chord corresponding to the first bracket envelopes the connected component $B_j$ to which the chord corresponding to the second bracket belongs to.

The point $p_\sigma|_V\in LG_{\ge 0}(n-1,V)$ corresponding to $w_\sigma\in\bigwedge^{n-1}V$ is given by the span of the set of vectors given by the collection of wedge-factors in the expression for $w_\sigma\in\bigwedge^{n-1}V$, which is a pure wedge-product. If $\{i_x,i_{x+1}\}$ envelopes $B_i$ then by summing vectors corresponding to $B_j$ with the vector corresponding to $\{i_x,i_{x+1}\}$ we can obtain the set of vectors whose terms are different among each other without changing the hall span. Thus writing back a pure wedge-product from this set of vectors we will obtain a wedge-product whose wedge-factors do not contain common summands. Thus after expanding the brackets and rewriting this wedge-product in the standard basis $\{v_{i_1}\wedge\ldots\wedge v_{i_{n-1}}|i_1<\ldots <i_{n-1}\}$ there will be no common terms. Therefore all the nonzero coefficients of the final form of \eqref{eq:step 4} are equal either to $1$ or to $-1$. By Corollary \ref{cor: coeff are of the same sign} all these coefficiens are positive. Thus all of them are equal to $1$.
\end{proof}

We now illustrate the proof of Theorem \ref{theorem:cactus-plu} by the following example.
\begin{example} \label{ex:proof of coeff}
Consider a non-crossing partition $\sigma=(\bar1|\bar2,\bar5,\bar8|\bar3|\bar4|\bar6,\bar7)$ (see Fig. \ref{fig:proof of coeff}). The expression for a concordance vector $w_\sigma$ provided by Algorithm \ref{Algoritm_embedding} is given by
\begin{equation}
\label{eq:wsigma1}
w_\sigma=(v_2-v_4+v_6-v_8+v_{10}-v_{12}+v_{14})\wedge(v_3-v_5+v_7)\wedge v_4\wedge v_6\wedge (v_9-v_{11}+v_{13})\wedge (v_{10}-v_{12})\wedge v_{11}.
\end{equation}

Note that the chord $\{2,16\}$ envelopes the connected components $\{4,6,8\}$ and $\{10,14\}$, and the chord $\{9,15\}$ envelopes the connected component $\{11,13\}$. As it was claimed in the proof of Theorem \ref{theorem:cactus-plu}, the bracket corresponding to $\{2,16\}$ contains as the terms the brackets corresponding to $\{4,6,8\}$, i.e. $(v_2-v_4+v_6-v_8+v_{10}-v_{12}+v_{14})$ contains $v_4$ and $v_6$. Similarly for $\{2,16\}$ envelopes $\{10,14\}$: the bracket corresponding to $\{2,16\}$ contains as the terms the brackets corresponding to $\{10,14\}$, i.e. $(v_2-v_4+v_6-v_8+v_{10}-v_{12}+v_{14})$ contains $v_{10}-v_{12}$. Finally, for $\{9,15\}$ envelopes $\{11,13\}$: the bracket corresponding to $\{9,15\}$ contains as the terms the brackets corresponding to $\{11,13\}$, i.e. $(v_9-v_{11}+v_{13})$ contains $v_{11}$. Thus we rewrite \eqref{eq:wsigma1} in such a way that any two brackets do not contain common terms:
$$w_\sigma=(v_2+v_{14})\wedge(v_3-v_5+v_7)\wedge v_4\wedge v_6\wedge (v_9+v_{13})\wedge (v_{10}-v_{12})\wedge v_{11}.$$
Expanding all the brackets we obtain an expression for $w_\sigma$ with all coefficients equal $1$.
\end{example}

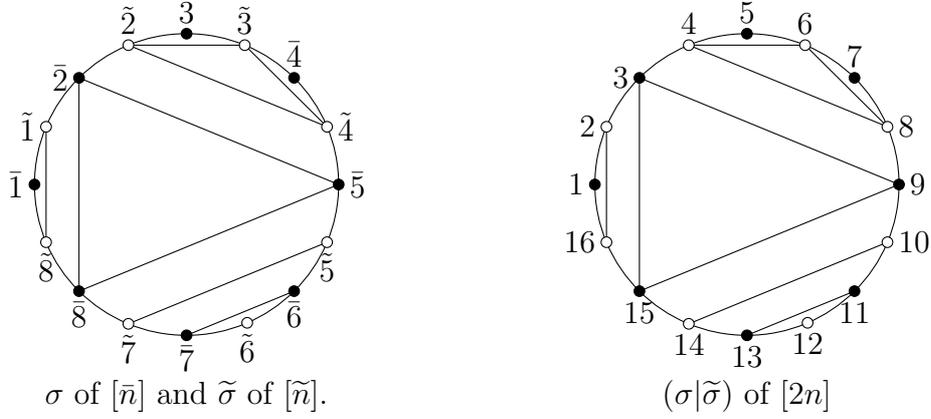
\begin{figure}[ht]
\centering
\begin{tikzpicture}
    \draw (0,0) circle (2);

    \draw (157.5:2) -- node [left] {}(202.5:2);
    
    \draw (135:2) -- node [left] {}(0:2);
    \draw (225:2) -- node [left] {}(135:2);
    \draw (225:2) -- node [left] {}(0:2);

    \draw (112.5:2) -- node [left] {}(22.5:2);
    \draw (112.5:2) -- node [left] {}(67.5:2);
    \draw (67.5:2) -- node [left] {}(22.5:2);

    \draw (337.5:2) -- node [left] {}(247.5:2);
    \draw (315:2) -- node [left] {}(270:2);

    \filldraw[fill=black] (0:2) node [right] {$\bar5$} circle (2pt);
    \filldraw[fill=white] (22.5:2) node [right] {$\tilde4$} circle (2pt);
    \filldraw[fill=black] (45:2) node [above] {$\bar4$} circle (2pt);
    \filldraw[fill=white] (67.5:2) node [above] {$\tilde3$} circle (2pt);
    \filldraw[fill=black] (90:2) node [above] {$\bar3$} circle (2pt);
    \filldraw[fill=white] (112.5:2) node [above] {$\tilde2$} circle (2pt);
    \filldraw[fill=black] (135:2) node [left] {$\bar2$} circle (2pt);
    \filldraw[fill=white] (157.5:2) node [left] {$\tilde1$} circle (2pt);
    \filldraw[fill=black] (180:2) node [left] {$\bar1$} circle (2pt);
    \filldraw[fill=white] (202.5:2) node [below] {$\tilde8$} circle (2pt);
    \filldraw[fill=black] (225:2) node [below] {$\bar8$} circle (2pt);
    \filldraw[fill=white] (247.5:2) node [below] {$\tilde7$} circle (2pt);
    \filldraw[fill=black] (270:2) node [below] {$\bar7$} circle (2pt);
    \filldraw[fill=white] (293.5:2) node [below] {$\tilde6$} circle (2pt);
    \filldraw[fill=black] (315:2) node [below] {$\bar6$} circle (2pt);
    \filldraw[fill=white] (337.5:2) node [below] {$\tilde5$} circle (2pt);

    \draw (0,-2.8) node {$\sigma$ of $[\bar n]$ and $\widetilde{\sigma}$ of $[\widetilde{n}]$.};
\end{tikzpicture} \phantom{aaaaaaaaaa}
\begin{tikzpicture}
    \draw (0,0) circle (2);

    \draw (157.5:2) -- node [left] {}(202.5:2);
    
    \draw (135:2) -- node [left] {}(0:2);
    \draw (225:2) -- node [left] {}(135:2);
    \draw (225:2) -- node [left] {}(0:2);

    \draw (112.5:2) -- node [left] {}(22.5:2);
    \draw (112.5:2) -- node [left] {}(67.5:2);
    \draw (67.5:2) -- node [left] {}(22.5:2);

    \draw (337.5:2) -- node [left] {}(247.5:2);
    \draw (315:2) -- node [left] {}(270:2);

    \filldraw[fill=black] (0:2) node [right] {$9$} circle (2pt);
    \filldraw[fill=white] (22.5:2) node [right] {$8$} circle (2pt);
    \filldraw[fill=black] (45:2) node [above] {$7$} circle (2pt);
    \filldraw[fill=white] (67.5:2) node [above] {$6$} circle (2pt);
    \filldraw[fill=black] (90:2) node [above] {$5$} circle (2pt);
    \filldraw[fill=white] (112.5:2) node [above] {$4$} circle (2pt);
    \filldraw[fill=black] (135:2) node [left] {$3$} circle (2pt);
    \filldraw[fill=white] (157.5:2) node [left] {$2$} circle (2pt);
    \filldraw[fill=black] (180:2) node [left] {$1$} circle (2pt);
    \filldraw[fill=white] (202.5:2) node [left] {$16$} circle (2pt);
    \filldraw[fill=black] (225:2) node [below] {$15$} circle (2pt);
    \filldraw[fill=white] (247.5:2) node [below] {$14$} circle (2pt);
    \filldraw[fill=black] (270:2) node [below] {$13$} circle (2pt);
    \filldraw[fill=white] (293.5:2) node [below] {$12$} circle (2pt);
    \filldraw[fill=black] (315:2) node [below] {$11$} circle (2pt);
    \filldraw[fill=white] (337.5:2) node [right] {$10$} circle (2pt);

    \draw (0,-2.8) node {$(\sigma|\widetilde{\sigma})$ of $[2n]$};
\end{tikzpicture}
    \caption{Non-crossing partition $\sigma=(\bar1|\bar2,\bar5,\bar8|\bar3|\bar4|\bar6,\bar7)$ of $[\bar n]$, $\widetilde\sigma$ of $[\widetilde n]$ and $(\sigma|\widetilde\sigma)$ of $[2n]$. }
    \label{fig:proof of coeff}
\end{figure}

Keeping in mind Corollary \ref{rem: algorithm vs Plucker coordinates} one can restate Theorem \ref{theorem:cactus-plu} as follows.
\begin{corollary}
     For a hollow cactus network $P(\sigma) \in \overline{E}_n$ and a point $p(\sigma) \in Gr_{\geq 0}(n-1, 2n)$ associated with it, all Plucker coordinates of $p(\sigma)|_{V} \in LG_{\geq 0}(n-1, 2n-2)$ are equal either $0$ or $1.$ 
\end{corollary}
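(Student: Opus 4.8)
The plan is to deduce this corollary directly from Theorem~\ref{theorem:cactus-plu}, since it is essentially a translation of that statement through the dictionary relating hollow cactus networks, the points $p_\sigma$, and the concordance vectors $w_\sigma$.

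First I would recall the chain of identifications already established. By Proposition~\ref{concordance vector is a cactus network} the hollow cactus network $P(\sigma)$ maps under Lam's embedding of Theorem~\ref{Emb} to the point $p_\sigma \in Gr_{\ge 0}(n-1,2n)\cap\mathbb PH$, and by Proposition~\ref{empty cactus maps to p} this point maps under the Plucker embedding to the concordance vector $w_\sigma\in\bigwedge^{n-1}\mathbb R^{2n}$. Hence the Plucker coordinates of $p(\sigma)$ are exactly the coefficients $a_{I\sigma}$ of $w_\sigma$ in the standard basis $\{e_I\}$ of $\bigwedge^{n-1}\mathbb R^{2n}$.

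Next I would invoke Theorem~\ref{Fund_repr}, which tells us $w_\sigma\in\bigwedge^{n-1}V$; together with Theorem~\ref{Embedding into LG} applied to $P(\sigma)\in\overline E_n$ this shows that the restriction $p(\sigma)|_V$ is a well-defined point of $LG_{\ge 0}(n-1,V)=LG_{\ge 0}(n-1,2n-2)$. Its Plucker coordinates, computed in the basis $\{v_{i_1}\wedge\dots\wedge v_{i_{n-1}}: i_1<\dots<i_{n-1}\}$ of $\bigwedge^{n-1}V$ induced by the basis~\eqref{Basis of V}, are precisely the coefficients obtained when $w_\sigma|_V$ is expanded in that basis, i.e. the output of Algorithm~\ref{Algoritm_embedding} (cf. Remark~\ref{rem: algorithm vs Plucker coordinates}).

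Finally, Theorem~\ref{theorem:cactus-plu} asserts that each such coefficient equals $0$ or $1$, which is the claim. The only work along the way is checking the compatibility of the three maps — Lam's embedding, the Plucker embedding on $\mathbb PH$, and the inclusion $H\subset\bigwedge^{n-1}V$ — but this bookkeeping has already been carried out in the proofs of Proposition~\ref{empty cactus maps to p}, Corollary~\ref{cor: coeff are of the same sign} and Algorithm~\ref{Algoritm_embedding}, so I expect no genuine obstacle: the corollary is just a restatement of Theorem~\ref{theorem:cactus-plu}.
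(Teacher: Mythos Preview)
Your proposal is correct and follows the same approach as the paper, which simply observes (in the sentence preceding the corollary) that the statement is a restatement of Theorem~\ref{theorem:cactus-plu} via the identification of $p(\sigma)|_V$ with $w_\sigma|_V$ from Remark~\ref{rem: algorithm vs Plucker coordinates} and Proposition~\ref{empty cactus maps to p}. You have spelled out the chain of identifications more carefully than the paper does, but the substance is identical.
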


The following four definitions are to construct a formal language to describe the combinatorial constructions in Algorithm \ref{Algoritm_embedding} and the proof of Theorem \ref{theorem:cactus-plu}. We will use this language to define a Lagrangian version of concordance (Definition \ref{def: Lagrangian concordance}) and state Corollary \ref{thm: Langrangian concordance}.
\begin{definition} \label{def:extension}
    Let $\sigma$ be a non-crossing partition  and $\overline{B}_j=\{(\overline{i}_1, \overline{i}_2, \dots, \overline{i}_k) \in \sigma| \ k>1\}$ be its connected component, then \textit{extension} $\mathrm{ext}(\overline{B}_j)$ is the following sequence of indices:
    $$\mathrm{ext}(\overline{B}_j)=(\mathrm{ext}(\overline{i}_1\overline{i}_2), \mathrm{ext}(\overline{i}_2\overline{i}_3), \dots, \mathrm{ext}(\overline{i}_{k-1}\overline{i}_k)  ),$$
    where  $\mathrm{ext}(\overline{i}_{l}\overline{i}_{l+1})=(2i_l-1, 2i_l+1, 2i_l+3, \dots, 2i_{l+1}-3). $ 

       Let $\widetilde{\sigma}$ be a dual non-crossing partition to $\sigma$ and let $\widetilde{B}_j=\{(\widetilde{i}_1, \widetilde{i}_2, \dots, \widetilde{i}_k) \in \widetilde{\sigma}| \ k>1\}$ be its connected component, then \textit{extension} $\mathrm{ext}(\overline{B}_j)$ is the following sequences of indices:
         
         $$\mathrm{ext}(\widetilde{B}_j)=(\mathrm{ext}(\widetilde{i}_1\widetilde{i}_2), \mathrm{ext}(\widetilde{i}_2\widetilde{i}_3), \dots, \mathrm{ext}(\widetilde{i}_{k-1}\widetilde{i}_k)  ),$$
         where $\mathrm{ext}(\widetilde{i}_{l}\widetilde{i}_{l+1})=(2i_l, 2i_l+2, 2i_l+4, \dots, 2i_{l+1}-2). $ 
   
\end{definition}

The following definition is a reformulation of Definition \ref{def:envelope} in the sense of Definition \ref{def:extension}.
\begin{definition}
Let $\sigma$ be a non-crossing partition  
and  $\overline{B}_{j_1} \in \sigma$ and $\overline{B}_{j_2} \in \sigma$ 
be two its connected components of size greater than $1$, then we will say that  $\overline{B}_{j_2}$ 
{\itshape envelopes}  $\overline{B}_{j_1}$ 
if there is $(\overline{i}_{l-1}\overline{i}_{l}) \in \overline{B}_{j_2} $ 
such that $\overline{B}_{j_1} \subset \mathrm{ext}(\overline{i}_{l-1}\overline{i}_{l})$.
For $\overline{B}_{j_2}$
envelopes $\overline{B}_{j_1}$ we will use the notation 
$ \overline{B}_{j_1} \in \mathrm{env}(\overline{B}_{j_2})$.

Similarly, we define that $\widetilde{B}_{j_2}$ 
{\itshape envelopes}  $\widetilde{B}_{j_1}$ for connected components $\widetilde{B}_{j_1},\widetilde{B}_{j_2}$ of $\widetilde{\sigma}$.
\end{definition}

Since we consider only non-crossing partitions  for any   $ \overline{B}_{j_1} \in \mathrm{env}(\overline{B}_{j_2})$ 
there is only one $(\overline{i}_{l-1}\overline{i}_{l}) \in \overline{B}_{j_2} $ 
which envelopes  $ \overline{B}_{j_1}$ 
. Similarly for the dual case.

\begin{definition} \label{def: Lext}
    Consider a non-crossing partition $\sigma.$ The \textit{Lagrangian extension} of  $\sigma$  is the  union of the index sequences $\mathrm{Lext}(\sigma)=\bigsqcup \limits_{\overline{B}_k \in \sigma: \ |\overline{B}_k| >1 } \mathrm{Lext}(\overline{B}_k) \bigsqcup \limits_{\widetilde{B}_k \in \widetilde{\sigma}: \ |\widetilde{B}_k| >1 }\mathrm{Lext}(\widetilde{B}_k),$ where $\mathrm{Lext}(\overline{B}_k)$ and $\mathrm{Lext}(\widetilde{B}_k)$ are defined as follows:

            For $\overline{B}_j=\{(\overline{i}_1, \overline{i}_2, \dots, \overline{i}_k) \in \sigma| \ k>1\}$ define $$\mathrm{Lext}(\overline{B}_j)=( \mathrm{ext}(\overline{i}_1\overline{i}_2)  , \mathrm{ext}(\overline{i}_2\overline{i}_3), \dots, \mathrm{ext}(\overline{i}_{k-1}\overline{i}_k) ) \setminus \bigcup \limits_{\overline{B}_l \in \mathrm{env}(\overline{B}_j)} \mathrm{ext}(\overline{B}_l);$$ 
            where $\mathrm{ext}(\overline{i}_{s-1}\overline{i}_{s} ) \setminus \bigcup \limits_{\overline{B}_l \in \mathrm{env}(\overline{B}_j)} $ is a \textit{sub-component} of the component $\mathrm{Lext}(\sigma);$ 

            and $\mathrm{Lext}(\widetilde{B}_k)$ is defined similarly.
        
      
    
\end{definition}

Analyzing the proof of Theorem \ref{theorem:cactus-plu} we obtain the analogue of the concordance for $LG_{\geq 0}(n-1,2n-2)$ (compare Definition \ref{def: Lagrangian concordance} with Definition \ref{co-con}). Roughly speaking, Definition \ref{def: Lagrangian concordance} provides a combinatorial description of nonzero Plucker coordinates of the point $p_\sigma|_V\in LG_{\ge 0}(n-1,V)$ associated with the concordance vector $w_\sigma$.

\begin{definition} \label{def: Lagrangian concordance}
    We will say that a subset $I \subset [2n-2], \ I=|n-1|$ is \textit{Lagrangian concordant} with $\sigma$ if $I$ shares exactly one element with each sub-component of $\mathrm{Lext}(\sigma).$
\end{definition}

Continuing Example \ref{ex:proof of coeff} we obtain:
\begin{example} 
Consider a non-crossing partition $\sigma=(\bar1|\bar2,\bar5,\bar8|\bar3|\bar4|\bar6,\bar7)$ (see Fig. \ref{fig:proof of coeff}).

Then:

$\bullet$ $\text{ext}(\overline{B}_1)=((3,5,7),(9,11,13))$ and $\text{ext}(\overline{B}_2)=(11)$. Note that $\overline{B}_2\in\text{env}(\overline{B}_1)$.

Thus $\text{Lext}(\overline{B}_1)=((3,5,7)-\text{ext}(\overline{B}_2),(9,11,13)-\text{ext}(\overline{B}_2))=((3,5,7),(9,13))$.

$\bullet$ $\text{ext}(\widetilde{B}_1)=(2,4,6,8,10,12)$, $\text{ext}(\widetilde{B}_2)=(4,6)$ and $\text{ext}(\widetilde{B}_3)=(10,12)$. Note that $\widetilde{B}_2,\widetilde{B}_3\in\text{env}(\widetilde{B}_1)$.

Thus $\text{Lext}(\widetilde{B}_1)=\text{ext}(\widetilde{B_1})-(\text{ext}(\widetilde{B}_2)\cup\text{ext}(\widetilde{B}_3))=(2,8)$, $\text{Lext}(\widetilde{B}_2)=(4,6)$ and $\text{Lext}(\widetilde{B}_3)=(10,12)$.

Taking all together we conclude that $\text{Lext}(\sigma)=(11)(3,5,7)(9,13)(4)(6)(10,12)(2,8)$.
The set of subsets $I\subset[2n-2]$, $|I|=n-1$ which are Lagrangian concordant with $\sigma$ can be obtained by choosing one element from each bracket. One can compare these $I$ with the nonzero terms in the expression for $w_\sigma$ from Example \ref{ex:proof of coeff}.

\end{example}

\begin{lemma} \label{lemma: Lagrng Plucker}
    Denote by $\Delta_I(w_\sigma|_V)$ the Plucker coordinates of the point $w_\sigma|_V\in LG_{\ge 0}(n-1,V)$, then for any given non-crossing partition $\sigma$ we have:
    $$\{I\subset[2n-2]|\ I=|n-1|, I\ \text{is Lagrangian concordant with}\ \sigma\}=\{I|\ \Delta_I(w_\sigma|_V)\ne 0\}.$$
\end{lemma}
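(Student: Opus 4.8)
The plan is to read off the Plücker coordinates of $w_\sigma|_V$ directly from the pure-wedge expression produced by Algorithm~\ref{Algoritm_embedding}, after rewriting that wedge so that its factors have pairwise disjoint supports exactly as in the proof of Theorem~\ref{theorem:cactus-plu}. Recall from Algorithm~\ref{Algoritm_embedding} that, in the basis \eqref{Basis of V}, one has $w_\sigma=\pm\bigwedge_{c}b_c$, where $c$ runs over the $n-1$ non-closing chords of the size-$\ge 2$ blocks of $\sigma$ and $\widetilde\sigma$ (there are $n-1$ of them by Lemma~\ref{number of parts}), and $b_c=\sum_{m\in\mathrm{ext}(c)}(\pm1)v_m$ is the bracket of steps~3--4: its support is the arithmetic progression $\mathrm{ext}(c)$ of Definition~\ref{def:extension} and its signs alternate along it. Reordering in step~3 and of the wedge factors changes only an overall sign, which is irrelevant for the vanishing of Plücker coordinates.

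The key intermediate point is the combinatorial dichotomy underlying the proof of Theorem~\ref{theorem:cactus-plu}: for chords $c\ne c'$ the progressions $\mathrm{ext}(c)$ and $\mathrm{ext}(c')$ are either disjoint or nested, and $\mathrm{ext}(c')\subsetneq\mathrm{ext}(c)$ holds exactly when $c$ envelopes the block $B'$ containing $c'$; consequently $\bigcup\{\mathrm{ext}(B): c \text{ envelopes } B\}=\bigcup\{\mathrm{ext}(c'): \mathrm{ext}(c')\subsetneq\mathrm{ext}(c)\}$. Since the signs in each bracket alternate, the restriction of $b_c$ to any nested sub-progression $\mathrm{ext}(c')$ equals $\pm b_{c'}$; hence, performing the row operations $b_c\mapsto\widetilde b_c:=b_c-\sum_{c'}\varepsilon_{c,c'}b_{c'}$ — the sum over the maximal chords $c'$ with $\mathrm{ext}(c')\subsetneq\mathrm{ext}(c)$, each $\varepsilon_{c,c'}=\pm1$ chosen to cancel the $\mathrm{ext}(c')$-part — processed from the outermost chord inwards, leaves the spanned subspace unchanged and produces vectors with $\pm1$ coefficients and
$$\mathrm{supp}(\widetilde b_c)=\mathrm{ext}(c)\setminus\textstyle\bigcup\{\mathrm{ext}(B): c \text{ envelopes } B\}=:S_c.$$
By Definition~\ref{def: Lext} the sets $S_c$ are precisely the sub-components of $\mathrm{Lext}(\sigma)$; they are pairwise disjoint (if $\mathrm{ext}(c')\subsetneq\mathrm{ext}(c)$ then $\mathrm{ext}(c')$, hence $S_{c'}$, has been removed from $S_c$) and non-empty (the first endpoint of $c$ lies in $\mathrm{ext}(c)$ but, since distinct blocks are disjoint, in no $\mathrm{ext}(c')$ nested in $\mathrm{ext}(c)$, so it survives in $S_c$).

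Finally, the point $w_\sigma|_V\in LG_{\ge0}(n-1,V)$ is the row space of the matrix whose rows are the $\widetilde b_c$, with pairwise disjoint supports $S_c$. For $I\subset[2n-2]$ with $|I|=n-1$, the submatrix on the columns indexed by $I$ has at most one nonzero entry in each row, so its determinant $\Delta_I(w_\sigma|_V)$ is nonzero if and only if each row has exactly one nonzero entry in these columns and these lie in distinct columns; disjointness of the $S_c$ makes the second condition automatic, so $\Delta_I(w_\sigma|_V)\ne 0$ iff $|I\cap S_c|=1$ for every $c$, that is, iff $I$ meets each sub-component of $\mathrm{Lext}(\sigma)$ in exactly one element, which by Definition~\ref{def: Lagrangian concordance} is the statement that $I$ is Lagrangian concordant with $\sigma$. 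This yields the asserted equality of index sets.

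I expect the main obstacle to be the middle step: one has to verify carefully that the purely set-theoretic recipe of Definition~\ref{def: Lext} agrees with the vector rewriting in the proof of Theorem~\ref{theorem:cactus-plu} — namely that it is enough to cancel the maximal nested sub-progressions (equivalently, that $\bigcup\{\mathrm{ext}(B): c \text{ envelopes } B\}$ is the union of all $\mathrm{ext}(c')\subsetneq\mathrm{ext}(c)$), that each cancellation is a bona fide elementary row operation introducing no new basis vectors (here one uses both the nesting-or-disjointness dichotomy and the alternating signs inside a bracket), and that after all the cancellations the support of $\widetilde b_c$ is exactly $S_c$ and no smaller.
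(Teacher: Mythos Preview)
Your argument is correct and follows essentially the same route as the paper: the paper's own proof is a one-sentence remark that Definition~\ref{def: Lagrangian concordance} is precisely the combinatorial bookkeeping of the rewriting carried out in the proof of Theorem~\ref{theorem:cactus-plu}, and you have unpacked exactly that---identifying the sub-components of $\mathrm{Lext}(\sigma)$ with the disjoint supports $S_c$ obtained after the row operations, and then reading off which $(n-1)$-subsets $I$ give a nonzero determinant. Your careful verification in the middle paragraph (that subtracting the maximal nested brackets realises the set-theoretic subtraction in Definition~\ref{def: Lext}, and that each $S_c$ is nonempty) fills in details the paper leaves implicit, but the approach is the same.
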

\begin{proof}
    Definition \ref{def: Lagrangian concordance} provides a combinatorial description of the process of enumerating all non-zero Plucker coordinates of the point $w_\sigma|_V$, which was described in the proof of Theorem \ref{theorem:cactus-plu}.
\end{proof}

The statement below is the Lagrangian analog to the formula for Plucker coordinates for the Lam's embedding (see Theorem \ref{Emb} or Theorem \ref{thm:co-Emb}).
\begin{corollary} \label{thm: Langrangian concordance}
    Consider $e \in \overline{E}_n$ and a point $Y \in LG_{\geq 0}(n-1, V)$ associated with $e \in \overline{E}_n,$ then the following holds: 
    $$\Delta_I(Y)=\sum \limits_{(I,  \mathrm{Lext}(\sigma))} L_{\sigma}, $$
    where the sum is over all $\sigma$ such that $I \subset [2n-2], \ |I|=n-1$ is Lagrangian concordant with $\sigma.$
\end{corollary}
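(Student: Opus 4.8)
The plan is to exhibit, for $e\in\overline E_n$, an explicit representative in $\bigwedge^{n-1}V$ of the point $Y\in LG_{\ge0}(n-1,V)$ attached to $e$ — namely the vector $\sum_{\sigma\in\mathcal{NC}_n}L_\sigma\,w_\sigma$ — and then simply read off its Pl\"ucker coordinates using the description of the expansion of each concordance vector $w_\sigma$ in the basis \eqref{Basis of V} provided by Theorem \ref{theorem:cactus-plu} together with Lemma \ref{lemma: Lagrng Plucker}. The whole argument is linear: once the representative is identified, the corollary is just the ``composition'' of the formula $\Delta_I^\bullet=\sum_{\sigma\text{ conc. with }I}L_\sigma$ of Theorem \ref{Emb} with the rule $w_\sigma|_V=\sum_{I\text{ Lag. conc. with }\sigma}v_I$.

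First I would recall how $Y$ is built. By Theorem \ref{Emb} and Definition \ref{def:concordance}, the point of $Gr_{\ge0}(n-1,2n)$ associated with $e$ is represented in $\bigwedge^{n-1}\mathbb R^{2n}$ by
\[
\sum_{I}\Delta_I^\bullet\,e_I=\sum_{I}\Big(\sum_{\sigma\text{ concordant with }I}L_\sigma\Big)e_I
=\sum_{\sigma\in\mathcal{NC}_n}L_\sigma\Big(\sum_{I}a_{I\sigma}e_I\Big)=\sum_{\sigma\in\mathcal{NC}_n}L_\sigma\,w_\sigma,
\]
where interchanging the two finite sums is legitimate and $w_\sigma=\sum_I a_{I\sigma}e_I$ by Definition \ref{def:concordance}. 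By Theorem \ref{Fund_repr} this vector lies in $H\subset\bigwedge^{n-1}V$, and by Theorem \ref{Theorem about Omega_n} and Theorem \ref{Embedding into LG} it is, up to the global scalar $L_{unc}$ (cf. Example \ref{bulcon-ex}), a representative of the point $Y\in LG_{\ge0}(n-1,V)$ associated with $e$.

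Next I would expand each $w_\sigma$, already known to lie in $\bigwedge^{n-1}V$, in the standard basis $\{v_{i_1}\wedge\cdots\wedge v_{i_{n-1}}\mid i_1<\cdots<i_{n-1}\}$ of $\bigwedge^{n-1}V$ coming from \eqref{Basis of V}; write $v_I:=v_{i_1}\wedge\cdots\wedge v_{i_{n-1}}$ for $I=\{i_1<\cdots<i_{n-1}\}\subset[2n-2]$. By Theorem \ref{theorem:cactus-plu} every coefficient $\Delta_I(w_\sigma|_V)$ equals $0$ or $1$, and by Lemma \ref{lemma: Lagrng Plucker} the indices $I$ with $\Delta_I(w_\sigma|_V)\neq0$ are exactly those Lagrangian concordant with $\sigma$; hence $w_\sigma|_V=\sum_{I\text{ Lag. conc. with }\sigma}v_I$. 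Substituting this into the displayed expression for $Y$, and using that passing to the basis \eqref{Basis of V} is a linear operation,
\[
Y=\sum_{\sigma}L_\sigma\,w_\sigma|_V=\sum_{\sigma}L_\sigma\sum_{I\text{ Lag. conc. with }\sigma}v_I
=\sum_{\substack{I\subset[2n-2]\\|I|=n-1}}\Big(\sum_{\sigma:\ I\text{ Lag. conc. with }\sigma}L_\sigma\Big)v_I,
\]
so the coefficient of $v_I$ — which is $\Delta_I(Y)$ for this representative — equals $\sum_{(I,\mathrm{Lext}(\sigma))}L_\sigma$, as claimed.

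The only point requiring a word of care is that Pl\"ucker coordinates are defined only up to a common scalar, so the identity must be read with the fixed ``grove-normalized'' representative of $Y$ singled out above; this is precisely the normalization already used in Theorem \ref{Emb} and Theorem \ref{thm:co-Emb}, so no new subtlety appears. I do not expect a genuine obstacle here: the corollary is a repackaging of Theorem \ref{theorem:cactus-plu} and Lemma \ref{lemma: Lagrng Plucker} via the linearity of the Lam embedding, and the substantive combinatorial work — identifying the nonzero coordinates of $w_\sigma|_V$ with the sub-components of $\mathrm{Lext}(\sigma)$ and checking that all nonzero coefficients equal $+1$ — has already been carried out.
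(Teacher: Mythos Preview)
Your proposal is correct and follows essentially the same route as the paper: write the representative of $Y$ as $\sum_\sigma L_\sigma\,w_\sigma|_V$, use Theorem \ref{theorem:cactus-plu} and Lemma \ref{lemma: Lagrng Plucker} to expand each $w_\sigma|_V$ as $\sum_{I\text{ Lag.\ conc.\ with }\sigma} v_I$, and swap the sums. If anything, your version is slightly more careful, since you explicitly invoke Theorem \ref{theorem:cactus-plu} to ensure the nonzero coefficients are $+1$ (the paper's proof cites only Lemma \ref{lemma: Lagrng Plucker}, which by itself only identifies the support), and you spell out why $\sum_\sigma L_\sigma w_\sigma$ represents $Y$.
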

\begin{proof}
Indeed, 
    $$Y=\sum \limits_{\sigma}L_\sigma w_\sigma|_V.$$
    By Lemma \ref{lemma: Lagrng Plucker} we have
    $$w_\sigma|_V=\sum \limits_{(I, \mathrm{Lext}(\sigma))}e_I,$$
     where $e_I=e_{i_1}\wedge\ldots\wedge e_{i_{n-1}}| I=\{i_1, \dots,i_{n-1}\}$  and the sum is over all $I$ such that  $I$  is Lagrangian concordant with $\sigma.$ Therefore we obtain
$$Y=\sum \limits_{\sigma}L_{\sigma}\sum \limits_{(I,  \mathrm{Lext}(\sigma))}e_I=\sum \limits_{I} \sum \limits_{(I,  \mathrm{Lext}(\sigma))} L_{\sigma} e_I=\sum \limits_{I} \left(\sum \limits_{(I,  \mathrm{Lext}(\sigma))} L_{\sigma} \right)e_I.$$

\end{proof}

\subsection{The special property of the concordance basis} \label{sec: special property}

The main goal of this subsection is to describe a special property of the action of Lam's generators on the concordance vectors. 
Note that the generators $\mathfrak{u}_2|_V,\ldots,\mathfrak{u}_{2n-1}|_V$ generate the symplectic Lie algebra $\mathfrak{sp}_{2n-2}$ (see the proof of Theorem \ref{Theorem 5.4}), therefore the results of this subsection hold for $\mathfrak{sp}_{2n-2}$ as well. We state it precisely in Corollary \ref{from Lam to sp}.

Define the following combinatorial operation on non-crossing partitions of $[\bar n]$.

\textbf{Isolating a boundary vertex}: Given a non-crossing partition $(\sigma|\widetilde\sigma)$ and an integer number $i\in[2n]$ define a new non-crossing partition $g_i\sigma$ obtained from $(\sigma|\widetilde\sigma)$ by isolating the vertex $i$ (see Fig. \ref{isolating a boundary vertex}). 

In the case when $i$ is an isolated vertex in the non-crossing partition $(\sigma|\widetilde\sigma)$ the operation $g_i$ acts trivially. 

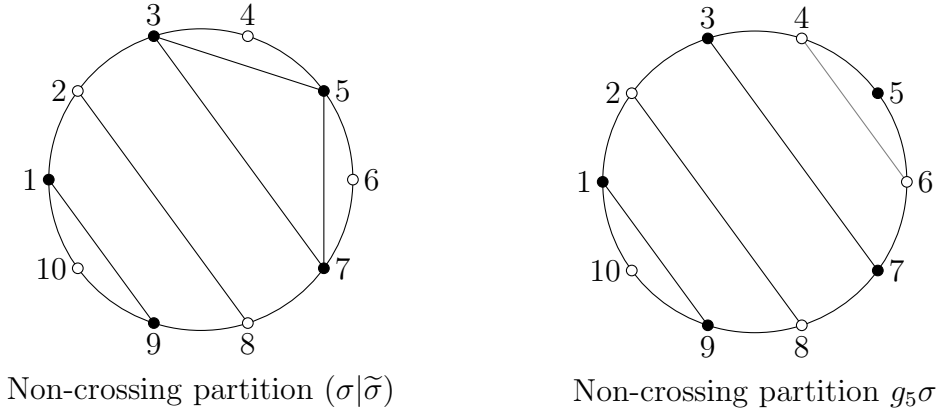
\begin{figure}[ht]
\centering
\begin{tikzpicture}
    \draw (0,0) circle (2);

    \draw (36:2) -- node [left] {}(108:2);
    \draw (108:2) -- node [left] {}(-36:2);
    \draw (36:2) -- node [left] {}(-36:2);
    \draw (144:2) -- node [left] {}(-72:2);
    \draw (180:2) -- node [left] {}(-108:2);

    \filldraw[fill=white] (0:2) node [right] {$6$} circle (2pt);
    \filldraw[fill=black] (36:2) node [right] {$5$} circle (2pt);
    \filldraw[fill=white] (72:2) node [above] {$4$} circle (2pt);
    \filldraw[fill=black] (108:2) node [above] {$3$} circle (2pt);
    \filldraw[fill=white] (144:2) node [left] {$2$} circle (2pt);
    \filldraw[fill=black] (180:2) node [left] {$1$} circle (2pt);
    \filldraw[fill=black] (-36:2) node [right] {$7$} circle (2pt);
    \filldraw[fill=white] (-72:2) node [below] {$8$} circle (2pt);
    \filldraw[fill=black] (-108:2) node [below] {$9$} circle (2pt);
    \filldraw[fill=white] (-144:2) node [left] {$10$} circle (2pt);

    \draw (0,-2.8) node {Non-crossing partition $(\sigma|\widetilde\sigma)$};
\end{tikzpicture}\phantom{aaaaaaaaaa}\begin{tikzpicture}
    \draw (0,0) circle (2);

    \draw (108:2) -- node [left] {}(-36:2);
    \draw[draw=gray] (0:2) -- node [left] {}(72:2);
    \draw (144:2) -- node [left] {}(-72:2);
    \draw (180:2) -- node [left] {}(-108:2);

    \filldraw[fill=white] (0:2) node [right] {$6$} circle (2pt);
    \filldraw[fill=black] (36:2) node [right] {$5$} circle (2pt);
    \filldraw[fill=white] (72:2) node [above] {$4$} circle (2pt);
    \filldraw[fill=black] (108:2) node [above] {$3$} circle (2pt);
    \filldraw[fill=white] (144:2) node [left] {$2$} circle (2pt);
    \filldraw[fill=black] (180:2) node [left] {$1$} circle (2pt);
    \filldraw[fill=black] (-36:2) node [right] {$7$} circle (2pt);
    \filldraw[fill=white] (-72:2) node [below] {$8$} circle (2pt);
    \filldraw[fill=black] (-108:2) node [below] {$9$} circle (2pt);
    \filldraw[fill=white] (-144:2) node [left] {$10$} circle (2pt);

    \draw (0,-2.8) node {Non-crossing partition $g_5\sigma$};
\end{tikzpicture}
    \caption{Isolating the boundary vertex $5$ in $\sigma=(\bar{1},\bar{5}|\bar{2},\bar{3},\bar{4})$}
    \label{isolating a boundary vertex}
\end{figure}

We will prove the following remarkable property of the concordance basis.
\begin{theorem} \label{cristal V}
The action of the Lam generators $\mathfrak{u}_i$ (see Definition \ref{Lam_group}) maps a concordance vector $w_\sigma$ to another concordance vector or to the zero. More precisely, $\mathfrak{u}_i(w_{\sigma})=0$ if the vertex $i$ is an isolated vertex in $\sigma$ or it is dual $\widetilde{\sigma}$ and $\mathfrak{u}_i(w_{\sigma})=w_{g_i\sigma}$ otherwise.

The same is true for the operators $\mathfrak{u}_i|_V$.
\end{theorem}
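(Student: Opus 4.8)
The plan is to compute $\mathfrak{u}_i(w_\sigma)$ directly, using the explicit decomposition of $w_\sigma$ in the basis \eqref{Basis of V} provided by Algorithm \ref{Algoritm_embedding}, and to match the result with $w_{g_i\sigma}$ computed by the same algorithm applied to $g_i\sigma$. The first ingredient is the explicit form of the generators: since $u_i(t)=x_i(t)y_{i-1}(t)=I+t(E_{i,i+1}+E_{i,i-1})$ (the $t^2$ term vanishes), one gets $\mathfrak{u}_i=E_{i,i+1}+E_{i,i-1}$ for $i\in\{2,\dots,2n-1\}$, acting on $\mathbb R^{2n}$ by $e_i\mapsto e_{i-1}+e_{i+1}$, $e_j\mapsto 0$ for $j\ne i$, and on $\bigwedge^{n-1}\mathbb R^{2n}$ as the induced derivation. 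Because $e_{i-1}+e_{i+1}=v_{i-1}$, this map preserves $V$, and on the basis \eqref{Basis of V} it reads $v_i\mapsto v_{i-1}$, $v_{i-2}\mapsto v_{i-1}$, $v_j\mapsto 0$ otherwise. Thus it suffices to prove the statement for $\mathfrak{u}_i$ on $\bigwedge^{n-1}\mathbb R^{2n}$, the claim for $\mathfrak{u}_i|_V$ being its restriction (using $w_\sigma\in\bigwedge^{n-1}V$ by Theorem \ref{Fund_repr}); for the corner generators $\mathfrak{u}_1,\mathfrak{u}_{2n}$, which are $s$-conjugates of the others by \eqref{cyclic operator s}, I would reduce to $i\in\{2,\dots,2n-1\}$ using that $\chi$ induces the rotation of non-crossing partitions and hence permutes the $w_\sigma$ up to sign.

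Writing $w_\sigma=\beta_1\wedge\dots\wedge\beta_{n-1}$ as in Algorithm \ref{Algoritm_embedding}, with each $\beta_\ell$ a bracket $(i_xi_{x+1})$ of a block of $(\sigma|\widetilde\sigma)$, i.e.\ a telescoping sum $v_{i_x}-v_{i_x+2}+\dots$, the key observation is that $\mathfrak{u}_i$ annihilates every bracket except the (at most two) brackets of the block $B$ of $(\sigma|\widetilde\sigma)$ containing $i$ that are incident to $i$. Indeed a bracket from a block of the parity opposite to $i$ involves no $v_j$ with $j\equiv i$; and a bracket from another block of the same parity either contains neither $v_i$ nor $v_{i-2}$, or — when $i$ lies strictly inside the relevant gap — contains both as consecutive terms with opposite signs, hence is sent to $v_{i-1}-v_{i-1}=0$. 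If $i$ is isolated in $\sigma$ or in $\widetilde\sigma$, then $\{i\}$ is a block of $(\sigma|\widetilde\sigma)$, no bracket is incident to $i$, and the above gives $\mathfrak{u}_i(w_\sigma)=0$, which settles the vanishing case.

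In the remaining case $i=j_r$ lies in a block $B=(j_1<\dots<j_m)$ with $m\ge 2$, and the surviving Leibniz terms come only from the incident bracket(s), on which $\mathfrak{u}_i\bigl((j_rj_{r+1})\bigr)=v_{i-1}$ and $\mathfrak{u}_i\bigl((j_{r-1}j_r)\bigr)=\varepsilon v_{i-1}$, where $\varepsilon$ is the sign of the last entry $v_{i-2}$ of $(j_{r-1}j_r)$. The elementary telescoping identity $(j_{r-1}j_{r+1})=(j_{r-1}j_r)-\varepsilon(j_rj_{r+1})$ collapses these two terms into a single wedge in which the incident bracket(s) of $B$ are replaced by $(j_{r-1}j_{r+1})$ and by the one–term bracket $v_{i-1}$. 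On the other side, isolating $i$ in $(\sigma|\widetilde\sigma)$ deletes $i$ from $B$ (replacing the incident bracket(s) by $(j_{r-1}j_{r+1})$) and merges the two blocks containing $i\pm1$, which — after re-expressing, if needed, one bracket of the absorbing block by the same telescoping identity — introduces exactly one new bracket, equal to the single vector $v_{i-1}$; all other brackets are unchanged. Matching the two computations identifies $\mathfrak{u}_i(w_\sigma)$ and $w_{g_i\sigma}$ up to a scalar; since by Corollary \ref{cor: coeff are of the same sign} and Theorem \ref{theorem:cactus-plu} both are positive $\{0,1\}$-valued Plücker vectors spanning the same $(n-1)$-dimensional subspace of $V$, the scalar is a sign, and a short computation — using that no bracket of $w_\sigma$ has first index in the interval $[i-1,i)$, so the two reorderings into the canonical first-index order of Algorithm \ref{Algoritm_embedding} contribute cancelling factors $(-1)^{|Q|}$ — pins it down to $+1$. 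The cases $r=1$, $r=m$ are the degenerate versions with a single incident bracket and require no telescoping identity.

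The statement for $\mathfrak{u}_i|_V$ is then automatic, and Corollary \ref{from Lam to sp} follows because $\mathfrak{u}_2|_V,\dots,\mathfrak{u}_{2n-1}|_V$ generate $\mathfrak{sp}_{2n-2}$ by Theorem \ref{Theorem 5.4}. I expect the main obstacle to be the combinatorial bookkeeping in the generic case: giving a clean, case-free description of how isolating the vertex $i$ transforms the bracket decomposition of $(\sigma|\widetilde\sigma)$ — in particular the behaviour of the block that absorbs the merge — and verifying that the Leibniz signs and the reordering signs genuinely cancel. Step 1, the vanishing case, and the telescoping identity should all be routine.
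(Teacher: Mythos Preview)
Your approach is valid but genuinely different from the paper's. The paper's proof is a two–line deduction from Lemma~\ref{changes of grove coordinates} (Lam's computation of how $u_i(a)$ acts on grove coordinates): since $w_{\sigma^0}$ has $L_\sigma=\delta_{\sigma,\sigma^0}$, and $u_i(a)=1+a\mathfrak u_i$, one reads off from that lemma that $L'_\sigma\neq 0$ iff $\sigma^0$ arises by merging the isolated vertex $\bar k$ of $\sigma$ back into some part, i.e.\ iff $\sigma=g_i\sigma^0$, and then $L'_{g_i\sigma^0}=1$. No bracket calculus, no sign bookkeeping.

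Your route bypasses Lam's lemma entirely and computes the Leibniz action of $\mathfrak u_i$ on the explicit wedge decomposition of Algorithm~\ref{Algoritm_embedding}. This is more self–contained and makes the interaction between the generators and the bracket factorisation transparent, at the cost of the combinatorial case analysis you flag. Your vanishing argument and the telescoping identity $(j_{r-1}j_{r+1})=(j_{r-1}j_r)-\varepsilon(j_rj_{r+1})$ are correct, and your observation that no bracket of $w_\sigma$ has first index $i-1$ (because $i-1=\max C$) is exactly what forces the new one–term bracket $v_{i-1}$ into the former position of $(j_rj_{r+1})$, so in the interior case $1<r<m$ the sign is $+1$ on the nose and the appeal to positivity is not even needed. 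The genuinely delicate point you should be prepared for is the boundary case $r=1$ or $r=m$: there $i\mp1$ need not be extremal in its dual block, so the merged block's canonical brackets are \emph{not} literally the old ones plus $v_{i-1}$ (e.g.\ $\sigma=(\bar 2\bar 3|\bar 1)$, $i=5$), and you will need the re-expression you allude to in order to match spans before comparing signs. Your reduction of $\mathfrak u_1,\mathfrak u_{2n}$ to the central range by the cyclic shift $s$ is reasonable, but you should check carefully that $s$ permutes the $w_\sigma$ with a uniform sign so that the conjugation transports the identity $\mathfrak u_i(w_\sigma)=w_{g_i\sigma}$ cleanly.
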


\begin{remark}
Note that the statement of Theorem \ref{cristal V} always holds for the Chevalley generators. 
\end{remark}


The following lemma describes the changes of grove coordinates under the action of Lam generator $u_i(a)$.

\begin{lemma} \cite[Proposition 5.15]{L} \label{changes of grove coordinates}

Let $X=\sum\limits_{\sigma}L_{\sigma}w_{\sigma}$ be a point in $Gr(n-1,2n)_{\ge 0}\cap \mathbb PH$. 

If $i=2\bar{k}-1$ is odd, then define 
$$L'_{\sigma}=\begin{cases}
L_{\sigma}+a\sum\limits_{\kappa}L_{\kappa}, \text{\ if\ }  \bar{k} \text{\ is isolated in\ } \sigma\\
L_{\sigma}+0, \text{\ if\ }  \bar{k} \text{\ is not isolated in\ } \sigma
\end{cases}$$

where the summation is over non-crossing partitions $\kappa$ obtained from $\sigma$ by merging $\bar{k}$ with any of the parts of $\sigma$.

If $i=2\bar{k}$ is even, then define 
$$L'_{\sigma}=\begin{cases}
L_{\sigma}+a\sum\limits_{\kappa}L_{\kappa}, \text{\ if\ }  \widetilde{k} \text{\ is isolated in\ } \widetilde\sigma\\
L_{\sigma}+0, \text{\ if\ }  \widetilde{k} \text{\ is not isolated in\ } \widetilde\sigma
\end{cases}$$

where the summation is over non-crossing partitions $\kappa$ obtained from $\sigma$ by merging $\widetilde{k}$ with any of the parts of $\widetilde\sigma$.

Then $u_i(a)X=\sum\limits_{\sigma}L'_{\sigma}w_{\sigma}$.

\end{lemma}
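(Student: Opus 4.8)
Since $\{w_\sigma\}_{\sigma\in\mathcal{NC}_n}$ is a basis of $H$ and, by Theorem~\ref{Fund_repr}, $H$ is invariant under every operator $u_i(a)$, writing $X=\sum_\sigma L_\sigma w_\sigma$ and expanding $X\,u_i(a)=\sum_\sigma L_\sigma\,(w_\sigma\,u_i(a))$ shows that the stated formula for $L'_\sigma$ is equivalent to the single identity
\[
w_\kappa\,u_i(a)=
\begin{cases}
w_\kappa+a\,w_{g_i\kappa}, & \text{if the vertex }i\text{ is not isolated in }(\kappa|\widetilde\kappa),\\[2pt]
w_\kappa, & \text{if }i\text{ is isolated in }(\kappa|\widetilde\kappa),
\end{cases}
\]
for every $\kappa\in\mathcal{NC}_n$, where $g_i\kappa$ isolates the vertex $i$ (for odd $i=2k-1$: split $\bar k$ off into a singleton of $\sigma$; for even $i$: the dual operation on $\widetilde\sigma$). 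Indeed, collecting the coefficient of each $w_\sigma$ and grouping the $\kappa$ with $g_i\kappa=\sigma$ reproduces exactly ``$L'_\sigma=L_\sigma+a\sum_\kappa L_\kappa$ if $\bar k$ is isolated in $\sigma$, and $L'_\sigma=L_\sigma$ otherwise'', because $\{\kappa:\ g_i\kappa=\sigma\}$ is precisely the set of partitions obtained from $\sigma$ by merging the singleton $\{\bar k\}$ into one of its parts. So it suffices to prove the displayed identity, and the plan is to do this by computing the operator that $u_i(a)$ induces on $\bigwedge^{n-1}\mathbb R^{2n}$.

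First I would unwind the matrices: for $2\le i\le 2n-1$ one has $u_i(a)=\mathrm{id}+a(E_{i,i+1}+E_{i,i-1})$ (the cross term vanishes since $E_{i,i+1}E_{i,i-1}=0$), and for $i=1$, $i=2n$ the same holds after conjugating by $s$, the wrap-around neighbour then carrying an extra factor $(-1)^n$ from $s$. Hence the induced operator on $\bigwedge^{n-1}\mathbb R^{2n}$ fixes $e_I$ whenever $i\notin I$, and sends $e_I\mapsto e_I+a\,e_{(I\setminus\{i\})\cup\{i+1\}}+a\,e_{(I\setminus\{i\})\cup\{i-1\}}$ whenever $i\in I$ (each new summand being $0$ if the inserted index already lies in $I$). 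A direct check of reordering signs shows they are all $+1$: in the interior case $i\pm1$ takes the vacated slot of $i$ without passing any index of $I$, and in the two boundary cases the $n-2$ transpositions needed to move $1$ (resp. $2n$) into place contribute exactly the sign $(-1)^n$ cancelling the coefficient $(-1)^n$. In particular $w_\kappa\,u_i(a)$ is affine-linear in $a$, so the displayed identity is equivalent to $\mathfrak{u}_i w_\kappa=w_{g_i\kappa}$ (resp. $0$), i.e. to Theorem~\ref{cristal V}; it also matches Lam's original argument, which realizes $u_{2k-1}$ and $u_{2k}$ as the two kinds of boundary moves on electrical networks via Theorem~\ref{Lam_group_acting} and tracks the effect on spanning forests. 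I would use the two viewpoints to cross-check.

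The isolated case is then immediate: if $\{i\}$ is a singleton part of $(\kappa|\widetilde\kappa)$, concordance with $\kappa$ forces $i\notin I$ for every concordant $I$, so every term of $w_\kappa=\sum_{I\,\mathrm{conc}\,\kappa}e_I$ is fixed and $w_\kappa\,u_i(a)=w_\kappa$. For the non-isolated case take $i=2k-1$ in a part $P$ of $\sigma$ with $|P|\ge 2$ (the even case is identical under $\sigma\leftrightarrow\widetilde\sigma$); then
\[
w_\kappa\,u_i(a)=w_\kappa
+a\!\!\sum_{\substack{I\,\mathrm{conc}\,\kappa\\ i\in I,\ i+1\notin I}}\!\! e_{(I\setminus\{i\})\cup\{i+1\}}
+a\!\!\sum_{\substack{I\,\mathrm{conc}\,\kappa\\ i\in I,\ i-1\notin I}}\!\! e_{(I\setminus\{i\})\cup\{i-1\}},
\]
and the claim reduces to showing that these two families of $(n-1)$-subsets together list, each exactly once, the subsets $J$ concordant with $g_i\kappa$.

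For this last step I would argue as follows. In the merged picture, $g_i\kappa$ replaces $P$ by $\{i\}$ and $P\setminus\{i\}$ and, by Lemma~\ref{number of parts}, merges into a single block exactly the two parts $\widetilde Q_-\ni i-1$ and $\widetilde Q_+\ni i+1$ of $\widetilde\sigma$ flanking $i$ (distinct when $|P|\ge 2$, by non-crossingness), all other blocks unchanged. Then $J$ is concordant with $g_i\kappa$ iff it omits exactly one element from each block; in particular $i\notin J$, and the omitted element of $\widetilde Q_-\cup\widetilde Q_+$ lies either in $\widetilde Q_+$ (so $i-1\in J$) or in $\widetilde Q_-$ (so $i+1\in J$). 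Sending such $J$ to $I:=(J\setminus\{i-1\})\cup\{i\}$ in the first alternative and to $(J\setminus\{i+1\})\cup\{i\}$ in the second yields in each case a subset concordant with $\kappa$ containing $i$, and inverts the matching family above; the two alternatives are exclusive, so each $J$ is produced once, while a cardinality count ($|[2n]\setminus J|=n+1$) rules out a $J$ produced by both families. The main obstacle is precisely this bookkeeping — pinning down how $g_i$ acts on the dual partition in the merged picture, checking it uniformly over all positions of $i$ in $P$ (including $|P|=2$ and $i\in\{1,2n\}$), and verifying that every reordering sign really is $+1$; a small-$n$ sanity check, e.g. $n=3$ with the $w_\sigma$ of Example~\ref{Example of algorithm} and the direct computation $w_{123}u_3(a)=w_{123}+a\,w_{13|2}$, is a reliable guide.
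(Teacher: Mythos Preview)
The paper does not supply its own proof; it quotes the lemma from \cite[Proposition 5.15]{L} and then uses it to derive Theorem~\ref{cristal V}. Lam's original argument, which you correctly identify, is geometric: $u_{2k-1}(a)$ and $u_{2k}(a)$ are realized via Theorem~\ref{Lam_group_acting} as adjoining a boundary edge or boundary spike of weight $a$, and one reads off the new grove measurements by tracking how spanning forests behave under that move. Your route is different and more elementary: you compute $u_i(a)=\mathrm{id}+a(E_{i,i+1}+E_{i,i-1})$ directly on the basis $\{e_I\}$ of $\bigwedge^{n-1}\mathbb R^{2n}$ and reduce everything to a bijection between subsets concordant with $\kappa$ containing $i$ and subsets concordant with $g_i\kappa$. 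This bypasses groves and the boundary-measurement machinery entirely and is in effect an independent proof of Theorem~\ref{cristal V}; consequently your remark ``i.e.\ to Theorem~\ref{cristal V}'' is correct as an observation of equivalence but must not be read as a shortcut, since in the paper's logical order that theorem is \emph{downstream} of this lemma. The bijection itself is sound: that isolating $\bar k$ forces exactly the single merge $\widetilde Q_-\cup\widetilde Q_+$ in the dual follows from Lemma~\ref{number of parts} together with the locality of the change, and your sign analysis (including the two wrap-around cases $i\in\{1,2n\}$) checks out. What Lam's approach buys is the direct link to the electrical picture; what yours buys is a self-contained linear-algebraic argument that would make the paper's Theorem~\ref{cristal V} independent of \cite{L}.
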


Note that the isolating operation $g_i$ is a left inverse of the merging operation in vertex $i$. Thus merging operation is injective.

Since $u_i^2(a)=0$ we have $u_i(a)=exp(a\mathfrak{u}_i)=1+a\mathfrak{u}_i$. Thus the second terms (divided by $a$) in the formulas above correspond to the changes of grove coordinates under the action of Lam algebra generator $\mathfrak{u}_i$. This observation leads to a different proof of Theorem \ref{cristal V}.

\begin{proof}[Proof of Theorem \ref{cristal V}]
Consider a concordance vector $w_{\sigma^0}$ corresponding to some fixed non-crossing partition $\sigma^0$ and a Lam algebra generator $\mathfrak{u}_i$ with odd $i=2\bar{k}-1$; the case of even $i=2\bar{k}$ is similar. Denote by $\{L'_{\sigma}|\sigma\in\mathcal{NC}_n\}$ the set of grove coordinates of the point $\mathfrak{u}_iw_{\sigma^0}$, then by Lemma \ref{changes of grove coordinates} and the discussion above we have:
\begin{equation}
\label{eq:Lsigma'}
L'_{\sigma}=\begin{cases}
\sum\limits_{\kappa}L_{\kappa}, \text{\ if\ }  \bar{k} \text{\ is isolated in\ } \sigma\\
0, \text{\ if\ }  \bar{k} \text{\ is not isolated in\ } \sigma
\end{cases}
\end{equation}

where the summation is over non-crossing partitions $\kappa$ obtained from $\sigma$ by merging $\bar{k}$ with any of the parts of $\sigma$.

By equation \eqref{eq:Lsigma'} $L'_{\sigma}\ne 0$ if and only if $\bar{k}$ is isolated vertex in $\sigma$ and there is $\kappa$ in the summation set such that $L_{\kappa}\ne 0$. Note that $L_{\sigma^0}$ is the only one nonzero grove measurement of the point defined by $w_{\sigma^0}$, therefore $L'_{\sigma}\ne 0$ if and only if $\bar{k}$ is an isolated vertex in $\sigma$ and $\kappa=\sigma^0$ appears in the summation set.

If the vertex $\bar{k}$ is isolated in $\sigma^0$, then $\kappa=\sigma^0$ will not appear in the summation sets since the vertex $\bar{k}$ cannot be isolated in any merged non-crossing partition.

If the vertex $\bar{k}$ is not isolated in $\sigma^0$, then $\kappa=\sigma^0$ will appear in exactly one summation sum. Indeed, it appears in grove measurements, defined by the point $w_{g_i{\sigma^0}}$, and it can not appear in other cases since merging is an injective operation. Thus $\mathfrak{u}_iw_{\sigma^0}=w_{g_i(\sigma^0)}$ if $\bar{k}$ is not an isolated vertex in $\sigma^0$.

\end{proof}

 The following bilinear form $\langle\cdot,\cdot\rangle$ defined on the concordance space $H$ was studied in a related context in \cite{KW 2011}, \cite{KW}. Since there are two independent structures defined on $H$, namely, this form $\langle\cdot,\cdot\rangle$ and the action of the Lam algebra, it is a natural question to understand in which sense they are compatible with each other. Proposition \ref{Invariant form under Lam algebra} gives a possible answer to this question.  
\begin{definition}
There is a bilinear form on the vector space of formal linear combinations of non-crossing partitions of $[\bar n]$, which is isomorphic to $H$:
 if $\tau,\sigma\in\mathcal{NC}_n$, $\langle\tau,\sigma\rangle$ takes the value $1$ or $0$ and is equal to $1$ if and only if the following two conditions are satisfied:

1. The number of parts of $\tau$ and $\sigma$ add up to $n+1$.

2. 
The partition on $[\bar n]$ defined by the union of the partitions $\tau$ and $\sigma$ consists of one part containing all the nodes.

For example, $\langle\bar1\bar2\bar3|\bar4,\bar2\bar4|\bar1|\bar3\rangle=1$ but $\langle\bar1\bar2|\bar3\bar4,\bar1\bar2|\bar3|\bar4\rangle=0$.

We extend the bilinear form from non-crossing partitions to concordance vectors: $\langle w_{\tau},w_{\sigma}\rangle:=\langle \tau,\sigma\rangle.$

\end{definition}

\begin{proposition} \label{invariance of g_i}
Let $\tau,\sigma\in\mathcal{NC}_n$ such that the vertex $i$ is either non-isolated or isolated in each of them simultaneously. 
Then for the form $\langle\cdot,\cdot\rangle$ the following identity holds:
$$\langle g_i\tau,\sigma\rangle=\langle\tau,g_i\sigma\rangle.$$
\end{proposition}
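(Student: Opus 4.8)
The plan is to translate the pairing into a statement about multigraph connectivity and then run a short cycle argument, after first using a duality to remove the ``even'' case. Throughout write $\widehat{1}$ for the one-block partition and, for non-crossing partitions $\alpha,\beta$ of a cyclically ordered finite set, $\alpha\vee\beta$ for the partition generated by their union (equivalently, the connected components of $G_\alpha\cup G_\beta$, where $G_\alpha$ denotes any graph realizing the blocks of $\alpha$); by the definition of the form, $\langle\alpha,\beta\rangle=1$ precisely when $|\alpha|+|\beta|=n+1$ and $\alpha\vee\beta=\widehat{1}$. If the vertex $i$ is isolated in both $\tau$ and $\sigma$, then $g_i$ fixes each of them and the identity is trivial, so assume $i$ is non-isolated in both.

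\textbf{Reduction to odd $i$.} I will use the \emph{duality identity} $\langle\tau,\sigma\rangle=\langle\widetilde\tau,\widetilde\sigma\rangle$. By Lemma \ref{number of parts} one has $|\widetilde\tau|+|\widetilde\sigma|=2(n+1)-|\tau|-|\sigma|$, so the cardinality conditions hold simultaneously; and the equality of the ``one block'' conditions is the invariance of the Kenyon--Wilson pairing under the rotation of the $[2n]$-circle that underlies the passage $\rho\mapsto\widetilde\rho$ (equivalently, the number of loops in the relevant matching superposition is unchanged by this rotation), cf.\ \cite{KW 2011},\cite{KW}. Now if $i=2k$ is even, then by definition $\widetilde{g_{2k}\rho}$ equals $\widetilde\rho$ with the vertex $\widetilde k$ made a singleton, and ``vertex $2k$ non-isolated in $\rho$'' means ``$\widetilde k$ non-isolated in $\widetilde\rho$''; applying the duality identity on both sides turns the desired equality for $i=2k$ into the corresponding equality for $\widetilde\tau,\widetilde\sigma$ and the isolation of the single vertex $\widetilde k$. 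Hence it suffices to prove: for non-crossing partitions $\tau,\sigma$ of an $n$-element cyclic set and a vertex $v$ non-isolated in both, writing $\alpha\setminus v$ for the partition obtained from $\alpha$ by making $v$ a singleton, one has $\langle\tau\setminus v,\sigma\rangle=\langle\tau,\sigma\setminus v\rangle$.

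\textbf{The cycle argument.} Since $v$ is non-isolated in both, $|\tau\setminus v|=|\tau|+1$ and $|\sigma\setminus v|=|\sigma|+1$, so the cardinality condition for $\langle\tau\setminus v,\sigma\rangle=1$ and for $\langle\tau,\sigma\setminus v\rangle=1$ is the same, namely $|\tau|+|\sigma|=n$; if it fails, both sides vanish, so assume it holds. It remains to show $(\tau\setminus v)\vee\sigma=\widehat{1}$ if and only if $\tau\vee(\sigma\setminus v)=\widehat{1}$. Realize each block of $\tau$ and of $\sigma$ by an arbitrary spanning tree, chosen so that $v$ is a leaf in the tree of its $\tau$-block and in the tree of its $\sigma$-block (possible as both blocks have at least two elements); let $H$ be the resulting multigraph on the $n$ vertices, with all these edges taken with multiplicity. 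Then $H$ has $n$ vertices and $(n-|\tau|)+(n-|\sigma|)=n$ edges, and its components are the blocks of $\tau\vee\sigma$. If $H$ is disconnected, so is $H$ with any edge deleted, hence $(\tau\setminus v)\vee\sigma\ne\widehat{1}$ and $\tau\vee(\sigma\setminus v)\ne\widehat{1}$ and both sides vanish. If $H$ is connected, its cyclomatic number is $n-n+1=1$, so $H$ has a unique simple cycle $C$ and every edge not on $C$ is a bridge. Deleting the unique $\tau$-edge $e_\tau$ at $v$ gives a spanning-forest realization of $\tau\setminus v$, so $(\tau\setminus v)\vee\sigma=\widehat{1}$ iff $H-e_\tau$ is connected iff $e_\tau$ is not a bridge iff $e_\tau\in C$; symmetrically $\tau\vee(\sigma\setminus v)=\widehat{1}$ iff $e_\sigma\in C$, where $e_\sigma$ is the unique $\sigma$-edge at $v$. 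Finally $v$ has degree exactly $2$ in $H$, with incident edges $e_\tau,e_\sigma$, so the simple cycle $C$ either passes through $v$ and uses both of these edges, or avoids $v$ and uses neither; in either case $e_\tau\in C\iff e_\sigma\in C$. This establishes the equivalence, and hence — via the reduction above — the proposition.

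\textbf{Where the difficulty lies.} The substantive point is the ``even'' case: there $g_{2k}$ is not an ``isolate a vertex'' move on $\sigma$ itself — it lowers $|\sigma|$ and acts instead as a merge, visible only after dualization — so one is forced into a genuine use of the duality identity $\langle\tau,\sigma\rangle=\langle\widetilde\tau,\widetilde\sigma\rangle$, i.e.\ of the rotation/interchange symmetry of the Kenyon--Wilson pairing, which I would either cite or extract from its loop-superposition description. Within the cycle argument itself the only thing needing care is the freedom to choose the block spanning trees so that $v$ becomes a leaf, hence a degree-$2$ vertex of $H$: that is precisely what powers the closing observation that a simple cycle through a degree-$2$ vertex uses both or neither of its two edges.
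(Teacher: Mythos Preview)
Your proof is correct, and the cycle/bridge argument is a genuinely different (and more explicit) route from the paper's for the key connectivity step. For odd $i$ non-isolated in both, the paper argues rather tersely that isolating $i$ in either partition only disturbs the part of $\tau\vee\sigma$ containing $i$, and since $i$ remains attached via the other partition the one-block condition transfers; your multigraph construction makes this precise via the unique cycle in a connected $n$-vertex, $n$-edge graph together with the degree-$2$ trick at $v$, which forces $e_\tau\in C\Leftrightarrow e_\sigma\in C$ with no ambiguity. What the paper's route (suitably unpacked) buys is that it actually gives $|g_i\tau\vee\sigma|=|\tau\vee g_i\sigma|$ without invoking the cardinality hypothesis; what yours buys is a clean, fully self-contained argument.

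For the even case, both you and the paper reduce to the odd case by passing to duals. A quicker direct route is available and avoids the duality identity you flag as the delicate point: for $i=2k$ with $\tilde k$ non-isolated in $\widetilde\tau$ and $\widetilde\sigma$, the operation $g_{2k}$ on a partition $\rho$ merges the $\rho$-blocks of $\bar k$ and $\overline{k+1}$, so $g_{2k}\tau\vee\sigma$ and $\tau\vee g_{2k}\sigma$ are both equal to $\tau\vee\sigma$ with the single relation $\bar k\sim\overline{k+1}$ adjoined---hence literally the same partition; the cardinality condition is likewise symmetric since $|g_{2k}\rho|=|\rho|-1$. Thus no appeal to $\langle\tau,\sigma\rangle=\langle\widetilde\tau,\widetilde\sigma\rangle$ is needed.
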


\begin{proof}
Suppose that $i$ is odd, otherwise change $\tau,\sigma$ to $\widetilde{\tau},\widetilde{\sigma}$ everywhere below. We first consider the case when the vertex $i$ is not isolated in neither $\tau$ nor $\sigma$. In such a case, the action of the operation $g_i$ increases the number of parts of the non-crossing partition by $1$, i.e. $|g_i\sigma|=|\sigma|+1$. The added part is the vertex $i$ by itself, which in $g_i\sigma$ forms the part by itself.
Thus 
\begin{equation}\label{eq:taugtau}
    |g_i\tau|+|\sigma|=|\tau|+1+|\sigma|=|\tau|+|g_i\sigma|,
\end{equation}
so the number of parts in $g_i\tau\cup\sigma$ is equal to $n+1$ if and only if the number of parts in $\tau\cup g_i\sigma$ is equal to $n+1$. Applying $g_i$ to $\tau$ preserves all 
parts of the partition of $[\bar n]$ defined by the union of $\tau$ and $\sigma$ except that parts which contain the vertex $i$. However, since the vertex $i$ is not isolated in neither $\tau$ nor $\sigma$, there is some part in $\tau$ containing $i$ and some other vertex;
 and the same for $\sigma$. It follows that the number of parts in $g_i\tau\cup\sigma$ is equal to one if and only if the same holds in $\tau\cup g_i \sigma$. Thus both conditions of the form $\langle\cdot,\cdot\rangle$ hold simultaneously in the left hand side of \eqref{eq:taugtau} if and only if they hold in the right hand side of \eqref{eq:taugtau}, so the form is invariant under the action of $g_i$.

In the case when the vertex $i$ is isolated in both $\tau$ and $\sigma$ their union never consists of a one part, because $i$ forms a part by itself. Thus both sides of \eqref{eq:taugtau} equal identically to zero.
\end{proof}

\begin{remark}
The assertion of Proposition \ref{invariance of g_i} is not true when $i$ is an isolated vertex of only one of two partitions: let $(\tau|\widetilde{\tau})=(137|2|46|5|8)$ and $(\sigma|\widetilde{\sigma})=(1|248|3|57|6)$. Then $\langle g_5\tau,\sigma\rangle=1$ while $\langle\tau,g_5 \sigma\rangle=0$.
\end{remark}

One can reformulate Proposition \ref{invariance of g_i} in an algebraic way as follows. Note that here we are not assuming any isolating condition on $i$ any more.
\begin{proposition}\label{Invariant form under Lam algebra}
For the form $\langle\cdot,\cdot\rangle$ the following identity holds:
$$\langle \mathfrak{u} w_{\tau},w_{\sigma}\rangle=\langle w_{\tau},\mathfrak{u}w_{\sigma}\rangle.$$
\end{proposition}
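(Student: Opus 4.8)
The plan is to deduce Proposition~\ref{Invariant form under Lam algebra} directly from Proposition~\ref{invariance of g_i}, using the description of the Lam generators $\mathfrak{u}_i$ on the concordance basis furnished by Theorem~\ref{cristal V}. Since $\langle\cdot,\cdot\rangle$ is bilinear and the concordance vectors $w_\sigma$ form a basis of $H$, and since the Lam algebra is generated by the $\mathfrak{u}_i$, $i\in\{1,\ldots,2n\}$, it suffices to prove the identity $\langle\mathfrak{u}_i w_\tau,w_\sigma\rangle=\langle w_\tau,\mathfrak{u}_i w_\sigma\rangle$ for a single generator $\mathfrak{u}_i$ and for arbitrary non-crossing partitions $\tau,\sigma$. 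First I would invoke Theorem~\ref{cristal V}: $\mathfrak{u}_i w_\tau$ equals $w_{g_i\tau}$ if the vertex $i$ is not isolated in $\tau$ (resp.\ $\widetilde\tau$), and equals $0$ otherwise, and similarly for $\sigma$. So both sides of the desired identity are values of the $\{0,1\}$-valued form on explicit basis vectors.

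The argument then splits into cases according to whether $i$ is isolated in $\tau$ and in $\sigma$. If $i$ is non-isolated in both, then $\mathfrak{u}_i w_\tau=w_{g_i\tau}$ and $\mathfrak{u}_i w_\sigma=w_{g_i\sigma}$, so the claim becomes $\langle w_{g_i\tau},w_\sigma\rangle=\langle w_\tau,w_{g_i\sigma}\rangle$, i.e.\ $\langle g_i\tau,\sigma\rangle=\langle\tau,g_i\sigma\rangle$, which is exactly Proposition~\ref{invariance of g_i}. If $i$ is isolated in both, then $\mathfrak{u}_i w_\tau=0=\mathfrak{u}_i w_\sigma$, so both sides vanish trivially. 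The remaining case is the one flagged in the Remark after Proposition~\ref{invariance of g_i}: $i$ isolated in exactly one of the two, say isolated in $\sigma$ but not in $\tau$. Then $\mathfrak{u}_i w_\sigma=0$, so the right-hand side is $0$, and the content of the identity is that $\langle w_{g_i\tau},w_\sigma\rangle=\langle g_i\tau,\sigma\rangle=0$ as well. This is where the real work lies, and it is the main obstacle: one must show that if $i$ is isolated in $\sigma$, then for \emph{any} $\tau$ in which $i$ is non-isolated, the pair $(g_i\tau,\sigma)$ fails at least one of the two defining conditions of the form. The natural way to see this: when $i$ is isolated in $\sigma$ and non-isolated in $\tau$, the operation $g_i$ on $\tau$ raises the part count by one, so $|g_i\tau|+|\sigma| = |\tau|+1+|\sigma|$; meanwhile, condition~2 (that $g_i\tau\cup\sigma$ is the one-block partition) forces, via Lam's lemma (Lemma~\ref{number of parts} applied to $\tau$ itself together with a counting of blocks), that $|\tau|+|\sigma|\le n+1$ with the correct parity, and one checks the two constraints are incompatible — precisely because the block $\{i\}$ appears in both $g_i\tau$ and $\sigma$, so their union already has $\{i\}$ isolated unless some other block of $\sigma$ or $g_i\tau$ reconnects $i$, which cannot happen as $\{i\}$ is a singleton in each. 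Hence $\langle g_i\tau,\sigma\rangle=0$.

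Concretely, I would write the last case as follows: since $\{i\}$ is a block of $\sigma$ and $\{i\}$ is a block of $g_i\tau$, the vertex $i$ is a singleton block of the common refinement, hence $i$ remains isolated in the join $g_i\tau\cup\sigma$ unless that join merges $\{i\}$ with something — but the join only merges blocks that share a vertex, and no block other than $\{i\}$ itself contains $i$ in either partition, so $i$ stays isolated and condition~2 fails whenever $n>1$. Therefore $\langle g_i\tau,\sigma\rangle=0=\langle w_\tau,\mathfrak{u}_i w_\sigma\rangle$, completing that case. By symmetry the case "$i$ isolated in $\tau$ only" is identical with the roles of $\tau$ and $\sigma$ swapped. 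Combining the cases gives $\langle\mathfrak{u}_i w_\tau,w_\sigma\rangle=\langle w_\tau,\mathfrak{u}_i w_\sigma\rangle$ for every generator and every $\tau,\sigma$; bilinearity and the fact that the $\mathfrak{u}_i$ generate the Lam algebra then upgrade this to $\langle\mathfrak{u}\, w_\tau,w_\sigma\rangle=\langle w_\tau,\mathfrak{u}\, w_\sigma\rangle$ for all $\mathfrak{u}$ in the Lam algebra, which is the assertion of Proposition~\ref{Invariant form under Lam algebra}. One subtlety worth double-checking is that "generated by $\mathfrak{u}_i$" means closed under brackets, and the displayed identity says each $\mathfrak{u}_i$ is skew-self-adjoint for $\langle\cdot,\cdot\rangle$; skew-self-adjoint operators are closed under the Lie bracket, so the property indeed propagates to the whole Lam algebra — this is the clean formal wrap-up.
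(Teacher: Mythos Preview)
Your case-by-case argument for a single generator $\mathfrak{u}_i$ is essentially the paper's proof: the three cases (both non-isolated, both isolated, exactly one isolated) and their resolutions via Theorem~\ref{cristal V} and Proposition~\ref{invariance of g_i} match the paper line for line, with the roles of $\tau$ and $\sigma$ exchanged in the mixed case, which is immaterial.

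There is, however, a genuine slip in your final wrap-up. You write that the identity says each $\mathfrak{u}_i$ is ``skew-self-adjoint'' and then invoke the (true) fact that skew-self-adjoint operators are closed under Lie bracket. But the displayed identity has no minus sign: what you actually proved is that each $\mathfrak{u}_i$ is \emph{self}-adjoint for $\langle\cdot,\cdot\rangle$. Self-adjoint operators are \emph{not} closed under the Lie bracket---if $A$ and $B$ are self-adjoint for a symmetric form then $[A,B]$ is skew-adjoint---so the propagation to the whole Lam algebra does not follow from the generator case. In fact it is false: already for $n=2$ one checks directly that $[\mathfrak{u}_1,\mathfrak{u}_2]$ is not self-adjoint for $\langle\cdot,\cdot\rangle$. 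The paper's own opening paragraph (reducing ``by induction'' to commutators of length~$1$ and then proving only the generator case) has the same gap; in light of the sentence introducing the proposition (``One can reformulate Proposition~\ref{invariance of g_i} in an algebraic way''), the statement should be read as concerning the generators $\mathfrak{u}=\mathfrak{u}_i$ only, and for that reading your argument is correct and coincides with the paper's.
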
 
\begin{proof}
We start with several reductions. It is enough to prove the assertion for the arbitrary commutator of generating elements. Thus by induction it is enough to prove the statement for the commutator $[\mathfrak{u}_i,\mathfrak{u}_j]$ of length $1$. 
Therefore it is enough to prove the following:
$$\langle (\mathfrak{u}_i\mathfrak{u}_j-\mathfrak{u}_j\mathfrak{u}_i) w_{\tau},w_{\sigma}\rangle=\langle w_{\tau},(\mathfrak{u}_i\mathfrak{u}_j-\mathfrak{u}_j\mathfrak{u}_i)w_{\sigma}\rangle.$$


In the case of $i$ is a non-isolated vertex in both $\tau$ and $\sigma$ the assertion follows from Theorem \ref{cristal V}:
$$\langle \mathfrak{u}_iw_{\tau},w_{\sigma}\rangle=\langle w_{g_i\tau},w_{\sigma}\rangle=\langle g_i\tau,\sigma\rangle=\langle\tau,g_i\sigma\rangle=\langle w_{\tau},w_{g_i\sigma}\rangle=\langle w_{\tau},\mathfrak{u}_iw_{\sigma}\rangle.$$
In the case when $i$ is an isolated vertex in both $\tau$ and $\sigma$ the generators $\mathfrak{u}_i$ act by $0$, so the assertion holds trivially.

It remains to consider the case when $i$ is an isolated vertex in the exactly one non-crossing partition. Without lost of generality we assume that $i$ is an isolated vertex in $\tau$ and non-isolated in $\sigma$. Then by Theorem \ref{cristal V} $\mathfrak{u}_i w_{\tau}=0$ and hence $\langle \mathfrak{u}_i w_{\tau},w_{\sigma}\rangle=0$. On the other hand, $i$ is an isolated vertex in both $\tau$ and $g_i\sigma$, so in their union $i$ forms an equivalence class by itself and thus $\langle w_{\tau},\mathfrak{u}_i w_{\sigma} \rangle=\langle \tau,g_i \sigma\rangle=0$.
\end{proof}

\begin{corollary}
\label{from Lam to sp}
The assertions of Theorem \ref{cristal V} and Proposition \ref{Invariant form under Lam algebra} are actually true for the symplectic Lie algebra $\mathfrak{sp}_{2n-2}$.
\end{corollary}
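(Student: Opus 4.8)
The plan is to deduce the corollary from what has already been established by transporting everything to the subspace $V$. First I would recall that, by Theorem \ref{Theorem 5.4} together with the observation preceding Theorem \ref{cristal V}, the restricted generators $\mathfrak{u}_2|_V,\ldots,\mathfrak{u}_{2n-1}|_V$ generate the Lie algebra $\mathfrak{sp}_{2n-2}$; in particular the restriction map $\mathfrak{u}\mapsto\mathfrak{u}|_V$ carries the Lam algebra \emph{onto} $\mathfrak{sp}_{2n-2}\subset\mathfrak{gl}(V)$. By Theorem \ref{Fund_repr} the concordance space $H$ lies inside $\bigwedge^{n-1}V$ and is the fundamental representation of $\mathfrak{sp}_{2n-2}$ attached to the last node of the Dynkin diagram. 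Since each $\mathfrak{u}_i$ preserves $V$ (Lemma \ref{Rows in V} and Theorem \ref{Theorem 5.4}), the action of $\mathfrak{u}_i|_V$ on $H$ is literally the action of $\mathfrak{u}_i$ on $H\subset\bigwedge^{n-1}\mathbb R^{2n}$, so there is a single unambiguous $\mathfrak{sp}_{2n-2}$-module structure on $H$ to speak about.

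With this identification in place, the first assertion is exactly the last sentence of Theorem \ref{cristal V}: each generator $\mathfrak{u}_i|_V$ of $\mathfrak{sp}_{2n-2}$ sends a concordance vector $w_\sigma$ to $w_{g_i\sigma}$, or to $0$ when $i$ is isolated in $\sigma$ or in $\widetilde\sigma$. Thus on the distinguished basis of the fundamental representation $H$ these generators act by the ``isolate a vertex'' rule precisely as the Lam generators do. For the second assertion, recall that Proposition \ref{Invariant form under Lam algebra} is stated and proved for an arbitrary element of the Lam algebra. Given $\mathfrak{u}'\in\mathfrak{sp}_{2n-2}$, choose $\mathfrak{u}$ in the Lam algebra with $\mathfrak{u}|_V=\mathfrak{u}'$ (possible by the surjectivity recalled above); since $\mathfrak{u}$ preserves $V$ it acts on $H$ as $\mathfrak{u}'$ does, and therefore
$$\langle\mathfrak{u}'w_\tau,w_\sigma\rangle=\langle\mathfrak{u}w_\tau,w_\sigma\rangle=\langle w_\tau,\mathfrak{u}w_\sigma\rangle=\langle w_\tau,\mathfrak{u}'w_\sigma\rangle,$$
which is the $\mathfrak{sp}_{2n-2}$-invariance of the form $\langle\cdot,\cdot\rangle$ on $H$.

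The only point that genuinely needs to be spelled out is the compatibility of the two a priori possible $\mathfrak{sp}_{2n-2}$-module structures on $H$ — the one inherited from the Lam action on $\bigwedge^{n-1}\mathbb R^{2n}$ and the one coming from $V$ via Theorem \ref{Fund_repr} — which, as noted, reduces to the invariance of $V$ under the operators $\mathfrak{u}_i$. I do not expect any real obstacle here: once the identification of the restricted Lam algebra with $\mathfrak{sp}_{2n-2}$ is recorded, the corollary is a faithful repackaging of Theorem \ref{cristal V} and Proposition \ref{Invariant form under Lam algebra}.
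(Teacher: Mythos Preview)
Your proposal is correct and follows essentially the same approach as the paper's one-line proof, which simply invokes that the restricted generators $\mathfrak{u}_2|_V,\ldots,\mathfrak{u}_{2n-1}|_V$ generate $\mathfrak{sp}_{2n-2}$ (from the proof of Theorem \ref{Theorem 5.4}). You spell out more carefully the surjectivity of the restriction and the compatibility of the module structures on $H$, but the underlying idea is identical.
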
 
\begin{proof}
It follows directly from the proof of Theorem \ref{Theorem 5.4} where it is proved that the generators $\mathfrak{u}_2|_V,\ldots,\mathfrak{u}_{2n-1}|_V$ generate the symplectic Lie algebra $\mathfrak{sp}_{2n-2}$.
\end{proof}

\section{Parametrizations of cactus networks}\label{sec:cactus}

\subsection{Parametrizations via effective resistance matrices}
In this section we present a parametrization of the totally nonnegative Grassmaniannain corresponding to electrical networks by the matrix of effective resistances. We will use it in order to get a precise form for the representatives of points of the totally nonnegative Grassmannian corresponding to cactus networks and to obtain an electrical description of Algorithm \ref{Algoritm_embedding}.
\begin{definition} \label{def-dual}
    Let $e(\Gamma, \omega)  \in E_n$ be an electrical network, then $e^{*}(\Gamma^{*}, \omega^{*})  \in E_n$ is the  {\it dual network} to $e$ if the following holds:
    \begin{itemize}
        \item a graph $\Gamma^{*}$ is a dual  graph to $\Gamma;$
        \item  for any two corresponding edges of $f \in E(\Gamma)$ and $ f^{*} \in E(\Gamma^{*}) $ their  conductivities related as follows $\omega(f)\omega^{*}(f^{*})=1.$  
    \end{itemize}
\end{definition}
\begin{remark} \label{remark:dual cactus network}
    The last definition can be naturally extended on cactus networks. Indeed it is enough to assume that if $\omega(f)=0$ then $\omega^{*}(f^{*})=\infty$ and vice versa. This agreement has the following combinatorial interpretation: a deletion of an edge $f$ of a graph $\Gamma$ is a contraction of the corresponded edge $f^{*}$ of a graph $\Gamma^{*}$ and vice versa. In other words, if a cactus network $e$ can be obtained from a network $e'$ by tending to zero (or infinity) conductivity of an edge $f,$ then $e^{*}$ can be obtained from a network $(e')^{*}$ by tending to infinity (or zero) conductivity of the corresponded edge $f^{*}.$ In particular, for a hollow cactus network $P(\widetilde{\sigma})=(P(\sigma))^*$. Note that for an arbitrary cactus network it is not the case, see \cite[Remark 2.7]{CGS}.
\end{remark}
The construction of the dual electrical network can be interpreted as the cyclic shift on $Gr_{\geq 0 }(n-1, 2n)$ (for $Gr_{\geq 0}(n+1, 2n)$ see \cite[Lemma 2.8]{CGS}):
\begin{lemma} \label{dual-gr}
  Let $e(\Gamma, \omega)  \in E_n$ be an electrical network  and $X(e), \ X(e^{*})$ are the points of $Gr_{\geq 0}(n-1, 2n)$ defined by the  Temperley trick for the networks $e(\Gamma, \omega) \in E_n$ and $e^{*}(\Gamma^{*}, \omega^{*}) \in E_n$ correspondingly.  Then the following holds:
    $$X(e^{*})=X(e)s^{-1},$$
    where the matrix $s$ is defined by the formula \eqref{cyclic operator s}.
    
    The same holds for points of $Gr_{\geq 0}(n+1, 2n)$ defined by the   generalized Temperley trick  for the networks $e$ and $e^{*}$. 
\end{lemma}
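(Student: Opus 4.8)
The plan is to deduce the statement for $Gr_{\geq 0}(n-1,2n)$ from its analogue for $Gr_{\geq 0}(n+1,2n)$, which is \cite[Lemma 2.8]{CGS}, by transporting it across the orthogonality of Proposition \ref{Ortogonality of embeddings}. Write $X(e)$ for the point of $Gr_{\geq 0}(n-1,2n)$ represented by $\Omega_n(e)$ and $Y(e)$ for the point of $Gr_{\geq 0}(n+1,2n)$ represented by $MD$ (equivalently, the Chepuri--George--Speyer point; see Corollary \ref{cor:maincor}). Proposition \ref{Ortogonality of embeddings} gives $MD\,(\Omega_n(e)\widetilde D)^{T}=0$, and since $\dim Y(e)+\dim\bigl(X(e)\widetilde D\bigr)=(n+1)+(n-1)=2n$, the two subspaces are Euclidean orthogonal complements; using $\widetilde D^{2}=\mathrm{Id}$ this reads
$$X(e)\,\widetilde D=Y(e)^{\perp},\qquad\text{equivalently}\qquad X(e)=Y(e)^{\perp}\widetilde D .$$
The same identities hold with $e$ replaced by $e^{*}$.

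Next I would feed in \cite[Lemma 2.8]{CGS}, which in the present normalization says $Y(e^{*})=Y(e)\,s^{-1}$; note that the matrix $s$ of \eqref{cyclic operator s} is the same for $k=n+1$ and $k=n-1$, because $(-1)^{(n+1)-1}=(-1)^{(n-1)-1}$. Two elementary facts about $s$ are needed. First, $s$ is a signed permutation matrix, hence orthogonal, $s^{T}=s^{-1}$; consequently for any subspace $W$ and invertible $g$ one has $(Wg)^{\perp}=W^{\perp}(g^{-1})^{T}$, which for $g=s^{-1}$ becomes $(Wg)^{\perp}=W^{\perp}s^{-1}$. Second, an entrywise check with $\widetilde D_{kk}=(-1)^{k}$ shows $\widetilde D\,s\,\widetilde D=-s$, hence $\widetilde D\,s^{-1}\,\widetilde D=-s^{-1}$. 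Combining,
$$X(e^{*})=Y(e^{*})^{\perp}\widetilde D=\bigl(Y(e)\,s^{-1}\bigr)^{\perp}\widetilde D=Y(e)^{\perp}\,s^{-1}\,\widetilde D=X(e)\,\widetilde D\,s^{-1}\,\widetilde D=-\,X(e)\,s^{-1},$$
and $-X(e)s^{-1}$ is the same point of the Grassmannian as $X(e)s^{-1}$. This settles the first assertion for $e\in E_n$; the assertion for $Gr_{\geq 0}(n+1,2n)$ is \cite[Lemma 2.8]{CGS} itself, and the computation shows that both cyclic shifts are governed by the same $s$.

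To pass from genuine networks to cactus networks I would argue exactly as in the Corollary following Corollary \ref{cor: inclusion}: realise a cactus network as a degeneration of networks $e'\in E_n$, replace $\Omega_n$ and $MD$ by the representatives $\Omega_{n,R}(e')$ and $M_TD$ (see Theorem \ref{Theorem about Omegar_n} and Remark \ref{thm: Speyer for cactus}), take the relevant limits of conductances, and use that both the passage to the dual network and the cyclic shift are continuous, and that the identity $X(e^{*})=X(e)s^{-1}$ between subspaces survives limits of representatives. The only delicate point is not conceptual but bookkeeping: fixing the \emph{direction} of the cyclic shift ($s$ versus $s^{-1}$) together with the global sign. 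I would pin this down on hollow cactus networks, where Remark \ref{remark:dual cactus network} gives $P(\sigma)^{*}=P(\widetilde\sigma)$ while $X(P(\sigma))$ corresponds to the concordance vector $w_\sigma$ and $X(P(\widetilde\sigma))$ to $w_{\widetilde\sigma}$ (Propositions \ref{concordance vector is a cactus network}, \ref{empty cactus maps to p}); writing $w_\sigma$ and $w_{\widetilde\sigma}$ explicitly via Algorithm \ref{Algoritm_embedding} and comparing $w_{\widetilde\sigma}$ with the cyclic shift of the Plucker vector of $w_\sigma$ fixes the convention, and already $n=2,3$ suffices to confirm that the correct shift is $s^{-1}$.
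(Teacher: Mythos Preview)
Your argument is correct but takes a genuinely different route from the paper. The paper gives a direct combinatorial proof: the action of $s^{-1}$ on Plucker coordinates corresponds to a counterclockwise shift of boundary labels on the bipartite network $N(\Gamma,\omega)$, and then a gauge transformation at each degree-$4$ black vertex turns the relabelled network into $N(\Gamma^{*},\omega^{*})$; this identifies $X(e)s^{-1}$ with $X(e^{*})$ without ever leaving the $Gr_{\geq 0}(n-1,2n)$ side. You instead transport the result from the $(n+1)$-plane side via Euclidean orthogonal complements: starting from the CGS statement $Y(e^{*})=Y(e)s^{-1}$ and Proposition \ref{Ortogonality of embeddings}, together with the matrix identities $s^{T}=s^{-1}$ and $\widetilde D\,s^{-1}\,\widetilde D=-s^{-1}$, you get $X(e^{*})=-X(e)s^{-1}$ as subspaces. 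Your approach is slick and economical, and nicely illustrates that the two compactifications are rigidly tied by orthogonality; its cost is that it imports \cite[Lemma~2.8]{CGS} as a black box and is sensitive to sign and direction conventions, which you rightly flag and propose to pin down on hollow cacti. The paper's approach is self-contained and makes the geometric reason (duality $\leftrightarrow$ boundary rotation plus gauge) transparent, at the price of a picture-level argument. Both treat the extension to cactus networks by continuity.
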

\begin{proof}
 Since the action of $s^{-1}$ shifts Plucker coordinates of $X(e)$ by $1:$  $$\Delta_{i_1\dots i_{n-1}}(X(e))=\Delta_{(i_1-1 \ mod \ 2n)\dots (i_{n-1}-1\ mod \ 2n)}(X(e)s^{-1}),$$ we conclude that the point $X(e)s^{-1} \in Gr_{\geq 0}(n-1, 2n)$ is defined by the network $N'$, which is  obtained from $N(\Gamma, \omega)$ by  counterclockwise shifting of a numeration of its boundary vertices by $1$ as it is shown in Fig. \ref{pic:networks}.

Let us apply the gauge transformation 
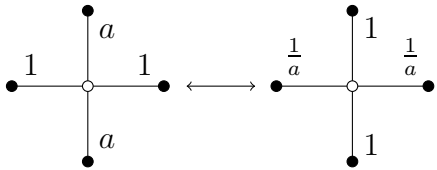
\begin{figure}[ht]
\centering
\begin{tikzpicture}

     \filldraw (-1,0) node [above] {} circle (2pt);
    \filldraw (1,0) node [left] {} circle (2pt);
    \filldraw (0,-1) node [below] {} circle (2pt);
    \filldraw (0,1) node [below] {} circle (2pt);

    \draw (-1,0) -- node [above left] {$1$}(0,0);
    \draw (0,0) -- node [above right] {$1$}(1,0);
    \draw (0,-1) -- node [below right] {$a$}(0,0);
    \draw (0,0) -- node [above right] {$a$}(0,1);

    \filldraw[fill=white] (0,0) node [below] {} circle (2pt);

 
    \draw[<->] (1.3,0) -- node {} (2.2,0);
\end{tikzpicture}
\begin{tikzpicture}

    \filldraw (-1,0) node [above] {} circle (2pt);
    \filldraw (1,0) node [left] {} circle (2pt);
    \filldraw (0,-1) node [below] {} circle (2pt);
    \filldraw (0,1) node [below] {} circle (2pt);

    \draw (-1,0) -- node [above left] {$\frac{1}{a}$}(0,0);
    \draw (0,0) -- node [above right] {$\frac{1}{a}$}(1,0);
    \draw (0,-1) -- node [below right] {$1$}(0,0);
    \draw (0,0) -- node [above right] {$1$}(0,1);

    \filldraw[fill=white] (0,0) node [below] {} circle (2pt);


\end{tikzpicture}
\caption{Gauge transformation}
    \label{pic:gauge}
\end{figure}
in each black vertex of degree $4$ to $N'$ (as it is shown in Fig. \ref{pic:gauge}) to obtain the network $N''$. Since gauge transformations do not change a point of a Grassamannian (see \cite[Lemma 4.6]{L3}), we conclude that $X(e)s^{-1}$  is defined by $N''.$ On the other hand, it is easy to see that $N''=N(\Gamma^{*}, \omega^{*}).$  That ends the proof. Note that by the continuity arguments we obtain that Lemma also holds if we assume that $e, e^{*}\in \overline{E}_n.$
\end{proof}


\begin{figure}
    \centering
\begin{tikzpicture}
    \draw (0,0) circle (1.5);

    \draw (0:0) -- node [left] {$a$}(60:0.75);
    \draw (0:1.125) -- node [above] {}(0:1.5);
    \draw (0:0) -- node [below] {$c$}(180:0.75);
    \draw (120:1.125) -- node [right] {}(120:1.5);
    \draw (0:0) -- node [above right] {$b$}(300:0.75);
    \draw (240:1.125) -- node [right] {}(240:1.5);
    
    \draw (60:0.75) -- node [above left] {$a$}(60:1.125);
    \draw (60:0.125) -- node [right] {}(60:1.5);
    \draw (60:0.5675) -- node [right] {}(0:1.125);
    \draw (60:0.5675) -- node [above] {}(120:1.125);

    \draw (180:0.75) -- node [below] {$c$}(180:1.125);
    \draw (180:0.125) -- node [right] {}(180:1.5);
    \draw (120:1.125) -- node [left] {} (-0.5625,0);
    \draw (-0.5625,0) -- node [left] {}(240:1.125);

    \draw (300:1.125) -- node [above right] {$b$}(300:0.75);
    \draw (300:1.125) -- node [right] {}(300:1.5);
    \draw (300:0.5625) -- node [right] {}(0:1.125);
    \draw (300:0.5625) -- node [] {}(240:1.125);


    \filldraw[fill=white] (0:1.5) node [right] {$2$} circle (0.1pt);
    \filldraw[fill=white] (60:1.5) node [above] {$1$} circle (0.1pt);
    \filldraw[fill=white] (120:1.5) node [above] {$6$} circle (0.1pt);
    \filldraw[fill=white] (180:1.5) node [left] {$5$} circle (0.1pt);
    \filldraw[fill=white] (240:1.5) node [below] {$4$} circle (0.1pt);
    \filldraw[fill=white] (300:1.5) node [below] {$3$} circle (0.1pt);

    \filldraw (0:0) node [right] {} circle (2pt);
    
    \filldraw (0:1.125) node [right] {} circle (2pt);
    \filldraw[fill=white] (60:0.5625) node [right] {} circle (2pt);
    \filldraw (120:1.125) node [right] {} circle (2pt);
    \filldraw[fill=white] (180:0.5625) node [right] {} circle (2pt);
    \filldraw (240:1.125) node [right] {} circle (2pt);
    \filldraw[fill=white] (300:0.5625) node [right] {} circle (2pt);

     \filldraw (60:1.125) node [right] {} circle (2pt);
     \filldraw (180:1.125) node [right] {} circle (2pt);
     \filldraw (300:1.125) node [right] {} circle (2pt);


    \draw (0,-2.8) node {$N(\Gamma)$};

     \draw[<->] (2.3,0) -- node {} (3.2,0);
\end{tikzpicture} 
\begin{tikzpicture}
    \draw (0,0) circle (1.5);

    \draw (0:0) -- node [left] {$a$}(60:0.75);
    \draw (0:1.125) -- node [above] {}(0:1.5);
    \draw (0:0) -- node [below] {$c$}(180:0.75);
    \draw (120:1.125) -- node [right] {}(120:1.5);
    \draw (0:0) -- node [above right] {$b$}(300:0.75);
    \draw (240:1.125) -- node [right] {}(240:1.5);
    
    \draw (60:0.75) -- node [above left] {$a$}(60:1.125);
    \draw (60:0.125) -- node [right] {}(60:1.5);
    \draw (60:0.5675) -- node [right] {}(0:1.125);
    \draw (60:0.5675) -- node [above] {}(120:1.125);

    \draw (180:0.75) -- node [below] {$c$}(180:1.125);
    \draw (180:0.125) -- node [right] {}(180:1.5);
    \draw (120:1.125) -- node [left] {} (-0.5625,0);
    \draw (-0.5625,0) -- node [left] {}(240:1.125);

    \draw (300:1.125) -- node [above right] {$b$}(300:0.75);
    \draw (300:1.125) -- node [right] {}(300:1.5);
    \draw (300:0.5625) -- node [right] {}(0:1.125);
    \draw (300:0.5625) -- node [] {}(240:1.125);


    \filldraw[fill=white] (0:1.5) node [right] {$1$} circle (0.1pt);
    \filldraw[fill=white] (60:1.5) node [above] {$6$} circle (0.1pt);
    \filldraw[fill=white] (120:1.5) node [above] {$5$} circle (0.1pt);
    \filldraw[fill=white] (180:1.5) node [left] {$4$} circle (0.1pt);
    \filldraw[fill=white] (240:1.5) node [below] {$3$} circle (0.1pt);
    \filldraw[fill=white] (300:1.5) node [below] {$2$} circle (0.1pt);

    \filldraw (0:0) node [right] {} circle (2pt);
    
    \filldraw (0:1.125) node [right] {} circle (2pt);
    \filldraw[fill=white] (60:0.5625) node [right] {} circle (2pt);
    \filldraw (120:1.125) node [right] {} circle (2pt);
    \filldraw[fill=white] (180:0.5625) node [right] {} circle (2pt);
    \filldraw (240:1.125) node [right] {} circle (2pt);
    \filldraw[fill=white] (300:0.5625) node [right] {} circle (2pt);

     \filldraw (60:1.125) node [right] {} circle (2pt);
     \filldraw (180:1.125) node [right] {} circle (2pt);
     \filldraw (300:1.125) node [right] {} circle (2pt);


    \draw (0,-2.8) node {$s^{-1}N(\Gamma)$};
    \draw[<->] (2.3,0) -- node {} (3.2,0);
\end{tikzpicture}
\begin{tikzpicture}
    \draw (0,0) circle (1.5);

    \draw (0:0) -- node [above] {}(60:0.75);
    \draw (0:1.125) -- node [above] {}(0:1.5);
    \draw (0:0) -- node [right] {}(180:0.75);
    \draw (120:1.125) -- node [right] {}(120:1.5);
    \draw (0:0) -- node [right] {}(300:0.75);
    \draw (240:1.125) -- node [right] {}(240:1.5);
    
    \draw (60:0.75) -- node [right] {}(60:1.5);
    \draw (60:0.5675) -- node [above] {$\frac{1}{a}$}(0:1.125);
    \draw (60:0.5675) -- node [above] {$\frac{1}{a}$}(120:1.125);

    \draw (180:0.75) -- node [right] {}(180:1.5);
    \draw (120:1.125) -- node [left] {$\frac{1}{c}$} (-0.5625,0);
    \draw (-0.5625,0) -- node [left] {$\frac{1}{c}$}(240:1.125);

    \draw (300:1.5) -- node [right] {}(300:0.75);
    \draw (300:0.5625) -- node [below] {$\frac{1}{b}$}(0:1.125);
    \draw (300:0.5625) -- node [below] {$\frac{1}{b}$}(240:1.125);


    \filldraw[fill=white] (0:1.5) node [right] {$1$} circle (0.1pt);
    \filldraw[fill=white] (60:1.5) node [above] {$6$} circle (0.1pt);
    \filldraw[fill=white] (120:1.5) node [above] {$5$} circle (0.1pt);
    \filldraw[fill=white] (180:1.5) node [left] {$4$} circle (0.1pt);
    \filldraw[fill=white] (240:1.5) node [below] {$3$} circle (0.1pt);
    \filldraw[fill=white] (300:1.5) node [below] {$2$} circle (0.1pt);

    \filldraw (0:0) node [right] {} circle (2pt);
    
    \filldraw (0:1.125) node [right] {} circle (2pt);
    \filldraw[fill=white] (60:0.5625) node [right] {} circle (2pt);
    \filldraw (120:1.125) node [right] {} circle (2pt);
    \filldraw[fill=white] (180:0.5625) node [right] {} circle (2pt);
    \filldraw (240:1.125) node [right] {} circle (2pt);
    \filldraw[fill=white] (300:0.5625) node [right] {} circle (2pt);

     \filldraw (60:1.125) node [right] {} circle (2pt);
     \filldraw (180:1.125) node [right] {} circle (2pt);
     \filldraw (300:1.125) node [right] {} circle (2pt);


    \draw (0,-2.8) node {$N(\Gamma^*)$};
\end{tikzpicture}
\phantom{aaa}\begin{tikzpicture}
    \draw (0,0) circle (1.5);

    \draw (0:0) -- (60:1.5);
    \draw (0:0) --  (180:1.5);
    \draw (0:0) -- node [right] {$b$}(300:1.5);
    \filldraw (300:1.6) node [right] {$\bar 2$} circle (0.1pt);
    \filldraw (60:1.5) node [above] {$\bar 1$} circle (0.1pt);
    \filldraw (178:1.7) node [below] {$\bar 3$} circle (0.1pt);
     \filldraw (170:0.8) node  {$c$};
     \filldraw (74:0.8) node  {$a$};
    \draw[<->] (120:2.2) -- (120:2.9);
    \draw (0,-2.8) node {$\Gamma$};
    \draw[<->] (2.3,0) -- node {} (3.2,0);
\end{tikzpicture}\phantom{aaa}
\begin{tikzpicture}
    \draw (0,0) circle (1.5);

     \draw (240:1.5) -- (0:1.5);
\draw (0:1.5) -- (120:1.5);
\draw (120:1.5) -- (240:1.5);
    \filldraw (235:1.9) node [right] {$\bar 2$} circle (0.1pt);
    \filldraw (-10:1.7) node [above] {$\bar 1$} circle (0.1pt);
    \filldraw (115:2.0) node [below] {$\bar 3$} circle (0.1pt);
    \filldraw (300:1.0) node  {$\frac{1}{b}$};
\filldraw (60:1.0) node  {$\frac{1}{a}$};
    \filldraw (180:1.0) node  {$\frac{1}{c}$};
     
    \draw[<->] (60:2.2) -- (60:2.9);
    \draw (0,-2.8) node {$\Gamma^*$};
\end{tikzpicture}

\caption{Action of the operator $s^{-1}$ on the network}
    \label{pic:networks}
\end{figure}

It is clear that $\Omega_n(e')$ is defined for any cactus network $e' \in \overline{E}_n,$ which is obtained from a network $e \in E_n$ by setting some of its conductivities equal to $0.$ There is a natural way to adjust Definition \ref{Omega_n}  for the explicit parameterization of the points of $Gr_{\geq 0}(n-1,2n)$ associated with   cactus networks $e'$  obtained from   a network $e \in E_n$ by setting some of its conductivities to infinity. 

\begin{definition} \label{def:eff-resist}
Let $e \in E_n$ and a set boundary potentials $\textbf{U} = (U_1, \dots , U_n)$ be such that: 
\begin{equation} \label{eq-resist}
    M_R\textbf{U} = -e_i + e_j,
\end{equation}
 where $e_k, \ k \in \{1, \dots  , n\}$ is the  standard basis of $\mathbb{R}^n.$ 
 The quantity $R_{ij}=|U_i - U_j|$ is called the {\it effective resistance}  between the boundary vertices $i$ and $j$ (in particular, $R_{ii}=0$). This difference does not depend on the choice of a solution of the equation \eqref{eq-resist}.
\end{definition}

\begin{definition} \label{OmegaR_def}
Let $e\in E_n,$ then   define a point in $Gr(n-1,2n)$ associated to $e$ as the row space of the matrix:
\begin{equation} \label{eq:omega_n,r}
  \Omega_{n,R}(e)=\left(\begin{matrix}
1 & m_{11} & 1 &  -m_{12} & 0 & m_{13} & 0 & \ldots  \\
0 & -m_{21} & 1 & m_{22} & 1 & -m_{23} & 0 & \ldots \\
0 & m_{31} & 0 & -m_{32} & 1 & m_{33} & 1 & \ldots \\
\vdots & \vdots & \vdots & \vdots & \vdots & \vdots &  \vdots & \ddots 
\end{matrix}\right),
\end{equation}
where $m_{ij}= -\frac{1}{2}(R_{i,j}+R_{i+1,j+1}-R_{i,j+1}-R_{i+1,j})$.

\end{definition}

\begin{theorem} \label{Theorem about Omegar_n}
The row space of $\Omega_{n,R}(e)$ defines the same point in the $Gr_{\geq 0}(n-1,2n)$ as the point defined by $e(\Gamma,\omega)$ under the Lam's embedding and the following holds:
$$\Delta_I(\Omega'_{n,R}(e))=\frac{\Delta_I^\bullet}{L_{123\dots n}}.$$
\end{theorem}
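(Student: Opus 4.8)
The plan is to reduce the statement to Theorem~\ref{Theorem about Omega_n} by means of the duality of electrical networks (Lemma~\ref{dual-gr}), and then to fix the normalizing constant by a one‑line minor computation. First I would identify $\Omega_{n,R}(e)$ with $\Omega_n(e^*)$ up to a cyclic shift of columns, where $e^*$ is the dual network of Definition~\ref{def-dual}. The key input is the classical relation between effective resistances and the dual network: if $G$ denotes the (symmetric) pseudoinverse of the response matrix $M_R(e)$, then $R_{ij}=G_{ii}+G_{jj}-2G_{ij}$, and hence the second difference occurring in Definition~\ref{OmegaR_def} is
$$m_{ij}=G_{ij}+G_{i+1,j+1}-G_{i,j+1}-G_{i+1,j}=(D\,G\,D^{T})_{ij},$$
where $D$ is the cyclic difference operator $U\mapsto(U_i-U_{i+1})_i$; and it is a standard fact for planar networks (cf.\ \cite{CIM}) that $M_R(e^*)=\pm D\,G\,D^{T}$, so that $m_{ij}=\pm(M_R(e^*))_{ij}$. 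Comparing Definition~\ref{Omega_n} with Definition~\ref{OmegaR_def} entry by entry then shows that $\Omega_{n,R}(e)$ is obtained from $\Omega_n(e^*)$ by shifting all columns cyclically by one position, i.e.\ by right multiplication by the matrix $s$ of \eqref{cyclic operator s}, up to a rescaling of the rows and the signs built into $s$.

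Granting this, the first assertion is immediate: by Theorem~\ref{Theorem about Omega_n} the row space of $\Omega_n(e^*)$ is the Lam point $X(e^*)\in Gr_{\geq 0}(n-1,2n)$, and by Lemma~\ref{dual-gr} we have $X(e^*)=X(e)s^{-1}$, i.e.\ $X(e)=X(e^*)s$; hence the row space of $\Omega_{n,R}(e)=\Omega_n(e^*)s$ is exactly the Lam point $X(e)$ of $e$.

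For the normalizing constant, note that the previous step gives $\Omega'_{n,R}(e)=C\,\Omega'_n(e)$ for some invertible $(n-1)\times(n-1)$ matrix $C$, so that $\Delta_I(\Omega'_{n,R}(e))=\det(C)\,\Delta_I(\Omega'_n(e))=\det(C)\,\Delta_I^\bullet/L_{unc}$ by Theorem~\ref{Theorem about Omega_n}. To compute $\det(C)$ I would evaluate one distinguished minor of $\Omega'_{n,R}(e)$ directly: from Definition~\ref{OmegaR_def} the columns of $\Omega'_{n,R}(e)$ indexed by $\{1,3,5,\dots,2n-3\}$ are $e_1,\ e_1+e_2,\ e_2+e_3,\ \dots,\ e_{n-2}+e_{n-1}$, so after the obvious column reduction $\Delta_{135\cdots 2n-3}(\Omega'_{n,R}(e))=1$. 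Since by Example~\ref{bulcon-ex} the set $\{1,3,\dots,2n-3\}$ is concordant only with the one‑block partition and $\Delta_{135\cdots 2n-3}^\bullet=L_{123\cdots n}$, substituting $I=\{1,3,\dots,2n-3\}$ into the displayed identity yields $1=\det(C)\cdot L_{123\cdots n}/L_{unc}$, so $\det(C)=L_{unc}/L_{123\cdots n}$ and therefore $\Delta_I(\Omega'_{n,R}(e))=\Delta_I^\bullet/L_{123\cdots n}$ for every $I$, as claimed.

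The main obstacle is the first step: establishing the identity $m_{ij}=\pm(M_R(e^*))_{ij}$ with the correct index conventions, and then matching the sign conventions in $s$ (the $(-1)^n$ in its corner) carefully enough that $\Omega_{n,R}(e)$ and $\Omega_n(e^*)s$ coincide as honest representatives of one point, not merely up to a global sign. This bookkeeping is routine but error‑prone; alternatively, once one knows that $\Omega_{n,R}(e)$ represents a point of $Gr_{\geq 0}(n-1,2n)$ all the relevant signs are forced to be positive, and the whole statement can be cross‑checked on hollow cactus networks, where $P(\widetilde\sigma)=(P(\sigma))^*$ (Remark~\ref{remark:dual cactus network}) makes both matrices and the normalization completely explicit; the general case then follows since both sides of the asserted identity are rational in the conductances.
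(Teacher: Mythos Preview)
Your approach is essentially the same as the paper's: identify $\Omega_{n,R}(e)=\Omega_n(e^*)s$ via the identity $m_{ij}=x^*_{ij}$, invoke Lemma~\ref{dual-gr} to obtain the Lam point $X(e)$, and then pin down the normalization by computing the minor at $I=\{1,3,\dots,2n-3\}$ together with Example~\ref{bulcon-ex}. The paper simply quotes \cite[Proposition~2.9]{KW 2011} for the identity $x^*_{ij}=-\tfrac12(R_{ij}+R_{i+1,j+1}-R_{i,j+1}-R_{i+1,j})$, which is exactly your $m_{ij}=(DGD^T)_{ij}$ computation; so your ``standard fact'' is correct but the natural reference is \cite{KW 2011} rather than \cite{CIM}, and with that citation all the sign bookkeeping you flag as an obstacle becomes a one-line substitution, making the cross-check on hollow cacti unnecessary.

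One concrete slip: by the paper's convention $\Omega'_{n,R}(e)$ is obtained by deleting the \emph{first} row, not the last. With the first row removed, column~$1$ of $\Omega'_{n,R}(e)$ is $(-1)^n e_{n-1}$ (coming from the wrapped entry in row~$n$), not $e_1$ as you wrote; your list $e_1,\ e_1+e_2,\dots$ is what one gets after deleting the \emph{last} row. The determinant is still $1$ in either case (expand along column~$1$), so your conclusion survives, but the stated column description is off by one and should be corrected. The intermediate step $\Omega'_{n,R}(e)=C\,\Omega'_n(e)$ is fine but superfluous once you compute $\Delta_{135\cdots 2n-3}(\Omega'_{n,R}(e))=1$ directly.
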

  By $\Omega'_{n,R}(e)$ we denote a matrix obtained from $\Omega_{n,R}(e)$ by the deletion of the first row. 
\begin{proof}
    Consider $e \in E_n$ and its dual network $e^{*}$. By Lemma \ref{dual-gr} $\Omega_n(e^{*})s$  and $\Omega_n(e)$ define the same point of the Grassmannian $Gr_{\ge 0}(n-1,2n)$.

Note that
    \begin{equation*} \label{ch-sp1}
    \Omega_n(e^{*})s
=\left(\begin{matrix}
1 & x^{*}_{11} & 1 &  -x^{*}_{12} & 0 & x^{*}_{13} & 0 & \ldots  \\
0 & -x^{*}_{21} & 1 & x^{*}_{22} & 1 & -x^{*}_{23} & 0 & \ldots \\
0 & x^{*}_{31} & 0 & -x^{*}_{32} & 1 & x^{*}_{33} & 1 & \ldots \\
\vdots & \vdots & \vdots & \vdots & \vdots & \vdots &  \vdots & \ddots 
\end{matrix}\right),
    \end{equation*}
where $x^*_{ij}$ are elements of the response matrix of dual electrical network $e^*$.
Then note that by \cite[Proposition 2.9]{KW 2011} we have:
 $$x^{*}_{ij}= -\frac{1}{2}(R_{i,j}+R_{i+1, j+1}-R_{i, j+1}-R_{i+1, j}).$$

Thus 
\begin{equation} \label{eq:relation between matrices}
\Omega_{n,R}(e)=\Omega_n(e^{*})s.
\end{equation}

Note also that by Theorem \ref{Theorem about Omega_n} $\text{rank}(\Omega_n(e^*))=n-1$ and the alternating sum of the rows of $\Omega_n(e)$ is equal to zero. Thus using \eqref{eq:relation between matrices} we conclude that the same properties are satisfied for $\Omega_{n,R}(e)$. Thus the row spaces of the matrices $\Omega_{n,R}(e)$ and $\Omega'_{n,R}(e)$ are the same.

To finish the proof it remains to notice that $\Delta_{135\dots 2n-3}(\Omega'_n(e^{*})s)=1$  and   $\Delta_{135\dots 2n-3}^\bullet=L_{123\dots n}$ (see Example \ref{bulcon-ex}).  
\end{proof}

\begin{remark} \label{thm: Speyer for cactus}
    Theorem \ref{Theorem about Omegar_n} provides a matrix which represents a cactus network in $Gr_{\ge0}(n-1,2n)$. A matrix representing a cactus network in $Gr_{\ge0}(n+1,2n)$ was constructed in \cite[Theorem 1.6 and Theorem 1.8]{CGS}.
\end{remark}

\begin{proposition}\cite[Proposition A.3]{KW 2011} \label{th: about inverse resp} 
Let $e(\Gamma, \omega)\in E_n$ be a planar circular electrical network on a connected graph $\Gamma$. Denote by $M'_R(e)$ the matrix obtained from $M_R(e)$ by deleting  the last row and column, then the following holds
 \begin{itemize}
 \item $M'_R(e)$ is invertible; 
 \item The matrix elements of its inverse are given by the formula
 \begin{equation*}
M'_R(e)^{-1}_{ij}=\begin{cases}
   R_{in},\, \text{if}\,\, i=j   \\
   \frac{1}{2}(R_{in}+R_{jn}-R_{ij}),\, \text{if}\,\, i\not = j,\\
    
      \end{cases}
    \end{equation*} 
\end{itemize}

\end{proposition}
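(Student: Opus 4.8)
The plan is to realize the columns of $N:=M'_R(e)^{-1}$ as vectors of boundary potentials with the last node grounded, and then to read off the effective resistances directly from them; the only inputs needed are that $M_R(e)$ is symmetric positive semidefinite with zero row sums and one-dimensional kernel $\mathrm{span}(\mathbf 1)$, which holds because $\Gamma$ is connected. For $v\in\mathbb R^{n-1}$ write $\widetilde v=(v,0)\in\mathbb R^{n}$ for its extension by $0$ in the last coordinate. Then $v^{T}M'_R(e)\,v=\widetilde v^{T}M_R(e)\,\widetilde v\ge 0$, with equality only if $\widetilde v\in\ker M_R(e)=\mathrm{span}(\mathbf 1)$, i.e.\ only if $v=0$; hence $M'_R(e)$ is positive definite, so invertible, and $N$ is symmetric and positive definite as well. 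In particular $N_{jj}=e_j^{T}Ne_j>0$ and $N_{ii}+N_{jj}-2N_{ij}=(e_i-e_j)^{T}N(e_i-e_j)\ge 0$ for all $i,j$.

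Next I would fix $j\in\{1,\dots,n-1\}$, let $U^{(j)}$ be the $j$-th column of $N$ (so $M'_R(e)\,U^{(j)}=e_j$ in $\mathbb R^{n-1}$) and set $\widetilde U^{(j)}=(U^{(j)},0)\in\mathbb R^{n}$. Since the rows of $M_R(e)$ sum to zero, the $n$-th coordinate of $M_R(e)\,\widetilde U^{(j)}$ is minus the sum of its first $n-1$ coordinates, so $M_R(e)\,\widetilde U^{(j)}=e_j-e_n=-e_n+e_j$ while $(\widetilde U^{(j)})_n=0$. This is precisely \eqref{eq-resist} for the pair $(n,j)$, so Definition \ref{def:eff-resist} together with $N_{jj}>0$ gives $R_{jn}=R_{nj}=\bigl|(\widetilde U^{(j)})_n-(\widetilde U^{(j)})_j\bigr|=|N_{jj}|=N_{jj}$, which is the asserted diagonal formula.

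For $i\ne j$ I would subtract: $M_R(e)\bigl(\widetilde U^{(j)}-\widetilde U^{(i)}\bigr)=e_j-e_i=-e_i+e_j$, so $\widetilde U^{(j)}-\widetilde U^{(i)}$ solves \eqref{eq-resist} for the pair $(i,j)$ and Definition \ref{def:eff-resist} yields $R_{ij}=\bigl|\bigl(\widetilde U^{(j)}-\widetilde U^{(i)}\bigr)_j-\bigl(\widetilde U^{(j)}-\widetilde U^{(i)}\bigr)_i\bigr|=\bigl|N_{jj}-2N_{ij}+N_{ii}\bigr|$, where I used $(\widetilde U^{(i)})_j=N_{ij}=(\widetilde U^{(j)})_i$ by symmetry of $N$. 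Since $N_{ii}+N_{jj}-2N_{ij}\ge 0$ by positive definiteness, the absolute value drops, and substituting the diagonal values $N_{ii}=R_{in}$, $N_{jj}=R_{jn}$ gives $R_{ij}=R_{in}+R_{jn}-2N_{ij}$, that is $N_{ij}=\tfrac12(R_{in}+R_{jn}-R_{ij})$. The only point deserving care is the positive-definiteness of $M'_R(e)$: it simultaneously delivers invertibility and forces the correct signs when the absolute values in Definition \ref{def:eff-resist} are removed. Alternatively one could invoke the discrete maximum principle recalled in Section \ref{sec:background} to pin down those signs, but the quadratic-form inequalities for $N$ already suffice, so the rest of the argument is routine linear algebra.
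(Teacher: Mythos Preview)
Your argument is correct. The paper does not give its own proof of this proposition; it simply cites \cite[Proposition A.3]{KW 2011} and uses the result as a black box in the proof of Lemma~\ref{th:about overline_omega_n,r}. Your proof is self-contained and uses only facts already recalled in the paper: symmetry of $M_R(e)$, the one-dimensional kernel $\mathrm{span}(\mathbf 1)$ for connected $\Gamma$, and Definition~\ref{def:eff-resist}. The one step worth spelling out a bit more is that $M_R(e)$ is actually positive semidefinite (not merely symmetric with zero row sums); this follows from its description as a Schur complement of the full graph Laplacian, but is not stated explicitly in Section~\ref{sec:background}. Once that is granted, your quadratic-form inequalities for $N$ do exactly what is needed to remove the absolute values coming from Definition~\ref{def:eff-resist}, and the rest is clean linear algebra.
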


\begin{lemma} \label{th:about overline_omega_n,r}
    Consider a network $e(\Gamma, \omega) \in E_n$ on a connected graph $\Gamma,$ then the representative of the corresponding point in $Gr_{\geq 0}(n-1,2n)$ could be chosen in the following form:
\begin{equation} \label{eq:overline_omega_n,r}
  \left(\begin{matrix}
1 & \cdot & 1 &  \cdot & 0 & \cdot & 0 & \ldots & 0 & \cdot  \\
0 & \cdot & 1 &  \cdot & 1 & \cdot & 0 & \ldots & 0 & \cdot  \\
0 & \cdot & 0 &  \cdot & 1 & \cdot & 1 & \ldots & 0 & \cdot  \\

\vdots & \vdots & \vdots & \vdots & \vdots & \vdots &  \vdots & \ddots & \vdots &  \vdots
\end{matrix}\right),
\end{equation}
where all elements in even columns depend only on $R_{i,j}$.
\end{lemma}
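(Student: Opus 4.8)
The representative to exhibit is simply $\Omega_{n,R}(e)$ from Definition \ref{OmegaR_def}: by Theorem \ref{Theorem about Omegar_n} its row space is exactly the point of $Gr_{\ge 0}(n-1,2n)$ attached to $e$, so the entire task is to read off from its construction that it has the shape \eqref{eq:overline_omega_n,r} and that the only network-dependent entries are functions of the matrix of effective resistances. The plan is thus in two parts. First I would record the column structure. Using the identity $\Omega_{n,R}(e)=\Omega_n(e^{*})s$ from \eqref{eq:relation between matrices} (equivalently, Lemma \ref{dual-gr}) together with the explicit form of the cyclic operator $s$ in \eqref{cyclic operator s}, one tracks how right multiplication by $s$ permutes the columns of $\Omega_n(e^{*})$: the even columns of $\Omega_{n,R}(e)$ are precisely the ``response-matrix'' columns $(\pm x^{*}_{ij})_i$ of $\Omega_n(e^{*})$ shifted into even position, while the odd columns of $\Omega_{n,R}(e)$ are the combinatorial columns of $\Omega_n(e^{*})$ (those with two nonzero entries, both equal to $1$), column $1$ arising from the last column $((-1)^n,0,\ldots,0)^{T}$ of $\Omega_n(e^{*})$ multiplied by the corner sign $(-1)^n$ of $s$, hence equal to $(1,0,\ldots,0)^{T}$. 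This is exactly the pattern displayed in \eqref{eq:overline_omega_n,r}: the odd columns are fixed once and for all (the vectors $(1,0,0,\ldots)^{T}$, $(1,1,0,\ldots)^{T}$, $(0,1,1,0,\ldots)^{T}$, \ldots, independent of $e$), and the even columns carry all the information.

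Second I would identify those even-column entries. They are $\pm x^{*}_{ij}$, the entries of the response matrix $M_R(e^{*})$ of the dual network $e^{*}$; by \cite[Proposition 2.9]{KW 2011} --- already invoked in the proof of Theorem \ref{Theorem about Omegar_n} --- one has $x^{*}_{ij}=-\tfrac12\big(R_{i,j}+R_{i+1,j+1}-R_{i,j+1}-R_{i+1,j}\big)=m_{ij}$, which is manifestly a function of the effective resistances $R_{i,j}$ of $e$ alone. The hypothesis that $\Gamma$ is connected enters exactly here: it is what guarantees that these quantities are well defined and finite. Indeed, by Proposition \ref{th: about inverse resp} the reduced response matrix $M'_R(e)$ is invertible, so the boundary potentials solving \eqref{eq-resist}, and hence the $R_{i,j}$, exist (and conversely $R$ determines $M_R$). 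Finally, the fact that \eqref{eq:overline_omega_n,r} genuinely represents a point of $Gr_{\ge 0}(n-1,2n)$, i.e. has rank $n-1$ and the correct (nonnegative) Plücker data, is inherited from the corresponding property of $\Omega'_n(e^{*})$ recorded in Theorem \ref{Theorem about Omega_n} and transported by $s$, precisely as in the proof of Theorem \ref{Theorem about Omegar_n}.

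The proof therefore has no real obstacle beyond careful bookkeeping, and the only mildly delicate points are the behaviour of the two boundary columns of the matrix --- the appearance of $(1,0,\ldots,0)^{T}$ in column $1$ produced by the $(-1)^n$ twist in $s$, and the last ``$\cdot$'' column being $\pm x^{*}_{i,n}$ --- together with confirming that the displayed matrix inherits the normalization $\Delta_{135\ldots(2n-3)}=1$ of Theorem \ref{Theorem about Omegar_n} and the rank $n-1$. If one prefers, the even entries $m_{ij}$ can alternatively be re-expressed through $M'_R(e)^{-1}$ using Proposition \ref{th: about inverse resp}, but this is not needed for the statement.
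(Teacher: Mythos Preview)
Your proof is correct but takes a different route from the paper's. You simply observe that $\Omega_{n,R}(e)$ from Definition~\ref{OmegaR_def} already has the shape~\eqref{eq:overline_omega_n,r} by inspection (odd columns fixed, even columns equal to $\pm m_{ij}$, explicit second differences of the $R_{i,j}$) and represents the correct Grassmannian point by Theorem~\ref{Theorem about Omegar_n}; this is entirely valid for the statement as written. The paper instead starts from $\Omega''_n(e)$, the matrix built from the response matrix of $e$ itself rather than of $e^{*}$, left-multiplies by $(M'_R(e))^{-1}$ --- whose entries are functions of $R_{i,j}$ by Proposition~\ref{th: about inverse resp} --- and then performs the row operations $v_i\mapsto v_i-v_{i+1}$ together with diagonal sign adjustments to reach the displayed shape. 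Your argument is cleaner for the bare lemma; the paper's more laborious construction is there because its \emph{method}, not just its conclusion, is reused downstream in Section~\ref{subec: electrical proof of algorithm}, where the passage through $(M'_R)^{-1}$ is applied blockwise to the subgraphs $\Gamma_{\overline{B}_i}$ and then the limit $R_{km}\to 0$ is taken.
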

\begin{proof}
    Consider a $n-1\times 2n$ matrix $\Omega''_n(e)$ obtained from $\Omega_n(e)$ by deleting its last row. Consider also a $n-1\times 2n$ matrix 
    \begin{equation*}    
     \Omega'''_n(e)=\left(\begin{matrix}
x_{11} & 1 & x_{12} &  0 & x_{13} &  0 & \ldots & x_{1n} & (-1)^n  \\
x_{21} & 1 & x_{23} &  1 & x_{24} &  0 & \ldots & x_{2n} & 0  \\
x_{31} & 0 & x_{32} &  1 & x_{33} &  1 & \ldots & x_{3n} & 0  \\
\vdots & \vdots & \vdots & \vdots & \vdots & \vdots &  \vdots & \ddots & \vdots \\
\end{matrix}\right)
    \end{equation*}
    obtained from $\Omega_n''(e)$ ignoring the signs in front of $x_{ij}$.
    
    Then one could check that
    \begin{equation}  \label{eq: thm 4.21}  
     \Omega''_n(e)=D_n\Omega_n'''(e)T_n,
    \end{equation}
    where $D_n=\mathrm{diag}\bigl((-1)^{i+1}\bigr)$ is the $n-1 \times n-1$ diagonal matrix and\\  $T_n=\mathrm{diag}\bigl((-1)^{1+1}, 1, (-1)^{1+2}, 1, (-1)^{1+3}, 1, \dots \bigr)$ is the $2n \times 2n$ diagonal matrix.

    By Proposition \ref{th: about inverse resp} the matrix $M'_R(e)$ is invertible, therefore $D_n(M'_R(e))^{-1}\Omega_n'''(e)T_n$ defines the same point of $Gr_{\ge 0}(n-1,2n)$ as $ \Omega''_n(e)$.
    
    
Also note that
\begin{equation}\label{eq:almostID}
(M'_R(e))^{-1}
\Omega_n'''(e)
=\left(\begin{matrix}
1 & \cdot & 0 &  \cdot & 0 &  \cdot & \ldots & -1 & \cdot  \\
0 & \cdot & 1 &  \cdot & 0 &  \cdot & \ldots & -1 & \cdot  \\
0 & \cdot & 0 &  \cdot & 1 &  \cdot & \ldots & -1 & \cdot  \\
\vdots & \vdots & \vdots & \vdots & \vdots & \vdots &  \vdots & \ddots & \vdots 
\end{matrix}\right).
\end{equation}
Denote the $i$-th row of the RHS matrix in \eqref{eq:almostID} by $v_i$ then three matrices
\begin{equation} \label{eq: Omega bar}
\Omega''_n(e),\ \
D_n\left(\begin{matrix}
v_1 \\
v_2 \\
\cdots \\
v_{n-2} \\
v_{n-1}
\end{matrix}\right)T_n,\ \ 
D_n\left(\begin{matrix}
v_1-v_2 \\
v_2-v_3 \\
\cdots \\
v_{n-2}-v_{n-1} \\
v_{n-1}
\end{matrix}\right)T_n
\end{equation}
 define the same point of $Gr_{\geq 0}(n-1, 2n)$. Note that the rightmost matrix in \eqref{eq: Omega bar} is of the required form \eqref{eq:overline_omega_n,r}.
\end{proof}

\subsection{Electrical description of Algorithm \ref{Algoritm_embedding}} \label{subec: electrical proof of algorithm}
In this subsection we give another, less combinatorial and more "electrical", way to construct a concordance vector as an element of $\bigwedge^{n-1}V$ than described in Algorithm \ref{Algoritm_embedding}. However, it turns out that this approach leads to the same answer as Algorithm \ref{Algoritm_embedding}.

Theorem \ref{Theorem about Omegar_n} and Lemma \ref{th:about overline_omega_n,r} provides a way to choose a representative in $Gr_{\ge0}(n-1,2n)$ for $p(\sigma)$. Consider an arbitrary hollow cactus network (see Definition \ref{def:hollow} and Definition \ref{def: hollow network}) $P(\sigma) \in \overline{E}_n$ with a boundary defined by a non-crossing partition $\sigma=(\overline{B}_1, \overline{B}_2, \dots, \overline{B}_k)$ and an electrical network $e(\Gamma, \omega) \in E_n$ on a graph $\Gamma$.
 


 For any connected component $\overline{B}_i=\{\overline{j}_1, \overline{j}_2, \dots, \overline{j}_{k_i} \} \in \sigma,$ which contains at least two boundary vertices, consider $e(\Gamma_{\overline{B}_i})  \in E_{|\overline{B}_i|}$ on the connected subgraph $\Gamma_{\overline{B}_i}$ of $\Gamma.$ Consider also  the vector space $V_R(\overline{B}_i)=\mathrm{span}(v^{R}_{ \overline{j}_1\overline{j}_2}, v^{R}_{ \overline{j}_2\overline{j}_3}, \dots, v^{R}_{ \overline{j}_{k_i-1}\overline{j}_{k_i}}), $ here $v^{R}_{ \overline{j}_{l}\overline{j}_{l+1}}$ is the $l-$th row of $\Omega_n(e(\Gamma,\omega))$. Note that for all $ k \in \overline{B}_i, \ m \notin \overline{B}_i $ holds that $x_{km}=0.$

 Repeating the proof of Lemma \ref{th:about overline_omega_n,r} for the restriction on $V_R(\overline{B}_i)$ we obtain that $V_R(\overline{B}_i)$ is spanned by the rows of the following matrix:
\begin{equation}\label{eq:matrix resistanse}
\bigl(M'_R\bigl(e(\Gamma_{\overline{B}_i})\bigr)\bigr)^{-1}\left(\begin{matrix}
    
v^{R}_{ \overline{j}_1\overline{j}_2}-v^{R}_{ \overline{j}_2\overline{j}_3} \\v^{R}_{ \overline{j}_2\overline{j}_3}- v^{R}_{ \overline{j}_3\overline{j}_4} \\ \dots \\
v^{R}_{ \overline{j}_{k_i-2}\overline{j}_{k_i-1}}-v^{R}_{ \overline{j}_{k_i-1}\overline{j}_{k_i}} \\
v^{R}_{ \overline{j}_{k_i-1}\overline{j}_{k_i}}
\end{matrix}\right). 
\end{equation}
By Proposition \ref{th: about inverse resp} entries of the matrix \eqref{eq:matrix resistanse}
depends only on $\{R_{km}| k, m \in \overline{B}_i\}.$

Thus tending all $R_{km} \to 0$ we have
$$V(\overline{B}_i):=\lim_{R_{km} \to 0}V_R(\overline{B}_i) \subset p(\sigma),$$
and $V(\overline{B}_i)$ is the span of the vectors $v_{\overline{j}_{l}\overline{j}_{l+1}}$ which are the vector-rows of the matrix obtained from the matrix \eqref{eq:matrix resistanse} after tending all $R_{km} \to 0$.


From the proof of Lemma \ref{th:about overline_omega_n,r} we also have that:
\begin{equation}\label{eq:vectors}
 v_{\overline{j}_{l}\overline{j}_{l+1}}=(0, 0, \dots,0, *, 0, \dots, 0, *, 0, \dots, 0),
\end{equation}
where the first non-zero entry $*$ is on the $(2\overline{j}_{l}-1)$-th position, the second non-zero entry $*$ is on the $(2\overline{j}_{l+1}-1)$-th position and both of them have the absolute value $1.$ Multiplying $v_{\overline{j}_{l}\overline{j}_{l+1}}$ by $\pm 1$ we are able to set the first $*$ equal to $1$. Then the second $*$ must be equal to  $(-1)^{\mathrm{sgn}(\overline{j}_{l+1})}$, where $\mathrm{sgn}(\overline{j}_{l+1})$ is uniquely defined by the condition that $v_{\overline{j}_{l}\overline{j}_{l+1}} \in V$. Note that $\mathrm{sgn}(\overline{j}_{l+1})$ coincides with the definition of the sign in the third step of Algorithm \ref{Algoritm_embedding}.  

By repeating above arguments for each $\overline{B}_i \in \sigma: \  |\overline{B}_i|>1$ we obtain that: 
$$\bigoplus \limits_{\overline{B}_i \in \sigma: \ |\overline{B}_i|>1}V(\overline{B}_i) \subset p(\sigma).$$

Now, consider the dual to $P(\sigma)$ hollow cactus network $P(\widetilde{\sigma}) \in \overline{E}_n$ with the boundary defined by the non-crossing partition $\widetilde{\sigma}=({\widetilde{B}_1, \widetilde{B}_2, \dots, \widetilde{B}_l })$  dual to $\sigma$. Then let us repeat the above arguments for each connected component $\widetilde{B}_{i} \in \widetilde{\sigma}: \ |\widetilde{B}_i|>1$, then we have: 
$\bigoplus \limits_{\widetilde{B}_i \in \widetilde{\sigma}: \ |\widetilde{B}_i|>1}V(\widetilde{B}_{i}) \subset p(\widetilde\sigma).$ 
Note that by Remark \ref{remark:dual cactus network} we have $P(\widetilde{\sigma})=(P(\sigma))^*$. 
Let us denote by $(p(\sigma))^*$ the point of $Gr_{\ge0}(n-1,2n)$ corresponding to $(P(\sigma))^*$. Thus we have $\bigoplus \limits_{\widetilde{B}_i \in \widetilde{\sigma}: \ |\widetilde{B}_i|>1}V(\widetilde{B}_{i}) \subset (p(\sigma))^*.$ Identify the space $V(\widetilde{B}_i)$ with a matrix representing it: it is a matrix whose rows define analogously to vectors $v_{\bar{j_l}\bar{j}_{l+1}}$ with the substitution $\bar{i}$ to $\widetilde{i}$ in the description below \eqref{eq:vectors}. Then we obtain: $
\bigoplus \limits_{\widetilde{B}_i \in \widetilde{\sigma}: \ |\widetilde{B}_i|>1}V(\widetilde{B}_{i})s \subset (p(\sigma))^*s.
$ 
Then by Lemma \ref{dual-gr} we have:
$
\bigoplus \limits_{\widetilde{B}_i \in \widetilde{\sigma}: \ |\widetilde{B}_i|>1}V(\widetilde{B}_{i})s \subset p(\sigma).
$

And also for any $\widetilde{B}_{i}=\{\widetilde{j}_1, \widetilde{j}_2, \dots, \widetilde{j}_{k_i} \},\  |\widetilde{B}_{i}|=k_i > 1$  we have that 
$V(\widetilde{B}_i)s=\mathrm{span}(v_{\widetilde{j}_1\widetilde{j}_2}, v_{\widetilde{j}_2\widetilde{j}_3}, \dots, v_{\widetilde{j}_{k_i-1}\widetilde{j}_{k_i}}),$ where
$$v_{\widetilde{j}_{l}\widetilde{j}_{l+1}}=(0, 0, \dots,0, 1, 0, \dots, 0, (-1)^{\mathrm{sgn}(\widetilde{j}_{l+1})}, 0, \dots, 0).$$
Here again the first non-zero entry is  on the $2\widetilde{j}_{l}$-th position, the last non-zero entry  is on the $2\widetilde{j}_{l+1}$-th position and $\mathrm{sgn}(\widetilde{j}_{l+1})$ is again uniquely defined by the condition that $v_{\widetilde{j}_{l}\widetilde{j}_{l+1}} \in V.$

Since $ \sum \limits_{\overline{B}_i \in  \overline{\sigma}: \ |\overline{B}_i|>1 } \mathrm{dim}(V(\overline{B}_i)) + \sum \limits_{\widetilde{B}_i \in  \widetilde{\sigma}: \ |\widetilde{B}_i|>1 } \mathrm{dim}(V(\widetilde{B}_i)) =n-1$ (see the proof of Algorithm \ref{Algoritm_embedding}, part "Sign's arrangement") we have that:
\begin{equation} \label{formula:exp-cac}
p(\sigma)=\bigoplus\limits_{\overline{B}_i \in \sigma: \ |\overline{B}_i|>1 } V(\overline{B}_i) \bigoplus\limits_{\widetilde{B}_i \in \sigma: \ |\widetilde{B}_i|>1 } V(\widetilde{B}_i).     
\end{equation}
Or, equivalently, using the definitions of $V(\overline{B}_i)$ and $V(\widetilde{B}_i)$ we have that the point $p(\sigma)$ could be represented by a $n+1\times 2n$ matrix $M_\sigma$ whose rows are the following set of vectors:
\begin{equation} \label{eq:physical representative of hollow cactus}
\bigsqcup\limits_{\overline{B}_i=\{\overline{j}_1,\ldots,\overline{j}_{k_i}\}\in\sigma:\ |\overline{B}_i|>1}\{v_{\overline{j}_1\overline{j}_2}, v_{\overline{j}_2\overline{j}_3}, \dots, v_{\overline{j}_{k_i-1}\overline{j}_{k_i}}\} \ \bigsqcup\limits_{\widetilde{B}_i=\{\widetilde{j}_1,\ldots,\widetilde{j}_{k_i}\}\in\widetilde{\sigma}:\ |\widetilde{B}_i|>1}\{v_{\widetilde{j}_1\widetilde{j}_2}, v_{\widetilde{j}_2\widetilde{j}_3}, \dots, v_{\widetilde{j}_{k_i-1}\widetilde{j}_{k_i}}\}.
\end{equation}
\begin{proposition} \label{prop:two algorithms coinside}
    The matrix $M_\sigma$ defined by \eqref{eq:physical representative of hollow cactus} coincides with the matrix obtained by writing wedge-factors of a pure wedge product provided by the third step of Algorithm \ref{Algoritm_embedding} applied to $\sigma$ as the rows. 
\end{proposition}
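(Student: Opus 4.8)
The plan is to match the two matrices row by row, after observing that both are assembled from the same chord data attached to the blocks of $(\sigma|\widetilde\sigma)$; the argument is then essentially a reconciliation of two pieces of bookkeeping plus one short $V$-membership computation. First I would pin down the bijection between the rows of $M_\sigma$ and the wedge-factors returned by step 3 of Algorithm \ref{Algoritm_embedding}. By the brackets-decomposition part of the proof of that algorithm (formula \eqref{br2 brackets}), those wedge-factors are grouped according to the connected components of $(\sigma|\widetilde\sigma)$; and since $\sigma$ occupies the odd and $\widetilde\sigma$ the even positions of $[2n]$ and their union is non-crossing, each such component is either a block $\overline B_i=\{\overline j_1<\dots<\overline j_{k_i}\}$ of $\sigma$, sitting at positions $2j_1-1,\dots,2j_{k_i}-1$, or a block of $\widetilde\sigma$ at even positions. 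For $\overline B_i$, step 2 produces precisely the brackets $(2j_1-1\ \,2j_2-1),\dots,(2j_{k_i-1}-1\ \,2j_{k_i}-1)$, which is the same index data as the rows $v_{\overline j_1\overline j_2},\dots,v_{\overline j_{k_i-1}\overline j_{k_i}}$ that $\overline B_i$ contributes to \eqref{eq:physical representative of hollow cactus}; similarly on the $\widetilde\sigma$ side. This pairs $v_{\overline j_l\overline j_{l+1}}$ with $(e_{2j_l-1}\pm e_{2j_{l+1}-1})$, and both matrices have $2n-|\sigma|-|\widetilde\sigma|=n-1$ rows by Lemma \ref{number of parts}.

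Next I would verify that paired vectors are literally equal in $\mathbb R^{2n}$. By \eqref{eq:vectors} the row $v_{\overline j_l\overline j_{l+1}}$ is supported exactly on positions $2j_l-1$ and $2j_{l+1}-1$ — the same two positions as its partner bracket — and is normalized so that the entry at the smaller position is $1$, matching the ``$+$'' put before the first term in step 3. The entry at the larger position is $(-1)^{\mathrm{sgn}(\overline j_{l+1})}$, with $\mathrm{sgn}(\overline j_{l+1})$ determined by the condition $v_{\overline j_l\overline j_{l+1}}\in V$; writing out the two defining equations of $V$ for $e_{2p-1}+\epsilon e_{2q-1}$ forces $\epsilon=(-1)^{q-p-1}$, and $q-p-1$ is exactly the number of odd integers strictly between $2p-1$ and $2q-1$, which is the count governing the sign in step 3. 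Hence the two signs agree — this is the coincidence already recorded right after \eqref{eq:vectors} — and the same computation with $e_{2p}+\epsilon e_{2q}$ settles the $\widetilde\sigma$-blocks. So corresponding rows of the two matrices are equal.

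Finally, since both matrices have exactly these vectors as their rows, it remains only to reconcile the orderings: reordering the rows of $M_\sigma$ by increasing position of the leading nonzero entry is precisely the reordering imposed in step 3 (``reorder brackets in the way of increasing first indices''), and after it the two matrices coincide entrywise; equivalently, they agree up to a permutation of rows. I do not expect a genuine obstacle here: the two substantive inputs — the exact support \eqref{eq:vectors} of the limiting vectors $v_{\overline j_l\overline j_{l+1}}$ and the identification of the $V$-membership sign with the parity rule of step 3 — were already carried out in the preceding ``Electrical description'' discussion, so this proposition is the packaging of that work. The one point that still needs a careful line is the assertion that the connected components of $(\sigma|\widetilde\sigma)$ are exactly the blocks of $\sigma$ and of $\widetilde\sigma$ taken separately, since this is what makes the chord data on the two sides line up.
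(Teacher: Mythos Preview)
Your proposal is correct and takes essentially the same approach as the paper: the paper's proof is literally the one-line remark that ``the proof is based on the straightforward comparison of the Algorithm \ref{Algoritm_embedding} and the construction of the matrix $M_\sigma$,'' and you have spelled out precisely that comparison, including the explicit sign check via the defining equations of $V$, which the paper had already recorded just after \eqref{eq:vectors}. The point you flag at the end is not an obstacle: the parts of the merged partition $(\sigma|\widetilde\sigma)$ are by Definition~\ref{groves, ncp} simply the parts of $\sigma$ together with the parts of $\widetilde\sigma$, so no further argument is needed there.
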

\begin{proof}
    Note that by Proposition \ref{empty cactus maps to p} $w_\sigma$ is a point corresponding to $p(\sigma)$ under the Plucker embedding, the proof is based on the straightforward comparison of the Algorithm \ref{Algoritm_embedding} and the construction of the matrix $M_\sigma$.
\end{proof}

In other words, Proposition \ref{prop:two algorithms coinside} says that the procedure above describes a different way to obtain the same representative for a point of a Grassmannian corresponding to a hollow cactus network as that of provided by Algorithm \ref{Algoritm_embedding}. We call it "electrical" since it is based on an analysis of an effective resistance matrix.

\section{Appendix} \label{sec: appendix}

\subsection{Postnikov's networks and bipartite networks} \label{pbn-ap}

Here we recall a definition of Postnikov's and bipartite networks and their connection with the totally nonnegative Grassmannians, which we have used in Section \ref{th-comb}. 

We now introduce the dual generalized  Temperley's trick. Note that Definition \ref{temp_gen} might be obtained from the definition of the  generalized  Temperley's trick $N(\Gamma, \omega)$ \cite[Section 5.1]{L} just by inverting the colors.     
\begin{definition}  \label{temp_gen}
For each $e\in E_n$ we  define a planar bipartite network $N^d(\Gamma, \omega)$ (here $d$ stands for "dual") embedded into the disk (see Fig. \ref{dual Temperley's trick}). The boundary vertices of $N^d(\Gamma, \omega)$ are as follows:
\begin{itemize}
    \item if $\Gamma$ has boundary vertices $\{\bar 1, \bar 2, . . . , \bar n\}$, then $N^d(\Gamma, \omega)$ will have black boundary vertices $\{1, 2, . . . , 2n\}$, where boundary vertex $\bar i$ is identified with $2i-1$ and vertex $2i$ can be identified with the vertex $\tilde i$ used to label dual non-crossing partitions;
    \item each boundary vertices has the degree equal to  $1$.
\end{itemize}
The interior vertices of $N^d(\Gamma, \omega)$ are defined as follows:
\begin{itemize}
    \item we have a white interior vertex $b_v$ for each interior vertex $v$ of $\Gamma$;
    \item a white interior vertex $b_F$ for each interior face $F$ of $\Gamma$;
    \item a black interior vertex $w_e$ placed at the midpoint of each interior edge $e$ of $\Gamma;$
    \item  for each boundary vertex $\bar i$, we have a white interior vertex $b_i$.
\end{itemize}  
The edges of $N^d(\Gamma, \omega)$ are defined as follows: 
\begin{itemize}
\item  if $v$ is a vertex of an edge $e$ in $\Gamma$, then $b_v$ and $w_e$ are joined, and the weight of this edge is equal
to the weight $\omega(e)$ of $e$ in $\Gamma$, 
\item  if $e$ borders $F$, then $w_e$ is joined to $b_F$ by an edge with weight $1$, 
\item  the vertex $b_i$ is joined by an edge with weight $1$ to the boundary vertex $2i - 1$ in $N^d(\Gamma, \omega)$, and $b_i$
is also joined by an edge with weight 1 to $w_e$ for any edge $e$ incident
to $\bar i$ in $\Gamma$, 
\item  even boundary vertices $2i$ in $N^d(\Gamma, \omega)$ are joined by an edge with weight $1$ to the face vertex $w_F$ of the face $F$ that they lie in.
\end{itemize}
\end{definition}

\begin{figure}
    \centering
\begin{tikzpicture}
    \draw (0,0) circle (2);

    \draw (-120:2) -- node [above] {$b$}(0:2);
    \draw (-240:2) -- node [right] {$c$}(-120:2);
    \draw (0:2) -- node [right] {$a$}(-240:2);
    \filldraw (0:2) node [right] {$\bar 2$} circle (0.1pt);
    \filldraw (120:2) node [above] {$\bar 1$} circle (0.1pt);
    \filldraw (240:2) node [below] {$\bar 3$} circle (0.1pt);

    \draw (0,-2.8) node {$\Gamma$};
\end{tikzpicture}\phantom{a}
\begin{tikzpicture}
    \draw (0,0) circle (2);

    \draw (0:0) -- node [above] {}(60:1);
    \draw (0:1.5) -- node [above] {}(0:2);
    \draw (0:0) -- node [right] {}(180:1);
    \draw (120:1.5) -- node [right] {}(120:2);
    \draw (0:0) -- node [right] {}(300:1);
    \draw (240:1.5) -- node [right] {}(240:2);
    
    \draw (60:1) -- node [right] {}(60:2);
    \draw (60:0.75) -- node [right] {$a$}(0:1.5);
    \draw (60:0.75) -- node [above] {$a$}(120:1.5);

    \draw (180:1) -- node [right] {}(180:2);
    \draw (120:1.5) -- node [left] {$c$} (-0.75,0);
    \draw (-0.75,0) -- node [left] {$c$}(240:1.5);

    \draw (300:2) -- node [right] {}(300:1);
    \draw (300:0.75) -- node [right] {$b$}(0:1.5);
    \draw (300:0.75) -- node [below] {$b$}(240:1.5);


    \filldraw[fill=white] (0:2) node [right] {$3$} circle (0.1pt);
    \filldraw[fill=white] (60:2) node [above] {$2$} circle (0.1pt);
    \filldraw[fill=white] (120:2) node [above] {$1$} circle (0.1pt);
    \filldraw[fill=white] (180:2) node [left] {$6$} circle (0.1pt);
    \filldraw[fill=white] (240:2) node [below] {$5$} circle (0.1pt);
    \filldraw[fill=white] (300:2) node [below] {$4$} circle (0.1pt);

    \filldraw (0:0) node [right] {} circle (2pt);
    
    \filldraw (0:1.5) node [right] {} circle (2pt);
    \filldraw[fill=white] (60:0.75) node [right] {} circle (2pt);
    \filldraw (120:1.5) node [right] {} circle (2pt);
    \filldraw[fill=white] (180:0.75) node [right] {} circle (2pt);
    \filldraw (240:1.5) node [right] {} circle (2pt);
    \filldraw[fill=white] (300:0.75) node [right] {} circle (2pt);

     \filldraw (60:1.5) node [right] {} circle (2pt);
     \filldraw (180:1.5) node [right] {} circle (2pt);
     \filldraw (300:1.5) node [right] {} circle (2pt);


    \draw (0,-2.8) node {$N(\Gamma)$};
\end{tikzpicture}\phantom{a}
\begin{tikzpicture}
    \draw (0,0) circle (2);

    \draw (0:0) -- node [above] {}(60:1);
    \draw (0:1.5) -- node [above] {}(0:2);
    \draw (0:0) -- node [right] {}(180:1);
    \draw (120:1.5) -- node [right] {}(120:2);
    \draw (0:0) -- node [right] {}(300:1);
    \draw (240:1.5) -- node [right] {}(240:2);
    
    \draw (60:1) -- node [right] {}(60:2);
    \draw (60:0.75) -- node [right] {$a$}(0:1.5);
    \draw (60:0.75) -- node [above] {$a$}(120:1.5);

    \draw (180:1) -- node [right] {}(180:2);
    \draw (120:1.5) -- node [left] {$c$} (-0.75,0);
    \draw (-0.75,0) -- node [left] {$c$}(240:1.5);

    \draw (300:2) -- node [right] {}(300:1);
    \draw (300:0.75) -- node [right] {$b$}(0:1.5);
    \draw (300:0.75) -- node [below] {$b$}(240:1.5);


    \filldraw (0:2) node [right] {$3$} circle (0.1pt);
    \filldraw (60:2) node [above] {$2$} circle (0.1pt);
    \filldraw (120:2) node [above] {$1$} circle (0.1pt);
    \filldraw (180:2) node [left] {$6$} circle (0.1pt);
    \filldraw (240:2) node [below] {$5$} circle (0.1pt);
    \filldraw (300:2) node [below] {$4$} circle (0.1pt);

    \filldraw[fill=white] (0:0) node [right] {} circle (2pt);
    
    \filldraw[fill=white] (0:1.5) node [right] {} circle (2pt);
    \filldraw (60:0.75) node [right] {} circle (2pt);
    \filldraw[fill=white] (120:1.5) node [right] {} circle (2pt);
    \filldraw (180:0.75) node [right] {} circle (2pt);
    \filldraw[fill=white] (240:1.5) node [right] {} circle (2pt);
    \filldraw (300:0.75) node [right] {} circle (2pt);

     \filldraw[fill=white] (60:1.5) node [right] {} circle (2pt);
     \filldraw[fill=white] (180:1.5) node [right] {} circle (2pt);
     \filldraw[fill=white] (300:1.5) node [right] {} circle (2pt);


    \draw (0,-2.8) node {$N^d(\Gamma)$};
\end{tikzpicture}
\caption{An electrical network $e(\Gamma,\omega)$, associated to it bipartite networks $N(\Gamma,\omega)$ and $N^d(\Gamma,\omega)$ obtained from $e$ by the usual and the dual generalized Temperley trick.}
    \label{dual Temperley's trick}
\end{figure}
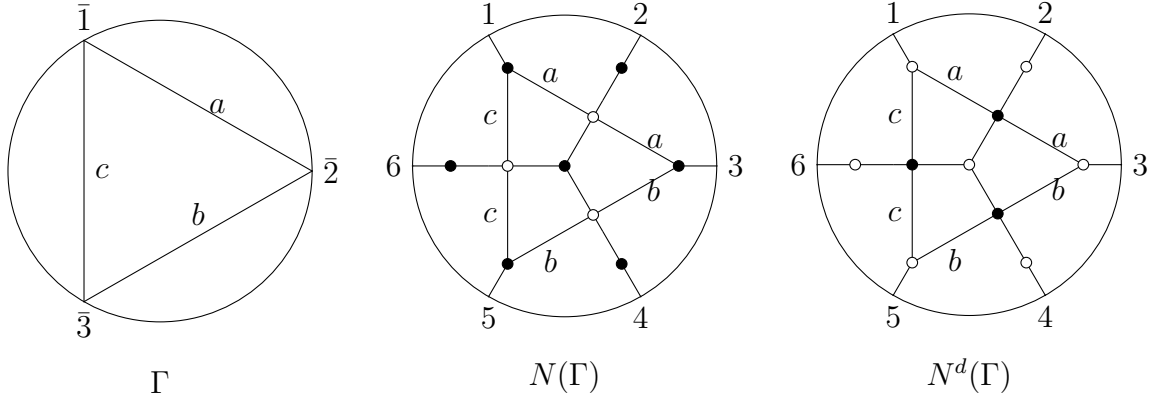

By each bipartite network $N^{d}(\Gamma, \omega)$ we can naturally construct a Postnikov network:
\begin{definition} \label{postn-net}
   Consider an electrical network $e(\Gamma, \omega)$ and associated with it  bipartite network $N^{d}(\Gamma, \omega), $ then a Postnikov network  $PN(\Gamma,\omega',O_2)$ is defined as follows:
   \begin{itemize}
   \item graphs  $N^{d}(\Gamma), PN(\Gamma) $ are the same;
   \item $O_2$ is the perfect edge  orientation, it means that each internal black vertex of $ PN(\Gamma) $  has exactly one outgoing edge and each internal white vertex $ PN(\Gamma) $ has exactly one incoming edge;
   \item  $O_2$ has the set of sources (i.e. the set of all external vertices of $ PN(\Gamma) $ with outgoing orientations)  $\{1,2 , 3, 5, \dots, 2n-1\};$ 
    \item for any $e \in E(PN(\Gamma))$ in the network $PN(\Gamma,\omega',O_2)$ connecting the white and the black vertices the weight $\omega(e)$ is the same as it is in the network $N^{d}(\Gamma, \omega); $
    \item  for any $e \in E(PN(\Gamma))$ in the network $PN(\Gamma,\omega',O_2)$ connecting the black and the white vertices the weight $\omega(e)$ is reciprocal of the weight of the same edge in the network $N^{d}(\Gamma, \omega). $
   \end{itemize}
\end{definition}

A Postnikov network  $PN(\Gamma,\omega',O_2)$ defines an extended matrix of boundary measurements:
\begin{definition} \label{extend}
Consider a Postnikov network $PN(\Gamma, \omega', O_2)$ and denote by $M_{i_rj}$ the 
sum of weights of the directed paths from $i_r \in O_2$ to an external vertex labeled by $j$: 
$$M_{i_rj} := \sum \limits_{p:\ i_r \to j } 
(-1)^{\mathrm{wind}(p)}wt(p),$$ 
where $wt(p)$ is the product of all edges belonging to $p$, and $\mathrm{wind}(p)$ is the winding index of a path $p$,  see \cite[Section 4]{Pos} for more details. 

The extended matrix of boundary measurements $M_B=(a_{rj})$ for a network  $PN(\Gamma, \omega', O_2)$ is an $n+1 \times 2n$ matrix, which is defined as follows:
\begin{itemize}
    \item its submatrix formed by the columns labeling the sources is the identity matrix;
    \item otherwise $a_{rj} = (-1)^sM_{i_rj},$ 
where $s$ is the number of elements from  $ O_2$ between the source labeled $i_r \in O_2$ and the sink labeled $j$. 
\end{itemize}
  Due to the combinatorial sense of entries of $M_B$ we will use the following notation for them: 
  $$a_{i_r \to j}:=a_{rj}.$$
\end{definition}
\begin{example}
    Consider an electrical network $e(\Gamma, \omega)$ as it is shown in Fig.\ref{fig:Postnikov network}, then an extended matrix of boundary measurements $M_B$  associated with this network   has the following form:
\begin{equation*} 
    M_B=
    \left(\begin{matrix}
1 & 0 & 0 & a & 0 & -(a+c)   \\
0 & 1 & 0 & -1 & 0 &  1 \\
0 & 0 & 1 & b+a & 0 &  -a \\
0 & 0 & 0 & b & 1 & c  
\end{matrix}
\right).
\end{equation*} 
 For instance, $a_{44}=a_{5 \to 4}=(-1)^0b.$   
\end{example}
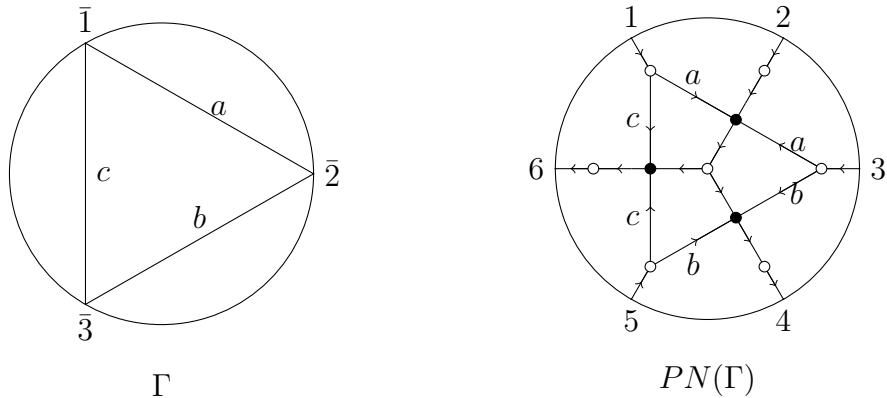
\begin{figure}
    \centering
\begin{tikzpicture}
    \draw (0,0) circle (2);

    \draw (-120:2) -- node [above] {$b$}(0:2);
    \draw (-240:2) -- node [right] {$c$}(-120:2);
    \draw (0:2) -- node [right] {$a$}(-240:2);
    \filldraw (0:2) node [right] {$\bar 2$} circle (0.1pt);
    \filldraw (120:2) node [above] {$\bar 1$} circle (0.1pt);
    \filldraw (240:2) node [below] {$\bar 3$} circle (0.1pt);

    \draw (0,-2.8) node {$\Gamma$};
\end{tikzpicture}\phantom{aaaaaaaaaa}
\begin{tikzpicture}
    \draw (0,0) circle (2);

    \draw (0:0) -- node [above] {}(60:1);
    \draw (0:1.5) -- node [above] {}(0:2);
    \draw (0:0) -- node [right] {}(180:1);
    \draw (120:1.5) -- node [right] {}(120:2);
    \draw (0:0) -- node [right] {}(300:1);
    \draw (240:1.5) -- node [right] {}(240:2);
    
    \draw (60:1) -- node [right] {}(60:2);
    \draw (60:0.75) -- node [right] {$a$}(0:1.5);
    \draw (60:0.75) -- node [above] {$a$}(120:1.5);

    \draw (180:1) -- node [right] {}(180:2);
    \draw (120:1.5) -- node [left] {$c$} (-0.75,0);
    \draw (-0.75,0) -- node [left] {$c$}(240:1.5);

    \draw (300:2) -- node [right] {}(300:1);
    \draw (300:0.75) -- node [right] {$b$}(0:1.5);
    \draw (300:0.75) -- node [below] {$b$}(240:1.5);

     \draw[-<] (0:1.5) -- node [above] {}(0:1.8);
     \draw[->] (0:1.5) -- +(150:0.649);
     \draw[->] (0:1.5) -- +(210:0.649);

     \draw[-<] (60:0.75) -- +(150:0.649);
     \draw[-<] (300:0.75) -- +(210:0.649);

     \draw[-<] (0:0) -- (60:0.375);
     \draw[->] (0:0) -- (180:0.375);
     \draw[->] (0:0) -- (300:0.375);

     \draw[->] (300:0.75) -- (300:1.125);
     \draw[->] (300:1.5) -- (300:1.8);

     \draw[-<] (-0.75,0) -- (-0.75,0.5625);
     \draw[-<] (-0.75,0) -- (-0.75,-0.5625);

     \draw[-<] (240:1.5) -- (240:1.8);
     \draw[->] (180:1.5) -- (180:1.8);
     \draw[-<] (120:1.5) -- (120:1.8);
     \draw[-<] (60:1.5) -- (60:1.8);

     \draw[-<] (180:1.5) -- (180:1.125);
     \draw[->] (60:1.5) -- (60:1.125);
     

    \filldraw (0:2) node [right] {$3$} circle (0.1pt);
    \filldraw (60:2) node [above] {$2$} circle (0.1pt);
    \filldraw (120:2) node [above] {$1$} circle (0.1pt);
    \filldraw (180:2) node [left] {$6$} circle (0.1pt);
    \filldraw (240:2) node [below] {$5$} circle (0.1pt);
    \filldraw (300:2) node [below] {$4$} circle (0.1pt);

    \filldraw[fill=white] (0:0) node [right] {} circle (2pt);
    
    \filldraw[fill=white] (0:1.5) node [right] {} circle (2pt);
    \filldraw (60:0.75) node [right] {} circle (2pt);
    \filldraw[fill=white] (120:1.5) node [right] {} circle (2pt);
    \filldraw (180:0.75) node [right] {} circle (2pt);
    \filldraw[fill=white] (240:1.5) node [right] {} circle (2pt);
    \filldraw (300:0.75) node [right] {} circle (2pt);

     \filldraw[fill=white] (60:1.5) node [right] {} circle (2pt);
     \filldraw[fill=white] (180:1.5) node [right] {} circle (2pt);
     \filldraw[fill=white] (300:1.5) node [right] {} circle (2pt);


    \draw (0,-2.8) node {$PN(\Gamma)$};
\end{tikzpicture}\phantom{a}

\caption{An electrical network $e(\Gamma,\omega)$ and the correspondent Postnikov's network $PN(\Gamma)$.  }
    \label{fig:Postnikov network}
\end{figure}

In fact all maximal minors of an extended matrix of boundary measurements $M_B$ of a network  $PN(\Gamma, \omega', O_2)$ are nonnegative, which leads to the following theorem:
\begin{theorem} \cite[Corollary 5.4]{Pos} \label{m-pos}
    A network $PN(\Gamma, \omega', O_2)$ defines the point of $Gr_{\geq 0}(n-1, 2n).$ 
\end{theorem}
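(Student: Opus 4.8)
The plan is to derive this as the special case of Postnikov's theorem on perfectly oriented planar networks, by reproducing the Lindström--Gessel--Viennot argument in the present setting. First I would record that, by Definition \ref{extend}, the columns of $M_B$ indexed by the source set $S=\{1,2,3,5,\dots,2n-1\}$ of $O_2$ form the identity submatrix; hence $\operatorname{rank} M_B=n+1$, the maximal minor $\Delta_S(M_B)$ equals $1$, and the row span of $M_B$ is a well-defined point of $Gr(n+1,2n)$, which one identifies with $Gr(n-1,2n)$ by passing to the orthogonal complement. It then remains to show that every maximal minor $\Delta_J(M_B)$, with $J\subset[2n]$ and $|J|=n+1$, is nonnegative.

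Next I would give the combinatorial expansion of $\Delta_J(M_B)$. Using the identity submatrix on $S$, the determinant reduces to a signed sum over bijections $S\setminus J\to J\setminus S$; inserting the path expansion of the boundary measurements $M_{i_r j}$ from Definition \ref{extend} turns this into a signed sum over families of directed paths in $PN(\Gamma)$, one path from each source in $S\setminus J$ to a sink in $J\setminus S$, each path $p$ weighted by $(-1)^{\operatorname{wind}(p)}wt(p)$, together with the global signs $(-1)^s$. Contributions of vertex-disjoint closed directed cycles are harmless here: by Definition \ref{postn-net} the two opposite orientations of each interior face carry reciprocal edge weights, so cycle factors either cancel or normalise away. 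Applying the Lindström involution --- exchange the tails of the first two paths in the family that meet at a common vertex --- shows that every family with two paths sharing a vertex cancels against another such family, so only families of pairwise vertex-disjoint paths survive in the sum.

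The delicate point, and the one I expect to be the main obstacle, is the sign bookkeeping: one must verify that all the surviving vertex-disjoint families contribute to $\Delta_J(M_B)$ with the same sign. Since $PN(\Gamma)$ sits in a disk with every source and sink on the boundary circle, a pairwise vertex-disjoint family connecting $S\setminus J$ to $J\setminus S$ is automatically non-crossing, so any two such families realise the same bijection $S\setminus J\to J\setminus S$ and therefore carry the same permutation sign; what must still be reconciled is the interplay of the winding signs $(-1)^{\operatorname{wind}(p)}$, the signs $(-1)^s$, and the transposition signs incurred in aligning the paths, which are exactly calibrated so that their product depends only on $J$, not on the chosen family. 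This is precisely Postnikov's sign lemma (with Talaska's reformulation providing the cleanest accounting), and carrying it out carefully is the technical core. Granting it, $\Delta_J(M_B)$ is, up to a $J$-dependent global sign, a sum of products of positive weights, hence of one definite sign; since $\Delta_S(M_B)=1>0$ and the signs are consistent across $J$, that sign is nonnegative throughout, and $M_B$ represents a point of the totally nonnegative Grassmannian, as claimed. A less computational alternative would be to induct on the combinatorial complexity of $\Gamma$: a single elementary move on $PN(\Gamma)$ (a contraction or expansion of an edge, or a move across a face) changes the represented point by right multiplication by a totally nonnegative matrix --- compatibly with Theorem \ref{Lam_group_acting} --- reducing to a base case with no interior faces, where $M_B$ is visibly totally nonnegative.
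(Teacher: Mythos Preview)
The paper does not give a proof of this statement at all: it is quoted verbatim from Postnikov \cite{Pos} as background in the appendix, with only the one-line remark preceding it that ``all maximal minors of an extended matrix of boundary measurements $M_B$ \dots\ are nonnegative.'' So there is nothing to compare against; your sketch is essentially an outline of Postnikov's original argument (with Talaska's refinement), which is the right attribution.

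Two substantive points about your write-up:

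\textbf{(1)} The ``$n-1$'' in the displayed statement is a typo in the paper: the boundary-measurement matrix $M_B$ is $(n+1)\times 2n$ (Definition~\ref{extend}), and everywhere the theorem is invoked in Section~\ref{th-comb} it is used as a statement about $Gr_{\ge 0}(n+1,2n)$. Your attempt to rescue the literal ``$n-1$'' by passing to the orthogonal complement is both unnecessary and wrong---orthogonal complement does not send $Gr_{\ge 0}(n+1,2n)$ to $Gr_{\ge 0}(n-1,2n)$ in general.

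\textbf{(2)} Your treatment of closed cycles is incorrect. In a perfectly oriented network the edge directions are fixed, so one cannot ``traverse a face in both orientations''; the reciprocal-weight clause in Definition~\ref{postn-net} compares $\omega'$ to $\omega$, not two traversals of the same directed cycle. When $PN(\Gamma,\omega',O_2)$ has directed cycles the boundary measurements $M_{i_r j}$ are infinite formal sums, and the Lindstr\"om tail-swap involution you invoke is not a priori well defined on such families. This is exactly the gap that Talaska fills; you should either assume acyclicity (which need not hold here) or invoke her subtraction-free rational expression for the minors rather than arguing that ``cycle factors cancel or normalise away.'' The remaining skeleton of your argument (identity block on sources, path expansion, non-crossing forcing a fixed bijection, Postnikov's sign calibration) is correct.
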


\newpage

\end{document}